\documentclass[11pt]{amsart}

\usepackage{amssymb, latexsym, amsfonts, pigpen, enumitem, mathrsfs, amsthm, bbm, stackrel, xcolor, tikz, tikz-cd, adjustbox, nicefrac, eucal}
\usepackage[matrix,arrow,curve]{xy}
\usepackage[colorlinks=true,pagebackref=true,citecolor=cyan,linkcolor=magenta]{hyperref}
\hypersetup{linktocpage}

\usepackage{verbatim}
\usepackage[letterpaper,top=0.9in, bottom=0.9in, left=0.9in, right=0.9in]{geometry}

\numberwithin{equation}{section}

\newtheorem{theorem}[equation]{Theorem}
\newtheorem{lemma}[equation]{Lemma}
\newtheorem{proposition}[equation]{Proposition}
\newtheorem{corollary}[equation]{Corollary}
\newtheorem{theoremintr}{Theorem}

\theoremstyle{definition}
\newtheorem{definition}[equation]{Definition}

\newtheorem{remark}[equation]{Remark}

\newtheorem{notation}[equation]{Notation}

\newcommand{\Witt}{\mathbf{W}}
\newcommand{\GWitt}{\mathbf{GW}}
\newcommand{\KMil}{\mathbf{K}^{\mathrm{M}}}
\newcommand{\KMiltwo}{\mathbf{k}^{\mathrm{M}}}
\newcommand{\KMW}{\mathbf{K}^{\mathrm{MW}}}
\newcommand{\WGr}{\mathrm{WGr}}
\newcommand{\hmlg}{H}
\newcommand{\cyc}{Z}
\newcommand{\MU}{\mathbf{MU}}
\newcommand{\MSU}{\mathbf{MSU}}
\newcommand{\kgl}{\mathbf{kgl}}
\newcommand{\kq}{\mathbf{kq}}
\newcommand{\EM}{\mathbf{M}}
\newcommand{\Sp}{{\mathrm{Sp}}}
\newcommand{\SL}{{\mathrm{SL}}}
\newcommand{\Gr}{{\mathrm{Gr}}}
\newcommand{\SGr}{{\mathrm{SGr}}}
\newcommand{\Proj}{\mathbb{P}}
\newcommand{\Hom}{{\mathrm{Hom}}}
\newcommand{\Ext}{{\mathrm{Ext}}}
\newcommand{\Z}{{\mathbb{Z}}}
\newcommand{\MSL}{\mathbf{MSL}}
\newcommand{\MGL}{\mathbf{MGL}}
\newcommand{\W}{\mathbf{W}}
\newcommand{\MWL}{\mathbf{MWL}}
\newcommand{\A}{\mathbb{A}}
\newcommand{\PP}{\mathbb{P}}
\newcommand{\NN}{\mathbb{N}}
\newcommand{\Th}{\mathrm{Th}}
\newcommand{\id}{\operatorname{id}}
\newcommand{\colim}{\operatorname*{colim}}
\newcommand{\slice}{\mathsf{s}}
\newcommand{\effcover}{\mathsf{f}}
\newcommand{\vs}{\widetilde{\slice}}
\newcommand{\vf}{\widetilde{\effcover}}

\newcommand{\struct}{\mathcal{O}}
\newcommand{\Gm}{{\mathbb{G}_m}}

\newcommand{\SH}{\mathbf{SH}}
\newcommand{\T}{\mathrm{T}}
\newcommand{\Spec}{\mathrm{Spec}}
\newcommand{\coker}{\mathrm{coker}}
\newcommand{\kernel}{\mathrm{ker}}
\newcommand{\sph}{\mathbbm{1}}

\newcommand{\QQ}{\mathbb{Q}}
\newcommand{\RR}{\mathbb{R}}

\newcommand{\Pic}{\operatorname{Pic}}

\newcommand{\chark}{\operatorname{char}}
\newcommand{\EE}{\mathsf{E}}
\newcommand{\etatop}{\eta_{\mathrm{top}}}

\newcommand{\SmF}{\mathbf{Sm}_F}

\newcommand{\SHF}{\mathbf{SH}(F)}
\newcommand{\determ}{\mathrm{det}}

\newcommand{\hyper}{\mathsf{h}}
\newcommand{\dd}{\mathsf{d}}
\newcommand{\Sq}{\mathsf{Sq}}
\newcommand{\pr}{\mathrm{pr}}
\newcommand{\inc}{\mathrm{inc}}

\let\directsum=\oplus
\let\iso=\cong

\begin{document}

\title{Slices of the special linear algebraic cobordism spectrum}
\author{Ahina Nandy}
\address{Radboud Universiteit, Mathematical Institute, Postbus 9010, 6500 GL Nijmegen, Netherlands}
\email{\href{mailto:ahina.nandy@ru.nl}{ahina.nandy@ru.nl}}
\author{Oliver R\"ondigs}
\address{Universit\"at Osnabr\"uck, Institut f\"ur Mathematik, 49069 Osnabr\"uck, Germany}
\email{\href{mailto:oliver.roendigs@uni-osnabrueck.de}{oliver.roendigs@uni-osnabrueck.de}}
\author{Egor Zolotarev}
\address{LMU M\"unchen, Mathematisches Institut, Theresienstr. 39, 80333 M\"unchen, Germany}
\email{\href{mailto:zolotarev@math.lmu.de}{zolotarev@math.lmu.de}, \href{mailto:zolotarev-egv@yandex.ru}{zolotarev-egv@yandex.ru}}
\urladdr{\href{https://sites.google.com/view/egor-zolotarev}{https://sites.google.com/view/egor-zolotarev}}
\keywords{Motivic homotopy theory, slice filtration, special linear algebraic cobordism, very effective hermitian $K$-theory}
\subjclass[2020]{Primary 14F42; Secondary 19G38, 55P42, 57R77}

\date{}

\thanks{}

\begin{abstract}
Let $F$ be a field of exponential characteristic $e$.
We compute the slices of $\MSL[\nicefrac{1}{e}]$, where $\MSL$ is the special linear algebraic cobordism spectrum defined by Panin and Walter. 
The answer is expressed in terms of the second page of the Adams--Novikov spectral sequence for the special unitary cobordism spectrum, which was explicitly determined by Novikov. 
Its applicability is demonstrated by computations with the slice spectral sequence for $\MSL$, which determine the first few Milnor--Witt stems of its homotopy groups (up to the third) in terms of very effective hermitian $K$-theory. 
We also establish a decomposition of the rational special linear algebraic cobordism spectrum over an arbitrary qcqs scheme.
\end{abstract}
\maketitle
\tableofcontents
\section{Introduction}
The slice filtration (also known as the motivic Postnikov tower) on the Morel--Voevodsky $\PP^1$-stable $\A^1$-homotopy category was introduced by Vladimir Voevodsky \cite{Voev_slices} to give a conceptual construction of the motivic spectral sequence for higher algebraic $K$-theory.
Its existence was conjectured by Alexander Beilinson and Stephen Lichtenbaum in analogy with the classical Atiyah--Hirzebruch spectral sequence in topology. 
This approach was successfully realized over fields by Vladimir Voevodsky \cite{Voev_zero_slice} and Marc Levine \cite{Levine_coniveau} (see also a recent paper of Bachmann--Elmanto--Morrow over arbitrary qcqs schemes \cite{BEM_AHSS}). 

Since then, the slice filtration has become a powerful computational tool in motivic homotopy theory. 
For example, this filtration was used in a joint work of the second author with Paul Arne \O stv\ae r for a new proof of the Milnor conjecture on quadratic forms \cite{RO16} (the original proof was given by Orlov--Vishik--Voevodsky \cite{OVV_Milnor_qf}). 
Furthermore, the slice filtration was applied to a systematic study of the stable homotopy groups of motivic spheres in joint works of the second author with Markus Spitzweck and Paul Arne \O stv\ae r \cite{RSO19, RSO24}. 
This approach is based on computations with the spectral sequence associated with the slice filtration for the motivic sphere spectrum $\sph$. 
The $E^1$-page of this spectral sequence is given by motivic cohomology with coefficients in the $E_2$-page of the topological Adams--Novikov spectral sequence for the topological sphere spectrum. 
It can be computed in a certain range using topological computations together with an analysis of certain motivic cohomology groups, mainly based on Voevodsky's solution of the Bloch--Kato conjecture \cite{Voevodsky_MC, Voevodsky_BK}. 
Moreover, the $\dd_1$-differentials in this spectral sequence (we call them first slice differentials) are explicitly describable with help of the motivic Steenrod algebra that was determined for primes different from the characteristic in \cite{Voevodsky_Steenrod} and \cite{HKO_Steenrod}.

In this paper we study the slice filtration for the special linear algebraic cobordism spectrum $\MSL$. 
This spectrum was introduced by Ivan Panin and Charles Walter in \cite{PW-Thom} as a version of ``oriented algebraic cobordism''\footnote{Note that there is another spectrum that might also be viewed as an oriented algebraic cobordism spectrum. It is called the metalinear algebraic cobordism spectrum and admits a more common version of ``quadratic'' orientation.}.
This spectrum is extensively studied now (see e.g., \cite{BH_eta, LYZ, ZolMSL}) and its first few slices were computed in work of the first author \cite{NanMSL}. 
In this paper we compute them completely. 
The answer is given away from the characteristic of the chosen base field, since the proof uses the slices of algebraic cobordism $\MGL$.
These were computed by Spitzweck \cite{SpiMGL1} assuming the Hopkins--Morel--Hoyois equivalence \cite{HoyMGL}, which is currently known to hold away from the characteristic. 
The following result is a straightforward combination of Theorems \ref{thm_conner_floyd} and \ref{slices_MSL}.
\begin{theoremintr}\label{thmintro.a}
    Suppose that $F$ is a field of characteristic zero. The slices of $\MSL$ are given by
    \[ \slice_q(\MSL)\cong \bigoplus_{0\leq n<\nicefrac{q}{2}}\Sigma^{q+2n,q}\EM\Z/2^{p(\lfloor \nicefrac{n}{2}\rfloor)}\oplus 
    \Sigma^{2q,q}\EM\Z^{p(q)-p(q-1)}, \]
    where $p(-)$ is the partition function and $\lfloor-\rfloor$ is the floor function. 
    If $F$ is a field of characteristic $p>0$, then the same result holds after inverting $p$.
\end{theoremintr}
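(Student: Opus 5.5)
The plan is to reduce to the known slices of $\MGL$ via an Adams--Novikov type resolution. Write $E=\MSL[\nicefrac1e]$. The first step is a motivic Conner--Floyd type statement (the paper's Theorem~\ref{thm_conner_floyd}): I would show that $\MGL\wedge\MSL$ splits as $\MGL\wedge(\text{a sum of Thom spectra of the form }\Sigma^{2n,n}\sph)$. More precisely, using the orientation of $\MGL$ and the fiber sequence $\mathrm{BSL}\to\mathrm{BGL}\to \Gm$-torsor data, I expect $\MGL\wedge\MSL\simeq\MGL[y_2,y_3,\dots]$ with $y_i$ in bidegree $(2i,i)$, mirroring the topological identity $\MU_*\MSU\cong\MU_*[y_2,y_3,\ldots]$. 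This lets one form the cobar (Adams--Novikov) resolution $E\to\MGL\wedge E\rightrightarrows\MGL\wedge\MGL\wedge E\cdots$, each term of which is a free $\MGL$-module on Tate spheres.

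Second, apply slices. By Spitzweck's computation (assuming Hopkins--Morel--Hoyois, hence after inverting $e$), $\slice_*(\MGL)\simeq \EM\Z\otimes\MU_*$ with $\MU_{2q}$ placed in $\Sigma^{2q,q}$, and slices commute with the free-module sums above. Since the resolution consists of cellular effective objects and slices are triangulated and commute with the relevant totalization in each fixed weight (the connectivity of $\slice_q$ of the $s$-th term grows with $s$, so the totalization is finite in each slice degree), $\slice_*(E)$ is computed by the cochain complex of $\EM\Z$-modules obtained by tensoring $\EM\Z$ with the cobar complex of the Hopf algebroid $(\MU_*,\MU_*\MU)$ with coefficients in $\MU_*\MSU$. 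Because $\EM\Z$-modules of this Tate form are governed by graded abelian groups, this yields $\slice_q(E)\simeq\bigoplus_s\Sigma^{2q-s,q}\EM\,\Ext^{s,2q}_{\MU_*\MU}(\MU_*,\MU_*\MSU)$ — this is exactly the paper's Theorem~\ref{slices_MSL}, the analogue of the formula for $\slice_q(\sph)$ in terms of the ANSS $E_2$-page. A key point is that the resulting $\EM\Z$-module is formal, i.e. determined by its homotopy; I would argue this as in the sphere case of R\"ondigs--Spitzweck--{\O}stv{\ae}r, using that $\Ext$ of $\EM\Z$-modules between shifted $\EM A$'s of the relevant bidegrees vanishes for weight reasons, or simply since $\Ext^{\geq 2}_\Z=0$.

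Third, insert Novikov's computation of the ANSS $E_2$-page for $\MSU$: $\Ext^{0,2q}$ is free of rank $p(q)-p(q-1)$ (the torsion-free part of $\MSU_{2q}$ counted via partitions without $1$'s), $\Ext^{s,*}$ for $s\ge1$ is $2$-torsion concentrated in odd $s$... — concretely, Novikov's description gives copies of $\Z/2$ in filtration $s=q-2n$ with multiplicity $p(\lfloor n/2\rfloor)$, for $0\le n<q/2$. Substituting $s=q-2n$ into $\Sigma^{2q-s,q}$ gives $\Sigma^{q+2n,q}$, matching the statement; the $s=0$ part gives $\Sigma^{2q,q}\EM\Z^{p(q)-p(q-1)}$.

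The main obstacle is the first step together with the convergence/formality of the slice cobar complex: one must verify that $\MGL\wedge\MSL$ really is a free $\MGL$-module on the expected Tate cells (this needs a motivic Thom isomorphism and a splitting of $\MGL\wedge\mathrm{BSL}_+$, which I'd attack via the $\MGL$-cohomology of $\mathrm{BSL}_n$ computed from the $\Gm$-bundle $\mathrm{BSL}_n\to\mathrm{BGL}_n$), and that the slice functors commute with the cobar totalization. The bookkeeping translating Novikov's answer into partition counts is then routine.
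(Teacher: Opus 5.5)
Your outline is sound, but it takes a genuinely different route from the paper's. You transplant the R\"ondigs--Spitzweck--\O stv\ae r computation of $\slice_q(\sph)$ to $\MSL$. That means an $\MGL$-based Adams--Novikov resolution, then Spitzweck's slices of $\MGL$, then Novikov's calculation of $\Ext_{\MU_*\MU}(\MU_*,\MU_*\MSU)$.

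The paper forms no resolution. It uses Zolotarev's two cofiber sequences (Theorem~\ref{MWL_cofib}):
\begin{enumerate}
\item $\slice_q(\MWL_\Lambda)\simeq\Sigma^{q+(q)}\EM\Lambda\otimes\W_{2q}$ is read off as the fiber of $\slice_q(\Delta)=\Sigma^{q+(q)}\EM\Lambda\otimes\Delta_{2q}$, using that $\Delta_{2q}$ is surjective.
\item It then inducts on $q$ along $\Sigma^{(1)}\MSL\xrightarrow{\eta}\MSL\to\MWL$. Lemma~\ref{lemma_hom_(s,0)} kills every component of $\slice_q(d)$ except the one induced by $\delta_{2q}$, and Lemma~\ref{lemma_EM_map} splits the fiber.
\end{enumerate}
The answer comes out in terms of cycles and homology of $(\W_*,\delta_*)$. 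The only topological input is the Conner--Floyd computation (Theorem~\ref{thm_conner_floyd}); the ANSS interpretation is merely Remark~\ref{remark_ANSS}.

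This route avoids the two steps you correctly flag as the real work. First, proving that $\MGL\wedge\MSL_\Lambda$ is Tate-free with $\pi_{2*,*}\cong\MU_*\MSU$ as an $\MU_*\MU$-comodule. Second, the finiteness and formality bookkeeping for the resolution. More importantly for the rest of the paper, it yields explicit formulas for $\slice_q(\MSL_\Lambda)\to\slice_q(\MWL_\Lambda)$. From these the paper derives:
\begin{itemize}
\item the $\eta$-action on slices (Lemma~\ref{lemma_eta_action});
\item the slices of $\MSL_\Lambda[\eta^{-1}]$;
\item the comparison with $\kq$ (Proposition~\ref{prop:slices-msl-kq});
\item the first slice differentials.
\end{itemize}
Your route is more uniform and conceptual: it works for any effective spectrum with Tate-free $\MGL$-homology and gives the ANSS formula directly. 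Extracting that finer structure from it would take extra work.

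Three small corrections.
\begin{itemize}
\item Theorem~\ref{thm_conner_floyd} is the topological computation of $\cyc_*(\W)$ and $\hmlg_*(\W)$, not a motivic splitting of $\MGL\wedge\MSL$.
\item The finiteness in each slice degree should come from $s$-effectivity of $\overline{\MGL}^{\wedge s}$. Here $\overline{\MGL}$ is the fiber of the unit $\sph_\Lambda\to\MGL_\Lambda$, which is $1$-effective because the unit is an equivalence on $\slice_0$. Hence $\slice_q$ sees only the first $q+1$ stages of the Adams tower. The slices of the cosimplicial terms $\MGL^{\wedge(s+1)}\wedge E$ themselves do not vanish for large $s$.
\item The positive-filtration torsion is not concentrated in odd $s$. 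For example, $\eta^2\in\Ext^{2,4}$ gives the summand $\Sigma^{(2)}\EM\Lambda/2$ of $\slice_2$. Your subsequent description with $s=q-2n$ is the correct one.
\end{itemize}
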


The above answer has an interpretation in terms of the $E_2$-page of the Adams--Novikov spectral sequence for the (topological) special unitary cobordism spectrum, see Remark \ref{remark_ANSS}. 
The proof of this theorem is based on the work of the third author \cite{ZolMSL} and proceeds via the computation of the slices of the $c_1$-spherical cobordism spectrum $\MWL$, which is equivalent to the $\eta$-cofiber of $\MSL$. 
We also obtain a comparison with the slices of the very effective hermitian $K$-theory via the morphism of motivic $\mathbb{E}_\infty$-ring spectra $\MSL\to \kq$ in Proposition \ref{prop:slices-msl-kq}. 
This comparison is used multiple times in further slice computations.

After giving a proof of Theorem~\ref{thmintro.a}, we perform explicit computations with the slice spectral sequence. 
More precisely, in Section \ref{section:4}, we compute the first slice differentials for $\MGL$ and then deduce formulas for the first slice differentials for $\MSL$ in a certain range via $\MWL$. 
These computations lead to a determination of the first four nontrivial columns of the $E^2$-page of the slice spectral sequence for $\MSL$. 
Section \ref{section:5} provides several higher slice differentials in order to push the previous computations to the $E^\infty$-page. 
Along the way, a few higher slice differentials for the motivic sphere spectrum $\sph$ are identified, which may be of independent interest. 
Section \ref{section:6} applies the obtained results to compute the homotopy groups of $\MSL$ up to the third Milnor--Witt degree, using the $\eta$-arithmetic square. 
This leads to the following result; for notation see the subsection below.

\begin{theoremintr}
    Suppose that $F$ is a field of characteristic zero and $n$ is an integer. 
    Then the canonical morphism $\MSL\to\kq$ induces
    \begin{enumerate}
        \item isomorphisms $\pi_{n+m,n}(\MSL)\cong\pi_{n+m,n}(\kq)$ for $m\leq 2$,
        \item an epimorphism $\pi_{n+3,n}(\MSL)\twoheadrightarrow\pi_{n+3,n}(\kq)$ with kernel $\KMil_{3-n}(F)/(\partial^2_\infty\Sq^6\pr^\infty_2H^{-n-4,-n})$.
    \end{enumerate}
    If $F$ is a field of characteristic $p>0$, then the same result holds after inverting $p$.
\end{theoremintr}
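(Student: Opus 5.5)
We compare the slice spectral sequences, or more precisely the $\eta$-arithmetic squares, of $\MSL$ and $\kq$ along the $\mathbb{E}_\infty$-map $\MSL\to\kq$. By Proposition \ref{prop:slices-msl-kq} this map identifies the slices in low weight shift: the summands $\Sigma^{q+2n,q}\EM\Z/2$ with $n=0$ and the bottom integral summands of $\slice_q(\MSL)$ match those of $\slice_q(\kq)$. The first slice that differs in the relevant range contributes in Milnor--Witt stem $3$. Concretely, the slices of $\MSL$ in Theorem \ref{thmintro.a} agree with those of $\kq$ except for
\begin{itemize}
\item the extra integral summands $\Sigma^{2q,q}\EM\Z^{p(q)-p(q-1)}$ for $q\geq 2$, beginning with $\Sigma^{8,4}\EM\Z$, which lies in stem $4$;
\item the extra mod $2$ summands $\Sigma^{q+2n,q}\EM\Z/2$ with $n\geq 2$, which first appear in stem $4$.
\end{itemize}

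The plan has four steps.

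\begin{enumerate}
\item I reduce, via the $\eta$-arithmetic square, to computing three pieces: $\MSL[\eta^{-1}]$, the $\eta$-completion, and $\MSL/\eta\simeq\MWL$. For the $\eta$-inverted part, $\MSL[\eta^{-1}]\to\kq[\eta^{-1}]$ is known to be an isomorphism in low Milnor--Witt stems, using the Witt-sheaf description of both. So the comparison concentrates on the $\eta$-complete or $2$-primary part, where the slice spectral sequence converges.

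\item I write down the $E^1$-pages for $\MSL$ and $\kq$ in columns $m\le 4$ and use the first slice differentials computed in Section \ref{section:4}. These are formulas in $\Sq^2$, $\Sq^2\Sq^1$ and related operations, transported from $\MGL$ through $\MWL$. With them I obtain the $E^2$-pages and check that the map is an isomorphism on columns $m\le 2$ and on column $3$ away from one extra group.

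\item I feed in the higher differentials of Section \ref{section:5}. They must show two things: in columns $m\le 2$ nothing additional survives on the $\MSL$ side, and in column $3$ the only extra surviving term is $\KMil_{3-n}(F)$. This term comes from $H^{3-n,3-n}$ of a $\Z/2$ or $\Z$ summand. It is modulo the image of a $\dd$-differential, which I expect to be exactly $\partial^2_\infty\Sq^6\pr^\infty_2$ applied to $H^{-n-4,-n}$, originating from the new weight-$4$ integral summand.

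\item I resolve the extension problems. The map to $\kq$ is a ring map inducing compatible filtrations, so a five-lemma argument on associated gradeds gives the isomorphisms for $m\le 2$. For $m=3$ it gives surjectivity, with kernel equal to the extra $E^\infty$-term.
\end{enumerate}

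The main obstacle is step 3, and within it two points. First, identifying the higher differential that kills part of $\KMil_{3-n}$ as exactly $\partial^2_\infty\Sq^6\pr^\infty_2$. Second, proving that no further differentials or hidden extensions interfere. For this I would first try naturality: map the sphere spectrum $\sph\to\MSL$ and use the higher slice differentials for $\sph$ identified in Section \ref{section:5}. I would also compare with $\MGL$, where all of these differentials vanish for degree reasons. For the kernel, I would check exactness of $0\to\KMil_{3-n}/(\dots)\to\pi_{n+3,n}\MSL\to\pi_{n+3,n}\kq\to0$ by showing that the extra class maps to zero in $\kq$, because it lives on a slice summand that is annihilated by the map of Proposition \ref{prop:slices-msl-kq}.
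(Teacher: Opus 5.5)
Your outline (slices, first differentials, higher differentials, comparison of filtrations along $\MSL\to\kq$) has the right shape. But there are two genuine gaps: the key differential is approached by a method that fails, and the passage from $\MSL^\wedge_\eta$ to $\MSL$ in stem $3$ is missing.

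\textbf{The kernel term and its differential.} Your bookkeeping of where the slices differ is off. The summand $\Sigma^{2+(2)}\EM\Lambda\subset\slice_2$ is shared with $\kq$. The first new integral summand is $\Sigma^{3+(3)}\EM\Lambda\subset\slice_3(\MSL_\Lambda)$, of bidegree $(6,3)$ rather than $(8,4)$, and it is exactly what contributes $H^{3-n,3-n}=\KMil_{3-n}(F)$ to the third stem. The group $H^{-n-4,-n}$ is $\pi_{4+(n)}\slice_0$, so the relevant differential is a $\dd_3$ out of slice $0$; it does not originate in a weight-$4$ summand, since differentials raise slice degree. The tools you propose for identifying this $\dd_3$ do not work.
\begin{itemize}
\item Naturality along $\sph\to\MSL$ is indeed how the paper disposes of the $\dd_2$'s into column $3$. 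But the sphere's $\dd_2$ on $\pi_4\slice_0$ is nonzero (Lemma~\ref{lem:d2pi4s0sphere}), so naturality only constrains $\dd_3^{\MSL}$ on $\ker(\dd_2^{\sph})$. On that subgroup $\partial^2_\infty\Sq^6\pr^\infty_2$ vanishes anyway: if $\pr^\infty_2x=\tau^4\bar c$, then $\dd_2^{\sph}(x)=0$ forces $\bar c\rho^4=0$, hence $\Sq^6(\tau^4\bar c)=\rho^6\tau\bar c=0$ by Lemma~\ref{lem:Sq6_tau4}. So this route cannot see the differential.
\item The $\MGL$-side differentials do not vanish for degree reasons. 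The paper's key input (Lemma~\ref{lem:d3pi4s0MSL}) is that the $\dd_3$ out of $\pi_4\slice_0$ for $\MGL/(a_1,a_2)$ is \emph{nonzero}. This is detected through $\MGL/(2,a_1,a_2)$, which over $\RR$ sends the generator to $\rho^7$, applied to $\rho g$ over $\QQ$.
\item That input is then combined with the classification of maps $\EM\Lambda\to\Sigma^{4+(3)}\EM\Lambda$, the coefficient $-3$ coming from $x_3=-3a_3+2a_1a_2$, and a mod-$3$ argument that excludes a $3$-torsion correction.
\item You also need the other component of this $\dd_3$, into the $\kq$-part of $E^3_{-n+3,3,-n}$, to vanish. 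The paper proves this in Lemma~\ref{lem:d3pi4s0kq} via the very effective slice filtration of $\kq$. Otherwise the kernel would be $H^{3-n,3-n}$ modulo $\partial^2_\infty\Sq^6\pr^\infty_2$ of a possibly proper subgroup of $H^{-n-4,-n}$.
\end{itemize}

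\textbf{From the $\eta$-completion to $\MSL$ in stem $3$.} Your Step~4 compares $E^\infty$-pages, and these compute $\pi_\ast(\MSL_\Lambda)^\wedge_\eta$, not $\pi_\ast\MSL_\Lambda$. In stems $1,2$ the passage is harmless, because $\pi_n\MSL_\Lambda[\eta^{-1}]=0=\pi_n(\MSL_\Lambda)^\wedge_\eta[\eta^{-1}]$ for $1\le n\le 3$. In stem $3$, however, the $\eta$-arithmetic square only gives an exact sequence
\[ \pi_4(\MSL_\Lambda)^\wedge_\eta\oplus\pi_4\MSL_\Lambda[\eta^{-1}]\to\pi_4(\MSL_\Lambda)^\wedge_\eta[\eta^{-1}]\to\pi_3\MSL_\Lambda\to\pi_3(\MSL_\Lambda)^\wedge_\eta\to 0. \]
Any cokernel of the first map would enlarge the kernel of $\pi_3\MSL_\Lambda\to\pi_3\kq_\Lambda$. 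Your Step~1 never mentions the corner $(\MSL_\Lambda)^\wedge_\eta[\eta^{-1}]$. Knowing that $\MSL_\Lambda[\eta^{-1}]$ and $\mathbf{kw}_\Lambda$ agree in low stems does not by itself kill this cokernel; you need a genuine stem-$4$ argument.

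The paper supplies it in Proposition~\ref{prop:eta_arithmetic_bound_zero}. First, $\eta$ acts surjectively on the fourth column of $E^\infty$ in weights $\geq 4$, so the cokernel is cyclic as a $\KMW_\star(F)$-module and is a quotient of $\Witt(F)\otimes\Lambda$. Second, a generator is the image of $\frac{1}{2\hyper}y_2^2\in\pi_{4+(4)}\MSL$, obtained from the cartesian square of \cite[Theorem D]{ZolMSL}. This element is also the image of the generator of $\pi_{4+(4)}\MSL[\eta^{-1}]\cong\Witt(F)$, which gives the required surjectivity.
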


In the last section, we obtain a decomposition of the rational special linear algebraic cobordism spectrum over an arbitrary qcqs scheme $S$ into the product of polynomial homotopy commutative ring spectra over the rational motivic cohomology spectrum $\EM\QQ$ and rational Witt motivic cohomology spectrum $\W\QQ$ (see Theorem \ref{thm:rational_decomp}):
    \[ \EM\QQ[y_2,y_3,\dots]\times \W\QQ[u_4,u_8,\dots]\cong \MSL\otimes\QQ. \]
The polynomial generators here arise from the polynomial generators of the geometric diagonal of $\MSL\otimes\QQ$ over $\Z$, which is computed via Betti realizations, see Lemma \ref{lem:geom_diag_overZ}. 
The proof for the plus part is essentially based on the degeneration of the slice spectral sequence for $\MSL\otimes\QQ$ over fields, while the proof for the minus part uses real Betti realization and the work of Tom Bachmann \cite{Bachmann_real_etale}.

\subsection*{Notation and terminology} 
Throughout the text, $F$ is a base field and $\Lambda=\Z[\nicefrac{1}{e}]$, where $e$ is the exponential characteristic of $F$. 
Starting from Section \ref{section:4}, we assume that the characteristic of $F$ is different from $2$.
The case $\chark(F)=2$ is treated separately in Appendix \ref{appendix_char2}. 
Let $\SmF$ denote the category of smooth $F$-schemes and $\SHF$ the $\PP^1$-stable $\A^1$-homotopy category over $F$.
We also denote by $\Sigma^{s+(w)}=\Sigma^{s+w,w}$ the suspension with the motivic sphere $S^{s+(w)}=S^{s+w,w}=S^s\wedge\mathbb{G}_m^w$, and by $\pi_{s+(w)}\mathsf{E}=\pi_{s+w,w}\mathsf{E}$ the corresponding homotopy group of a spectrum $\mathsf{E}\in\SHF$. 
In addition, we use the abbreviation $\pi_s\mathsf{E}$ for the direct sum
\[ \pi_{s+(\star)}\mathsf{E}=\bigoplus_{w\in \Z}\pi_{s+(w)}\mathsf{E} \]
considered as a module over the Milnor--Witt $K$-theory by Morel's theorem $\pi_{0}\sph\cong \KMW_{-\star}(F)$ \cite[Theorem 1.23]{Morel_book}. 
The notation $\eta\in\pi_{0+(1)}\sph$ stands for the motivic Hopf element, $\hyper\in\pi_{0+(0)}\sph\cong\mathbf{GW}(F)$ for the hyperbolic plane $2+\eta[-1]$, and $\rho\in\pi_{0-(1)}\sph$ for the map $\sph\to \Gm$ corresponding to $-1\in F^\times$.
By abuse of notation, the same symbol $\rho$ also denotes the corresponding element in integral and mod-$2$ motivic cohomology.

We abbreviate by $H^{s,w}$ (resp. $h^{s,w}$, $h^{s,w}_n$) the integral (resp. mod-$2$, mod-$n$) motivic cohomology group $H^s(F,\Z(w))$ (resp. $H^s(F,\Z/2(w))$, $H^s(F,\Z/n(w))$). 
The notation $\partial^a_b$ is frequently used for the connecting homomorphism of the form $h^{s,w}_a\to h^{s+1,w}_b$. 
In turn, $\pr^a_b$ and $\inc^a_b$ denote the projection and inclusion homomorphisms $h^{s,w}_a\to h^{s,w}_b$. 
Here $a$ and $b$ can take the value $\infty$, and in this case $h^{s,w}_\infty=H^{s,w}$. 
Let $\tau$ denote the generator of $h^{0,1}$, as usual. 
We also use notation $\Sq^i$ for the $i$-th motivic Steenrod square operation.
The only other Steenrod power that will appear is $\mathsf{P}^1$ at the prime $3$.

For a motivic spectrum $\EE\in\SHF$, the notation $\EE_\Lambda$ stands for its $\Lambda$-localization $\EE\otimes\Lambda\in \SH(F)\otimes\Lambda$. 
For the convenience of the reader, here is the list of the main motivic and topological spectra used in the text:

\begin{tabular}{p{3.5cm}|l}
    $\sph$ & motivic sphere spectrum \\
    $\EM A$ & motivic Eilenberg--MacLane spectrum of $A$ \\
    $\MSL$ & special linear algebraic cobordism \cite[\S 4]{PW-Thom} \\
    $\MGL$, $\MWL$ & algebraic cobordism and $c_1$-spherical algebraic cobordism \cite[\S 2]{ZolMSL} \\
    $\mathbf{KGL}$, $\mathbf{KQ}$ & algebraic and hermitian $K$-theory \\
    $\kgl$, $\kq$ & (very) effective algebraic and very effective hermitian $K$-theory \\
    $\mathbf{kw}$ & connective Witt theory \\
    $\MSU$ & special unitary cobordism \\
    $\MU$, $\W$ & complex cobordism and $c_1$-spherical cobordism \cite{Conner_Floyd} 
\end{tabular}

\subsection*{Acknowledgements}

Oliver R\"ondigs acknowledges support by a DFG research grant with project number 539085450. 
Egor Zolotarev is supported by the DFG research grant AN 1545/4-1 and thanks Alexey Ananyevskiy for useful discussions.

\section{Slice filtration and cobordism spectra}
This section constitutes an overview of known facts that are essential to the main arguments. 
More precisely, we recall the slice filtration, and the constructions and basic properties of the algebraic cobordism spectra $\MSL$ \cite[\S 4]{PW-Thom} and $\MWL$ \cite[\S 2]{ZolMSL}. 
We also discuss the action of certain cohomological operations on the Lazard ring, which appear in \cite[\S 4]{ZolMSL}.
\subsection{Voevodsky's slice filtration}
Denote by $i_q\colon\Sigma^{q+(q)}\SH^\mathrm{eff}(F)\to \SHF$ the inclusion of the localizing subcategory generated by $\Sigma^{q+(q)}$-suspensions of smooth $F$-schemes. 
This functor admits a right adjoint $r_q$. 
Following Voevodsky \cite{Voev_slices0}, the functor $\effcover_q:=i_q\circ\,r_q$ is called the \textit{$q$-th effective cover}. 
Any spectrum $\EE\in\SHF$ is supplied with an exhaustive filtration
\[ \cdots\to\effcover_{n+1}(\EE)\to\effcover_n(\EE)\to\effcover_{n-1}(\EE)\to\cdots\to\EE.\]
The \textit{$q$-th slice of a spectrum $\EE\in\SHF$}, denoted by $\slice_q(\EE)$, is the $q$-th successive quotient of this filtration. 
There is a natural cofiber sequence in $\SHF$ 
\begin{equation}\label{equat_def_slices}
    \effcover_{q+1}(\EE)\to\effcover_q(\EE)\to\slice_q(\EE)\to\Sigma\effcover_{q+1}(\EE).
\end{equation}
For formal reasons, the slices of a spectrum $\EE\in\SHF$ are (strict) modules over the zeroth slice of the sphere spectrum $\slice_0(\sph)$, which is equivalent to the motivic Eilenberg--MacLane spectrum $\EM\Z$; see \cite{Levine_coniveau, Voev_zero_slice}. 
Thus if $\EE$ is a $\Lambda$-local motivic spectrum $\EE\in\SH(F)\otimes\Lambda$, then the slices $\slice_q(\EE)$ are (strict) modules over $\EM\Lambda$.

Let $\EE$ be a motivic spectrum over $F$. 
Then the slice filtration on $\EE$ gives a spectral sequence of the form 
\begin{equation}
    E^1_{n+m,q,n}(\EE)=\pi_{m+(n)}\slice_q(\EE)\leadsto \pi_{m+(n)}(\EE).
\end{equation}
The last arrow means that the $E^\infty$-page is somehow related to $\pi_{*+(\star)}(\EE)$, but we do not claim any convergence property of the above spectral sequence in general. 
The $\dd_1$-differential of this spectral sequence is obtained by applying $\pi_{m+(n)}(-)$ to the following composition
\begin{equation}
    \slice_q(\EE)\to\Sigma\effcover_{q+1}(\EE)\to\Sigma\slice_{q+1}(\EE),
\end{equation}
where both morphisms are defined via the cofiber sequence \eqref{equat_def_slices}. The $r$-th slice differential has the following form:
\[ \dd_r^\EE(q)\colon E^r_{n+m,q,n}(\EE)\to E^r_{n+m-1,q+r,n}(\EE), \]
and it is sometimes denoted simply by $\dd_r(q)$ or even $\dd_r$. 
Taking the sum of these differentials over $n\in \Z$ produces a morphism of modules over Milnor $K$-theory $\KMil_\star(F)$ by \cite[Lemma 4.14]{RSO19}.
\subsection{Algebraic cobordism spectra}
We begin with the recollection on the Panin--Walter special linear algebraic cobordism spectrum $\MSL$. 
Consider the Grassmannian scheme $\Gr_n(\A^m)$ of $n$-dimensional linear subspaces of $\A^{m}$ and denote by $\gamma_{n,m}$ the tautological vector bundle over $\Gr_n(\A^m)$.
\begin{definition}
     The \textit{special linear Grassmannian} $\SGr_n(\A^m)$ is the smooth $F$-scheme given by the complement of the zero section of the determinant bundle over $\Gr_n(\A^m)$ 
    \[\SGr_n(\A^m):=\determ(\gamma_{n,m})^\circ\in\SmF.\]
    Let $\gamma_{n,m}^{\SL}$ denote the pullback of the tautological vector bundle along $\SGr_n(\A^m)\to\Gr_n(\A^m)$.
\end{definition}
\begin{remark}
    The colimit of these Grassmannians along the embeddings $\SGr_n(\A^m)\hookrightarrow \SGr_n(\A^{m+1})$ is motivically equivalent to the classifying space of the special linear group (see e.g., the proof of \cite[Theorem 4.1.1]{AHW_BG}):
    \[\SGr_n:=\colim_m\SGr_n(\A^m)\cong_{\mathrm{mot}} \mathrm{BSL}_n.\]
\end{remark}
Let $\Th(\gamma_n^\SL)$ be the colimit of the following tower of morphisms of presheaves
 \[\cdots\to \Th(\gamma_{n,m-1}^\SL)\to \Th(\gamma_{n,m}^\SL)\to \Th(\gamma_{n,m+1}^\SL)\to\cdots. \]
\begin{definition}[{\cite[Definition 4.2]{PW-Thom}}]
The \textit{special linear algebraic cobordism spectrum} $\MSL$ is the motivic spectrum with $n$-th space given by $\Th(\gamma_n^\SL)$, and its structure maps are induced by $\SGr_n\to \SGr_{n+1}$. 
\end{definition}
The above spectrum can be endowed with an $\mathbb{E}_\infty$-ring structure, either by constructing the corresponding symmetric $\T^2$-spectrum \cite[\S 4.2(a)]{PW-Thom} or by using the Bachmann--Hoyois motivic Thom functor (see \cite[Example 16.22]{BH-Norms}). 
Moreover, by construction there is a forgetful ring morphism to Voevodsky's algebraic cobordism spectrum 
\begin{equation}\label{MSL_to_MGL}
    \MSL\to\MGL.
\end{equation}

We turn to the $c_1$-spherical algebraic cobordism spectrum $\MWL$. 
It is also constructed by using Thom spaces over appropriate Grassmannians. 
\begin{definition}
    The \textit{Wall Grassmannian} $\WGr_n(\A^m)$ is a smooth $F$-scheme given by the complement of the zero section of the line bundle $\determ(\gamma_{n,m})\boxtimes \struct(1)$ over $\Gr_n(\A^m)\times \Proj^1$
    \[\WGr_n(\A^m):=(\determ(\gamma_{n,m})\boxtimes \struct(1))^\circ\in\SmF. \]
    We denote by $\gamma^{\mathrm{W}}_{n,m}$ the pullback of $\gamma_{n,m}$ along the projection $\WGr_n(\A^m)\to\Gr_n(\A^m)$.
\end{definition}
Now the construction of the respective Thom spectrum is similar to the construction of $\MSL$. Let $\Th(\gamma_n^{\mathrm{W}})$ be the colimit of the sequence 
\[\cdots\to\Th(\gamma^{\mathrm{W}}_{n,m-1})\to\Th(\gamma^{\mathrm{W}}_{n,m})\to\Th(\gamma^{\mathrm{W}}_{n,m+1})\to\cdots.\]
\begin{definition}[{\cite[Definition 2.5 and Proposition 2.11]{ZolMSL}}] 
    The \textit{$c_1$-spherical algebraic cobordism spectrum} $\MWL$ is the motivic spectrum with $n$-th space given by $\Th(\gamma_n^{\mathrm{W}})$ and its structure maps are induced by $\WGr_n\to \WGr_{n+1}$.
\end{definition}
The respective spectrum can be endowed with a natural $\MSL$-module structure; see \cite[Proposition 2.6]{ZolMSL}. 
Moreover, the ring morphism \eqref{MSL_to_MGL} factors though $\MWL$ such that both maps $\mathrm{forg}\colon\MSL\to\MWL$ and $\mathrm{forg}'\colon\MWL\to\MGL$ are morphisms of $\MSL$-modules. These arrows are included in the following cofiber sequences.
\begin{theorem}[see {\cite[Theorem A]{ZolMSL}}]\label{MWL_cofib} 
    There are cofiber sequences in $\SHF$ \begin{enumerate} \item $\Sigma^{(1)}\MSL\xrightarrow{\eta}\MSL\xrightarrow{\mathrm{forg}}\MWL\xrightarrow{d} \Sigma^{1+(1)}\MSL$, \item $\MWL\xrightarrow{\mathrm{forg}'}\MGL\xrightarrow{\Delta}\Sigma^{2+(2)}\MGL\to \Sigma\MWL$, \end{enumerate} where $\eta\in\pi_{0+(1)}\sph$ is the first motivic Hopf element and $\Delta$ is the cohomological operation that corresponds to $c_1(\determ\,\gamma)\cdot c_1(\determ\,\gamma^\vee)$ under the Thom isomorphism $\MGL^{4,2}(\MGL)\cong \MGL^{4,2}(\mathrm{BGL}).$ %\ahina{degree shift}
\end{theorem}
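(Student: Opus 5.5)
The plan follows the strategy of \cite{ZolMSL}: realize both cofiber sequences as Thom-space level statements and then stabilize. We start with (1). The key geometric input is that $\WGr_n$ is a $\Gm$-torsor over $\Gr_n\times\PP^1$, namely the complement of the zero section of $\determ(\gamma_n)\boxtimes\struct(1)$. Restricting to $\Gr_n\times\{\infty\}$ recovers $\SGr_n$, the complement of the zero section of $\determ\gamma_n$. We therefore compare the Thom spaces of $\gamma^{\mathrm W}_n$ and $\gamma^{\SL}_n$. Pulling back along the point inclusion $\infty\hookrightarrow\PP^1$ gives the map $\mathrm{forg}\colon\MSL\to\MWL$; it is a morphism of $\MSL$-modules, compatible with $\SGr_n\to\SGr_{n+1}$.

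To identify its cofiber, we use the cofiber sequence of the pair $(\WGr_n,\SGr_n)$. Away from $\infty$, the torsor over $\Gr_n\times\A^1$ is isomorphic to the one over $\Gr_n$, since $\struct(1)$ is trivial on $\A^1$. By homotopy invariance, the quotient $\Th(\gamma^{\mathrm W}_n)/\Th(\gamma^{\SL}_n)$ is then computed by a Mayer--Vietoris / purity argument. It becomes $\Th(\gamma^{\SL}_n)\wedge\PP^1\simeq\Sigma^{1+(1)}\Th(\gamma^{\SL}_n)$, which gives the connecting map $d$. It then remains to show that the resulting map $\Sigma^{(1)}\MSL\to\MSL$ is $\eta$, up to a unit. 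Here we reduce to the universal case $n=0$ or $n=1$: the relevant piece is the Hopf construction $\A^2\setminus0\to\PP^1$, i.e.\ the $\Gm$-torsor $\struct(-1)^\circ$ over $\PP^1$, whose attaching map is $\eta$ by definition. The $\MSL$-module structure then propagates this to all of $\MSL$.

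For (2), we dualize the picture. The map $\MWL\to\MGL$ is induced by $\WGr_n\to\Gr_n$. The cofiber of $\Th(\gamma^{\mathrm W}_n)\to\Th(\gamma_n)$ is, by the Thom--Gysin (purity) sequence for the complement of the zero section of the line bundle $L=\determ\gamma\boxtimes\struct(1)$, the Thom space of $\gamma_n\oplus L$ over $\Gr_n\times\PP^1$. Integrating over $\PP^1$, this yields $\Sigma^{2+(2)}$ of a Thom spectrum over $\Gr_n$ via the projective bundle formula, since $\MGL$ is oriented. The composite $\MGL\to\Sigma^{2+(2)}\MGL$ is then a cohomology operation. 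Under the Thom isomorphism $\MGL^{4,2}(\MGL)\cong\MGL^{4,2}(\mathrm{BGL})$ it corresponds to the Euler class of $L$ pushed forward, i.e.\ $c_1(L)$ restricted appropriately. Expanding $c_1(\determ\gamma\otimes\struct(1))$ in the formal group law and extracting the $\PP^1$-coefficient should give $c_1(\determ\gamma)c_1(\determ\gamma^\vee)$. This identification works because the degree-one part, after using $c_1(\struct(1))^2=0$, involves the derivative of the formal group law and the inverse series.

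The main obstacle is the precise identification of the maps. In (1) this is the connecting map being exactly $\eta$, with signs and twist conventions; in (2) it is the formal group law computation. The geometric cofiber sequences themselves should follow routinely from purity and homotopy invariance.
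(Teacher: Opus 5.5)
The paper gives no argument for this statement (it is quoted from \cite[Theorem A]{ZolMSL}), so your outline has to stand on its own. Part (1) is sound in substance. Purity for a fibre of $\WGr_n\to\PP^1$, whose complement retracts onto $\SGr_n$, gives $\Th(\gamma^{\SL}_n)\to\Th(\gamma^{\mathrm W}_n)\to\Sigma^{1+(1)}\Th(\gamma^{\SL}_n)$. The level that exhibits $\eta$ is $n=1$, not $n=0$ (note that $\SGr_0=\Gm$). Indeed $\SGr_1\simeq\A^\infty\smallsetminus 0$ is contractible and $\WGr_1\simeq\PP^1$ with $\gamma^{\mathrm W}_1\cong\struct(-1)$, so $\Th(\gamma^{\mathrm W}_1)$ is the cone of the Hopf map. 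Propagating from the bottom cell requires the connecting map to be $\MSL$-linear, i.e.\ the cofibre sequence must be formed in $\MSL$-modules. That is a coherence issue rather than a conceptual one.

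Part (2) has a genuine gap. Purity for $L^\circ\subset L$ over $\Gr_n\times\PP^1$ computes the cofibre of $\Th(\gamma^{\mathrm W}_n)\to\Th(\gamma_n)\wedge\PP^1_+$, not of $\Th(\gamma^{\mathrm W}_n)\to\Th(\gamma_n)$. Stably the former cofibre is $\Sigma^{1+(1)}\MGL\wedge\PP^1_+\simeq\Sigma^{1+(1)}\MGL\oplus\Sigma^{2+(2)}\MGL$, which has one summand too many. Collapsing $\PP^1$ adds a further cofibre $\Sigma^{2+(1)}\MGL$, and the surplus cancels only after a connecting map is identified (octahedral axiom). ``Integrating over $\PP^1$'' is not an operation on cofibres in $\SHF$; the orientation gives Thom isomorphisms only after smashing with $\MGL$.

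The missing observation is that no $\PP^1$ is needed. Since $\struct(1)_\ell=\ell^\vee$, a point of $\WGr_n$ over $(V,\ell)$ is an isomorphism $\ell\cong\determ V$, i.e.\ an injection $\determ V\hookrightarrow F^2$ with image $\ell$. Hence $\WGr_n\cong\bigl((\determ\gamma_n)^\vee\otimes\struct^{2}\bigr)\smallsetminus 0$ is the complement of the zero section of a rank-two bundle over $\Gr_n$. Purity over $\Gr_n$ then gives the cofibre $\Th(\gamma_n\oplus 2(\determ\gamma_n)^\vee)$. In the colimit this is $\Sigma^{2+(2)}\MGL$ as a spectrum, by pulling back along the self-equivalence $V\mapsto V+2\determ(V)^\vee-2$ of $\mathrm{BGL}$ (inverse $W\mapsto W-2\determ W+2$).

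The identification of $\Delta$ also fails as written. With $h=c_1(\struct(1))$, $h^2=0$ and $x=c_1(\determ\gamma)$, one has $c_1(L)=x+h\,\partial_2F(x,0)$. The $h$-coefficient lies in $\MGL^{0,0}$, which is the wrong bidegree, and no inverse series enters; pushing $c_1(L)$ forward along $\PP^1$ likewise lands in bidegree $(0,0)$. What the rank-two picture actually produces is the Euler class $e(2\determ\gamma^\vee)=c_1(\determ\gamma^\vee)^2$. This differs from $c_1(\determ\gamma)c_1(\determ\gamma^\vee)$ by the unit $[-1]_F(x)/x$.

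Reaching the operation named in the statement therefore needs an extra step. One option is an $\A^1$-equivalence over $\Gr_n$ between the complements of the zero sections of $2\determ\gamma^\vee$ and $\determ\gamma\oplus\determ\gamma^\vee$ (both are pulled back from $\mathrm{B}\Gm$); the Euler class of the latter is $c_1(\determ\gamma)c_1(\determ\gamma^\vee)$ on the nose. Another is an argument that the unit is absorbed by an automorphism of $\Sigma^{2+(2)}\MGL$.
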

\begin{notation}
    In what follows, $d:\MWL\to\Sigma^{1+(1)}\MSL$ denotes boundary morphism of the first cofiber sequence, and $\delta:\MWL\to\Sigma^{1+(1)}\MWL$ denotes the composition of $d$ with $\Sigma^{1+(1)}\mathrm{forg}$.
    Sometimes the letter $d$ is used in other contexts, such as differentials in spectral sequences; we hope the meaning will always be implicitly clear.
\end{notation}
\subsection{Operations on the Lazard ring and Conner--Floyd homology}
Consider the coefficient ring of the complex cobordism spectrum $\MU_*$. 
This ring can be identified with the Lazard ring by Quillen's theorem \cite{quillen.mu}. 
We denote by \[\Delta_{n}\colon\MU_{n}\to\MU_{n-4}\] the action of the cohomological operation $c_1(\determ\,\mathrm{EU})\cdot c_1(\determ\,\mathrm{EU}^\vee)\in\MU^4(\mathrm{BU})\cong\MU^4(\MU)$ on the coefficient ring. %\ahina{similar degree shift}
\begin{notation}
    The groups $\W_{n}:=\kernel(\Delta_n)$ are free abelian and concentrated in even non-negative degrees.  
\end{notation}
Similarly, consider the action of the $\MU$-cohomological operation $-c_1(\det\,\mathrm{EU}^\vee)\in\MU^2(\mathrm{BU})\cong\MU^2(\MU)$ on the coefficient ring $\MU_n\to \MU_{n-2}$. 
This action lifts to the correctly defined homomorphisms of abelian groups $\delta_n\colon\W_{n}\to\W_{n-2}$ (see \cite[\S 6]{CLP}).
Since $\delta_{n-2}\circ\delta_n=0$, there results a chain complex of abelian groups \[\cdots\xrightarrow{\delta_{n+4}}\W_{n+2}\xrightarrow{\delta_{n+2}}\W_{n}\xrightarrow{\delta_{n}}\W_{n-2}\xrightarrow{\delta_{n-2}}\cdots.\]
\begin{notation}
    Let $\cyc_{n}(\W)$ (resp. $B_{n}(\W)$, resp. $\hmlg_{n}(\W)$) denote the group of cycles (resp. boundaries, resp. homology) of the chain complex $(\W_{*},\delta_{*})$ in the spot corresponding to $\W_{n}$. 
    These groups are concentrated in even nonnegative degrees.
\end{notation}
The following classical computation is due to Conner and Floyd \cite{Conner_Floyd} (see also \cite[Appendix~B]{ZolMSL} for an overview).
\begin{theorem}\label{thm_conner_floyd}
    Let $n$ be a nonnegative integer.
    \begin{enumerate}
        \item The group $\cyc_{2n}(\W)$ is a free abelian group of rank $p(n)-p(n-1)$, where $p(-)$ is the partition function.
        \item The group $\hmlg_{2n}(\W)$ is trivial if $n$ is odd and isomorphic to $(\Z/2)^{p(k)}$ for $n=4k$ or $n=4k+2$. 
    \end{enumerate}
\end{theorem}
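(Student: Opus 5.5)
This is the classical Conner--Floyd computation, and I would prove it by identifying the complex $(\W_*,\delta_*)$ with an explicit model whose homology is visible. The plan has three steps.

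\emph{Step 1: a model for $\W_*$ and $\delta$.} Realize $\W_*$ as the coefficient ring of the $c_1$-spherical cobordism spectrum $\W$. Stably, $\W$ is the Thom spectrum over $\mathrm{BU}\times\mathbb{CP}^\infty$ pulled back from the fibre of $c_1(\det)\cdot c_1(\det^\vee)$, i.e. the $\MU$-module cofiber of $\Delta$. Concretely, $\W_*=\kernel\Delta$ is the bordism of stably complex manifolds whose determinant line bundle's $c_1$ is the pullback of a class $S^2\to\mathbb{CP}^\infty$.

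Rationally, and in fact integrally, $\W_*\otimes\QQ$ is a polynomial ring on generators $x_1,x_i$ $(i\ge 2)$ up to the twisted multiplication of Conner--Floyd. Using Quillen's theorem I would choose generators $x_i\in\MU_{2i}$ with $\Delta(x_i)=0$ for suitable $i$, and compute $\delta$ on them by the Landweber--Novikov description of $-c_1(\det^\vee)$ acting on the Lazard ring. Then $\W_*$ is free, with a basis mapped by $\delta$ in a controlled way. The key structural fact I expect is that $\delta$ is a derivation for the twisted product, with $\delta(x_1)=2$-type behaviour mod decomposables, and $\delta x_{2i}=x_{2i-1}$ up to units and decomposables for $2i\neq 2^k$.

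\emph{Step 2: the homology.} Rationally, $\delta$ is exact except on a polynomial algebra on the classes of degrees $4i$. This gives $\ker\delta\otimes\QQ$ of Poincar\'e series $\prod_{i\ge2}(1-t^{2i})^{-1}$, whose coefficient in degree $2n$ is $p(n)-p(n-1)$. Since $\cyc_{2n}(\W)$ is a subgroup of a free group, it is free of that rank, which is part~(1).

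For part~(2), I would first invert $2$ and show $\hmlg(\W)[\nicefrac12]$ is concentrated in the rational part. Using the contracting homotopy $\tfrac12 x_1$-multiplication, $\hmlg$ is $2$-torsion killed by $2$. Then I would compute $\hmlg(\W;\Z/2)$ via a Künneth argument. Modulo $2$ the complex splits as a tensor product of acyclic pieces $\Z/2[x_{2i-1},x_{2i}]$-type Koszul complexes for non-powers of two, times $\Z/2[x_1^2,x_{2^k}^2]$-pieces. The resulting homology is polynomial on classes in degrees $8j$ together with an exterior-type class in degree $4$ (from $x_1^2$ and $x_2$), sitting in degrees $2n$ with $n\equiv0,2\pmod 4$ and dimension $p(k)$.

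\emph{Step 3: comparison with $\MSU$.} Alternatively, and as a check, I would use the Conner--Floyd identification of $\hmlg(\W)$ with the image of $\MSU_*\to\W_*$ modulo $\delta$-image. I expect the main obstacle to be Step~1: pinning down $\delta$ on generators precisely enough modulo $2$ in the degrees $2^k$. Here I would use characteristic numbers $s_n$ and Milnor's criterion. Since the statement is classical, in the paper it suffices to cite \cite{Conner_Floyd} and \cite[Appendix~B]{ZolMSL}.
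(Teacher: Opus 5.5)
Citing \cite{Conner_Floyd} and \cite[Appendix~B]{ZolMSL} is exactly what the paper does, since it gives no argument of its own, so at that level your proposal agrees with it. The sketch you give in support would not go through, however, because several of its structural inputs are false.

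First, there is no generator $x_2$. In fact $\W_*\cong\Z[x_1,x_k\mid k\geq 3]$, and since $\Delta_{2n}\colon\MU_{2n}\to\MU_{2n-4}$ is onto with kernel $\W_{2n}$, one has $\operatorname{rank}\W_{2n}=p(n)-p(n-2)$. For example, $\W_4$ has rank one, spanned by $x_1^{*2}$.

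Second, $\delta$ is not a derivation for $*$. The element $x_1^{*2}$ is a cycle although $\delta(x_1)=-2$. A derivation would give $\delta(x_1^{*2})=-4x_1$ and $\delta(x_1^{*3})=-6x_1^{*2}$, not the value $-2x_1^{*2}$ used in Remark~\ref{rem:choice_s3}. The rule consistent with these values carries a correction term: $\delta(a*b)=\delta a*b+a*\delta b+x_1*\delta a*\delta b$.

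Third, nothing special happens at powers of two. From $2x_4-x_1*x_3\in\cyc_8(\W)$, $\delta(x_1)=-2$ and $\delta(x_3)=0$ one gets $\delta(x_4)=-x_3$. For suitable generators, $\delta(x_{2i})=\pm x_{2i-1}$ for all $i\geq 2$.

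These errors break Step~2. Your homotopy idea survives: with the corrected rule, $\delta(x_1*a)+x_1*\delta a=-2a$. But this shows $2\hmlg_*(\W)=0$, so $(\W_*\otimes\QQ,\delta)$ is \emph{acyclic}. Your claim that it is exact except on a polynomial algebra in degrees $4i$ contradicts this, and your Poincar\'e series for $\cyc_*(\W)\otimes\QQ$ does not follow from it. Part (1) follows instead from acyclicity and the rank of $\W_{2n}$ by telescoping: $\operatorname{rank}\cyc_{2n}(\W)=\sum_{j\geq 0}(-1)^j\operatorname{rank}\W_{2n-2j}=p(n)-p(n-1)$.

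For part (2), the mod~$2$ pieces $\mathbb{F}_2[x_{2i-1},x_{2i}]$ with $x_{2i}\mapsto x_{2i-1}$ are not acyclic; even for a genuine derivation their homology is $\mathbb{F}_2[x_{2i}^2]$. These degree-$8i$ classes are needed for \emph{every} $i\geq 2$, not just powers of two. For instance, $\hmlg_{24}(\W)\cong(\Z/2)^3$ needs a class coming from $x_6^{*2}$ besides those from $x_1^{*12}$ and $x_1^{*4}*x_4^{*2}$. The classical answer is that $\hmlg_*(\W)$ is a polynomial $\Z/2$-algebra on the class of $x_1^{*2}$ (degree $4$) and on classes represented by corrections of $x_{2i}^{*2}$ (degree $8i$, $i\geq 2$). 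Your description as an ``exterior class in degree $4$ times polynomial classes in degrees $8j$'' has the right Poincar\'e series only because $(1-t^4)^{-1}=(1+t^4)(1-t^8)^{-1}$, not because of the pieces you list.

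You also need to get from mod~$2$ to integral homology. Since $x_1$ is a cycle mod~$2$, universal coefficients make $\hmlg(\W;\Z/2)$ two shifted copies of $\hmlg(\W)$. Moreover, $\delta$ is not a derivation even mod~$2$ (e.g.\ $\delta(x_4^{*2})\equiv x_1*x_3^{*2}$), so the K\"unneth splitting needs a change of generators or a filtration argument.

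Finally, the check in Step~3 fails. The image of $\MSU_4$ in $\W_4$ is $\Z\{y_2\}=\Z\{2x_1^{*2}\}=B_4(\W)$, so ``image modulo $\delta$-image'' is zero in degree $4$, whereas $\hmlg_4(\W)\cong\Z/2$.
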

\begin{remark}
    The above notation agrees with that of the previous subsection. 
    More precisely, $\pi_{\star+(\star)}(\MGL_\Lambda)$ is isomorphic to $\Lambda\otimes\MU_{2\star}$ and the action of $\Delta\colon\MGL\to\Sigma^{2+(2)}\MGL$ on the $\Lambda$-local geometric diagonal coincides with \[\Lambda\otimes\Delta_{2\star}\colon\Lambda\otimes\MU_{2\star}\to\Lambda\otimes\MU_{2\star-4}\] 
    by \cite[Lemma 4.5]{ZolMSL}. 
    Similarly, $\pi_{\star+(\star)}(\MWL_\Lambda)$ is isomorphic to $\Lambda\otimes\W_{2\star}$ and the action of $\delta\colon\MWL\to\Sigma^{1+(1)}\MWL$ coincides (on the $\Lambda$-local geometric diagonal) with the map  \[\Lambda\otimes\delta_{2\star}\colon\Lambda\otimes\W_{2\star}\to\Lambda\otimes\W_{2\star-2}\]
    by \cite[Proposition 4.10, Lemma 4.13]{ZolMSL}.
\end{remark}
\section{Slices of some cobordism spectra}
In this section, we compute the slice filtrations of the $c_1$-spherical cobordism spectrum $\MWL_\Lambda$, the special linear algebraic cobordism spectrum $\MSL_\Lambda$, and its $\eta$-periodic version $\MSL_\Lambda[\eta^{-1}]$. 
In addition, we compare their slices with the slices of the respective ``connective'' $K$-theory spectra (effective algebraic $K$-theory $\kgl_\Lambda$, resp. very effective hermitian $K$-theory $\kq_\Lambda$, resp. connective Witt theory $\mathbf{kw}_\Lambda$). 
\subsection{The \texorpdfstring{$c_1$}{c1}-spherical algebraic cobordism spectrum}
Recall the following standard observation about the motivic Eilenberg--MacLane spectra.
\begin{lemma}\label{lemma_EM_ses}
    A short exact sequence $0\to A\xrightarrow{\varphi} B\xrightarrow{\psi} C\to 0$ of abelian groups 
    induces a cofiber sequence  \[\EM\Lambda\otimes A\xrightarrow{\EM\Lambda\otimes\varphi}\EM\Lambda\otimes B\xrightarrow{\EM\Lambda\otimes\psi}\EM\Lambda\otimes C\to \Sigma\EM\Lambda\otimes A\]
    of $\EM\Lambda$-modules.
\end{lemma}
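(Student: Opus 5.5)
The plan is to build the Eilenberg--MacLane spectrum with coefficients in an abelian group as a cofiber of a map between free objects, and then use right exactness of tensor products. For any abelian group $A$, set $\EM\Lambda\otimes A := \EM\Lambda\otimes \mathbb{S}A$, where $\mathbb{S}A$ is the Moore spectrum of $A$. Equivalently, choose a free resolution $0\to P_1\xrightarrow{f} P_0\to A\to 0$ and define $\EM\Lambda\otimes A$ as the cofiber of the induced map $\bigoplus_{P_1}\EM\Lambda\to\bigoplus_{P_0}\EM\Lambda$ of $\EM\Lambda$-modules. This construction is well defined and functorial in $A$.

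The cleanest route works in the stable $\infty$-category of $\EM\Lambda$-modules, which is tensored over spectra, or at least over $\mathrm{D}(\Z)$. One considers the exact functor
\[ \mathrm{D}(\Z)\to \mathrm{Mod}_{\EM\Lambda},\qquad X\mapsto \EM\Lambda\otimes_{\Z}X. \]
This functor sends $\Z$ to $\EM\Lambda$ and preserves colimits; it is induced along the unit map $\Z\to \mathrm{End}(\EM\Lambda)$, or along $\mathbb{S}\to\EM\Lambda$ composed with $\mathrm{Sp}\to\mathrm{D}(\Z)$. The key steps are then as follows.

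First, a short exact sequence $0\to A\to B\to C\to 0$ of abelian groups gives a cofiber sequence $A\to B\to C\to A[1]$ in $\mathrm{D}(\Z)$, with the groups viewed as complexes in degree $0$.

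Second, any exact functor of stable $\infty$-categories preserves cofiber sequences, so applying $\EM\Lambda\otimes_{\Z}-$ produces the claimed cofiber sequence. The maps in it are $\EM\Lambda\otimes\varphi$ and $\EM\Lambda\otimes\psi$, and the boundary map is the image of the extension class.

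Third, one checks that $\EM\Lambda\otimes_{\Z}A$ agrees with the notion of $\EM\Lambda\otimes A$ used in the paper, namely the Moore-type construction. This holds because both constructions preserve colimits in $A$ and agree on $\Z$.

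The only real obstacle is making precise that $\EM\Lambda$-modules admit a tensoring with $\mathrm{D}(\Z)$ rather than merely with spectra. Two approaches would work here.

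The first uses the structure map $\mathbb{S}\to\EM\Z\to\EM\Lambda$ together with the fact that $\EM\Z$ receives an $\mathbb{E}_\infty$ map from the constant motivic spectrum associated to $H\Z$, which is the pullback of the topological $H\Z$ along $\mathrm{Sp}\to\SHF$. This gives a symmetric monoidal functor $\mathrm{Mod}_{H\Z}=\mathrm{D}(\Z)\to \mathrm{Mod}_{\EM\Lambda}$.

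The second avoids that input and works with spectra alone. One tensors the topological cofiber sequence of Moore spectra $\mathbb{S}A\to\mathbb{S}B\to\mathbb{S}C$ with $\EM\Lambda$. In that case the only care needed is that Moore spectra are functorial up to phantom-free ambiguity for the given maps, which is automatic since everything is induced from $\mathrm{D}(\Z)$.
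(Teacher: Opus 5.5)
Your first approach is a correct proof, and it takes a genuinely different route from the paper. The paper gives no argument of its own; it cites Hoyois \cite[Proposition 4.13.(2)]{HoyMGL} (see also \cite[Lemma A.3]{RSO19}). You instead derive the statement from the $\mathrm{D}(\Z)$-linearity of $\EM\Lambda$-modules. The $\mathbb{E}_\infty$-map from the constant spectrum $c(H\Z)$ to $\EM\Z$ that you invoke does exist. Its adjoint is a map $H\Z\to\Gamma(\EM\Z)$, where $\Gamma$ is the right adjoint of the constant functor, and this map is forced because $\pi_n\Gamma(\EM\Z)=H^{-n,0}$ is $\Z$ for $n=0$ and zero otherwise. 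The short exact sequence, viewed as a cofiber sequence in $\mathrm{D}(\Z)$, is then carried to a cofiber sequence by an exact functor. This gives a self-contained argument that is functorial in the sequence and identifies the boundary map. The cost is that you must pin down what $\EM\Lambda\otimes A$ means, whereas the citation outsources both the construction and its exactness to the literature. Since $\Lambda$ is flat, no Tor terms arise either way.

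Two of your justifications need repair.

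\emph{The third step.} The argument ``both constructions preserve colimits in $A$ and agree on $\Z$'' is not valid, because $A\mapsto\mathbb{S}A$ is not a functor on abelian groups. Argue instead as follows. First, $H\Z\otimes\mathbb{S}A\simeq HA$ in $\mathrm{D}(\Z)$, so $\EM\Lambda\otimes_{\Z}A\simeq\EM\Lambda\otimes\mathbb{S}A$ by associativity of base change. Second, the maps agree because morphisms between such modules are detected on $\pi_{0,0}$, by Lemma~\ref{lemma_hom_(s,0)}.

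\emph{The second approach.} Your remark about ``phantom-free ambiguity \dots\ induced from $\mathrm{D}(\Z)$'' is wrong on both counts. Moore spectra are not induced from $\mathrm{D}(\Z)$. A map $\mathbb{S}A\to\mathbb{S}B$ realizing $\varphi$ is only determined up to $\Ext(A,B/2)$, and this is not a phantom phenomenon. The route still works, but for other reasons. Any such realization of the injective $\varphi$ has cofiber a Moore spectrum for $C$, as a check of integral homology shows. After tensoring with $\EM\Lambda$ the ambiguity disappears, again by Lemma~\ref{lemma_hom_(s,0)}.
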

\begin{proof}
    This follows from \cite[Proposition 4.13.(2)]{HoyMGL} (see also \cite[Lemma A.3]{RSO19}).
\end{proof}
The next input essential for our computation is the partial solution of the Voevodsky conjecture about the slices of the algebraic cobordism spectrum, due to Spitzweck \cite[Theorem 4.7]{SpiMGL1}, \cite[Theorem 3.1]{SpiMGL2} and Hopkins--Morel--Hoyois \cite[Theorem 7.12]{HoyMGL}. It states that there is an isomorphism of $\EM\Lambda$-modules:
\begin{equation}\label{slices_MGL}
       \slice_q(\MGL_\Lambda)\cong \Sigma^{q+(q)}\EM\Lambda\otimes\MU_{2q}.
\end{equation}
\begin{theorem}\label{slices_MWL}
    The slices of $\MWL_\Lambda$ are given by  \[\slice_q(\MWL_\Lambda)\cong \Sigma^{q+(q)}\EM\Lambda\otimes\W_{2q}\] 
    as $\EM\Lambda$-modules. 
    More precisely, the canonical morphism $\slice_q(\MWL_\Lambda)\to\slice_q(\MGL_\Lambda)$ coincides with $\Sigma^{q+(q)}\EM\Lambda\otimes\mathrm{inc}_{2q}$ under the above identifications. 
    Here $\mathrm{inc}_{2q}$ is the inclusion $\mathrm{inc}_{2q}\colon\W_{2q}\hookrightarrow\MU_{2q}$.
\end{theorem}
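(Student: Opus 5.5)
Since the slice functor $\slice_q$ is exact, I will apply it to the second cofiber sequence of Theorem~\ref{MWL_cofib}, after $\Lambda$-localization:
\[ \slice_q(\MWL_\Lambda)\to\slice_q(\MGL_\Lambda)\xrightarrow{\slice_q(\Delta)}\slice_q(\Sigma^{2+(2)}\MGL_\Lambda)\to\Sigma\slice_q(\MWL_\Lambda). \]
Because $\slice_q\Sigma^{2+(2)}=\Sigma^{2+(2)}\slice_{q-2}$, the spectral identification \eqref{slices_MGL} turns the middle map into a morphism of $\EM\Lambda$-modules
\[ \Sigma^{q+(q)}\EM\Lambda\otimes\MU_{2q}\longrightarrow\Sigma^{2+(2)}\Sigma^{q-2+(q-2)}\EM\Lambda\otimes\MU_{2q-4}=\Sigma^{q+(q)}\EM\Lambda\otimes\MU_{2q-4}. \]
The key step is to show that this map equals $\Sigma^{q+(q)}\EM\Lambda\otimes\Delta_{2q}$.

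Both source and target are free $\EM\Lambda$-modules on a finitely generated free abelian group, placed in the same bidegree. An $\EM\Lambda$-module map between them is therefore determined by the induced map on $\pi_{q+(q)}$, which is a homomorphism $\Lambda\otimes\MU_{2q}\to\Lambda\otimes\MU_{2q-4}$. This uses $\pi_{0+(0)}\EM\Lambda=\Lambda$, and the fact that $\EM\Lambda$-module maps $\Sigma^{q+(q)}\EM\Lambda\to\Sigma^{q+(q)}\EM\Lambda$ are $H^{0,0}(F,\Lambda)=\Lambda$.

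To compute this map on $\pi_{q+(q)}$, I compare with the geometric diagonal. The identification \eqref{slices_MGL} is characterized by the property that $\pi_{q+(q)}\MGL_\Lambda\cong\pi_{q+(q)}\effcover_q\MGL_\Lambda\to\pi_{q+(q)}\slice_q\MGL_\Lambda$ is an isomorphism onto $\Lambda\otimes\MU_{2q}$. The first map is an isomorphism by effectivity and connectivity (Morel connectivity for $\effcover_{q+1}$), and it is compatible with Quillen's identification. Naturality of $\effcover_q\to\slice_q$ with respect to $\Delta$ then reduces the question to the action of $\Delta$ on the geometric diagonal. By the Remark following Theorem~\ref{thm_conner_floyd} (\cite[Lemma 4.5]{ZolMSL}), that action is $\Lambda\otimes\Delta_{2q}$. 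I expect this compatibility of the Spitzweck/Hopkins--Morel identification with the geometric diagonal, uniformly in $q$ and naturally in $\Delta$, to be the main obstacle. If needed, I would cite that Spitzweck's isomorphism is induced by the map $\MGL\to\slice_*\MGL$ sending generators of the Lazard ring to their classes.

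Finally, I determine the fiber. Over $\Lambda$, $\Delta_{2q}\colon\MU_{2q}\to\MU_{2q-4}$ is a map of free abelian groups with kernel $\W_{2q}$. I will also need the image $I\subset\MU_{2q-4}$, and the claim is that $\Delta_{2q}$ is surjective integrally. This surjectivity is visible from the cofiber sequence in topology: the sequence $\W\to\MU\xrightarrow{\Delta}\Sigma^4\MU$ has $\pi_*\W$ concentrated in even degrees, so the long exact sequence forces $\Delta_*$ to be surjective. Given this, the sequence $0\to\W_{2q}\to\MU_{2q}\to\MU_{2q-4}\to0$ is short exact, and Lemma~\ref{lemma_EM_ses} yields a cofiber sequence
\[ \EM\Lambda\otimes\W_{2q}\to\EM\Lambda\otimes\MU_{2q}\to\EM\Lambda\otimes\MU_{2q-4}. \]
Comparing the two cofiber sequences identifies $\slice_q(\MWL_\Lambda)$ with $\Sigma^{q+(q)}\EM\Lambda\otimes\W_{2q}$, and under this identification the canonical map to $\slice_q(\MGL_\Lambda)$ is $\Sigma^{q+(q)}\EM\Lambda\otimes\mathrm{inc}_{2q}$. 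The identification is canonical because the fiber of a given map is well defined up to equivalence compatible with the map to $\slice_q\MGL_\Lambda$.
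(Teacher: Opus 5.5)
Your proposal is correct and follows essentially the same route as the paper. Like the paper, you apply $\slice_q$ to the cofiber sequence \ref{MWL_cofib}(2), identify $\slice_q(\Delta)$ with $\Sigma^{q+(q)}\EM\Lambda\otimes\Delta_{2q}$ via \eqref{slices_MGL} and \cite[Lemma 4.5]{ZolMSL}, and conclude from $0\to\W_{2q}\to\MU_{2q}\to\MU_{2q-4}\to0$ with Lemma~\ref{lemma_EM_ses}. The only differences are that you spell out the rigidity step (maps between these free $\EM\Lambda$-modules are detected on $\pi_{q+(q)}$, cf.\ Lemma~\ref{lemma_hom_(s,0)}), and you derive surjectivity of $\Delta_{2q}$ from the topological cofiber sequence and the classical vanishing of $\pi_{\mathrm{odd}}\W$ rather than citing \cite[Proposition B.6]{ZolMSL}.
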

\begin{proof}
    Applying the $q$-th slice functor to the $\Lambda$-local version of the cofiber sequence \ref{MWL_cofib}(2) produces the following cofiber sequence of $\EM\Lambda$-modules: \[\slice_q(\MWL_\Lambda)\to \slice_q(\MGL_\Lambda)\xrightarrow{\slice_q(\Delta)} \slice_q(\Sigma^{2+(2)}\MGL_\Lambda)\cong\Sigma^{2+(2)}\slice_{q-2}(\MGL_\Lambda).\] 
    Using the equivalence \eqref{slices_MGL} and \cite[Lemma 4.5]{ZolMSL}, the last morphism can be identified with \[\Sigma^{q+(q)}\EM\Lambda\otimes\Delta_{2q}\colon\Sigma^{q+(q)}\EM\Lambda\otimes\MU_{2q}\to\Sigma^{q+(q)}\EM\Lambda\otimes\MU_{2q-4}.\] 
    The homomorphism $\Delta_{2q}\colon\MU_{2q}\to\MU_{2q-4}$ is surjective with kernel $\W_{2q}$ by \cite[Proposition B.6]{ZolMSL}. 
    Hence, the result follows from Lemma~\ref{lemma_EM_ses}.
\end{proof}
Our next goal is to compare the slices of the $c_1$-spherical cobordism spectrum and the (very) effective cover of the spectrum $\mathbf{KGL}$ representing algebraic $K$-theory. 
Consider the Wood cofiber sequence \[\Sigma^{(1)}\mathbf{KQ}\xrightarrow{\eta}\mathbf{KQ}\xrightarrow{\mathrm{Forg}}\mathbf{KGL}\xrightarrow{\mathrm{Hyp}}\Sigma^{1+(1)}\mathbf{KQ}\]
given in \cite[Theorem 3.4]{RO16}, and \cite[Theorem 2.1(2)]{KRO_hermitian_k_theory_char2} in characteristic $2$. 
Here $\mathrm{Hyp}$ is the composition of a suitable suspension of the hyperbolic map with the Bott periodicity equivalence of $\mathbf{KGL}$.
The usual $\SL$-orientation of the hermitian $K$-theory spectrum $\mathbf{KQ}$ corresponds to a map $\MSL\to\mathbf{KQ}$ (in fact there is an $\mathbb{E}_\infty$-ring refinement of this morphism; see \cite[Remark 7.10 and Remark 7.3]{KQ_Gorenstein}). 
It induces a morphism 
\begin{center}
\begin{tikzcd}
\Sigma^{(1)}\MSL \ar[r,"\eta"] \ar[d] & \MSL \ar[r] \ar[d] & \MWL \ar[d,dotted] \ar[r] &\Sigma^{1+(1)}\MSL \ar[d]\\ \Sigma^{(1)}\mathbf{KQ} \ar[r,"\eta"] & \mathbf{KQ} \ar[r] & \mathbf{KGL}  \ar[r] & \Sigma^{1+(1)}\mathbf{KQ}
\end{tikzcd}
\end{center}
of cofiber sequences.
Since $\MWL$ is effective, any resulting map $\MWL\to \mathbf{KGL}$ factors uniquely through $\kgl:=\effcover_0(\mathbf{KGL})$.

By the works of Levine \cite{Levine_coniveau} and Voevodsky \cite{Voev_slices} the slices of $\kgl$ are given by \[\slice_q(\kgl)\cong\Sigma^{q+(q)}\EM\Z\ \text{for $q\geq0$}.\]
Moreover, the usual orientation map $\MGL_\Lambda\to\kgl_\Lambda$ induces the following map on the $q$-th slices (under the above identifications) \[\Sigma^{q+(q)}\EM\Lambda\otimes \mathrm{FGL}(m)_{2q}\colon\Sigma^{q+(q)}\EM\Lambda\otimes\MU_{2q}\to\Sigma^{q+(q)}\EM\Lambda;\] here $\mathrm{FGL}(m)_{2q}$ is the degree $2q$ part of the homomorphism of graded rings $\mathrm{FGL}(m)\colon\MU_*\to\Z[\beta]$ that classifies the multiplicative formal group law over $\Z[\beta]$ with $\mathrm{deg}(\beta)=2$.
\begin{lemma}\label{lemma_map_s_mwl_to_kgl}
    The map $\MWL_\Lambda\to \kgl_\Lambda$ induces the following map on the $q$-th slices 
    \[\Sigma^{q+(q)}\EM\Lambda\otimes f_{2q}\colon\Sigma^{q+(q)}\EM\Lambda\otimes\W_{2q}\to\Sigma^{q+(q)}\EM\Lambda;\] 
    here the homomorphism $f_{2q}\colon\W_{2q}\to \Z$ is the composition of the inclusion $\mathrm{inc}_{2q}\colon\W_{2q}\hookrightarrow\MU_{2q}$ and $\mathrm{FGL}(m)_{2q}$.
\end{lemma}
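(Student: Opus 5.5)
The idea is to factor the map $\MWL_\Lambda\to\kgl_\Lambda$ through $\MGL_\Lambda$ and compute the induced map on slices via Theorem~\ref{slices_MWL} and the known description of $\slice_q(\MGL_\Lambda)\to\slice_q(\kgl_\Lambda)$.

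First I will check that the composite
\[
\MWL\xrightarrow{\mathrm{forg}'}\MGL\to\kgl
\]
agrees with the dotted map $\MWL\to\mathbf{KGL}$, or with some valid choice of it, after factoring through $\kgl$. Here $\MGL\to\kgl$ is the usual orientation. The dotted arrow in the diagram of cofiber sequences is a priori not unique, so the first step is to fix the choice. The natural candidate is the composite $\MWL\to\MGL\to\mathbf{KGL}$. I need to check that this composite makes the middle square commute, i.e.\ that $\MSL\to\MWL\to\MGL\to\mathbf{KGL}$ equals $\MSL\to\mathbf{KQ}\xrightarrow{\mathrm{Forg}}\mathbf{KGL}$. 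This amounts to the statement that forgetting the symplectic/$\SL$-structure on the Thom class of $\mathbf{KQ}$ gives the usual $\mathbf{KGL}$ Thom class restricted along $\MSL\to\MGL$. That is a compatibility of orientations: the forgetful map sends the $\mathbf{KQ}$ Thom class of an $\SL$-bundle to the $\mathbf{KGL}$ Thom class. I expect this to follow from the construction of the $\SL$-orientation, possibly up to a unit. One would then also want compatibility of the right square, or else one simply declares that the map in the lemma is this composite. I expect the main obstacle to be exactly this identification of the chosen dotted map with $\MWL\to\MGL\to\kgl$, together with ruling out ambiguity by phantom-type differences. I will first attempt to argue that any two choices differ by a map $\MWL\to\mathbf{KGL}$ factoring through $\Sigma^{1+(1)}\MSL\to\mathbf{KGL}$, and to control that difference on slices.

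Granting the factorization, functoriality of $\slice_q$ gives
\[
\slice_q(\MWL_\Lambda)\to\slice_q(\MGL_\Lambda)\to\slice_q(\kgl_\Lambda).
\]
By Theorem~\ref{slices_MWL}, the first map is $\Sigma^{q+(q)}\EM\Lambda\otimes\mathrm{inc}_{2q}$. By the recalled description of the orientation map, the second map is $\Sigma^{q+(q)}\EM\Lambda\otimes\mathrm{FGL}(m)_{2q}$. Since $\EM\Lambda\otimes(-)$ is functorial in homomorphisms of free abelian groups, the composite is $\Sigma^{q+(q)}\EM\Lambda\otimes(\mathrm{FGL}(m)_{2q}\circ\mathrm{inc}_{2q})=\Sigma^{q+(q)}\EM\Lambda\otimes f_{2q}$. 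Finally, the identification $\slice_q(\kgl)\cong\slice_q(\mathbf{KGL})$ for $q\ge0$ holds because $\kgl=\effcover_0\mathbf{KGL}$ and both spectra have the same slices in that range, so no extra correction enters there.
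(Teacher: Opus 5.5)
Your proposal follows the paper's proof: the paper takes the triangle $\MWL_\Lambda\to\MGL_\Lambda\to\kgl_\Lambda$ to commute by construction, using that the $\SL$-orientation refines the usual orientation. It then applies $\slice_q$ and composes $\Sigma^{q+(q)}\EM\Lambda\otimes\mathrm{inc}_{2q}$ (from Theorem~\ref{slices_MWL}) with $\Sigma^{q+(q)}\EM\Lambda\otimes\mathrm{FGL}(m)_{2q}$, exactly as you do. The obstacle you flag is settled there simply by fixing the dotted map to be the composite through $\MGL$. That is the right move, because your fallback of bounding the ambiguity cannot work: a difference factoring through $d$ can be nonzero on slices. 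For instance, precomposing $d$ with the Bott-shifted orientation $\Sigma^{1+(1)}\MSL\to\Sigma^{1+(1)}\mathbf{KGL}\simeq\mathbf{KGL}$ changes $f_2(x_1)$ by $\delta_2(x_1)=-2$.
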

\begin{proof}
    By the construction of the map $\MWL_\Lambda\to\kgl_\Lambda$ the diagram 
    \begin{center}
    \begin{tikzcd}
    \MSL_\Lambda \ar[r] \ar[rd] & \MWL_\Lambda \ar[r] \ar[d] & \MGL_\Lambda \ar[ld] \\ & \kgl_\Lambda & 
    \end{tikzcd}
    \end{center}
    commutes, where the right diagonal arrow is induced by the orientation of $\kgl_\Lambda$ and the left diagonal map refines this orientation to an $\SL$-orientation. 
    Applying $\slice_q(-)$ to the right commutative triangle and using Theorem \ref{slices_MWL} with the discussion above, the result follows.
\end{proof}
\subsection{Passage to the special linear cobordism spectrum}
For the computation of the slices of the special linear algebraic cobordism spectrum, a few known lemmas about maps between the motivic Eilenberg--MacLane spectra are useful. 
Let $\EM\Lambda\text{-}\mathbf{mod}$ denote the category of (strict) $\EM\Lambda$-modules associated with the ring $\Lambda$.
\begin{lemma}\label{lemma_hom_(s,0)}
    Suppose that $A,B$ are $\Lambda$-modules. 
    Then there are natural isomorphisms  
    \[ [\EM A,\Sigma^{p}\EM B]_{\EM\Lambda\text{-}\mathbf{mod}} \cong 
    \begin{cases} 
        \Hom_\Lambda(A,B), & p=0, \\
        \Ext_\Lambda(A,B), & p=1, \\
        0, & \mathrm{otherwise}.
    \end{cases} \]
\end{lemma}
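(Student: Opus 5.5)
We compute $[\EM A,\Sigma^p\EM B]$ in the category of $\EM\Lambda$-modules by resolving $A$ by free $\Lambda$-modules. Since $\Lambda=\Z[\nicefrac{1}{e}]$ is a principal ideal domain, $A$ has a free resolution of length one,
\[ 0\to P_1\xrightarrow{\varphi} P_0\to A\to 0, \]
with $P_0,P_1$ free $\Lambda$-modules (direct sums of copies of $\Lambda$). Lemma~\ref{lemma_EM_ses} turns this into a cofiber sequence of $\EM\Lambda$-modules
\[ \EM\Lambda\otimes P_1\to\EM\Lambda\otimes P_0\to\EM A\to\Sigma\EM\Lambda\otimes P_1, \]
where we use $\EM A\simeq\EM\Lambda\otimes A$ for a $\Lambda$-module $A$. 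Applying $[-,\Sigma^p\EM B]_{\EM\Lambda\text{-}\mathbf{mod}}$ gives a long exact sequence. This reduces the problem to the free case.

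For the free case, note that $\EM\Lambda\otimes P=\bigoplus_I\EM\Lambda$ when $P=\Lambda^{\oplus I}$. Then
\[ [\EM\Lambda\otimes P,\Sigma^p\EM B]\cong\prod_I[\EM\Lambda,\Sigma^p\EM B]_{\EM\Lambda\text{-}\mathbf{mod}}\cong\prod_I\pi_{-p+(0)}\EM B=\prod_I H^{p,0}(F;B), \]
using the free–forgetful adjunction. Motivic cohomology in weight $0$ is $B$ in degree $0$ and vanishes otherwise: $\Z(0)=\Z$ as a complex in degree $0$, and $B(0)=B$ similarly, with $H^{p}_{\mathrm{Nis}}(\Spec F,B)=0$ for $p\neq0$ because $\Spec F$ is a point. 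Hence the groups are $\Hom_\Lambda(P,B)$ for $p=0$ and $0$ otherwise, naturally in $P$.

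Substituting into the long exact sequence, the only nonzero terms come from $p=0$ and $p=1$:
\[ 0\to[\EM A,\EM B]\to\Hom(P_0,B)\xrightarrow{\varphi^*}\Hom(P_1,B)\to[\EM A,\Sigma\EM B]\to0. \]
All other $[\EM A,\Sigma^p\EM B]$ vanish, since they sit between zero groups. This yields $\Hom_\Lambda(A,B)$ and $\Ext_\Lambda(A,B)$ as the kernel and cokernel of $\varphi^*$. Naturality follows from the standard comparison of resolutions: a map $A\to A'$ lifts to resolutions, uniquely up to homotopy, and Lemma~\ref{lemma_EM_ses} is natural.

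The main technical point is the identification $[\EM\Lambda,\Sigma^p\EM B]_{\EM\Lambda}\cong H^{p,0}(F;B)$ together with the vanishing of weight-zero motivic cohomology of a field outside degree $0$. A secondary point is that when $I$ is infinite, the direct sum commutes with mapping out, which gives a product; this is harmless because exactness and the identification with $\Hom$ still hold.
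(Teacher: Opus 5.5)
Your proof is correct. The paper gives no argument of its own here and simply cites \cite[Lemma A.3]{RSO19}. What you wrote is a self-contained version of the standard proof behind such statements:
a length-one free resolution over the principal ideal domain $\Lambda$; Lemma~\ref{lemma_EM_ses} to obtain a cofiber sequence of $\EM\Lambda$-modules; and the free--forgetful adjunction, which reduces the free case to $[\sph,\Sigma^{p}\EM B]\cong H^{p,0}(F;B)$, equal to $B$ for $p=0$ and zero otherwise.
Using Lemma~\ref{lemma_EM_ses} is not circular, since its proof rests on Hoyois's result. Compared with the citation, your route works directly with $\Lambda$-coefficients and isolates the single geometric input: the vanishing of weight-zero motivic cohomology of a field outside degree $0$.

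Two small points deserve a sentence each if you write it up.

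First, for the middle arrow of your exact sequence to be $\varphi^*$, the identification $[\EM\Lambda\otimes P,\EM B]\cong\Hom_\Lambda(P,B)$ must be natural in $P$. This is automatic if you define it as the map induced by $\pi_{0,0}$, which your computation shows is bijective for free $P$.

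Second, Lemma~\ref{lemma_EM_ses} as stated does not specify the connecting map, so naturality in $A$ needs a short argument. Take a lift of $f\colon A\to A'$ to resolutions. Any fill-in $g\colon\EM A\to\EM A'$ supplied by the triangulated axioms agrees with $\EM f$ after precomposition with $\EM\Lambda\otimes P_0\to\EM A$. This precomposition is injective by the $p=0$ part of your exact sequence, so $g=\EM f$ and the connecting maps are compatible.
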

\begin{proof}
    \cite[Lemma A.3]{RSO19}.
\end{proof}
\begin{lemma}\label{lemma_EM_map}
     Let $\varphi:A\to B$ be a homomorphism of abelian groups.
     Then there exist (not necessarily unique) morphisms $\partial\colon\Sigma^{-1}\EM\Lambda\otimes\coker(\varphi)\to\EM\Lambda\otimes A$ and $\theta\colon \EM\Lambda\otimes B\to \Sigma \EM\Lambda\otimes \ker(\varphi)$ such that the sequence \[\EM\Lambda\otimes A\xrightarrow{\EM\Lambda\otimes\varphi}\EM\Lambda\otimes B
     \xrightarrow{(\theta\ \EM\Lambda \otimes \mathrm{pr})^{\mathrm{tr}}}\Sigma \EM\Lambda\otimes\kernel(\varphi)\oplus \EM\Lambda\otimes\coker(\varphi)\xrightarrow{(\Sigma\EM\Lambda\otimes\mathrm{inc}\ \Sigma\partial)} \Sigma \EM\Lambda\otimes A\] is a cofiber sequence of $\EM\Lambda$-modules. 
     Here $\mathrm{inc}\colon \kernel(\varphi)\hookrightarrow A$ and $\mathrm{pr}\colon B\to \coker(\varphi)$ are the canonical homomorphisms.
\end{lemma}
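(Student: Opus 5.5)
Factor $\varphi$ as $A\xrightarrow{\varphi'}\mathrm{im}(\varphi)\xrightarrow{j} B$ and use the two short exact sequences
\[0\to\kernel(\varphi)\xrightarrow{\mathrm{inc}}A\xrightarrow{\varphi'}\mathrm{im}(\varphi)\to 0,\qquad 0\to\mathrm{im}(\varphi)\xrightarrow{j}B\xrightarrow{\mathrm{pr}}\coker(\varphi)\to 0.\]
By Lemma~\ref{lemma_EM_ses} each gives a cofiber sequence of $\EM\Lambda$-modules. From the first we get a boundary map $\EM\Lambda\otimes\mathrm{im}(\varphi)\to\Sigma\EM\Lambda\otimes\kernel(\varphi)$. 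From the second, the cofiber of $\EM\Lambda\otimes j$ is $\EM\Lambda\otimes\coker(\varphi)$, with boundary $b\colon\EM\Lambda\otimes\coker(\varphi)\to\Sigma\EM\Lambda\otimes\mathrm{im}(\varphi)$. Since $\EM\Lambda\otimes\varphi=(\EM\Lambda\otimes j)\circ(\EM\Lambda\otimes\varphi')$, the octahedral axiom applied to this composite gives a cofiber sequence
\[\mathrm{cof}(\EM\Lambda\otimes\varphi')\to\mathrm{cof}(\EM\Lambda\otimes\varphi)\to\mathrm{cof}(\EM\Lambda\otimes j)\to\Sigma\,\mathrm{cof}(\EM\Lambda\otimes\varphi'),\]
that is,
\[\Sigma\EM\Lambda\otimes\kernel(\varphi)\to C\to\EM\Lambda\otimes\coker(\varphi)\xrightarrow{c}\Sigma^2\EM\Lambda\otimes\kernel(\varphi),\]
where $C=\mathrm{cof}(\EM\Lambda\otimes\varphi)$.

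The key step is to show that $C$ splits. The connecting map $c$ lies in $[\EM\coker(\varphi),\Sigma^2\EM\kernel(\varphi)]_{\EM\Lambda\text{-}\mathbf{mod}}$. (Here we tensor over $\Z$ and regard $\EM\Lambda\otimes M$ as $\EM(\Lambda\otimes M)$, using flatness of $\Lambda$; more precisely we first rewrite $\EM\Lambda\otimes M$ as a module over $\Lambda$, which is harmless because $\Lambda$ is a localization of $\Z$.) By Lemma~\ref{lemma_hom_(s,0)} this group vanishes, since $\Lambda$ is hereditary and there are no $\Ext^2$. So $c=0$, and hence
\[C\simeq\Sigma\EM\Lambda\otimes\kernel(\varphi)\oplus\EM\Lambda\otimes\coker(\varphi).\]
The choice of splitting is not canonical, which explains the ``not necessarily unique'' in the statement.

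It remains to identify the maps. Under the octahedral diagram, the map $\EM\Lambda\otimes B\to C$ composed with the projection to $\EM\Lambda\otimes\coker(\varphi)$ is the canonical map $\EM\Lambda\otimes B\to\mathrm{cof}(\EM\Lambda\otimes j)$, which is $\EM\Lambda\otimes\mathrm{pr}$. Define $\theta$ as the other component, determined by the chosen splitting. Dually, the boundary $C\to\Sigma\EM\Lambda\otimes A$ restricted to $\Sigma\EM\Lambda\otimes\kernel(\varphi)$ is the suspension of the map $\mathrm{cof}(\EM\Lambda\otimes\varphi')[-1]\to\EM\Lambda\otimes A$. In the first cofiber sequence this map is $\EM\Lambda\otimes\mathrm{inc}$, up to the standard sign conventions. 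Define $\Sigma\partial$ as the other component.

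The main obstacle is bookkeeping: checking that the octahedral identifications really give $\mathrm{pr}$ and $\mathrm{inc}$ on the relevant components, with consistent signs. If signs disagree, absorb them by replacing $\theta$ or $\partial$ with its negative, which the statement permits.
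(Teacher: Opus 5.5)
Your proposal is correct and follows essentially the same route as the paper. You factor $\EM\Lambda\otimes\varphi$ through $\EM\Lambda\otimes\mathrm{im}(\varphi)$ and use Lemma~\ref{lemma_EM_ses} with the octahedral axiom to relate $\Sigma\EM\Lambda\otimes\kernel(\varphi)$, the cofiber of $\EM\Lambda\otimes\varphi$, and $\EM\Lambda\otimes\coker(\varphi)$. You then split that sequence because $[\EM\Lambda\otimes\coker(\varphi),\Sigma^2\EM\Lambda\otimes\kernel(\varphi)]$ vanishes by Lemma~\ref{lemma_hom_(s,0)}; the paper does the same with fibers instead of cofibers. Your sign bookkeeping also works: a sign on the $\mathrm{inc}$ component is absorbed by negating the $\Sigma\EM\Lambda\otimes\kernel(\varphi)$ summand of the splitting, which just replaces $\theta$ by $-\theta$.
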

\begin{proof}
    The morphism $\EM\Lambda\otimes\varphi$ factors as the composition $\EM\Lambda\otimes A\to \EM\Lambda\otimes\mathrm{im}(\varphi)\to\EM\Lambda\otimes B$. 
    The cofiber sequence of the respective fibers in the category of $\EM\Lambda$-modules can be identified with \[\EM\Lambda\otimes\kernel(\varphi)\to \mathrm{fib}(\EM\Lambda\otimes\varphi)\to \Sigma^{-1}\EM\Lambda\otimes\coker(\varphi)\] by Lemma \ref{lemma_EM_ses}. 
    After suspension, the boundary morphism $\Sigma^{-1}\EM\Lambda\otimes\coker(\varphi)\to\Sigma^{1}\EM\Lambda\otimes\kernel(\varphi)$ belongs to the group \[[\EM\Lambda\otimes\coker(\varphi),\Sigma^{2}\EM\Lambda\otimes\kernel(\varphi)]_{\EM\Lambda\text{-}\mathbf{mod}},\] which is trivial by Lemma \ref{lemma_hom_(s,0)}. 
    Hence, the above cofiber sequence splits and a choice of a splitting provides the required maps.
\end{proof}
\begin{remark}\label{rem:splitting_non_unique}
    As written in the statement of the lemma, in general $\partial$ is not unique. 
    The reason is that it depends on the choice of a section to $\mathrm{fib}(\EM\Lambda\otimes\varphi)\to \Sigma^{-1}\EM\Lambda\otimes\coker(\varphi)$. 
    In this section, we ignore this issue since for the next computation only its existence is required. However, for the computation of the first slice differentials, we will make one specific choice, see Remark \ref{rem:choice_s3}.
\end{remark}
\begin{notation}\label{not:iota(delta)}
    Denote by $\iota_{2q-2}^*(\delta_{2q})$ the homomorphism $\W_{2q}\to\cyc_{2q-2}(\W)$ given by the pullback of the map $\delta_{2q}\colon\W_{2q}\to\W_{2q-2}$ along the inclusion $\iota_{2q-2}\colon\cyc_{2q-2}(\W)\hookrightarrow\W_{2q-2}$. 
    This just restricts the target of $\delta_{2q}$, since the image of $\delta_{2q}$ is contained in the group of cycles $\cyc_{2q-2}(\W)$.
\end{notation}
\begin{theorem}\label{slices_MSL}
    Let $F$ be a field and $\Lambda=\Z[\nicefrac{1}{e}]$, where $e$ is the exponential characteristic of $F$.
    \begin{enumerate}
    \item As $\EM\Lambda$-modules, the slices of $\MSL_\Lambda$ are given as follows: 
    \[\slice_q(\MSL_\Lambda)\cong \Sigma^{q+(q)}\EM\Lambda\otimes\cyc_{2q}(\W)\oplus \bigoplus_{p=1}^q \Sigma^{q-p+(q)}\EM\Lambda\otimes\hmlg_{2q-2p}(\W).\]
    \item Let $N_q:=\bigoplus_{p=2}^{q} \Sigma^{q-p+(q)}\EM\Lambda\otimes\hmlg_{2q-2p}(\W)$ and $\varphi:= \iota_{2q-2}^*(\delta_{2q})\colon\W_{2q}\to \cyc_{2q-2}(\W)$. 
    The morphism $\MSL_\Lambda\to \MWL_\Lambda$ induces the map 
    \[\Sigma^{q+(q)}\EM\Lambda\otimes\cyc_{2q}(\W)\oplus\Sigma^{q-1+(q)}\EM\Lambda\otimes\hmlg_{2q-2}(\W)\oplus N_q\xrightarrow{\Sigma^{q+(q)}(\EM\Lambda\otimes\mathrm{inc}\ \partial\ 0)} \Sigma^{q+(q)}\EM\Lambda\otimes\W_{2q} \] 
    on the $q$-th slice, where $\mathrm{inc}$ and $\partial$ are as in Lemma \ref{lemma_EM_map}.
    \end{enumerate}
\end{theorem}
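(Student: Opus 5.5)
Since $\MSL_\Lambda$ does not seem to admit a direct cofiber sequence to a spectrum with known slices, I plan to reconstruct it from $\MWL_\Lambda$ through the $\eta$-cofiber sequence of Theorem~\ref{MWL_cofib}(1). Rotating that sequence, $\MSL_\Lambda$ is the fiber of $d\colon\MWL_\Lambda\to\Sigma^{1+(1)}\MSL_\Lambda$. Iterating, $\MSL_\Lambda$ is the limit of a tower whose layers are suspensions $\Sigma^{p+(p)}\MWL_\Lambda$, and the "differential" between consecutive layers is $\delta$. The slice functor is exact and commutes with $\Sigma^{1+(1)}$ up to index shift, $\slice_q(\Sigma^{1+(1)}E)\cong\Sigma^{1+(1)}\slice_{q-1}(E)$. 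Hence applying $\slice_q$ gives a finite tower, since $\MWL$ is effective and only $p\le q$ contribute. Its layers are $\Sigma^{p+(p)}\slice_{q-p}(\MWL_\Lambda)\cong\Sigma^{q+(q)}\Sigma^{0}\EM\Lambda\otimes\W_{2q-2p}$, shifted appropriately by Theorem~\ref{slices_MWL}. The composite maps between layers are $\slice_q(\delta)$, which I would identify with $\Sigma^{q+(q)}\EM\Lambda\otimes\delta_{2q-2p}$. That identification comes from the Remark after Theorem~\ref{thm_conner_floyd} (the action of $\delta$ on the geometric diagonal) together with the fact, from Lemma~\ref{lemma_hom_(s,0)}, that maps between such Eilenberg--MacLane modules are determined by their effect on $\pi_{\star+(\star)}$.

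A cleaner way to organise the induction is on $q$. Apply $\slice_q$ to the cofiber sequence
\[\Sigma^{(1)}\MSL_\Lambda\to\MSL_\Lambda\to\MWL_\Lambda\to\Sigma^{1+(1)}\MSL_\Lambda.\]
This yields a cofiber sequence of $\EM\Lambda$-modules
\[\slice_q(\MSL_\Lambda)\to\Sigma^{q+(q)}\EM\Lambda\otimes\W_{2q}\xrightarrow{\slice_q(d)}\Sigma^{1+(1)}\slice_{q-1}(\MSL_\Lambda).\]
By induction, $\slice_{q-1}(\MSL_\Lambda)$ has the form in (1). Its top summand is $\Sigma^{q-1+(q-1)}\EM\Lambda\otimes\cyc_{2q-2}(\W)$, which after $\Sigma^{1+(1)}$ becomes $\Sigma^{q+(q)}\EM\Lambda\otimes\cyc_{2q-2}(\W)$. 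The other summands are $\Sigma^{q-p+(q)}\EM\Lambda\otimes\hmlg$ with $p\ge1$.

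Next I compute $\slice_q(d)$ componentwise. The component into the top summand is determined on $\pi$ by $\iota^*_{2q-2}(\delta_{2q})=\varphi$. This uses the inductive statement (2): composing with the map $\slice_{q-1}(\MSL)\to\slice_{q-1}(\MWL)$, which is $\mathrm{inc}$ on that summand, recovers $\slice_q(\delta)=\EM\Lambda\otimes\delta_{2q}$. The components into the summands $\Sigma^{q-p+(q)}\EM\Lambda\otimes\hmlg$ for $p\ge1$ are maps from $\Sigma^{q+(q)}\EM\Lambda\otimes\W_{2q}$ to negative simplicial shifts, so they vanish by Lemma~\ref{lemma_hom_(s,0)}.

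So the cofiber sequence splits as the fiber of $\EM\Lambda\otimes\varphi$ plus a direct sum of $\Sigma^{-1}$ of the remaining summands. Applying Lemma~\ref{lemma_EM_map} to $\varphi\colon\W_{2q}\to\cyc_{2q-2}(\W)$, with $\ker\varphi=\cyc_{2q}(\W)$ and $\coker\varphi=\hmlg_{2q-2}(\W)$, identifies the fiber with
\[\Sigma^{q+(q)}\big(\EM\Lambda\otimes\cyc_{2q}(\W)\oplus\Sigma^{-1}\EM\Lambda\otimes\hmlg_{2q-2}(\W)\big).\]
The remaining summands shift to $\Sigma^{q-p+(q)}\EM\Lambda\otimes\hmlg_{2q-2p}$ for $p\ge2$; this is $N_q$. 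This gives (1). For (2), the map to $\slice_q(\MWL)$ is by construction the fiber inclusion, namely $(\mathrm{inc}\ \partial)$ on the first part. On $N_q$ it is zero, because $N_q$ enters through $\Sigma^{-1}$ of the cofiber term, and I would check that vanishing by degree reasons via Lemma~\ref{lemma_hom_(s,0)}.

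The main obstacle is the identification of $\slice_q(d)$, projected to the top summand, with $\EM\Lambda\otimes\varphi$. This requires the inductive part (2) to be strong enough. It also requires that a map of $\EM\Lambda$-modules $\Sigma^{q+(q)}\EM\Lambda\otimes A\to\Sigma^{q+(q)}\EM\Lambda\otimes B$ is detected by its effect on homotopy groups. That detection follows from Lemma~\ref{lemma_hom_(s,0)} together with the geometric diagonal computation of $\delta$. A secondary subtlety is that the splitting in Lemma~\ref{lemma_EM_map} is non-unique (Remark~\ref{rem:splitting_non_unique}), but the statement only asserts existence.
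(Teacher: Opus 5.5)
Your proposal is correct and follows the paper's proof essentially step by step. Both arguments induct on $q$ via the slices of the cofiber sequence \ref{MWL_cofib}(1). Both kill the components of $\slice_q(d)$ into the $\hmlg$-summands with Lemma~\ref{lemma_hom_(s,0)}, pin down the remaining component $\phi$ through $\slice_q(\delta)$, the inductive part (2) and injectivity of $\mathrm{inc}$, and conclude with Lemma~\ref{lemma_EM_map}. Your degree check that $N_q$ maps trivially, using $[\EM A,\Sigma^{p}\EM B]=0$ for $p\ge 2$, is a correct detail the paper leaves implicit.

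The only loose point is the tower heuristic in your first paragraph, which plays no role since the actual argument is the induction. The limit of the $\eta$-tower is $\MSL^\wedge_\eta$ rather than $\MSL$. The correct finite version is $\slice_q\MSL\simeq\slice_q(\MSL/\eta^{q+1})$, which holds by effectivity.
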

\begin{proof}
    The proof uses induction. 
    The cofiber sequence \ref{MWL_cofib}(1) induces on slices a cofiber sequence 
    \begin{equation}\label{eq:proof-slices-msl}
    \slice_q(\MSL_\Lambda)\to \slice_q(\MWL_\Lambda)\xrightarrow{\slice_q(d)}\slice_q(\Sigma^{1+(1)}\MSL_\Lambda)\cong\Sigma^{1+(1)}\slice_{q-1}(\MSL_\Lambda)
    \end{equation}
    of $\EM\Lambda$-modules. 
    The statements for $q=0$ follow from the effectivity of $\MSL$ and $\slice_0(\MWL_\Lambda)\cong\EM\Lambda$; see Theorem \ref{slices_MWL}, where $\cyc_0(\W)=\W_0\cong \Z$. 
    Assuming that both statements are proven for $q-1$, the cofiber sequence~\ref{eq:proof-slices-msl} reduces the task to identifying the fiber of $\slice_q(d)$. 
    Theorem \ref{slices_MWL} and the induction hypothesis identify $\slice_q(d)$ with 
    \[(\phi\ \psi_1\ \dots\  \psi_{q-1})^{\mathrm{tr}}\colon\Sigma^{q+(q)}\EM\Lambda\otimes\W_{2q}\to \Sigma^{q+(q)}\EM\Lambda\otimes\cyc_{2q-2}(\W)\oplus \bigoplus_{p=1}^{q-1} \Sigma^{q-p+(q)}\EM\Lambda\otimes\hmlg_{2q-2-2p}(\W)\] 
    for some morphisms $\phi$ and $\psi_1,\dots,\psi_{q-1}$. 
    Since after $q+(q)$-desuspension $\psi_p$ belongs to the group \[[\EM\Lambda\otimes\W_{2q},\Sigma^{-p}\EM\Lambda\otimes\hmlg_{2q-2p-2}(\W)]_{\EM\Lambda\text{-}\mathbf{mod}}\] 
    which is trivial for $p>0$ by Lemma \ref{lemma_hom_(s,0)}, all $\psi_p$ are zero.
    Hence, the $q$-th slice of $\MSL_\Lambda$ is given by 
    \[\slice_q(\MSL_\Lambda)\cong\mathrm{fib}(\phi)\oplus N_q,\] 
    and it remains to identify $\phi$. 
    For this purpose consider the following diagram
    \[\begin{tikzcd}
	{\Sigma^{q+(q)}\EM\Lambda\otimes\W_{2q}} & {\Sigma^{q+(q)}\EM\Lambda\otimes\cyc_{2q-2}(\W)} \\
    & {\Sigma^{q+(q)}\EM\Lambda\otimes\W_{2q-2},}
	\arrow["\phi",from=1-1, to=1-2]
	\arrow[from=1-1, to=2-2]
    \arrow["\Sigma^{q+(q)}\EM\Lambda\otimes\mathrm{inc}",from=1-2, to=2-2]
    \end{tikzcd}\] 
    where the diagonal morphism is given by $\slice_q(\delta)$ (up to identifications from Theorem \ref{slices_MWL}). 
    This diagram commutes in $\EM\Lambda\text{-}\mathbf{mod}$ by the definition of $\phi$ and the induction hypothesis for the second assertion. 
    Combining this with the injectivity of $\mathrm{inc}\colon\cyc_{2q-2}(\W)\hookrightarrow\W_{2q-2}$, the map $\phi$ is thus given by $\Sigma^{q+(q)}\EM\Lambda\otimes\iota_{2q-2}^*(\delta_{2q})$. 
    The induction steps for both statements follow from Lemma~\ref{lemma_EM_map}.
\end{proof}
\begin{remark}\label{remark_ANSS}
    The groups that appear in the slices of $\MSL_\Lambda$ are precisely the groups constituting the $E_2$-page of the Adams--Novikov spectral sequence for $\mathbf{MSU}$ by \cite[Proposition 5.3]{CLP}: 
    \[ \slice_q(\MSL_\Lambda)\cong \bigoplus_{p=0}^q \Sigma^{q-p+(q)}\EM\Lambda\otimes\Ext^{p,2q}_{\mathbf{MU^*(MU)}}(\mathbf{MU^*(MSU),MU^*}). \]
\end{remark}
\begin{remark}\label{remark:first_slices}
Comparing the previous theorem with Theorem \ref{thm_conner_floyd}, the first few slices $\slice_q(\MSL_\Lambda)$ can be given in an explicit form.
The result for $q\leq 6$ looks as follows:
\allowdisplaybreaks
\begin{align*} 
        \slice_0(\MSL_\Lambda) & \cong \EM\Lambda\ \text{(cf. \cite[Example 16.35]{BH-Norms}, \cite[Corollary 4.9]{NanMSL}),} \\
        \slice_1(\MSL_\Lambda) & \cong \Sigma^{(1)}\EM\Lambda/2\ \text{(cf. \cite[Corollary 4.10]{NanMSL}),} \\
        \slice_2(\MSL_\Lambda) & \cong \Sigma^{(2)}\EM\Lambda/2 \oplus \Sigma^{2+(2)}\EM\Lambda, \\
        \slice_3(\MSL_\Lambda) & \cong \Sigma^{(3)}\EM\Lambda/2 \oplus  \Sigma^{2+(3)}\EM\Lambda/2 \oplus \Sigma^{3+(3)}\EM\Lambda, \\
        \slice_4(\MSL_\Lambda) & \cong \Sigma^{(4)}\EM\Lambda/2 \oplus \Sigma^{2+(4)}\EM\Lambda/2 \oplus \Sigma^{4+(4)}\EM\Lambda^2, \\
        \slice_5(\MSL_\Lambda) & \cong \Sigma^{(5)}\EM\Lambda/2 \oplus \Sigma^{2+(5)}\EM\Lambda/2 \oplus \Sigma^{4+(5)}\EM\Lambda/2 \oplus \Sigma^{5+(5)}\EM\Lambda^2, \\
        \slice_6(\MSL_\Lambda) & \cong \Sigma^{(6)}\EM\Lambda/2 \oplus \Sigma^{2+(6)}\EM\Lambda/2 \oplus \Sigma^{4+(6)}\EM\Lambda/2 \oplus \Sigma^{6+(6)}\EM\Lambda^4.
        %\slice_7(\MSL_\Lambda) & \cong \Sigma^{(7)}\EM\Lambda/2 \oplus \Sigma^{2+(7)}\EM\Lambda/2 \oplus \Sigma^{4+(7)}\EM\Lambda/2 \oplus \Sigma^{6+(7)}\EM\Lambda/2 \oplus \Sigma^{7+(7)}\EM\Lambda^4, \\
        %\slice_8(\MSL_\Lambda) & \cong \Sigma^{(8)}\EM\Lambda/2 \oplus \Sigma^{2+(8)}\EM\Lambda/2 \oplus \Sigma^{4+(8)}\EM\Lambda/2 \oplus \Sigma^{6+(8)}\EM\Lambda/2 \oplus \Sigma^{8+(8)}\EM\Lambda^7, \\
        %\slice_9(\MSL_\Lambda) & \cong \Sigma^{(9)}\EM\Lambda/2 \oplus \Sigma^{2+(9)}\EM\Lambda/2 \oplus \Sigma^{4+(9)}\EM\Lambda/2 \oplus \Sigma^{6+(9)}\EM\Lambda/2 \oplus \Sigma^{8+(9)}\EM(\Lambda/2)^2 \oplus \Sigma^{9+(9)}\EM\Lambda^8.
\end{align*}
Since $\Lambda[\nicefrac{1}{2}]\otimes
\hmlg_{*}(\W)=0$, inverting $2$ produces a simpler picture, namely the following equivalences: 
\[\slice_q(\MSL_{\Lambda[\nicefrac{1}{2}]})\cong\Sigma^{q+(q)}\EM\Lambda[\nicefrac{1}{2}]\otimes\cyc_{2q}(\W)\cong\Sigma^{q+(q)}\EM\Lambda[\nicefrac{1}{2}]^{p(q)-p(q-1)}\]
This simple form of the slices away from $2$ corresponds to the classical fact that the Adams--Novikov spectral sequence for $\mathbf{MSU}$ collapses at the $E_2$-page away from $2$. 
\end{remark}
The second assertion of Theorem~\ref{slices_MSL} allows us to deduce the action of the first motivic Hopf element $\eta\in\pi_{0+(1)}(\sph)$ on the slices of $\MSL_{\Lambda}$ as follows.
\begin{lemma}\label{lemma_eta_action}
    For every non-negative integer $q$, the morphism \[\Sigma^{(1)}\slice_{q-1}(\MSL_\Lambda)\cong\slice_q(\Sigma^{(1)}\MSL_\Lambda)\xrightarrow{\slice_q(\eta)}\slice_q(\MSL_\Lambda)\] 
    \begin{enumerate}
        \item restricts to the morphism \[\Sigma^{q-1+(q)}\EM\Lambda\otimes\cyc_{2q-2}(\W)\xrightarrow{\Sigma^{q-1+(q)}\EM\Lambda\otimes\mathrm{pr}_{2q-2}} \Sigma^{q-1+(q)}\EM\Lambda\otimes\hmlg_{2q-2}(\W),\] where the epimorphism $\mathrm{pr}_{2q-2}\colon\cyc_{2q-2}(\W)\twoheadrightarrow\hmlg_{2q-2}(\W)$ is the quotient map;
        \item restricts to the identity map \[  \Sigma^{q-2+(q)}\EM\Lambda\otimes\hmlg_{2q-4}(\W)\oplus\Sigma^{(1)}N_{q-1}\to N_q;\]
        \item restricts to the zero morphism on other direct summands.
    \end{enumerate}
\end{lemma}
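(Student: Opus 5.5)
Since slices are exact functors, I would apply $\slice_q$ to the cofiber sequence \ref{MWL_cofib}(1). This yields the cofiber sequence
\[\Sigma^{(1)}\slice_{q-1}(\MSL_\Lambda)\xrightarrow{\slice_q(\eta)}\slice_q(\MSL_\Lambda)\xrightarrow{\slice_q(\mathrm{forg})}\slice_q(\MWL_\Lambda)\xrightarrow{\slice_q(d)}\Sigma^{1+(1)}\slice_{q-1}(\MSL_\Lambda),\]
which is the one used in the proof of Theorem \ref{slices_MSL}. Thus $\slice_q(\eta)$ is, up to sign, the fiber map of $\slice_q(\mathrm{forg})$; equivalently, it is the desuspension of the connecting map $\Sigma^{-1}\mathrm{cofib}(\slice_q(d))$-type identification. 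In that proof, $\slice_q(d)$ was identified with $(\phi\ 0\ \cdots\ 0)^{\mathrm{tr}}$, where $\phi=\Sigma^{q+(q)}\EM\Lambda\otimes\iota^*_{2q-2}(\delta_{2q})$. So the fiber of $\slice_q(d)$ splits as $\mathrm{fib}(\phi)\oplus N_q$, and the plan is to read off the map $\Sigma^{-1}$ of the target into this fiber.

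First, the summands of $\Sigma^{(1)}\slice_{q-1}$ that are killed by $\slice_q(d)$ are the factors $\Sigma^{q-1-p+(q)}\EM\Lambda\otimes\hmlg_{2q-2-2p}(\W)$ with $p\ge1$, i.e.\ $\Sigma^{(1)}$ of all the homology summands of $\slice_{q-1}$. Since $\slice_q(d)$ is zero on these, the fiber sequence shows that they map isomorphically, via the identity under our identifications, onto the summand $N_q$ of $\slice_q(\MSL_\Lambda)$. Reindexing, these summands are exactly $\Sigma^{q-2+(q)}\EM\Lambda\otimes\hmlg_{2q-4}(\W)\oplus\Sigma^{(1)}N_{q-1}$, which gives (2). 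To be careful, I will write the fiber of a map $X\to Y_1\oplus Y_2$ with zero second component as $\mathrm{fib}(X\to Y_1)\oplus\Sigma^{-1}Y_2$; the boundary map $\Sigma^{-1}(Y_1\oplus Y_2)\to\mathrm{fib}$ is then the canonical one on $\Sigma^{-1}Y_2$, and it lands in $\mathrm{fib}(X\to Y_1)$ on $\Sigma^{-1}Y_1$.

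Second, on $\Sigma^{q-1+(q)}\EM\Lambda\otimes\cyc_{2q-2}(\W)$ the map factors through the connecting map $\Sigma^{-1}\EM\Lambda\otimes\cyc_{2q-2}\to\mathrm{fib}(\phi)$. Lemma \ref{lemma_EM_map} decomposes $\mathrm{fib}(\phi)$ as $\EM\Lambda\otimes\ker\varphi\oplus\Sigma^{-1}\EM\Lambda\otimes\coker\varphi$, where $\ker\varphi=\cyc_{2q}(\W)$ and $\coker\varphi=\hmlg_{2q-2}(\W)$. By the construction in that lemma, the composite $\Sigma^{-1}\EM\Lambda\otimes B\to\mathrm{fib}\to\Sigma^{-1}\EM\Lambda\otimes\coker$ is $\Sigma^{-1}\EM\Lambda\otimes\mathrm{pr}$. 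The component into $\EM\Lambda\otimes\ker\varphi$ lies in $[\Sigma^{-1}\EM\Lambda\otimes\cyc_{2q-2},\EM\Lambda\otimes\cyc_{2q}]$, and this group vanishes by Lemma \ref{lemma_hom_(s,0)} (it sits in degree $p=1$ in the wrong direction, i.e.\ it is $[\EM A,\Sigma^{1}\EM B]$ with the arguments arranged so that it vanishes). I need to check this carefully; if it turns out to be an $\Ext$ group, I would instead use the specific splitting. Either way, this gives (1) up to the choice of splitting, and (3) follows, since the cross terms (the $\cyc$ summand into $N_q$ and the homology summands into $\mathrm{fib}(\phi)$) vanish either by the splitting of the fiber sequence or by Lemma \ref{lemma_hom_(s,0)} degree reasons.

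The main obstacle is the bookkeeping of the splitting. The decomposition of $\slice_q(\MSL_\Lambda)$ in Theorem \ref{slices_MSL} involved choices (Remark \ref{rem:splitting_non_unique}), and the identity claim in (2) must be compatible with how $N_q$ was identified, namely inductively through exactly this fiber sequence. So I would fix the identification of $N_q$ as $\Sigma^{-1}$ of the zero-mapped summands in the proof of Theorem \ref{slices_MSL}, which makes (2) tautological. The group $[\Sigma^{-1}\EM\Lambda\otimes\cyc,\EM\Lambda\otimes\cyc]$ equals $\Ext$ of a free group, which is $0$, and this settles the component of (1) into $\EM\Lambda\otimes\ker\varphi$.
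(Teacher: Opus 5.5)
Your proposal is correct and matches the paper's (very terse) proof. Both identify $\slice_q(\eta)$ as the connecting map of the fiber sequence for $\slice_q(d)=(\phi\ 0\ \cdots\ 0)^{\mathrm{tr}}$, which is the sequence that produced the splitting $\slice_q(\MSL_\Lambda)\cong\mathrm{fib}(\phi)\oplus N_q$ in Theorem~\ref{slices_MSL}, and both read off the $\mathrm{pr}$-component from the cofiber sequence of Lemma~\ref{lemma_EM_map}. The paper disposes of the remaining components with Lemma~\ref{lemma_hom_(s,0)}, while you get most cross terms directly from the diagonal form of the connecting map of a direct sum, which is slightly cleaner.

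One correction: discard your first claim that the component $\Sigma^{q-1+(q)}\EM\Lambda\otimes\cyc_{2q-2}(\W)\to\Sigma^{q+(q)}\EM\Lambda\otimes\cyc_{2q}(\W)$ vanishes for degree reasons. It is the $p=1$ case of Lemma~\ref{lemma_hom_(s,0)}, i.e.\ an $\Ext_\Lambda$-group, and it vanishes because $\cyc_{2q-2}(\W)$ is free, as you conclude at the end. Signs are harmless, since the groups $\hmlg_{2n}(\W)$ are $2$-torsion.
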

\begin{proof}
    Statements (1) and (2) follow directly from part (2) of Theorem \ref{slices_MSL}. 
    Lemma \ref{lemma_hom_(s,0)} implies the last statement.
\end{proof}
This information suffices to determine the slices of the $\eta$-periodic special linear algebraic cobordism spectrum $\MSL_\Lambda[\eta^{-1}]$, defined as the (homotopy) colimit \[\MSL_\Lambda\xrightarrow{\Sigma^{(-1)}\eta}\Sigma^{(-1)}\MSL_\Lambda\xrightarrow{\Sigma^{(-2)}\eta}\Sigma^{(-2)}\MSL_\Lambda\xrightarrow{\Sigma^{(-3)}\eta}\cdots.\]
\begin{theorem}\label{slice_MSL_eta_inv}
    Let $F$ be a field and $\Lambda=\Z[\nicefrac{1}{e}]$, where $e$ is the exponential characteristic of $F$.
    As $\EM\Lambda$-modules, the slices of $\MSL_\Lambda[\eta^{-1}]$ are given as follows: \[\slice_q(\MSL_\Lambda[\eta^{-1}])\cong \bigoplus_{p=0}^{\infty} \Sigma^{p+(q)}\EM\Lambda\otimes\hmlg_{2p}(\W)\]
\end{theorem}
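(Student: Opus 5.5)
The plan is to use that the slice functor $\slice_q$ commutes with (homotopy) colimits. This holds because $\effcover_q$ is a right adjoint of an inclusion of a localizing subcategory generated by compact objects, so it preserves filtered homotopy colimits. Applying $\slice_q$ to the telescope defining $\MSL_\Lambda[\eta^{-1}]$ and using $\slice_q(\Sigma^{(-k)}\EE)\cong\Sigma^{(-k)}\slice_{q+k}(\EE)$ gives
\[\slice_q(\MSL_\Lambda[\eta^{-1}])\cong\colim_k \Sigma^{(-k)}\slice_{q+k}(\MSL_\Lambda),\]
where the transition maps are $\Sigma^{(-k-1)}\slice_{q+k+1}(\eta)$. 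Lemma \ref{lemma_eta_action} describes these maps explicitly.

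Next I insert Theorem \ref{slices_MSL}(1). The $k$-th term is
\[\Sigma^{(-k)}\slice_{q+k}(\MSL_\Lambda)\cong\Sigma^{q+(q)}\EM\Lambda\otimes\cyc_{2q+2k}(\W)\oplus\bigoplus_{p'=0}^{q+k-1}\Sigma^{p'+(q)}\EM\Lambda\otimes\hmlg_{2p'}(\W).\]
This is obtained by reindexing with $p'=q+k-p$. The suspension works out as $q+k-p+(q+k)-(k)=p'+(q)$, so every $\hmlg$-summand has weight $q$ and simplicial shift $p'$, independently of $k$.

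By Lemma \ref{lemma_eta_action}, the transition map from stage $k$ to stage $k+1$ does three things:
\begin{enumerate}
\item It is the identity on the summands $\Sigma^{p'+(q)}\EM\Lambda\otimes\hmlg_{2p'}(\W)$ with $p'\le q+k-1$. These summands come from $N_{q+k}$ together with the $\hmlg_{2q+2k-2}$ summand, which lands in $N_{q+k+1}$ identically.
\item It sends the cycle summand $\Sigma^{q+(q)}\EM\Lambda\otimes\cyc_{2q+2k}(\W)$ onto the new summand $\Sigma^{p'+(q)}\EM\Lambda\otimes\hmlg_{2q+2k}(\W)$, with $p'=q+k$, via $\mathrm{pr}$.
\item It is zero from the cycle summand into the new cycle summand $\cyc_{2q+2k+2}(\W)$.
\end{enumerate}

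The colimit is then computed summand by summand. The cycle summands die in the colimit, since each is mapped to zero in the next cycle summand and only contributes through $\mathrm{pr}$ to an $\hmlg$-summand that is already accounted for. More carefully, the telescope is a colimit of a sequence of split diagrams. In the $\hmlg$-part the maps are inclusions of summands, each surjected onto by the preceding cycle term. Hence the colimit is $\bigoplus_{p\ge0}\Sigma^{p+(q)}\EM\Lambda\otimes\hmlg_{2p}(\W)$.

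The main obstacle is verifying this colimit rigorously, since the maps are only described up to the chosen splittings. To handle it, I will compute homotopy sheaves, or equivalently check the colimit on each summand. Concretely, I will show that the map from $\bigoplus_{p'\le q+k}\Sigma^{p'+(q)}\EM\Lambda\otimes\hmlg_{2p'}(\W)$ at stage $k+1$ to the colimit is compatible. Alternatively, I will pass to a cofinal reindexing: composing two consecutive transition maps kills the cycle summand entirely by item (3). The telescope is therefore equivalent to the telescope of the $\hmlg$-parts, with split inclusions as transition maps, and its colimit is the infinite direct sum. Finally, since $\hmlg_{2p}(\W)$ vanishes unless $p$ is even, this is the stated formula.
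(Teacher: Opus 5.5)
Your proposal is correct and is essentially the paper's argument: commute $\slice_q$ with the $\eta$-telescope, read off the transition maps from Lemma~\ref{lemma_eta_action}, and observe that the colimit splits into pieces $\cyc_{2s}(\W)\xrightarrow{\mathrm{pr}}\hmlg_{2s}(\W)\xrightarrow{\id}\hmlg_{2s}(\W)\to\cdots$; the only difference is that the paper first reduces to $q=0$ via the equivalence $\slice_q(\eta^q)$ instead of reindexing. Two small slips do not affect the conclusion. First, the cycle summand at stage $k$ is $\Sigma^{q+k+(q)}\EM\Lambda\otimes\cyc_{2q+2k}(\W)$, not $\Sigma^{q+(q)}(\dots)$. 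Second, the composite of two transition maps does not kill that summand; rather, each transition map factors through $\bigoplus_{p'\le q+k}\Sigma^{p'+(q)}\EM\Lambda\otimes\hmlg_{2p'}(\W)$, and this factorization is exactly what justifies your cofinality comparison with the telescope of split inclusions.
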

\begin{proof}
    By the definition of $\MSL_\Lambda[\eta^{-1}]$, the multiplication by the $q$-th power of the first motivic Hopf element induces an equivalence on $\MSL_\Lambda[\eta^{-1}]$. 
    Taking the induced morphism on the $q$-th slices induces the equivalence \[\Sigma^{(q)}\slice_0(\MSL_\Lambda[\eta^{-1}])\cong\slice_{q}(\Sigma^{(q)}\MSL_\Lambda[\eta^{-1}])\xrightarrow[\slice_q(\eta^{q})]{\simeq}\slice_{q}(\MSL_\Lambda[\eta^{-1}]).\] 
    Therefore, it is enough to prove the claim for $q=0$. 
    Since slices commute with (homotopy) colimits by \cite[Corollary 4.5]{SpiMGL1}, then
    \begin{gather*} \slice_0(\MSL_\Lambda[\eta^{-1}])=\slice_0(\colim_n \Sigma^{(-n)}\MSL_\Lambda)\cong \colim_n \Sigma^{(-n)} \slice_n(\MSL_\Lambda)\cong \\ \cong\colim_{n} \Bigl(\Sigma^{n}\EM\Lambda\otimes \cyc_{2n}(\W)\oplus \bigoplus_{p=1}^{n} \Sigma^{n-p}\EM\Lambda\otimes\hmlg_{2n-2p}(\W)\Bigr),
    \end{gather*}
    and it remains to identify morphisms in this direct system. 
    This has been done in Lemma~\ref{lemma_eta_action} (up to appropriate $\Gm$-suspensions). 
    It follows that the desired colimit decomposes into the infinite direct sum of the colimits of the following form: \[\Sigma^{s}\EM\Lambda\otimes\cyc_{2s}(\W)\xrightarrow{\Sigma^{s}\EM\Lambda\otimes\mathrm{pr}_{2s}}\Sigma^{s}\EM\Lambda\otimes\hmlg_{2s}(\W)\xrightarrow{\id}\Sigma^{s}\EM\Lambda\otimes\hmlg_{2s}(\W)\xrightarrow{\id}\cdots\] 
    Each of them obviously gives an additional summand $\Sigma^{s}\EM\Lambda\otimes\hmlg_{2s}(\W)$. 
    This concludes the proof.
\end{proof}
\begin{remark}
    In fact, all the above results can be easily generalized to an arbitrary compatible pair $(S,\Lambda)$ using cellularity of the cobordism spectra $\MWL$ and $\MSL$ (see \cite[Corollary 2.17 and Remark 2.18]{RSO19}).
\end{remark}
\subsection{Comparison with very effective hermitian $K$-theory}\label{comp_slices_msl_kq}
Let $\kq$ denote the very effective cover of the hermitian $K$-theory spectrum $\mathbf{KQ}$. 
By construction, there is a canonical morphism $\kq\to\mathbf{KQ}$, even over fields of characteristic $2$, or arbitrary Dedekind domains. 
The computation of the slices of $\kq$  obtained in \cite{very_eff_KQ} (see also \cite{KRO_hermitian_k_theory_char2} for the case of arbitrary characteristic) will be restated in convenient terms, in order to compare them easily with the slices of $\MSL_\Lambda$. 
Consider the Wood cofiber sequence for $\kq$ (see \cite[Proposition 2.11]{very_eff_KQ} and \cite[Theorem 4.3]{KRO_hermitian_k_theory_char2} in characteristic $2$): 
\begin{equation}   \Sigma^{(1)}\kq\xrightarrow{\eta}\kq\xrightarrow{\mathrm{forg}}\kgl \xrightarrow{\mathrm{hyp}}\Sigma^{1+(1)}\kq
\end{equation}
It yields the morphism $\Sigma^{1+(1)}\mathrm{forg}\circ\mathrm{hyp}\colon\kgl\to\Sigma^{1+(1)}\kgl$, which is an analogue of $\delta\colon \MWL\to \Sigma^{1+(1)}\MWL$ in this case. 
The square of this morphism is zero in the sense that \[\Sigma^{1+(1)}(\Sigma^{1+(1)}\mathrm{forg}\circ\mathrm{hyp})\circ (\Sigma^{1+(1)}\mathrm{forg}\circ\mathrm{hyp})=0.\] 
Hence it induces a chain complex of abelian groups 
\[ \cdots\to\pi_{p+1+(q+1)}(\kgl)\to\pi_{p+(q)}(\kgl)\to\pi_{p-1+(q-1)}(\kgl)\to\cdots.\]
Restricting attention to the geometric diagonal $\pi_{\star+(\star)}(\kgl)\cong\Z[\beta]$ defines the following chain complex 
\[\cdots\to\Z\{\beta^{s+1}\}\to\Z\{\beta^s\}\to\Z\{\beta^{s-1}\}\to\cdots\] 
which is indexed by powers of the Bott periodicity class $\beta$.
It does not depend on the chosen base field. 
In fact, for any field it identifies with the analogous chain complex in topology obtained from the connective complex $K$-theory spectrum $\mathbf{ku}$.
\begin{notation}
    Let $\cyc_{2s}(\Z[\beta])$ (resp. $B_{2s}(\Z[\beta])$, resp. $\hmlg_{2s}(\Z[\beta])$) denote the group of cycles (resp. boundaries, resp. homology) of the chain complex $(\Z[\beta],(\Sigma^{1+(1)}\mathrm{forg}\circ\mathrm{hyp})_*)$ in the spot corresponding to $\Z\{\beta^s\}$. 
    By convention, these groups are trivial for odd indices: $\cyc_{2s+1}(\Z[\beta])=0$, $B_{2s+1}(\Z[\beta])=0$, $\hmlg_{2s+1}(\Z[\beta])=0$.
\end{notation}
\begin{theorem}
    The slices of the very effective hermitian $K$-theory spectrum $\kq$ are given by \[ \slice_q(\kq)\cong\Sigma^{q+(q)}\EM\Z\otimes\cyc_{2q}(\Z[\beta])\oplus \bigoplus_{p=1}^q \Sigma^{q-p+(q)}\EM\Z\otimes\hmlg_{2q-2p}(\Z[\beta]). \]
\end{theorem}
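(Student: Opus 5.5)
The plan is to run the same inductive argument as in the proof of Theorem~\ref{slices_MSL}, with the Wood cofiber sequence for $\kq$ replacing Theorem~\ref{MWL_cofib}(1). We also need a strengthened induction hypothesis analogous to part (2) of that theorem: the map $\slice_q(\kq)\to\slice_q(\kgl)\cong\Sigma^{q+(q)}\EM\Z$ is $\Sigma^{q+(q)}(\EM\Z\otimes\mathrm{inc}\ \partial\ 0)$, with $\mathrm{inc}\colon\cyc_{2q}(\Z[\beta])\hookrightarrow\Z\{\beta^q\}$ and $\partial$ as in Lemma~\ref{lemma_EM_map}.

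Applying $\slice_q$ to the Wood sequence gives a cofiber sequence
\[\slice_q(\kq)\to\slice_q(\kgl)\xrightarrow{\slice_q(\mathrm{hyp})}\Sigma^{1+(1)}\slice_{q-1}(\kq).\]
The base case $q=0$ uses very effectivity of $\kq$, so $\slice_{-1}(\kq)=0$ and $\slice_0(\kq)\cong\slice_0(\kgl)\cong\EM\Z$, with $\cyc_0(\Z[\beta])=\Z$.

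For the induction step, write $\slice_q(\mathrm{hyp})$ as a column $(\phi\ \psi_1\ \cdots\ \psi_{q-1})^{\mathrm{tr}}$ into the summands given by the induction hypothesis. Each $\psi_p$ lies in $[\EM\Z,\Sigma^{-p}\EM\Z\otimes\hmlg_{2q-2-2p}(\Z[\beta])]$, which vanishes for $p>0$ by Lemma~\ref{lemma_hom_(s,0)}. The remaining component $\phi$ lands in $\Sigma^{q+(q)}\EM\Z\otimes\cyc_{2q-2}(\Z[\beta])$. Composing with the injection into $\Sigma^{q+(q)}\EM\Z\otimes\Z\{\beta^{q-1}\}$, which is the part of the induction hypothesis that identifies $\slice_{q-1}(\kq)\to\slice_{q-1}(\kgl)$, gives $\slice_q(\Sigma^{1+(1)}\mathrm{forg}\circ\mathrm{hyp})$. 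Since maps $\EM\Z\to\EM\Z$ are determined by their effect on $\pi_{0}$, this composite is $\EM\Z$ tensored with the differential of the chain complex on $\Z\{\beta^q\}$. Hence $\phi=\Sigma^{q+(q)}\EM\Z\otimes\iota^*(d_{2q})$, where $d_{2q}$ denotes that differential. Lemma~\ref{lemma_EM_map} then produces both the stated splitting of $\slice_q(\kq)$ and the refined description of $\slice_q(\kq)\to\slice_q(\kgl)$, which closes the induction.

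Two points need care. First, the identification of $\slice_q(\kgl)\to\Sigma^{2+(2)}\slice_{q-2}(\kgl)$ with the geometric-diagonal differential is justified because all objects are $\Sigma^{q+(q)}\EM\Z$ and maps between them are detected on $\pi_{q+(q)}$, which is the geometric diagonal. Second, and this is the main obstacle, one must know the chain complex itself, i.e.\ compute $\Sigma^{1+(1)}\mathrm{forg}\circ\mathrm{hyp}$ on $\beta^s$. I would do this by complex realization (or base change from $\Z$), reducing to the classical composite $\mathbf{ku}\to\Sigma^2\mathbf{ku}$ coming from the Wood sequence. That composite acts as $\beta^s\mapsto(1+(-1)^{s})\beta^{s-1}$ up to sign and a parity shift of the twist by $\psi^{-1}$, so the homology is $\Z/2$ or $0$ in a $4$-periodic pattern. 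If realization is unavailable, I would instead pin down the differential by comparing the resulting answer with the explicit slices of $\kq$ from \cite{very_eff_KQ}. The theorem only requires consistency of the abstract form, which the induction already gives.
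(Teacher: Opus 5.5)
Your proposal is correct and matches the paper's argument: the paper proves this by rerunning the induction from Theorem~\ref{slices_MSL}, including the strengthened hypothesis describing $\slice_q(\kq)\to\slice_q(\kgl)$, with the Wood cofiber sequence in place of Theorem~\ref{MWL_cofib}(1). Two small points: the target of $\slice_q(\Sigma^{1+(1)}\mathrm{forg}\circ\mathrm{hyp})$ is $\Sigma^{1+(1)}\slice_{q-1}(\kgl)$, not $\Sigma^{2+(2)}\slice_{q-2}(\kgl)$; and, as you note yourself, the chain complex need not be computed explicitly, since the statement is phrased in terms of its abstract cycles and homology.
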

\begin{proof}
    The same arguments as in the proof of Theorem \ref{slices_MSL}, using the Wood cofiber sequence instead of the cofiber sequence (1) of Theorem \ref{MWL_cofib}, show the claim.
\end{proof}
\begin{remark}
    It is easy to compute the groups $\cyc_{2s}(\Z[\beta])$ and $\hmlg_{2s}(\Z[\beta])$ explicitly. 
    Indeed, the group of cycles is trivial if $s$ is odd and isomorphic to $\Z$ if $s$ is even. 
    In turn, the respective homology group is zero if $s$ is odd and isomorphic to $\Z/2$ if $s$ is even, since the dimension of the hyperbolic plane is $2$. 
    Combining this computation with the previous theorem, one obtains \cite[Theorem 3.2]{very_eff_KQ} and \cite[Corollary 4.4]{KRO_hermitian_k_theory_char2}.
\end{remark}
An explicit verification shows that the map $f\colon\W_{2*}\to\Z[\beta]$ defined in Lemma \ref{lemma_map_s_mwl_to_kgl} induces a homomorphism of chain complexes as follows:
\begin{center}
\begin{tikzcd}
\cdots \ar[r] & \W_{2q+2} \ar[r] \ar[d,"f_{2q+2}"] & \W_{2q} \ar[r] \ar[d,"f_{2q}"] & \W_{2q-2} \ar[r] \ar[d,"f_{2q-2}"] & \cdots \\ \cdots \ar[r] & \Z\{\beta^{q+1}\} \ar[r] & \Z\{\beta^q\} \ar[r] & \Z\{\beta^{q-1}\} \ar[r] & \cdots.
\end{tikzcd}
\end{center}

\begin{notation}\label{not.cyc-hom-f}
    The associated homomorphisms between cycle and homology groups are denoted \[\cyc_{2q}(f)\colon\cyc_{2q}(\W)\to\cyc_{2q}(\Z[\beta]) \quad \mathrm{and} \quad \hmlg_{2q}(f)\colon\hmlg_{2q}(\W)\to\hmlg_{2q}(\Z[\beta]).\]
\end{notation}
\begin{proposition}\label{prop:slices-msl-kq}
    The map on the $q$-th slices $\slice_q(\MSL_\Lambda)\to \slice_q(\kq_\Lambda)$ 
    \begin{enumerate}
        \item restricts to the map
        \[\Sigma^{q+(q)}\EM\Lambda\otimes\cyc_{2q}(f)\colon\Sigma^{q+(q)}\EM\Lambda\otimes\cyc_{2q}(\W)\to\Sigma^{q+(q)}\EM\Lambda\otimes\cyc_{2q}(\Z[\beta]),\] 
        where the homomorphism $\cyc_{2q}(f)\colon\cyc_{2q}(\W)\to\cyc_{2q}(\Z[\beta])$ is discussed in Notation~\ref{not.cyc-hom-f};
        \item restricts to the map 
        \[\Sigma^{q-p+(q)}\EM\Lambda\otimes\hmlg_{2q-2p}(f)\colon\Sigma^{q-p+(q)}\EM\Lambda\otimes\hmlg_{2q-2p}(\W)\to\Sigma^{q-p+(q)}\EM\Lambda\otimes\hmlg_{2q-2p}(\Z[\beta]),\] 
        where the homomorphism $\hmlg_{2q-2p}(f)\colon\hmlg_{2q-2p}(\W)\to\hmlg_{2q-2p}(\Z[\beta])$ is discussed in Notation~\ref{not.cyc-hom-f};
        \item restricts to the zero map on other direct summands.
    \end{enumerate}
\end{proposition}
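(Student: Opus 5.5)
We argue by induction on $q$, running the inductive proof of Theorem~\ref{slices_MSL} in parallel for $\MSL_\Lambda\to\MWL_\Lambda$ and for $\kq_\Lambda\to\kgl_\Lambda$. The orientation $\MSL\to\mathbf{KQ}$ induces a map from the cofiber sequence of Theorem~\ref{MWL_cofib}(1) to the Wood sequence. Since $\MSL$ and $\MWL$ are very effective, this factors through a map of cofiber sequences
\[(\Sigma^{(1)}\MSL\to\MSL\to\MWL\to\Sigma^{1+(1)}\MSL)\longrightarrow(\Sigma^{(1)}\kq\to\kq\to\kgl\to\Sigma^{1+(1)}\kq).\]
The delicate point is compatibility of the right-hand square with $d$ and $\mathrm{hyp}$, because the dotted map is only a filler. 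We use the universal property of very effective covers: maps from the very effective spectrum $\Sigma^{(1)}\MSL$ into $\Sigma^{(1)}\mathbf{KQ}$ lift uniquely to $\Sigma^{(1)}\kq$. We then choose the filler $\MWL\to\kgl$ so that the whole ladder commutes. Its restriction along $\mathrm{forg}'$ is still compatible with the orientation of $\kgl$, so Lemma~\ref{lemma_map_s_mwl_to_kgl} applies, and on $q$-th slices it is $\Sigma^{q+(q)}\EM\Lambda\otimes f_{2q}$. Applying $\slice_q$ gives a map of cofiber sequences
\[\bigl(\slice_q(\MSL_\Lambda)\to\slice_q(\MWL_\Lambda)\xrightarrow{\slice_q(d)}\Sigma^{1+(1)}\slice_{q-1}(\MSL_\Lambda)\bigr)\longrightarrow\bigl(\slice_q(\kq_\Lambda)\to\slice_q(\kgl_\Lambda)\to\Sigma^{1+(1)}\slice_{q-1}(\kq_\Lambda)\bigr).\]

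Next we decompose the map on fibers. By Lemma~\ref{lemma_hom_(s,0)}, a map $\Sigma^{a+(q)}\EM\Lambda\otimes A\to\Sigma^{b+(q)}\EM\Lambda\otimes B$ vanishes unless $b-a\in\{0,1\}$. Hence the map $\slice_q(\MSL_\Lambda)\to\slice_q(\kq_\Lambda)$ is a matrix with only two kinds of possibly nonzero entries: diagonal entries between summands of the same shift, and entries raising the simplicial shift by one. The diagonal entries are determined by $\Lambda$-linear homomorphisms $\cyc_{2q}(\W)\to\cyc_{2q}(\Z[\beta])$ and $\hmlg_{2q-2p}(\W)\to\hmlg_{2q-2p}(\Z[\beta])$.

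To identify the diagonal entries:
\begin{itemize}
\item The top summand $\Sigma^{q+(q)}\EM\Lambda\otimes\cyc_{2q}(\W)$ maps to $\slice_q(\MWL_\Lambda)$ by $\mathrm{inc}$ (Theorem~\ref{slices_MSL}(2)). On the $\kq$ side the analogous statement holds, and its inclusion is injective on $\pi_{q+(q)}$. Commutativity with $f_{2q}$ then forces this entry to be $\cyc_{2q}(f)$.
\item The $p=1$ summand $\Sigma^{q-1+(q)}\EM\Lambda\otimes\hmlg_{2q-2}(\W)$ is the cokernel part of $\mathrm{fib}(\phi)$. Using Lemma~\ref{lemma_eta_action}(1), it is the image of the $\cyc_{2q-2}$ summand of $\Sigma^{(1)}\slice_{q-1}$ under $\slice_q(\eta)$. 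Since the comparison map commutes with $\eta$, this entry is $\hmlg_{2q-2}(f)$ applied to the induction hypothesis $\cyc_{2q-2}(f)$, after passing to quotients; the quotient map $\mathrm{pr}$ is surjective, so the entry is determined.
\item For $p\ge2$, Lemma~\ref{lemma_eta_action}(2) identifies $N_q$ with the $\eta$-image of $\Sigma^{(1)}$ of the lower summands. Compatibility with $\eta$ and the induction hypothesis give $\hmlg_{2q-2p}(f)$.
\end{itemize}

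The main obstacle is the off-diagonal entries of shift $+1$, for instance $\Sigma^{q-p+(q)}\EM\Lambda\otimes\hmlg\to\Sigma^{q-p+1+(q)}\EM\Lambda\otimes\hmlg$; these correspond to classes in $\Ext_\Lambda$. All $\hmlg$ groups here are $\mathbb{F}_2$-vector spaces and $\cyc$ groups are free, so $\Ext_\Lambda(\hmlg,\hmlg)$ can be nonzero. We would kill these entries with the same $\eta$-compatibility argument. Every summand of shift below $q$ is in the image of $\slice_q(\eta)$ from a summand of $\Sigma^{(1)}\slice_{q-1}$, by the surjection $\mathrm{pr}$ or the identity of Lemma~\ref{lemma_eta_action}. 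The map $\slice_q(\eta)$ is diagonal on both sides. So an off-diagonal entry out of a summand of shift below $q$ is the off-diagonal entry one level down, composed with $\eta$. That lower entry vanishes by induction.

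For the $\cyc_{2q}$ summand, the only target of shift $q+1$ would be beyond the slice, so no off-diagonal entry leaves it. One still has to check that precomposition with $\EM\Lambda\otimes\mathrm{pr}$ is injective on the relevant $\Ext$ group. We expect this to follow from $\cyc\twoheadrightarrow\hmlg$ with free source: the group $\Ext(\cyc,-)$ vanishes, and the long exact sequence for $\mathrm{pr}$ shows $\Ext(\hmlg,-)\to\Ext(\cyc,-)$ is onto zero. If that fails, we would instead compare the connecting maps $\partial$ in Lemma~\ref{lemma_EM_map} directly on both sides.
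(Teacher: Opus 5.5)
Your strategy is what the paper's one-line proof has in mind: you run the induction of Theorem~\ref{slices_MSL} in parallel for $\MSL_\Lambda\to\MWL_\Lambda$ and $\kq_\Lambda\to\kgl_\Lambda$, and you use that the connecting map $\Sigma^{(1)}\slice_{q-1}\to\slice_q$ is $\slice_q(\eta)$. Your identification of the diagonal entries is correct. The top summand is handled via $\slice_q(\kq_\Lambda)\to\slice_q(\kgl_\Lambda)$ on $\pi_{q+(q)}$, and the lower summands via $\eta$ and surjectivity of $\mathrm{pr}$.

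The gap is in part (3), at the entry
\[ o\colon\Sigma^{q-1+(q)}\EM\Lambda\otimes\hmlg_{2q-2}(\W)\to\Sigma^{q+(q)}\EM\Lambda\otimes\cyc_{2q}(\Z[\beta]) \]
out of the $p=1$ summand. Compatibility with $\eta$ only yields $o\circ(\EM\Lambda\otimes\mathrm{pr})=0$, and this is vacuous. Precomposition with $\mathrm{pr}$ sends $\Ext_\Lambda(\hmlg_{2q-2}(\W),-)$ into $\Ext_\Lambda(\cyc_{2q-2}(\W),-)=0$, so it is the zero map, not an injection. Your own exact-sequence remark proves exactly this, so the step you expect is false in general (e.g.\ $\Ext(\Z/2,\Z)\neq 0$).

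This gap propagates. Your $\eta$-argument for the $p\geq 2$ columns imports the degree-$(q-1)$ version of $o$, followed by $\mathrm{pr}$ on the $\kq$ side, so all off-diagonal entries are affected. The fallback via $\partial$ does not close it either. It only relates $o$ to the non-unique maps $\partial$ and $\partial'$ of Lemma~\ref{lemma_EM_map} on the two sides (cf.\ Remark~\ref{rem:splitting_non_unique}), and that relation does not by itself force $o=0$.

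The missing input is a parity observation, which kills all off-diagonal entries at once:
\begin{itemize}
\item By Theorem~\ref{thm_conner_floyd}, $\hmlg_{2n}(\W)=0$ for $n$ odd, and $\cyc_{2s}(\Z[\beta])=\hmlg_{2s}(\Z[\beta])=0$ for $s$ odd.
\item Hence every nonzero summand of $\slice_q(\kq_\Lambda)$, and every nonzero $\hmlg$-summand of $\slice_q(\MSL_\Lambda)$, sits at an even simplicial shift.
\item The only possible odd-shift summand of $\slice_q(\MSL_\Lambda)$ is $\Sigma^{q+(q)}\EM\Lambda\otimes\cyc_{2q}(\W)$, which has no target at shift $q+1$.
\item By Lemma~\ref{lemma_hom_(s,0)}, only entries between summands of equal shift can then be nonzero, and these are exactly the diagonal ones.
\end{itemize}
In particular $o=0$: $\hmlg_{2q-2}(\W)\neq 0$ forces $q$ odd, and then $\cyc_{2q}(\Z[\beta])=0$. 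This argument does not depend on the chosen splittings, so part (3) holds for any choice of identifications. With it, you need no induction for part (3), and your $\eta$-argument is only required for the diagonal entries, where it works as written.
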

\begin{proof}
    This follows immediately from the definitions, Lemma \ref{lemma_map_s_mwl_to_kgl} and the chosen approach to compute the slices.
\end{proof}
For the sake of completeness, we formulate the computation of the slices of connective Witt theory $\mathbf{kw}$ in these terms and its comparison with the slices of $\MSL_\Lambda[\eta^{-1}]$. 
\begin{theorem}
    The slices of the connective Witt theory spectrum $\mathbf{kw}$ are given by (cf. \cite[Theorem 3.4]{very_eff_KQ}) 
    \[\slice_q(\mathbf{kw})\cong \bigoplus_{p=0}^\infty \Sigma^{p+(q)}\EM\Z\otimes\hmlg_{2p}(\Z[\beta]).\] 
    Moreover, for every nonnegative integer $q$, the canonical morphism $\slice_q(\MSL_\Lambda[\eta^{-1}])\to\slice_q(\mathbf{kw}_\Lambda)$ is given by the direct sum of the following maps: 
    \[\Sigma^{p+(q)}\EM\Lambda\otimes \hmlg_{2p}(f)\colon\Sigma^{p+(q)}\EM\Lambda\otimes\hmlg_{2p}(\W)\to\Sigma^{p+(q)}\EM\Lambda\otimes\hmlg_{2p}(\Z[\beta]).\]
\end{theorem}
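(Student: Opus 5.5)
The plan is to imitate the proof of Theorem~\ref{slice_MSL_eta_inv} with $\kq$ in place of $\MSL_\Lambda$. Connective Witt theory can be written as the $\eta$-periodization $\mathbf{kw}\simeq\kq[\eta^{-1}]$, i.e.\ the colimit of $\kq\xrightarrow{\Sigma^{(-1)}\eta}\Sigma^{(-1)}\kq\to\cdots$. This is the standard description of $\mathbf{kw}$ used in \cite{very_eff_KQ}; if one prefers another definition of $\mathbf{kw}$, one first identifies it with this colimit. Multiplication by $\eta^q$ gives $\Sigma^{(q)}\slice_0(\mathbf{kw})\simeq\slice_q(\mathbf{kw})$. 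It therefore suffices to treat $q=0$, and by \cite[Corollary 4.5]{SpiMGL1} we have $\slice_0(\mathbf{kw})\cong\colim_n\Sigma^{(-n)}\slice_n(\kq)$.

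The key step is the analogue of Lemma~\ref{lemma_eta_action} for $\kq$. The slices of $\kq$ were computed above by the same induction as in Theorem~\ref{slices_MSL}, using the Wood cofiber sequence. The analogue of part (2) of Theorem~\ref{slices_MSL} then holds verbatim, with $\kgl$ replacing $\MWL$ and $\Sigma^{1+(1)}\mathrm{forg}\circ\mathrm{hyp}$ replacing $\delta$. Consequently $\slice_q(\eta)$ does the following:
\begin{enumerate}
\item it maps $\Sigma^{q-1+(q)}\EM\Z\otimes\cyc_{2q-2}(\Z[\beta])$ to $\Sigma^{q-1+(q)}\EM\Z\otimes\hmlg_{2q-2}(\Z[\beta])$ via the quotient map;
\item it is the identity on the remaining $\hmlg$-summands;
\item it is zero on all other summands, by Lemma~\ref{lemma_hom_(s,0)}.
\end{enumerate}
Passing to the colimit, each telescope $\cyc_{2s}\to\hmlg_{2s}\xrightarrow{\id}\hmlg_{2s}\to\cdots$ contributes one summand $\Sigma^{s}\EM\Z\otimes\hmlg_{2s}(\Z[\beta])$. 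This yields the stated formula; by the Remark after the $\kq$-slice theorem, it recovers \cite[Theorem 3.4]{very_eff_KQ}.

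For the comparison statement, the ring map $\MSL\to\kq$ commutes with multiplication by $\eta$, so it induces a map of telescopes $\MSL_\Lambda[\eta^{-1}]\to\mathbf{kw}_\Lambda$. On slices this is the colimit of the maps $\Sigma^{(-n)}\slice_n(\MSL_\Lambda)\to\Sigma^{(-n)}\slice_n(\kq_\Lambda)$, which Proposition~\ref{prop:slices-msl-kq} describes summand by summand:
\begin{enumerate}
\item $\cyc(f)$ on the cycle summands;
\item $\hmlg(f)$ on the homology summands;
\item zero between non-corresponding summands.
\end{enumerate}
On the $s$-th telescope, the $\hmlg_{2s}$-stages map by $\hmlg_{2s}(f)$, so the induced map on colimits is $\Sigma^{p+(q)}\EM\Lambda\otimes\hmlg_{2p}(f)$. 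The cross terms vanish because they vanish at every stage. Compatibility with the $\eta^q$-shift, used to pass from $q=0$ to general $q$, is automatic because the comparison map is $\eta$-linear.

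I expect the main obstacle to be a subtlety rather than a computation. A colimit in the homotopy category of a sequence of maps of direct sums need not be determined by the stagewise maps alone, because of $\lim^1$/phantom issues and the non-uniqueness of the splittings in Remark~\ref{rem:splitting_non_unique}. To handle this, I would work in the stable $\infty$-category of $\EM\Lambda$-modules. There each stage is a finite sum of shifted Eilenberg--MacLane modules, and the transition maps are diagonal up to homotopy. By Lemma~\ref{lemma_hom_(s,0)} there are no higher homotopies to control between these objects in the relevant degrees, since the negative Ext groups vanish and the Ext$^1$ terms do not interfere. So the telescope is equivalent to the sum of the individual telescopes, and the comparison map is identified on the colimit.

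I would also check that the splittings chosen for $\MSL$ and for $\kq$ are compatible with the comparison map, so that the summand-wise description in Proposition~\ref{prop:slices-msl-kq} really holds at every stage simultaneously. Any discrepancy there lies in groups $[\EM A,\Sigma^{-p}\EM B]$ with $p>0$, which vanish by Lemma~\ref{lemma_hom_(s,0)}.
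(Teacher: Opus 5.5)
Your proposal is correct and follows the paper's proof: you compute $\slice_q(\mathbf{kw})$ from $\mathbf{kw}=\kq[\eta^{-1}]$ by rerunning Lemma~\ref{lemma_eta_action} and Theorem~\ref{slice_MSL_eta_inv} for $\kq$, and then read off the comparison map stage by stage from Proposition~\ref{prop:slices-msl-kq}. One correction to your side remarks: different splittings differ by $\Ext$-classes $\EM A\to\Sigma^{1}\EM B$ (compare Remark~\ref{rem:choice_s3}), not by negative shifts, and they are harmless here only because $\hmlg_{2n}(\W)$, $\hmlg_{2n}(\Z[\beta])$ and $\cyc_{2n}(\Z[\beta])$ vanish for odd $n$, so every such cross term into $\slice_q(\kq_\Lambda)$ has zero source or target (similarly, the map on colimits is pinned down because the relevant $\lim^1$ of finite groups vanishes, not because there are no higher homotopies).
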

\begin{proof}
    The same arguments as in Lemma \ref{lemma_eta_action} and Theorem \ref{slice_MSL_eta_inv} compute the slices of $\mathbf{kw}=\kq[\eta^{-1}]$. 
    The last claim follows from Proposition \ref{prop:slices-msl-kq}.
\end{proof}
\section{First slice differentials for some cobordism spectra}\label{section:4}
The purpose of this section is to determine a few of the first slice differentials for $\MSL$. 
Starting from this section, we assume that the characteristic of the chosen base field $F$ is different from $2$.
The case of fields $F$ with $\chark(F)=2$ is considered in Appendix \ref{appendix_char2}.
We proceed backwards along the maps
$\MSL\to\MSL\to \MGL$, beginning with the identification of all first slice differentials for $\MGL_\Lambda$, which is based on the following lemma.
\begin{lemma}\label{lemma:d1-integralsummands}
The group of maps of the form $\EM\Lambda\to\Sigma^{2+(1)}\EM\Lambda$ is isomorphic to $\mathbb{F}_2\{\partial^2_\infty\mathsf{Sq}^2\mathrm{pr}^\infty_2\}$.
\end{lemma}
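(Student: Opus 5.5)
We compute the group $[\EM\Lambda,\Sigma^{2+(1)}\EM\Lambda]$ in $\SHF$, which is the motivic cohomology of the motivic Eilenberg--MacLane spectrum $\EM\Lambda$ in bidegree $(3,1)$, with $\Lambda$-coefficients. The plan is to pass to finite coefficients via the universal coefficient sequence and reduce to the known structure of the mod-$\ell$ motivic Steenrod algebra, which is available for $\ell\neq e$ by \cite{Voevodsky_Steenrod, HKO_Steenrod}.

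\emph{Step 1 (rationally trivial).} First I would show that $[\EM\Lambda,\Sigma^{2+(1)}\EM\Lambda]$ is torsion. Rationally, $\EM\Lambda\otimes\QQ=\EM\QQ$, and the rational motivic Steenrod algebra is $\QQ$ concentrated in degree $0$, since $\EM\QQ\otimes\EM\QQ\simeq\EM\QQ$. So $[\EM\Lambda,\Sigma^{2+(1)}\EM\QQ]\cong[\EM\QQ,\Sigma^{2+(1)}\EM\QQ]=0$.

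\emph{Step 2 (Bockstein sequences).} From the cofiber sequence $\EM\Lambda\to\EM\QQ\to\EM\QQ/\Lambda$ and Step 1, I would identify the group with a quotient of $[\EM\Lambda,\Sigma^{1+(1)}\EM\QQ/\Lambda]$. This is a colimit over $n$ of the groups $[\EM\Lambda,\Sigma^{1+(1)}\EM\Z/\ell^n]$ for primes $\ell\neq e$. Equivalently, each element is $\partial^{\ell^n}_\infty$ applied to a class in $[\EM\Lambda,\Sigma^{1+(1)}\EM\Z/\ell^n]$.

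\emph{Step 3 (the $\ell$-torsion).} Next I would compute the mod-$\ell$ cohomology of $\EM\Lambda$ in bidegree $(2,1)$ and $(3,1)$, namely the $\EM\Z/\ell$-module maps out of $\EM\Z/\ell\otimes\EM\Lambda$. Using the Voevodsky/Hoyois--Kelly--\O stv\ae r description of the mod-$\ell$ Steenrod algebra, $\EM\Z/\ell^\ast\EM\Z$ is the quotient of the Steenrod algebra by the left ideal generated by the Bockstein $\beta=\Sq^1$. It is free over $h^{\ast,\ast}$ on admissible monomials not ending in $\beta$.

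In weight $1$, the admissible monomials have bidegrees $(0,0)$, $\beta\colon(1,0)$, $\Sq^2\colon(2,1)$ (only for $\ell=2$), $\beta\Sq^2=\Sq^3\colon(3,1)$ (only for $\ell=2$), and, for odd $\ell$, $\mathsf{P}^1$ of bidegree $(2\ell-2,\ell-1)$, which has weight $\geq2$ and so does not appear. Hence in bidegree $(2,1)$ the mod-$\ell$ cohomology consists of coefficient terms $h^{2-i,1-j}$ times monomials. The only contributions are:
\begin{itemize}[label={}]
\item $\Sq^2\cdot h^{0,0}$ for $\ell=2$;
\item $1\cdot h^{2,1}=0$;
\item $\beta\cdot h^{1,1}$, which is killed because the basis excludes words ending in $\beta$.
\end{itemize}
So only $\Sq^2\pr^\infty_2$ survives, and only at $\ell=2$. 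Mod $4$ and higher powers, I would check via the Bockstein long exact sequence that $\Sq^2$ does not lift to a $\Z/4$-class. Alternatively, $\partial^2_\infty\Sq^2\pr$ is the only candidate, and it satisfies $2\cdot\partial^2_\infty(\dots)=0$. Coefficients $h^{1,1}=F^\times/\ell$ could a priori contribute through $\partial$ of $\{a\}\cdot\pr$, but $\partial^\ell_\infty(\{a\}\pr)=\{a\}\partial\pr=0$, since $\partial^\ell_\infty\pr^\infty_\ell=0$.

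\emph{Step 4 (nonvanishing).} It remains to see that $\partial^2_\infty\Sq^2\pr^\infty_2\neq0$. If it were zero, then $\Sq^2\pr^\infty_2$ would lift to $\EM\Z$, and reducing mod $2$ would give $\Sq^2$ lifting integrally. But then $\Sq^1\Sq^2=\Sq^3$ would vanish on integral classes, whereas $\Sq^3\pr^\infty_2$ is a nonzero basis element of $\EM\Z/2^\ast\EM\Z$.

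The main obstacle I expect is bookkeeping the coefficient contributions $h^{p,q}$ in the $\ell$-adic colimit, in particular the $\beta\cdot h^{1,1}$ and $\tau$-type terms for $\ell=2$. This also needs care at $\ell=2$ in characteristic not $2$, which the section assumes.
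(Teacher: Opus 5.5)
\textbf{There is a genuine gap in Step 2.} Your finite-coefficient input is correct. From $(\EM\Z/\ell)^{*,*}(\EM\Z)\cong\mathcal{A}/\mathcal{A}\beta$ you get $[\EM\Lambda,\Sigma^{1+(1)}\EM\Z/2]\cong\Z/2\{\Sq^2\pr^\infty_2\}$ and $[\EM\Lambda,\Sigma^{2+(1)}\EM\Z/2]\cong\Z/2\{\Sq^3\pr^\infty_2\}$. Both groups vanish for odd $\ell\neq e$, and Step 4 correctly shows $x:=\partial^2_\infty\Sq^2\pr^\infty_2\neq 0$.

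The problem is the passage from finite coefficients back to $\Lambda$ in Step 2, and likewise the inference ``zero after rationalization, hence torsion'' in Step 1. The spectrum $\EM\Lambda$ is not compact in $\SHF$. So nothing guarantees that $[\EM\Lambda,\Sigma^{1+(1)}(-)]$ commutes with $\colim_n\EM\Z/\ell^n$, or with the infinite sum over $\ell$ making up $\EM(\QQ/\Lambda)$.

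This is not a mere formality. What your mod-$\ell$ results honestly give, via the Bockstein sequences for $\ell\colon\EM\Lambda\to\EM\Lambda$, is the following for $G=[\EM\Lambda,\Sigma^{2+(1)}\EM\Lambda]$: it is uniquely $\ell$-divisible for odd $\ell$, ${}_2G=\Z/2\{x\}$, and $G/2G\cong\Z/2$ is spanned by $x$. Hence you only get $G\cong\Z/2\{x\}\oplus V$ with $V$ some $\QQ$-vector space. The vanishing of $[\EM\Lambda,\Sigma^{2+(1)}\EM\QQ]$ cannot exclude $V\neq 0$, for the same reason: $[\EM\Lambda,-]$ need not commute with the telescope computing $\EM\QQ$.

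\textbf{The missing idea is a reduction to maps out of compact objects, which is what the paper's proof does.} Use that $\EM\Lambda\simeq\slice_0\sph_\Lambda$ and that $\Sigma^{s+(1)}\EM\Lambda$ is right orthogonal to $\SH^{\mathrm{eff}}(2)$. Then the cofiber sequences $\effcover_{q+1}\sph\to\effcover_q\sph\to\slice_q\sph$ for $q=0,1$, together with $H^{2,1}=H^{3,1}=0$, show the following. Precomposition with the first slice differential $\Sq^2\pr^\infty_2\colon\slice_0\sph\to\Sigma\slice_1\sph\simeq\Sigma^{1+(1)}\EM\Lambda/2$ induces an isomorphism $[\Sigma^{1+(1)}\EM\Lambda/2,\Sigma^{2+(1)}\EM\Lambda]\cong[\EM\Lambda,\Sigma^{2+(1)}\EM\Lambda]$. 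The left-hand group has weight zero and equals $[\sph/2,\Sigma\EM\Lambda]\cong\Ext(\Z/2,\Lambda)\cong\Z/2\{\partial^2_\infty\}$, which gives the lemma directly.

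The same argument also justifies your colimit claim. It exhibits $[\EM\Lambda,\Sigma^{1+(1)}\EM A]$ as the cokernel of a map $H^{1,1}(F;A)\to\Hom(\Z/2,A)$, and both of these commute with filtered colimits in $A$. Once this reduction is in place, the full mod-$\ell$ Steenrod algebra is not needed beyond the known formula for $\dd_1$ on $\slice_0\sph$.

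A minor point: in Step 1 the graded endomorphisms of $\EM\QQ$ are $H^{*,*}(F;\QQ)$, not just $\QQ$. What you actually need, $H^{2,1}(F;\QQ)=H^{3,1}(F;\QQ)=0$, is true.
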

\begin{proof}
    The cofiber sequences 
    \[ \effcover_{q+1} \sph \to \effcover_q\sph \to \slice_q \sph \to \Sigma \effcover_{g+1} \sph\]
    for $q\in \{0,1\}$, the vanishing $[\Sigma^q\sph,\Sigma^{2+(1)}\EM\Lambda]=0$ and the vanishing $[\effcover_2\sph,\Sigma^{2+(1)}\EM\Lambda]=0$ imply that the first slice differential $\slice_0\sph \to \Sigma\slice_1\sph$ induces an isomorphism
    \[ [\slice_0\sph,\Sigma^{2+(1)}\EM\Lambda] \cong [\Sigma\slice_1\sph,\Sigma^{2+(1)}\EM\Lambda].\]
    Since $\slice_0\sph\cong \EM\Z$ by \cite[Theorem B.4]{BH-Norms}, $\slice_1\sph_\Lambda\cong \Sigma^{(1)}\EM\Lambda/2$ and the first slice differential $\slice_0(\sph)\to \Sigma\slice_1(\sph)$ equals $\mathsf{Sq}^2\mathrm{pr}^\infty_2$ by \cite[Lemma 4.1]{RSO19}, the result follows with Lemma~\ref{lemma_hom_(s,0)}.
\end{proof}

We use the usual presentation of the Lazard ring
\[ \MU_*\cong\Z[a_1,a_2,\dots] \]
where the first polynomial generators $a_i$ are given by the following linear combinations of the coefficients $a_{ij}$ of the universal formal group law: $a_1=a_{11}$, $a_2=a_{12}$, $a_3=a_{22}-a_{13}$, $a_4=a_{14}$.
\begin{theorem}\label{thm.firstslicediffMGL}
    The restriction of the slice differential $\dd_1^{\MGL_\Lambda}(q)\colon\slice_q(\MGL_\Lambda)\to \Sigma\slice_{q+1}(\MGL_\Lambda)$ to 
    \[ \Sigma^{q+(q)}\EM\Lambda\{a_I\}\to\Sigma^{q+2+(q+1)}\EM\Lambda\{a_J\}\]
    is given by $\partial^2_\infty\mathsf{Sq}^2\mathrm{pr}^\infty_2$ if $a_J=a_1a_I$ and is trivial otherwise.
\end{theorem}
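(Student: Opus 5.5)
By Lemma~\ref{lemma:d1-integralsummands}, the restriction of $\dd_1^{\MGL_\Lambda}(q)$ to the summands indexed by $a_I$ and $a_J$ (after desuspending by $q+(q)$) is a map $\EM\Lambda\to\Sigma^{2+(1)}\EM\Lambda$. It is therefore either $0$ or $\partial^2_\infty\Sq^2\pr^\infty_2$, and the whole task is to determine one coefficient $c_{I,J}\in\mathbb{F}_2$ for each pair of monomials. To detect this coefficient I will compose with $\pr^\infty_2$ and use the fact that $\pr^\infty_2\partial^2_\infty\Sq^2\pr^\infty_2=\Sq^1\Sq^2\pr^\infty_2=\Sq^3\pr^\infty_2$ is nonzero in the mod-$2$ motivic Steenrod algebra. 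So it suffices to compute the mod-$2$ reduction of the differential, that is, the composite
\[
\slice_q(\MGL_\Lambda)\to\Sigma\slice_{q+1}(\MGL_\Lambda)\to\Sigma\slice_{q+1}(\MGL)/2 .
\]

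The main input will be multiplicativity. The slice tower of $\MGL$ is multiplicative and $\slice_*(\MGL_\Lambda)\cong\EM\Lambda[a_1,a_2,\dots]$ with the grading as in \eqref{slices_MGL}, so $\dd_1$ is a derivation with respect to the induced pairing. I plan to use this as follows.

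\begin{enumerate}
\item First compute $\dd_1$ on the unit summand $\slice_0(\MGL)=\EM\Lambda\{1\}$. The unit map $\sph\to\MGL$ induces $\slice_*(\sph)\to\slice_*(\MGL)$, and on $\slice_1$ this is $\Sigma^{(1)}\EM\Lambda/2\to\Sigma^{1+(1)}\EM\Lambda\{a_1\}$. This map should be the Bockstein $\partial^2_\infty$: topologically, $\eta$ maps to zero in $\MU_*$, and its representative $\alpha_1$ in the Adams--Novikov $E_2$-term is detected by $a_1$ modulo $2$. Naturality of $\dd_1$ and \cite[Lemma 4.1]{RSO19} then give $\dd_1(1)=\partial^2_\infty\Sq^2\pr^\infty_2$ into the $a_1$-summand. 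Alternatively, $\dd_1(1)$ must be nonzero, because otherwise $\eta$ would survive to $\pi_{1+(1)}$ of $\MGL$-homotopy in slice filtration $1$; but $\eta$ acts as zero on $\MGL$, so $\dd_1(1)$ has to kill the class in $\pi\slice_1$ that $\eta$ would give.
\item Next, show that $\dd_1$ vanishes on the generators $a_i$ for $i\geq2$, and also on $a_1$ itself, modulo the term forced by the derivation rule. Here I would use that $\dd_1(a_i)$ is determined by the action of the motivic Steenrod algebra on $\MGL$, namely by the $\MGL$-Hopf algebroid coaction, which is $\mathbb{F}_2$-linear and given by $\Sq^2$ acting on $H_*(\MU)$. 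Concretely, the composite through mod $2$ is controlled by the dual action of $\Sq^2$ on $H_*(\MGL;\Z/2)\cong H_*[b_1,b_2,\dots]$. For a Hurewicz-generator choice of $a_i$, $\Sq^2$ lowers the degree by one generator step, and a direct check of which $a_J$ appear then gives the answer, including agreement with the specific generators $a_1,\dots,a_4$ listed above.
\item Finally, combine these with the Leibniz rule to get $\dd_1(a_I)=a_1a_I\cdot\partial^2_\infty\Sq^2\pr^\infty_2$. The Cartan formula for the operation $\partial^2_\infty\Sq^2\pr^\infty_2$ on products of integral classes introduces cross terms, but they vanish on integral $\EM\Lambda$-summands because $\Sq^1\pr^\infty_2=0$.
\end{enumerate}

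I expect the main obstacle to be making the derivation property of $\dd_1$ rigorous at the level of maps of $\EM\Lambda$-modules. The cleaner route may be to avoid it altogether by reducing to the sphere. For each monomial $a_I$, choose a map $\Sigma^{2|I|+(|I|)}\sph\to\MGL$ representing $a_I$. By cellularity, this map lifts to $\effcover_{|I|}\MGL$ and induces the inclusion of the $a_I$-summand on $\slice_{|I|}$. Then $\dd_1$ restricted to that summand is the composite of the sphere's first differential (from \cite[Lemma 4.1]{RSO19}) with the module action $\slice_1(\sph)\otimes\slice_{|I|}(\MGL)\to\slice_{|I|+1}(\MGL)$, which is multiplication by $a_1$ on the $\EM\Lambda/2$ factor. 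This reduces everything to step 1, the identification of the unit map on $\slice_1$, and I would pursue this version first.
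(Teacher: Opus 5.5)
Your ``cleaner route'' is the right kind of argument, but it reduces everything to one claim that the proposal never establishes. The claim is that the unit $u\colon\sph\to\MGL_\Lambda$ induces the \emph{nonzero} map $\partial^2_\infty\colon\Sigma^{(1)}\EM\Lambda/2\to\Sigma^{1+(1)}\EM\Lambda\{a_1\}$ on $\slice_1$, or equivalently that $\dd_1^{\MGL_\Lambda}(0)\neq0$. Nothing else you write distinguishes the correct answer from $\dd_1^{\MGL_\Lambda}\equiv0$.

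Neither of your justifications proves this claim. The Adams--Novikov remark (that $\alpha_1$ is the Bockstein of $a_1$ modulo $2$) is only an analogy. To make it an argument you would need a model of $\slice_*\sph$ as the totalization of the $\MGL$-cobar construction in which $\slice_q(u)$ is the coaugmentation, and you neither state nor cite such a compatibility. Your alternative argument fails outright. All $E^1$-terms of $\MGL_\Lambda$ contributing to $\pi_{0+(1)}$ vanish; for instance $\pi_{0+(1)}\slice_1\MGL_\Lambda\cong H^{1,0}=0$. So $\eta\mapsto 0$ is already visible on $E^1$ and forces no differential at all.

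The missing input is the one the paper uses. The orientation $\MGL_\Lambda\to\mathbf{KGL}_\Lambda$ is an isomorphism on $\slice_0$ and $\slice_1$, and $\dd_1^{\mathbf{KGL}}(0)=\partial^2_\infty\Sq^2\pr^\infty_2$ by \cite[Lemma 5.1]{RO16}. Naturality along $u$ together with \cite[Lemma 4.1]{RSO19} gives $\Sigma\slice_1(u)\circ\Sq^2\pr^\infty_2=\partial^2_\infty\Sq^2\pr^\infty_2\neq 0$. Hence $\slice_1(u)\neq0$, and so $\slice_1(u)=\partial^2_\infty$ by Lemma~\ref{lemma_hom_(s,0)}.

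With this supplied, your route is a correct proof that genuinely differs from the paper's. The source of $a_I$ should be $\Sigma^{q+(q)}\sph$, not $\Sigma^{2q+(q)}\sph$, and lifting to $\effcover_q$ needs only $q$-effectivity, not cellularity. The steps are then:
\begin{enumerate}
\item Factor $a_I\colon\Sigma^{q+(q)}\sph\to\MGL_\Lambda$ as $(\cdot a_I)\circ\Sigma^{q+(q)}u$.
\item The $\EM\Lambda$-linear map $\slice_{q+1}(\cdot a_I)$ is determined by its effect on the geometric diagonal. Therefore $\slice_{q+1}(a_I)$ is $\partial^2_\infty$ into the $a_1a_I$-summand.
\item Naturality of $\dd_1$ along $a_I$ then gives the restriction of $\dd_1^{\MGL_\Lambda}(q)$ to the $a_I$-summand as an identity of maps of spectra, over all fields of characteristic $\neq 2$ at once.
\end{enumerate}

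The paper argues on elements instead. Lemma~\ref{lemma:d1-integralsummands} reduces the problem to one coefficient in $\mathbb{F}_2$, which is detected over $\RR$ on $\partial^2_\infty(\tau^3)$. The Leibniz rule \cite[Proposition 2.24]{RSO19} and $\dd_1(a_I)=0$ compute $\dd_1(\partial^2_\infty(\tau^3)a_I)$. A base-change argument (via $\QQ$, $\Z_{(p)}$ and $\mathbb{F}_p$) then extends the result to all fields of characteristic $\neq 2$. Your version avoids both the test element and the base change.

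Steps 2 and 3 of your plan (the Steenrod coaction on $H_*(\MGL;\Z/2)$ and a Cartan-type derivation rule for maps) rest on no available identification of $\dd_1^{\MGL}$, so you should drop them. The mod-$2$ detection also becomes unnecessary.
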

\begin{proof}
    The desired component of the first differential is either trivial or given by $\partial^2_\infty\mathsf{Sq}^2\mathrm{pr}^\infty_2$ according to Lemma~\ref{lemma:d1-integralsummands}.
    Over $\RR$ this component is non-trivial if and only if it maps $\partial^{2}_\infty(\tau^3)\in H^{1,3}(\RR)$ to a non-trivial $2$-torsion element in $H^{4,4}(\RR)\cong\KMil_4(\RR)$, as the following computation shows:
    \[ (\partial^2_\infty\mathsf{Sq}^2\mathrm{pr}^\infty_2)\circ\partial^2_\infty(\tau^3)=\partial^2_\infty\mathsf{Sq}^2\mathsf{Sq}^1(\tau^3)=\partial^2_\infty\mathsf{Sq}^2(\tau^2\rho)=\partial^2_\infty(\tau\rho^3) = \{-1,-1,-1,-1\}\neq 0. \]
    To compute the first slice differential on the elements $\partial^2_\infty(\tau^3)a_I$, apply the Leibniz rule given in \cite[Proposition 2.24]{RSO19} to the multiplicative slice spectral sequence for $\MGL_\Lambda$:
    \[ \dd_1^{\MGL_\Lambda}(\partial^2_\infty(\tau^3) a_I)=\dd_1^{\MGL_\Lambda}(\partial^2_\infty(\tau^3)) a_I+\partial^2_\infty(\tau^3)\dd_1^{\MGL_\Lambda}(a_I)=\dd_1^{\MGL_\Lambda}(\partial^2_\infty(\tau^3)) a_I.\]
    Here the vanishing $\dd_1^{\MGL_\Lambda}(a_I)=0$, which holds for degree reasons, is used. 
    It remains to note that the image of $\partial^2_\infty(\tau^3)$ under the first slice differential is given by $\partial^2_\infty(\tau\rho^3)a_1$. 
    This follows from the computation of $\dd_1^\mathbf{KGL}$ \cite[Lemma 5.1]{RO16} and $\slice_i(\MGL_\Lambda)\cong\slice_i(\mathbf{KGL}_\Lambda)$ for $i=0,1$.

    The motivic Steenrod algebra is suitably invariant under base change, as \cite[Remark 10.27, Theorem 10.26]{Spi_HZ} shows.
    Also the slices of $\MGL_\Lambda$ pull back.
    Hence the identification of the first slice differential extends from $\RR$ to any subring in which $2$ is invertible, such as $\QQ$ and $\Z_{(p)}$ for $p$ an odd prime.
    Further base change to the residue field $\mathbb{F}_p$ of $\Z_{(p)}$ provides the identification for all prime fields of characteristic not $2$. 
    The conclusion over an arbitrary field $F$ of characteristic different from $2$ follows by base change from the prime subfield $F_0\subset F$.
\end{proof}
\begin{notation}
    The resulting map is denoted as follows:
    \[\dd_1^{\MGL_\Lambda}(q)=\partial^2_\infty\mathsf{Sq}^2\mathrm{pr}^\infty_2\otimes a_1\colon\Sigma^{q+(q)}\EM\Lambda\otimes\MU_{2q}\to\Sigma^{q+2+(q+1)}\EM\Lambda\otimes\MU_{2q+2}\]
\end{notation}
Our next goal is to compute the first few $\dd_1$-differentials for $\MWL_\Lambda$. 
The graded group $\W_*$ admits a non-canonical ring structure as a graded polynomial ring 
\[ \W_*\cong\Z[x_1,x_k\,|\,k\geq 3],\ \text{where}\ \mathrm{deg}(x_k)=2 \]
as explained in \cite[Theorem 6.10]{CLP}.
Moreover, the image of $x_1$ in $\MU_2$ is equal to $a_1$. 
However, the inclusion $\W_*\to\MU_*$ does not preserve the ring structure. 
To avoid confusion, we denote the multiplication in $\W_*$ by $*$. 
Since $\W_*\to\MU_*$ is injective, we identify elements of $\W_*$ with their images. 
The following tedious calculations will be useful.
\begin{lemma}\label{lemma:xi_decomposition_ai}
    The generator $x_3\in\W_6$ identifies with $-3a_3+2a_1a_2\in \MU_6$, and the generator $x_4\in\W_8$ identifies with $-a_2^2-2a_4-9a_1a_3+7a_1^2a_2\in \MU_8$.
\end{lemma}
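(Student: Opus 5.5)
The plan is to work entirely inside $\MU_*\otimes\QQ\cong\QQ[m_1,m_2,\dots]$, where $m_i$ are the coefficients of the logarithm of the universal formal group law. Both $\Delta_{2n}$ and $\delta_{2n}$ become explicit differential operators there. The specific generators $x_3,x_4$ will then be pinned down by the defining properties they have in \cite[\S 6]{CLP}.

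\textbf{Step 1: the $a_i$ in terms of the $m_i$.} Since $\MU_*$ is torsion free, an identity in $\MU_*$ may be checked after tensoring with $\QQ$. Expanding $F(x,y)=\exp(\log x+\log y)$ with $\log x=x+\sum m_ix^{i+1}$ gives $a_{ij}$ as polynomials in the $m_i$ in degrees $\le 8$. From these I obtain
\[ a_1=a_{11},\qquad a_2=a_{12},\qquad a_3=a_{22}-a_{13},\qquad a_4=a_{14} \]
as rational polynomials in $m_1,\dots,m_4$. This is routine bookkeeping with the low-order coefficients of $F$.

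\textbf{Step 2: the operations on $\MU_*\otimes\QQ$.} The operation $-c_1(\det\mathrm{EU}^\vee)\in\MU^2(\MU)$ corresponds, via the Landweber--Novikov description, to the operation associated with the first power series in the Chern roots. Rationally it acts on the logarithm coefficients by an explicit formula. The cleanest route is through the Hurewicz map $\MU_*\to H_*(\MU)=\Z[b_1,b_2,\dots]$, where such an operation acts as a derivation twisted by the $[-1]$-series. In the same way, $\Delta=c_1(\det)\cdot c_1(\det^\vee)$ acts as a second-order operator. Using these formulas I would:
\begin{enumerate}
\item compute $\Delta_6\colon\MU_6\to\MU_2$ and $\Delta_8\colon\MU_8\to\MU_4$ on the monomial bases $\{a_3,a_1a_2,a_1^3\}$ and $\{a_4,a_1a_3,a_2^2,a_1^2a_2,a_1^4\}$;
\item determine the kernels $\W_6$ (rank $2$) and $\W_8$ (rank $3$);
\item check that $-3a_3+2a_1a_2$ and $-a_2^2-2a_4-9a_1a_3+7a_1^2a_2$ lie in these kernels.
\end{enumerate}
As a consistency check, the ranks must match $p(n)-p(n-2)$ for the kernel of a surjection, which is guaranteed by \cite[Proposition B.6]{ZolMSL}.

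\textbf{Step 3: singling out $x_3$ and $x_4$ inside the kernels.} The kernel membership only locates $x_k$ up to decomposables in $\W_*$; the precise elements are fixed by the construction in \cite[Theorem 6.10]{CLP}. There $x_k$ is characterized, up to the $*$-decomposables $x_1^{*3}$, $x_1^{*4}$, $x_1*x_3$, by two conditions: a prescribed value of the characteristic number $s_k$, and a prescribed image under $\delta$ (for instance $\delta x_4$ being a specified multiple of $x_3$, and $\delta x_3$ being a specified multiple of $x_1^{*2}$). I would:
\begin{enumerate}
\item compute $s_3$ and $s_4$ of the candidates, using the Newton-polynomial expression of $s_k$ in the $b_i$ or $m_i$;
\item compute $\delta$ of the candidates using Step 2, and compare with the normalization in \cite{CLP};
\item finally, check that the ambiguity by decomposables is resolved. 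This requires translating the $*$-products $x_1*x_1*x_1$ and the like into $\MU_*$, which are not ordinary products since the inclusion $\W_*\to\MU_*$ is not multiplicative.
\end{enumerate}

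\textbf{Main obstacle.} The expected hard part is Step 3. It requires unwinding exactly how \cite{CLP} normalizes $x_k$ and how the twisted product $*$ on $\W_*$ is defined, namely via the projection $\MU_*\to\W_*$ built from the $\CP^1$-bundle or Wall construction. If the characterization there is via explicit manifolds, I would instead compute the Chern numbers of those manifolds (for example, hypersurfaces in products of projective spaces corrected by $\CP^1$-factors). I would then express the resulting classes in the $a_i$ through their $s$-numbers and the full set of Chern numbers in degrees $6$ and $8$, which determine classes in $\MU_*$ uniquely.
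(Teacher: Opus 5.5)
\textbf{There is a genuine gap in Step~3.} The normalizations you plan to use cannot single out the displayed $x_4$, so the last item of your Step~3 cannot succeed.

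The element $x_1^{*4}=a_1^4+16a_1^2a_2+64a_2^2$ lies in $\cyc_8(\W)$, and it is decomposable in $\MU_*$. Hence $\delta(x_1^{*4})=0$ and $s_4(x_1^{*4})=0$. Therefore, for every $k\in\Z$, the element $x_4+k\,x_1^{*4}$:
\begin{itemize}
\item lies in $\W_8$,
\item has the same $\delta$-image and the same Milnor number as $x_4$,
\item together with $x_1,x_3$ is still a system of $*$-polynomial generators, since it differs from $x_4$ by a $*$-decomposable.
\end{itemize}
So conditions of the type in \cite[Theorem 6.10]{CLP} (a value of $s_k$ up to sign, plus a prescribed $\delta$-image) determine $x_4$ only modulo $\Z\,x_1^{*4}$. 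Likewise, $x_3$ is determined only up to sign, as a generator of the rank-one group $\cyc_6(\W)$. For $x_3$ your plan therefore does work, up to that sign convention, and it is a self-contained alternative to quoting an external formula. For $x_4$, however, no computation of $\Delta$, $\delta$ and $s_k$ in $\MU_*\otimes\QQ$ can show that ``the'' generator equals $-a_2^2-2a_4-9a_1a_3+7a_1^2a_2$. That formula records a particular choice, and your argument has no input that fixes it.

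\textbf{How the paper fixes the choice.} It imports explicit formulas from \cite[Proposition 7.1]{Bak_HHA}:
\begin{itemize}
\item $x_3=-a_{22}$;
\item away from $2$, the $\MSU$-generator $y_4=2x_4-x_1*x_3$ equals $-a_{23}+\tfrac32 a_1x_3$.
\end{itemize}
The Milnor numbers $s_4(a_4)=\pm5$ and $s_4(y_4)=\pm20$, together with the Lazard relation $2a_{12}^2+3a_{11}a_{13}+4a_{14}-2a_{23}=0$, pin down $y_4=-2a_{23}+3a_1x_3$ integrally. Then $x_4=\tfrac12(y_4+x_1x_3)$, using $x_1*x_3=x_1x_3$ because $\delta x_3=0$. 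Combined with $2a_{11}a_{12}+3a_{13}-2a_{22}=0$, this gives the stated expressions.

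\textbf{What your steps can establish.} Your Steps~1--2 and the first two items of Step~3 can honestly prove the weaker statement that the displayed elements are \emph{admissible} choices of generators:
\begin{itemize}
\item they lie in $\W_6$ and $\W_8$ (with $m_i=[\mathbb{CP}^i]/(i+1)$ one checks that $c_1^3$, $c_1^4$ and $c_1^2c_2$ vanish on them);
\item they have $s_3=\pm6$ and $s_4=\pm10$;
\item they have the required $\delta$-images, which you still have to verify with your Step~2.
\end{itemize}
If you adopt that reading, say explicitly that the lemma \emph{fixes} a choice of generators, and drop the claim that the ambiguity is resolved. Otherwise you need an external normalization such as Baker's.
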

\begin{proof}
    The element $x_3$ is equal to $-a_{22}\in\MU_6$ by \cite[Proposition 7.1]{Bak_HHA}.
    The formula for $x_3$ then follows from the relation $2a_{11}a_{12}+3a_{13}-2a_{22}=0$ in the Lazard ring. 
    To prove the second formula, first note that the generator $y_4$ of $\MSU_8$ is given by $2x_4-x_1*x_3$ \cite[Theorem 7.1 and below]{CLP}.
    Away from $2$, $y_4\in\MSU_8\textstyle[\frac{1}{2}]$ decomposes as $-a_{23}+\textstyle \frac{3}{2}a_1x_3$ in $\MU_8\textstyle[\frac{1}{2}]$ by \cite[Proposition 7.1]{Bak_HHA}. 
    From the relation $2a_{12}^2+3a_{11}a_{13}+4a_{14}-2a_{23}=0$ in $\MU_*$, together with the values of the Milnor numbers $s_4(a_4)=\pm m_4=\pm 5$ and $s_4(y_4)=\pm 2m_4m_3=\pm 5\cdot 4$, one deduces that $y_4=-2a_{23}+3a_1x_3\in\MU_8$. 
    As a consequence,
    \[ x_4=\textstyle\frac{1}{2}(y_4+x_1x_3)=\frac{1}{2}(-2a_{23}+4a_1x_3)=-a_{23}+2a_1x_3\in\MU_8,\]
    and the claim follows from the formula for $x_3$ already proven and the relations on the coefficients of the universal formal group law mentioned above. 
\end{proof}
\begin{lemma}\label{lemma:xi_decomposition_ai_2}
    The decomposable generators of $\W_{\leq 8}$ are given by the following linear combinations of the $a_I$:
    \begin{align*}
        & x_1^{*2}=a_1^2+8a_2; \\
        & x_1^{*3}=a_1^3+8a_1a_2; \\ 
        & x_1^{*4}=a_1^4+16a_1^2a_2+64a_2^2; \\
        & x_1*x_3=-3a_1a_3+2a_1^2a_2.
    \end{align*}
\end{lemma}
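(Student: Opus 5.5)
The plan is to compute the products $*$ in $\W_*$ directly from the way the ring structure of \cite[Theorem 6.10]{CLP} is defined. I will not try to use the product of $\MU_*$, since the inclusion $\W_*\to\MU_*$ is not multiplicative. Recall that the multiplication on $\W_*$ comes from the projection $\MU_*\to\W_*$ (a retraction of the inclusion, given by a cohomological operation built from $c_1(\determ)$, as in \cite[\S 6]{CLP}). Concretely, one has $a*b=\pi(ab)$, where $\pi$ is the idempotent projector of Conner--Floyd. Equivalently, one can use the classical formula $a*b=ab+2[V]\,\partial a\,\partial b$. Here $\partial\colon\MU_*\to\MU_{*-2}$ is the operation $-c_1(\det\mathrm{EU}^\vee)$, and $[V]\in\MU_4$ is the class $\Delta$-dual to $\Proj^1\times\Proj^1$, which I would express in terms of the $a_i$. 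I will first fix this formula precisely, including signs and the representative of $[V]$, using $x_1=a_1$ as normalization.

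The key steps are as follows. First, compute $\partial$ (the operation $\delta$ extended to $\MU_*$) on the low-degree generators: $\partial a_1$, $\partial a_2$, $\partial a_3$ and $\partial(a_1a_2)$. I would do this via the action of Landweber--Novikov type operations on the universal formal group law coefficients, namely $c_1(\det)$ acting as $s_{(1)}$-type operations, using the relations among the $a_{ij}$ already used in Lemma~\ref{lemma:xi_decomposition_ai}. In particular, $\partial a_1=\partial[\Proj^1]$ should be $2$ up to sign. Second, compute $x_1^{*2}=a_1*a_1=a_1^2+(\text{correction})\cdot(\partial a_1)^2$. The expected answer $a_1^2+8a_2$ tells me the correction term equals $2[V]$ with $[V]$ corresponding to $a_2$ (up to sign), which gives $2\cdot a_2\cdot 4=8a_2$. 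I would verify this independently by checking that $a_1^2+8a_2$ lies in $\ker\Delta_4$; this is a consistency check, but not a proof that it equals $x_1^{*2}$.

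Third, iterate using associativity: $x_1^{*3}=x_1*(x_1^{*2})$ and $x_1^{*4}=x_1^{*2}*x_1^{*2}$. This requires $\partial$ of $a_1^2+8a_2$. For this I would use the derivation-like rule for $\partial$ on products, namely $\partial(ab)=\partial a\cdot b+a\cdot\partial b-[\Proj^1]\partial a\,\partial b$ or its analogue, together with $\partial a_2$. The results should collapse to $a_1^3+8a_1a_2$ and $(a_1^2+8a_2)^2=a_1^4+16a_1^2a_2+64a_2^2$. The latter suggests that $\partial(x_1^{*2})=0$, i.e. $x_1^{*2}$ is a $\delta$-cycle. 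That fits the known fact that $\W_4$ cycles give $\MSU_4$, so I would check this vanishing directly. Similarly, $x_1*x_3=a_1x_3+2[V]\partial a_1\partial x_3$. Substituting Lemma~\ref{lemma:xi_decomposition_ai}, we have $a_1x_3=-3a_1a_3+2a_1^2a_2$, so the claim amounts to the correction vanishing. That would follow if $\partial x_3=0$, which should hold since $x_3=-a_{22}$ and the relevant generators of $\MSU_6$ are cycles.

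The main obstacle is pinning down the exact product formula in $\W_*$ with correct signs and the right class $[V]$, and computing $\partial$ on the $a_i$ in the chosen basis. Everything after that is bookkeeping. As a fallback, I would verify each identity by checking two things: that the right-hand side lies in $\W_{2n}=\ker\Delta_{2n}$, and that it has the correct Chern numbers/characteristic numbers (e.g. $s_n$ and the $c_1$-numbers). These pin down elements of $\W_*$ rationally, and hence integrally since $\W_*$ is torsion-free.
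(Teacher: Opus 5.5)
Your approach is essentially the paper's: its proof applies the product formula $a*b=ab+2a_2\delta(a)\delta(b)$ from \cite[Proposition 6.8]{CLP} (with $\delta=-\partial$), together with $\delta(x_1)=-2$, $\delta(x_3)=0$, the vanishing $\delta(x_1^{*2})=0$ that you anticipate, and Lemma~\ref{lemma:xi_decomposition_ai}. The one point to tighten is that $[V]=a_2$ is part of that cited formula and should not be read off from the expected answer $a_1^2+8a_2$, which would be circular; once you take it from \cite{CLP}, you no longer need to compute $\partial$ on the individual $a_i$.
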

\begin{proof}
    It is a rather straightforward calculation that uses Lemma \ref{lemma:xi_decomposition_ai} and the formula \[a*b=ab+2a_2\delta(a)\delta(b)\] 
    stated in \cite[Proposition 6.8]{CLP} (note that $\delta=-\partial$). 
    The action of the operation $\delta$ on $x_1$ and $x_3$ is given by $\delta(x_1)=-2$ and $\delta(x_3)=0$.
\end{proof}
\begin{lemma}\label{lem:d1MWL}
    In slice degrees $0\leq q\leq 3$ the differential $\dd_1^{\MWL_\Lambda}(q)$ has the following form:
    \[ \dd_1^{\MWL_\Lambda}(q)=\partial^2_\infty\mathsf{Sq}^2\mathrm{pr}^\infty_2\otimes x_1\colon\Sigma^{q+(q)}\EM\Lambda\otimes\W_{2q}\to\Sigma^{q+2+(q+1)}\EM\Lambda\otimes\W_{2q+2}\]
\end{lemma}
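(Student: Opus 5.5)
The plan is to deduce $\dd_1^{\MWL_\Lambda}$ from the known first slice differential of $\MGL_\Lambda$ (Theorem~\ref{thm.firstslicediffMGL}) using naturality along $\mathrm{forg}'\colon\MWL_\Lambda\to\MGL_\Lambda$. The first slice differential is natural in the spectrum. By Theorem~\ref{slices_MWL}, $\slice_q(\mathrm{forg}')$ is $\Sigma^{q+(q)}\EM\Lambda\otimes\mathrm{inc}_{2q}$. Hence there is a commutative square
\[ \bigl(\Sigma^{q+2+(q+1)}\EM\Lambda\otimes\mathrm{inc}_{2q+2}\bigr)\circ\dd_1^{\MWL_\Lambda}(q)=\bigl(\partial^2_\infty\Sq^2\pr^\infty_2\otimes a_1\bigr)\circ\bigl(\Sigma^{q+(q)}\EM\Lambda\otimes\mathrm{inc}_{2q}\bigr). \]

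First, describe the group in which $\dd_1^{\MWL_\Lambda}(q)$ lives. Choose bases of the free groups $\W_{2q}$ and $\W_{2q+2}$. Each matrix entry is a map $\EM\Lambda\to\Sigma^{2+(1)}\EM\Lambda$, so by Lemma~\ref{lemma:d1-integralsummands} it is either $0$ or $\partial^2_\infty\Sq^2\pr^\infty_2$. Thus $[\Sigma^{q+(q)}\EM\Lambda\otimes A,\Sigma^{q+2+(q+1)}\EM\Lambda\otimes B]\cong\Hom(A/2,B/2)$ for free $A,B$ of finite rank, naturally in $A$ and $B$, with $\psi$ corresponding to $\partial^2_\infty\Sq^2\pr^\infty_2\otimes\psi$. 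So $\dd_1^{\MWL_\Lambda}(q)=\partial^2_\infty\Sq^2\pr^\infty_2\otimes\psi_q$ for a unique $\psi_q\colon\W_{2q}/2\to\W_{2q+2}/2$. The displayed square then reads
\[ \mathrm{inc}_{2q+2}\circ\psi_q=a_1\cdot\mathrm{inc}_{2q}\pmod 2. \]
Composition with $\mathrm{inc}_{2q+2}$ is injective on these Hom groups because $\mathrm{inc}_{2q+2}\otimes\Z/2$ is injective. Indeed, the cokernel of $\W_{2q+2}\hookrightarrow\MU_{2q+2}$ is the image of $\Delta_{2q+2}$, which is all of $\MU_{2q-2}$ by \cite[Proposition B.6]{ZolMSL}. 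That group is free, so the inclusion is split. Therefore $\psi_q$ is determined by the identity above.

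It then remains to show that $x_1*w\equiv a_1 w \pmod 2$ in $\MU_{2q+2}$ for all $w\in\W_{2q}$, i.e.\ that $\psi_q$ is $*$-multiplication by $x_1$ modulo $2$. I would use the product formula $a*b=ab+2a_2\delta(a)\delta(b)$ from \cite[Proposition 6.8]{CLP} together with $x_1=a_1$. This gives $x_1*w=a_1w-4a_2\delta(w)\equiv a_1w$ modulo $2$, which is exactly what is needed.

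The main obstacle is whether this product formula may be applied to an arbitrary $w\in\W_{2q}$, and whether the multiplication $*$ in the statement agrees with it. That is presumably why the statement is restricted to $q\leq 3$. As a safe route in this range, I would verify the congruence directly on the bases $\{1\}$, $\{x_1\}$, $\{x_1^{*2}\}$ and $\{x_1^{*3},x_3\}$ of $\W_0,\dots,\W_6$, using Lemmas~\ref{lemma:xi_decomposition_ai} and~\ref{lemma:xi_decomposition_ai_2}. Modulo $2$ one checks
\begin{align*}
x_1*x_1^{*2}=x_1^{*3}&=a_1^3+8a_1a_2\equiv a_1(a_1^2+8a_2),\\
x_1*x_1^{*3}=x_1^{*4}&\equiv a_1^4\equiv a_1(a_1^3+8a_1a_2),\\
x_1*x_3=-3a_1a_3+2a_1^2a_2&\equiv a_1(-3a_3+2a_1a_2).
\end{align*}
This completes the verification for $0\leq q\leq3$.
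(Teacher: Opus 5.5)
Your proof is correct and takes essentially the paper's route. Both use naturality of $\dd_1$ along $\mathrm{forg}'$, the computation $[\EM\Lambda,\Sigma^{2+(1)}\EM\Lambda]\cong\mathbb{F}_2$ from Lemma~\ref{lemma:d1-integralsummands}, and the expansions of Lemmas~\ref{lemma:xi_decomposition_ai} and~\ref{lemma:xi_decomposition_ai_2}; the only difference is that the paper solves the resulting matrix identities entry by entry, while you phrase them as an identity in $\Hom(\W_{2q}/2,\W_{2q+2}/2)$, which is uniquely solvable because $\mathrm{inc}_{2q+2}$ is split injective.

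Your hesitation about the product formula is unnecessary. The correction term in $a*b=ab+2a_2\delta(a)\delta(b)$ is divisible by $2$ whatever class stands in front of it, so your general argument already gives $\dd_1^{\MWL_\Lambda}(q)=\partial^2_\infty\Sq^2\pr^\infty_2\otimes x_1$ for every $q\geq 0$, not only for $q\leq 3$.
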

\begin{proof}
    For $q=0$ the claim follows immediately from the case of $\MGL_\Lambda$, since $\slice_i(\MWL_\Lambda)\cong \slice_i(\MGL_\Lambda)$ for $i\in\{0,1\}$. 
    Consider the commutative diagram
    \begin{center}
    \begin{tikzcd}[ampersand replacement=\&]
    \EM\Lambda\{x_1\} \arrow[rr, "\simeq"] \arrow[d, "\dd_1^{\MWL_\Lambda}(1)"'] \& \& \EM\Lambda\{a_1\} \arrow[d, "\dd_1^{\MGL_\Lambda}(1)"]               \\
    {\Sigma^{2+(1)}\EM\Lambda\{x_1^{*2}\}} \arrow[rr, "{\renewcommand{\arraystretch}{0.9}
    \begin{pmatrix} 1 \\ 8 \end{pmatrix}}"] \arrow[d, "\dd_1^{\MWL_\Lambda}(2)"'] \& \&  {\Sigma^{2+(1)}\EM\Lambda\{a_1^2\}\oplus\Sigma^{2+(1)}\EM\Lambda\{a_2\}} \arrow[d, "\dd_1^{\MGL_\Lambda}(2)"] \\
    {\Sigma^{4+(2)}\EM\Lambda\{x_1^{*3}\}\oplus\Sigma^{4+(2)}\EM\Lambda\{x_3\}} \arrow[rr, "{\renewcommand{\arraystretch}{0.9}
    \setlength{\arraycolsep}{3pt}\begin{pmatrix} 1 & 0 \\ 8 & 2 \\ 0 & -3 \end{pmatrix}}"] \& \& {\Sigma^{4+(2)}\EM\Lambda\{a_1^{3}\}\oplus\Sigma^{4+(2)}\EM\Lambda\{a_1a_2\}\oplus\Sigma^{4+(2)}\EM\Lambda\{a_3\}},
    \end{tikzcd}
    \end{center}
    which is induced by the morphism $\mathrm{forg}^\prime\colon \MWL_\Lambda\to\MGL_\Lambda$. 
    The horizontal maps have this form by Lemma \ref{lemma:xi_decomposition_ai} and Lemma \ref{lemma:xi_decomposition_ai_2}. 
    To see that $\dd_1^{\MWL_\Lambda}(1)$ is given by the unique non-trivial arrow, compose the top square with the projection onto $\Sigma^{2+(1)}\EM\Lambda\{a_1^2\}$ and use $\dd_1^{\MGL_\Lambda}(1)=(\partial^2_\infty\mathsf{Sq}^2\mathrm{pr}^\infty_2\ 0)^{\mathrm{tr}}$. 
    The formula for the differential $\dd_1^{\MWL_\Lambda}(2)$ follows in the same manner, composing the bottom square with the projections onto $\Sigma^{4+(2)}\EM\Lambda\{a_1^3\}$ and $\Sigma^{4+(2)}\EM\Lambda\{a_3\}$. 
    
    In slice degree $3$, the desired differential $\dd_1^{\MWL_\Lambda}(3)$ is the $3+(3)$-suspension of
    \[\begin{pmatrix}
        \alpha_1 & \beta_1 \\ \alpha_2 & \beta_2 \\ \alpha_3 & \beta_3
    \end{pmatrix}\colon\EM\Lambda\{x_1^{*3}\}\oplus\EM\Lambda\{x_3\}\to \Sigma^{2+(1)}\EM\Lambda\{x_1^{*4}\}\oplus\Sigma^{2+(1)}\EM\Lambda\{x_1*x_3\}\oplus\Sigma^{2+(1)}\EM\Lambda\{x_4\} \]
    for $\alpha_i,\,\beta_i\in\mathbb{F}_2\{\partial^2_\infty\mathsf{Sq}^2\mathrm{pr}^\infty_2\}$.
    In turn, Lemma \ref{lemma:xi_decomposition_ai} and Lemma \ref{lemma:xi_decomposition_ai_2} again provide a complete description of the morphism $\slice_4(\MWL_\Lambda)\to\slice_4(\MGL_\Lambda)$. 
    It is given by applying $\Sigma^{4+(4)}$ to the following matrix:
    \begin{align*} & \begin{pmatrix}
        1 & 0 & 0 \\ 16 & 2 & 7 \\ 64 & 0 & -1 \\ 0 & -3 & -9 \\ 0 & 0 & -2
    \end{pmatrix}\colon\EM\Lambda\{x_1^{*4}\}\oplus\EM\Lambda\{x_1*x_3\}\oplus\EM\Lambda\{x_4\}\to \\
    & \hspace{5.5cm} \to\EM\Lambda\{a_1^4\}\oplus\EM\Lambda\{a_1^2a_2\}\oplus\EM\Lambda\{a_2^2\}\oplus\EM\Lambda\{a_1a_3\}\oplus\EM\Lambda\{a_4\}.\end{align*} 
    A commutative diagram similar to the one above and the computation from Theorem~\ref{thm.firstslicediffMGL} of the first slice differential for $\MGL_\Lambda$ lead to the following equality:
    \[\begin{pmatrix}
        \alpha_1 & \beta_1 \\  7\alpha_3 & 7\beta_3 \\ -\alpha_3 & -\beta_3 \\ -3\alpha_2-9\alpha_3 & -3\beta_2-9\beta_3 \\0 & 0
    \end{pmatrix} = 
    \begin{pmatrix}
        \partial^2_\infty\mathsf{Sq}^2\mathrm{pr}^\infty_2 & 0 \\ 0 & 0 \\ 0 & 0 \\ 0 & -3\partial^2_\infty\mathsf{Sq}^2\mathrm{pr}^\infty_2 \\ 0 & 0
    \end{pmatrix}\]
    Here we used several times that the operation $\partial^2_\infty\mathsf{Sq}^2\mathrm{pr}^\infty_2$ has order $2$. 
    Hence, $\alpha_1=\partial^2_\infty\mathsf{Sq}^2\mathrm{pr}^\infty_2$ and $\beta_2 = \partial^2_\infty\mathsf{Sq}^2\mathrm{pr}^\infty_2$, while all other components are trivial.
\end{proof}
Under suitable efforts, these computations could be extended. 
Nevertheless, they suffice to deduce the slice $\dd_1$-differentials for special linear algebraic cobordism completely in the same range. The computation depends on the choice of the identification of $\slice_3(\MSL_\Lambda)$.
\begin{remark}\label{rem:choice_s3}
    According to Theorem \ref{slices_MSL}, there exists an isomorphism $\slice_3(\MSL_\Lambda)\cong \Sigma^{(3)}\EM\Lambda\oplus \Sigma^{2+(3)}\EM\Lambda\oplus \Sigma^{3+(3)}\EM\Lambda$. 
    However, as mentioned in part (2) of Theorem \ref{slices_MSL} and Remark \ref{rem:splitting_non_unique}, the map $\slice_3(\MSL_\Lambda)\to \slice_3(\MWL_\Lambda)$ depends on the choice of such an isomorphism. 
    We now fix a specific choice that is used in the computation of the first slice differentials. 
    The generator of $\cyc_6(\W)$ is given by $x_3$, by the discussion just after \cite[Theorem 7.1]{CLP} and the homomorphism $\iota_4^*(\delta_6)\colon \W_6\to \cyc_4(\W)$ sends $x_1^{*3}$ to $-2x_1^{*2}$ and $x_3$ to zero (see \cite[Lemma 6.5 and Theorem~6.10]{CLP} and recall that $\partial=-\delta$). 
    It follows that the map $\slice_3(\MSL_\Lambda)\to \slice_3(\MWL_\Lambda)$ has the form
    \[ \begin{pmatrix}
        0 & \partial^2_\infty & 0 \\ 0 & \phi & 1
    \end{pmatrix}\colon\Sigma^{(3)}\EM\Lambda/2\oplus\Sigma^{2+(3)}\EM\Lambda/2\oplus\Sigma^{3+(3)}\EM\Lambda \to \Sigma^{3+(3)}\EM\Lambda\{x_1^{*3}\}\oplus\Sigma^{3+(3)}\EM\Lambda\{x_3\}, \]
    for some map $\phi\in \{0,\partial^2_\infty\}$. 
    These two possibilities differ by the automorphism of the source given by the following matrix
    \[
    \begin{pmatrix}
        1 & 0 & 0 \\ 0 & 1 & 0 \\ 0 & \partial^2_\infty & 1
    \end{pmatrix}.
    \]
    We choose the identification of $\slice_3(\MSL_\Lambda)$ corresponding to the case $\phi=0$.
\end{remark}
\begin{theorem}\label{thm:d1_msl}
    For $0\leq q\leq 3$ the differential $\dd_1(q)\colon\slice_q(\MSL_\Lambda)\to \Sigma\slice_{q+1}(\MSL_\Lambda)$ is given by:
    \allowdisplaybreaks
    \begin{align*}
        & \dd_1(0)=\mathsf{Sq}^2 \mathrm{pr}^\infty_2\colon\EM\Lambda\to\EM\Lambda/2\to \Sigma^{1+(1)}\EM\Lambda/2, \\
        & \dd_1(1)=\begin{pmatrix}
        \mathsf{Sq}^2 \\
        \partial^2_\infty \mathsf{Sq}^2\mathsf{Sq}^1\end{pmatrix}\colon\Sigma^{(1)}\EM\Lambda/2\to \Sigma^{1+(2)}\EM\Lambda/2\oplus \Sigma^{3+(2)}\EM\Lambda, \\
        & \dd_1(2)=\begin{pmatrix} \mathsf{Sq}^2 & \tau \mathrm{pr}^\infty_2 \\ \mathsf{Sq}^3\mathsf{Sq}^1 & \mathsf{Sq}^2\mathrm{pr}^\infty_2 \\ 0 & 0 \end{pmatrix}\colon \Sigma^{(2)}\EM\Lambda/2\oplus\Sigma^{2+(2)}\EM\Lambda\to \Sigma^{1+(3)}\EM\Lambda/2\oplus \Sigma^{3+(3)}\EM\Lambda/2\oplus\Sigma^{4+(3)}\EM\Lambda, \\
        & \dd_1(3)=\begin{pmatrix}
            \mathsf{Sq}^2 & \tau & 0 \\ \mathsf{Sq}^3\mathsf{Sq}^1 & \mathsf{Sq}^2+\rho\mathsf{Sq}^1 & 0 \\ 0 & \begin{pmatrix}
            \partial^2_\infty\mathsf{Sq}^2\mathsf{Sq}^1 \\ 0
            \end{pmatrix} & \begin{pmatrix}
            0 \\ \partial^2_\infty\mathsf{Sq}^2\mathrm{pr}^\infty_2
            \end{pmatrix}\end{pmatrix} \colon\Sigma^{(3)}\EM\Lambda/2\oplus\Sigma^{2+(3)}\EM\Lambda/2\oplus\Sigma^{3+(3)}\EM\Lambda\to \\
            & \hspace{9cm}\to\Sigma^{1+(4)}\EM\Lambda/2\oplus\Sigma^{3+(4)}\EM\Lambda/2\oplus\Sigma^{5+(4)}\EM\Lambda^2.
    \end{align*}
    Here, in formulas for $\dd_1(2)$ and $\dd_1(3)$, we use the identification of $\slice_3(\MSL_\Lambda)$ specified in Remark \ref{rem:choice_s3}.
    For $q\geq 4$ the differential $\dd_1(q)$ restricts to the direct summand $\Sigma^{(q)}\EM\Lambda/2\oplus \Sigma^{2+(q)}\EM\Lambda/2$ by \[\begin{pmatrix} \mathsf{Sq}^2 & \tau \\ \mathsf{Sq}^3\mathsf{Sq}^1 & \mathsf{Sq}^2+\rho\mathsf{Sq}^1 \end{pmatrix}\colon\Sigma^{(q)}\EM\Lambda/2\oplus \Sigma^{2+(q)}\EM\Lambda/2\to \Sigma^{1+(q+1)}\EM\Lambda/2\oplus\Sigma^{3+(q+1)}\EM\Lambda/2.\]
\end{theorem}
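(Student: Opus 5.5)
The plan is to exploit naturality of the slice differentials along the two maps in the cofiber sequence of Theorem~\ref{MWL_cofib}(1), namely $\mathrm{forg}\colon\MSL_\Lambda\to\MWL_\Lambda$ and the $\eta$-multiplication $\Sigma^{(1)}\MSL_\Lambda\to\MSL_\Lambda$. Both induce maps of slice towers, hence commute with $\dd_1$.

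\emph{Restricting the possible entries.} First, for every pair of summands of $\slice_q(\MSL_\Lambda)$ and $\Sigma\slice_{q+1}(\MSL_\Lambda)$ occurring in Remark~\ref{remark:first_slices}, I will list the group of $\EM\Lambda$-module maps. These are motivic cohomology operations of weight one: sums of products of $\mathsf{Sq}^2$, $\mathsf{Sq}^3\mathsf{Sq}^1$, $\tau$, $\rho\mathsf{Sq}^1$, Bocksteins and $\partial^2_\infty$. The relevant input is Lemma~\ref{lemma:d1-integralsummands} and its mod-$2$ analogues, obtained from the known structure of the motivic Steenrod algebra.

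\emph{Columns through $\mathrm{forg}$.} The integral summands $\Sigma^{q+(q)}\EM\Lambda\otimes\cyc_{2q}(\W)$ map injectively to $\slice_q(\MWL_\Lambda)$ via $\mathrm{inc}$, by Theorem~\ref{slices_MSL}(2). The summand $\Sigma^{q-1+(q)}\EM\Lambda\otimes\hmlg_{2q-2}(\W)$ maps via $\partial$. Composing $\dd_1^{\MSL}$ with $\slice(\mathrm{forg})$ and using Lemma~\ref{lem:d1MWL} with the explicit $x_1$-multiplication pins down every component landing in integral summands. This gives, for example:
\begin{itemize}
\item the entries $\partial^2_\infty\mathsf{Sq}^2\mathrm{pr}^\infty_2$ in $\dd_1(3)$, from $x_1 * x_3$;
\item the entry $\partial^2_\infty\mathsf{Sq}^2\mathsf{Sq}^1$ in $\dd_1(1)$, from $x_1\mapsto x_1^{*2}$ composed with the Bockstein identification;
\item the vanishing of the bottom row of $\dd_1(2)$.
\end{itemize}
For the case $q=3$, the fixed choice $\phi=0$ of Remark~\ref{rem:choice_s3} is needed to read off these components.

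\emph{Columns through $\eta$.} For the mod-$2$ rows, $\eta$ acts on slices by Lemma~\ref{lemma_eta_action}: it maps the $\hmlg$-summands of $\slice_{q-1}$ identically onto the corresponding summands of $\slice_q$, and acts by $\mathrm{pr}$ on the cycle summand. Naturality then gives $\dd_1(q)\circ\slice_q(\eta)=\Sigma^{(1)}\slice_{q+1}(\eta)\circ\Sigma^{(1)}\dd_1(q-1)$, which propagates mod-$2$ entries inductively from lower $q$. The base case $\dd_1(0)=\mathsf{Sq}^2\mathrm{pr}^\infty_2$ is the sphere/$\kq$ case, compatible with $\slice_{\le1}(\MSL)\cong\slice_{\le1}(\kq)$. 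This produces the stable block $\begin{pmatrix}\mathsf{Sq}^2&\tau\\ \mathsf{Sq}^3\mathsf{Sq}^1&\mathsf{Sq}^2+\rho\mathsf{Sq}^1\end{pmatrix}$ for all $q\ge 4$, once it is known for small $q$.

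\emph{Determining the new entries.} Entries not detected by $\eta$ are those originating on the new top summand $\Sigma^{q-1+(q)}\hmlg$ or on the cycle summand. Here the comparison with $\kq$ (Proposition~\ref{prop:slices-msl-kq}) is the main tool, since $\hmlg_{2q}(f)$ is an isomorphism in low degrees and the $\dd_1$ of $\kq$ is known from \cite{very_eff_KQ,RSO19}. I would use it to obtain the $\tau$, $\mathsf{Sq}^3\mathsf{Sq}^1$ and $\rho\mathsf{Sq}^1$ entries.

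The main obstacle I expect is the $\rho\mathsf{Sq}^1$ correction in the $(2,2)$-entry. It is invisible both to forgetting (which kills the torsion) and to a naive $\eta$-argument. I would first try to deduce it from the $\kq$ comparison. Failing that, I would use $\dd_1\circ\dd_1=0$ together with the Adem relations, $\mathsf{Sq}^2\mathsf{Sq}^2=\tau\mathsf{Sq}^3\mathsf{Sq}^1$ and $\mathsf{Sq}^1\tau=\rho$, which force $\mathsf{Sq}^2+\rho\mathsf{Sq}^1$ rather than $\mathsf{Sq}^2$.
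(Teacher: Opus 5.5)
Your proposal is correct, and its decisive inputs are the paper's. These are the comparison with $\MWL_\Lambda$ (Lemma~\ref{lem:d1MWL} plus the choice $\phi=0$) for all components landing in integral summands, and the comparison with $\kq_\Lambda$ for the mod-$2$ entries.

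The difference is your $\eta$-induction, which the paper does not need. By Proposition~\ref{prop:slices-msl-kq}, the maps on $\slice_q$ and $\slice_{q+1}$ induced by $\MSL_\Lambda\to\kq_\Lambda$ are the identity on the first two summands and have no cross terms into them. Hence the top-left $2\times 2$ block of $\dd_1(q)$ equals that of $\dd_1^{\kq}(q)$ from \cite[Theorem 3.5]{very_eff_KQ}, for all $q\geq 2$ at once. Likewise $\dd_1(0)$ and $\dd_1(1)$ follow from $\slice_q(\MSL_\Lambda)\cong\slice_q(\kq_\Lambda)$ for $q\leq 2$. So the $\rho\mathsf{Sq}^1$ you flag as the main obstacle is simply inherited from $\kq$, and stability for $q\geq 4$ needs no induction.

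Your route is still valid and somewhat more intrinsic:
\begin{itemize}
\item Lemma~\ref{lemma_eta_action} does identify the block of $\dd_1(q)$ with that of $\dd_1(q-1)$ for $q\geq 4$.
\item It explains $\mathsf{Sq}^3\mathsf{Sq}^1$ as $\mathrm{pr}^\infty_2\circ\partial^2_\infty\mathsf{Sq}^2\mathsf{Sq}^1$, transported from $\dd_1(1)$.
\item Obtaining that integral entry of $\dd_1(1)$ via $\MWL_\Lambda$ instead of $\kq_\Lambda$ is fine.
\end{itemize}

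Its limitation is the one you noticed. On the new summand $\Sigma^{2+(3)}\EM\Lambda/2$, $\eta$ only sees the column after precomposition with $\mathrm{pr}^\infty_2$, i.e.\ modulo $h^{1,1}(F)\cdot\mathsf{Sq}^1$. Your fallback does close this. Write the entry as $\mathsf{Sq}^2+a\mathsf{Sq}^1$. The component of $\dd_1(3)\circ\dd_1(2)=0$ starting at the integral summand of $\slice_2$ is then $(a+\rho)\mathsf{Sq}^3\mathrm{pr}^\infty_2$. Since $\mathsf{Sq}^3\mathrm{pr}^\infty_2$ is part of an $h^{*,*}(F)$-basis of the mod-$2$ cohomology of $\EM\Lambda$, this forces $a=\rho$.

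One small correction: the choice $\phi=0$ of Remark~\ref{rem:choice_s3} already matters for the bottom row of $\dd_1(2)$, whose target is $\slice_3$, not only for $\dd_1(3)$. With $\phi=\partial^2_\infty$, the last entry of that row would be $\partial^2_\infty\mathsf{Sq}^2\mathrm{pr}^\infty_2$.
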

\begin{proof}
    The formulas for $\dd_1^{\MSL_\Lambda}(0)$ and $\dd_1^{\MSL_\Lambda}(1)$ follow from \cite[Theorem 3.5]{very_eff_KQ} since $\slice_q(\MSL_\Lambda)\cong\slice_q(\kq_\Lambda)$ for $q\leq 2$ by Proposition \ref{prop:slices-msl-kq}. 
    Moreover, it also gives the first $2\times 2$ block of $\dd_1^{\MSL_\Lambda}(q)$ for $q\geq 2,$ because $\slice_q(\MSL_\Lambda)\to\slice_q(\kq_\Lambda)$ maps the first two torsion summands identically by Proposition~\ref{prop:slices-msl-kq}. 
    Thus, it remains to compute the last row of $\dd_1^{\MSL_\Lambda}(2)$ and the last row and column of $\dd_1^{\MSL_\Lambda}(3)$.

    For $q=2$, the component of the first slice differential that goes from $\Sigma^{(2)}\EM\Lambda/2$ to the integral summand is zero by degree reasons \cite[Lemma A.4]{RO16}. 
    To prove the triviality of $\dd_1$ within the integral summands, consider the following commutative diagram (in which all terms are desuspended by $\Sigma^{(2)}$ for simplicity):
    \begin{center}\begin{tikzcd} \EM\Lambda/2\oplus\Sigma^{2}\EM\Lambda\ar[r] \ar[d,"{\dd_1^{\MSL_\Lambda}(2)}"'] & \Sigma^{2}\EM\Lambda\{x_1^{*2}\} \ar[d,"{\dd_1^{\MWL_\Lambda}(2)=(\begin{smallmatrix}
        \partial^2_\infty\mathsf{Sq}^2\mathrm{pr}^\infty_2 \\ 0 
    \end{smallmatrix})}"] \\ \Sigma^{1+(1)}\EM\Lambda/2\oplus\Sigma^{3+(1)}\EM\Lambda/2\oplus\Sigma^{4+(1)}\EM\Lambda \ar[r] & \Sigma^{4+(1)}\EM\Lambda\{x_1^{*3}\}\oplus\Sigma^{4+(1)}\EM\Lambda\{x_3\}
    \end{tikzcd}\end{center}
    This diagram is induced by the map $\MSL_\Lambda\to\MWL_\Lambda$. 
    The generator of $\cyc_4(\W)$ is given by $x_1^{*2}$ (i.e., the top morphism is the projection onto the second direct summand) and the bottom morphism is discussed in Remark \ref{rem:choice_s3}; recall that we use the identification of $\slice_3(\MSL_\Lambda)$ corresponding to $\phi=0$. 
    Now composing the above diagram with the projection onto $\Sigma^{4+(1)}\EM\Lambda\{x_3\}$, the identification of $\dd_1^{\MSL_\Lambda}(2)$ follows. 

    Our argument for $q=3$ proceeds similarly. 
    The components 
    \[\Sigma^{(3)}\EM\Lambda/2\to\Sigma^{5+(4)}\EM\Lambda^2\quad \text{and}\quad \Sigma^{3+(3)}\EM\Lambda\to \Sigma^{1+(4)}\EM\Lambda/2\] 
    of $\dd_1^{\MSL_\Lambda}(3)$ are again trivial by degree reasons. 
    The restriction of $\dd_1^{\MSL_\Lambda}(3)$ to $\Sigma^{3+(3)}\EM\Lambda\to\Sigma^{3+(4)}\EM\Lambda/2$ is zero, because the integral summand maps trivially under $\slice_3(\MSL_\Lambda)\to\slice_3(\kq_\Lambda)$, while mod $2$ summands maps identically onto the one from $\slice_4(\kq_\Lambda)$ by Proposition~\ref{prop:slices-msl-kq}. 
    Hence it remains to compute the last two entries of the last row. 
    For that, consider the following implicitly $\Sigma^{(3)}$-desuspended commutative diagram:
    \begin{center}
    \begin{tikzcd}
        \EM\Lambda/2 
        \oplus \Sigma^{2}\EM\Lambda/2 
        \oplus \Sigma^{3}\EM\Lambda 
        \arrow[d] 
        \arrow[r, "\dd_1^{\mathrm{MSL}_\Lambda}(3)"] 
        & 
        \Sigma^{1+(1)}\EM\Lambda/2 
        \oplus \Sigma^{3+(1)}\EM\Lambda/2 
        \oplus \Sigma^{5+(1)}\EM\Lambda 
        \oplus \Sigma^{5+(1)}\EM\Lambda 
        \arrow[d] \\
        \Sigma^{3}\EM\Lambda\{x_1^{*3}\} 
        \oplus \Sigma^{3}\EM\Lambda\{x_3\} 
        \arrow[r, "\dd_1^{\mathrm{MWL}_\Lambda}(3)"] 
        & 
        \Sigma^{5+(1)}\EM\Lambda\{x_1^{*4}\} 
        \oplus \Sigma^{5+(1)}\EM\Lambda\{x_1 * x_3\} 
        \oplus \Sigma^{5+(1)}\EM\Lambda\{x_4\}.
    \end{tikzcd}
    \end{center}
    Note that the summand $\EM\Lambda/2$ in its upper left corner maps trivially to the bottom right corner for degree reasons.
    The chosen generators of $\cyc_8(\W)$ are given by $x_1^{*4}$ and $2x_4-x_1*x_3$. 
    The left vertical map in the diagram is computed above and the right vertical map is readily computed by part (2) of Theorem~\ref{slices_MSL} (note that there are no boundary morphisms here since $\hmlg_6(\W)=0$).
    %First, we consider the summand of the target that corresponds to $x_1^{*4}$. 
    By \cite[Lemma~A.4]{RO16} and Lemma \ref{lemma:d1-integralsummands}, there are operations $\alpha_1,\alpha_2\in \mathbb{F}_2\{\partial^2_\infty\Sq^2\Sq^1\}$ and $\beta_1,\beta_2\in\mathbb{F}_2\{\partial^2_\infty\Sq^2\pr^\infty_2\}$ such that the right bottom $2\times 2$ block of $d_1^{\MSL_\Lambda}(3)$ has the form
    \[ \begin{pmatrix}
    \alpha_1 & \beta_1 \\ \alpha_2 & \beta_2 \end{pmatrix}\colon\Sigma^{2}\EM\Lambda/2\oplus\Sigma^{3}\EM\Lambda\to \Sigma^{5+(1)}\EM\Lambda\{x_1^{*4}\}\oplus \Sigma^{5+(1)}\EM\Lambda\{2x_4-x_1*x_3\}.\]
    Applying the above diagram, we obtain $\alpha_1=\partial^2_\infty\Sq^2\Sq^1$ and $\beta_2=\partial^2_\infty\Sq^2\pr^\infty_2$, while $\alpha_2$ and $\beta_1$ are trivial.
\end{proof}
\begin{remark}
    It seems possible to deduce formulas for the first slice differential in a wider range. 
    However, doing so along the arguments given above requires a presentation of the polynomial generators $x_i$ as linear combinations of the monomial generators $a_I$, which involves non-trivial combinatorial computations with Chern numbers.
\end{remark}
\begin{lemma}\label{lemma:E^2_for_m<3}
    The morphism $\MSL\to\kq$ induces isomorphisms \[E^2_{-n+m,q,-n}(\MSL_\Lambda)\xrightarrow{\simeq} E^2_{-n+m,q,-n}(\kq_\Lambda)\] for $m\leq 2$. 
    The description of these groups is given in the following table
    \begin{center}\begin{tabular}{|c|c|c|c|}
        \hline
         $q$ & $E^2_{-n,q,-n}$ & $E^2_{-n+1,q,-n}$ & $E^2_{-n+2,q,-n}$ \\
         \hline
         $0$ & $H^{n,n}$ & $H^{n-1,n}$ & $\kernel(\mathsf{Sq}^2\mathrm{pr}^\infty_2)\subseteq H^{n-2,n}$ \\
         $1$ & $h^{n+1,n+1}$ & $h^{n,n+1}/\mathsf{Sq}^2\mathrm{pr}^\infty_2H^{n-2,n}$ & $\kernel(\mathsf{Sq}^2)\subseteq h^{n-1,n+1}$ \\
         $2$ & $h^{n+2,n+2}$ & $h^{n+1,n+2}/\mathsf{Sq}^2h^{n-1,n+1}$ & $h^{n,n+2}/(\mathsf{Sq}^2h^{n-2,n+1})\oplus 2H^{n+2,n+2}$ \\
         $3$ & $h^{n+3,n+3}$ & $0$ & $h^{n+1,n+3}/(\mathsf{Sq}^2h^{n-1,n+2}+\tau\mathrm{pr}^\infty_2H^{n+1,n+2})$ \\
         $\geq 4$ & $h^{n+q,n+q}$ & $0$ & $0$\\
         \hline
    \end{tabular}\end{center}
\end{lemma}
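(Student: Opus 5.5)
The plan is a column-by-column computation of $E^1$ and then of $E^2$ in the range $m\le 2$, with the comparison to $\kq_\Lambda$ running alongside. First I write down the $E^1$-terms. By Theorem~\ref{slices_MSL} (cf. Remark~\ref{remark:first_slices}), a summand $\Sigma^{a+(q)}\EM A$ of $\slice_q(\MSL_\Lambda)$ contributes
\[ \pi_{m-n+(-n)}\Sigma^{a+(q)}\EM A = H^{a+q-m,\,q+n}(F;A) \]
to $E^1_{-n+m,q,-n}$. Motivic cohomology of a field vanishes when the cohomological degree exceeds the weight, so the summand $\Sigma^{2j+(q)}$ can only contribute if $2j\le m$. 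For $m\le 2$ this leaves only:
\begin{itemize}
\item the summands $\Sigma^{(q)}\EM\Lambda/2$ and $\Sigma^{2+(q)}\EM\Lambda/2$ for $q\ge 1$ (together with $\Sigma^{1+(1)}\EM\Lambda$-type contributions);
\item the integral top summands $\Sigma^{q+(q)}\EM\Lambda\otimes\cyc_{2q}(\W)$, which contribute $H^{2q-m,q+n}$. This can be nonzero only for $q\le m$, i.e. $q=0$, $q\le 1$ (where $\cyc_2(\W)=0$), and $q=2$ with $m=2$, giving $H^{n+2,n+2}$.
\end{itemize}
Comparing with the slices of $\kq$ and using Proposition~\ref{prop:slices-msl-kq}, the map $\slice_q(\MSL_\Lambda)\to\slice_q(\kq_\Lambda)$ is an isomorphism on all these summands. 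The mod $2$ summands map identically, since $\hmlg_0(f)$ and $\hmlg_4(f)$ are isomorphisms $\Z/2\to\Z/2$; this I would check from $f_0=\id$ and $f_4(x_1^{*2})=\pm 1$ or similar. The $\cyc_{2q}(f)$ is an isomorphism for $q=0$ and for $q=2$, where $\cyc_4(\W)=\Z\{x_1^{*2}\}$ and I need $f_4(x_1^{*2})$ to be a generator of $\cyc_4(\Z[\beta])=\Z\{\beta^2\}$. I would verify this from the multiplicative formal group law values of $a_1^2+8a_2$. Hence the map is an isomorphism on $E^1$ in columns $m\le 2$, and the same argument gives injectivity for $m=3$.

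Second, I pass to $E^2$ by naturality of $\dd_1$. Since $\dd_1$ lowers $m$ by one, $E^2$ in column $m$ depends on $E^1$ in columns $m-1,m,m+1$. The incoming differentials come from column $m+1\le 3$, where the map is only injective, so this needs care. I would argue that the images of the incoming $\dd_1$ agree, because the components of $\dd_1$ hitting the relevant summands are those computed in Theorem~\ref{thm:d1_msl}, whose first $2\times 2$ block agrees with that of $\kq$. The extra $\MSL$ summands in column $3$ come from integral summands $\Sigma^{q+(q)}$ with $q=3$ (the $x_3$ summand), and those map under $\dd_1(3)$ only to integral targets, by the third column of $\dd_1(3)$. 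For the outgoing differentials, the targets in column $m-1$ are isomorphic, so the kernels agree.

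Finally, I compute the table row by row, evaluating $\dd_1$ on the groups above with Theorem~\ref{thm:d1_msl}. In row $q=0$, $\ker(\Sq^2\pr^\infty_2)$ arises in column $2$. In row $1$, the quotient arises by the image of $\Sq^2\pr^\infty_2$. Row $2$, column $2$ picks up $h^{n,n+2}/\Sq^2 h^{n-2,n+1}$ together with the integral part. There $\dd_1(2)$ restricted to the integral summand is $(\tau\pr^\infty_2,\Sq^2\pr^\infty_2,0)^{\mathrm{tr}}$, and its kernel on $H^{n+2,n+2}\cong\KMil_{n+2}$ is $2H^{n+2,n+2}$, since $\tau\pr^\infty_2$ is injective on $\KMil/2$. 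Rows $q\ge3$ in columns $1,2$ use the vanishing of $E^2$, showing $\tau$ and $\Sq^2$ are surjective/injective as needed via the Bloch--Kato isomorphism $h^{p,w}\cong \tau^{w-p}h^{p,p}$. I expect the main obstacle to be this last bookkeeping: in particular, the vanishing of column $1$ for $q\ge 3$ and the exact image $\tau\pr^\infty_2H^{n+1,n+2}$ in row $3$, which require knowing $\Sq^2$ and $\Sq^3\Sq^1$ on $\tau$-multiples (e.g. $\Sq^2(\tau^k x)=k\tau^{k-1}\rho\ldots$). I would take these from the corresponding computation for $\kq$ in \cite{very_eff_KQ}, which suffices once the isomorphism is established.
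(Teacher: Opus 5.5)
Your plan is essentially the paper's proof. The paper simply combines Theorem~\ref{thm:d1_msl}, Proposition~\ref{prop:slices-msl-kq} and the known $E^2$-page of $\kq$ from \cite[Theorem 2.6]{RSO24}.

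One claim needs correcting. On column $m=3$ the map $E^1(\MSL_\Lambda)\to E^1(\kq_\Lambda)$ is not injective. It is a split surjection, and its kernel is the contribution $H^{n+3,n+3}$, in row $q=3$, of the top summand $\Sigma^{3+(3)}\EM\Lambda\otimes\cyc_6(\W)$. Here $\cyc_6(\W)=\Z\{x_3\}$ while $\cyc_6(\Z[\beta])=0$; your own ``extra $\MSL$ summands'' remark already says as much. Surjectivity is precisely what the argument needs. Combined with the isomorphisms on columns $1$ and $2$, naturality of $\dd_1$ forces the images of the incoming differentials in column $2$ to correspond. So your separate analysis of the $x_3$-summand is redundant, though correct, since its column-$2$ target $H^{n+6,n+4}$ is zero.

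There are two smaller slips.
\begin{enumerate}
\item The contribution of $\Sigma^{a+(q)}\EM A$ to $E^1_{-n+m,q,-n}$ is $H^{a+q-m+n,q+n}$, not $H^{a+q-m,q+n}$. Your vanishing conclusions in fact use the correct formula.
\item In row $2$, column $2$, the kernel of $\dd_1(2)$ is not the sum of the kernels on the two summands. The reason is that $\Sq^2(\tau^2c)=\tau\rho^2c$ is in general nonzero on $h^{n,n+2}$. The kernel is $\{(a,B)\colon \Sq^2a=\tau\pr^\infty_2B\}$. It is still abstractly $h^{n,n+2}\oplus 2H^{n+2,n+2}$, via the section $\tau^2c\mapsto(\tau^2c,\rho^2\tilde c)$ with $\tilde c\in H^{n,n}$ a lift of $c$. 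This splitting is compatible with the incoming image $\{(\Sq^2b,\partial^2_\infty\Sq^2\Sq^1b)\}$, which equals the section applied to $\Sq^2h^{n-2,n+1}$. This is one more reason to import that entry from the $\kq$ computation, as you propose.
\end{enumerate}
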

\begin{proof}
    This follows directly from Theorem \ref{thm:d1_msl}, Proposition \ref{prop:slices-msl-kq}, and \cite[Theorem 2.6]{RSO24}.
\end{proof}
\begin{proposition}\label{prop:E^2_for_m=3}
    The non-trivial entries in the 3rd column of the $E^2$-page of the $-n$th slice spectral sequence for $\MSL_\Lambda$ are given as follows:
    \begin{align*}
        & E^2_{-n+3,0,-n}(\MSL_\Lambda)\cong H^{n-3,n} \\
        & E^2_{-n+3,1,-n}(\MSL_\Lambda)\cong\kernel(\mathsf{Sq}^2)\subseteq h^{n-2,n+1} \\
        & E^2_{-n+3,2,-n}(\MSL_\Lambda)\cong \kernel((b,B)\mapsto(\tau\mathrm{pr}^\infty_2(B)+\mathsf{Sq}^2(b),\mathsf{Sq}^3\mathsf{Sq}^1(b))\subseteq h^{n-1,n+2}\oplus H^{n+1,n+2} \\
        & E^2_{-n+3,3,-n}(\MSL_\Lambda)\cong h^{n,n+2}/(\mathrm{pr}^\infty_2H^{n,n+2})\oplus H^{n+3,n+3}\cong {}_2H^{n+1,n+2}\oplus H^{n+3,n+3}
    \end{align*}
    Moreover, the morphism between 3rd columns $E^2_{-n+3,q,-n}(\MSL_\Lambda)\to E^2_{-n+3,q,-n}(\kq_{\Lambda})$ is an isomorphism for $q\neq 3$ and a split epimorphism for $q=3$, namely the projection onto the first direct summand.
\end{proposition}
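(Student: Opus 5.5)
The plan is to compute the $E^2$-terms in the column $m=3$ directly from the $E^1$-page, using the explicit first slice differentials of Theorem~\ref{thm:d1_msl}. For each $q$, $E^2_{-n+3,q,-n}$ is the homology at $\pi_{3-(n)}\slice_q(\MSL_\Lambda)$ of
\[\pi_{4-(n)}\slice_{q-1}\xrightarrow{\dd_1(q-1)}\pi_{3-(n)}\slice_q\xrightarrow{\dd_1(q)}\pi_{2-(n)}\slice_{q+1}.\]
First I will list the $E^1$-groups $\pi_{3-(n)}\Sigma^{a+(q)}\EM A = H^{q+n-3+a,\,q+n}(F;A)$ from the explicit slices in Remark~\ref{remark:first_slices}, and discard the summands that vanish because of the Beilinson--Soulé-type vanishing $H^{s,w}=0$ for $s>w$ (and $h^{s,w}=0$ for $s>w$). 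This leaves only the summands $q\le 3$. For $q\ge 4$ every summand has cohomological degree exceeding the weight: the top integral summand $\Sigma^{q+(q)}$ gives $H^{n+q-3+q,\,n+q}$ with $q-3>0$.

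Next I will run through the four cases.

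For $q=0$ the incoming differential is zero. The outgoing $\dd_1(0)=\Sq^2\pr^\infty_2$ lands in $h^{n-1,n+1}$ and is evaluated on $H^{n-3,n}$.

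For $q=1$ I take the kernel of $\dd_1(1)$ on $h^{n-2,n+1}$ modulo the image of $\Sq^2\pr^\infty_2$. Here I would argue that the integral component $\partial^2_\infty\Sq^2\Sq^1$ vanishes on the relevant groups, or that the image of $\dd_1(0)$ is zero. These degree checks, together with the corresponding ones for $q=0$, use the comparison with $\kq$ and the known $E^2$-page of $\kq$ from \cite{very_eff_KQ,RSO24}. Since $\slice_q(\MSL_\Lambda)\cong\slice_q(\kq_\Lambda)$ for $q\le 2$ and the relevant differentials agree by Theorem~\ref{thm:d1_msl} and Proposition~\ref{prop:slices-msl-kq}, the rows $q=0,1$ are isomorphic to those for $\kq$. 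In particular, on $E^1$ for $q=0$ there is no incoming term.

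For $q=2$ the incoming map from $h^{n-3,n+1}$ should vanish for degree or weight reasons. The kernel of $\dd_1(2)$ restricted to $h^{n-1,n+2}\oplus H^{n+1,n+2}$ is then read off from the matrix in Theorem~\ref{thm:d1_msl}. Its third row is zero, and the target $H^{n+2,n+3}$-summand vanishes anyway. This gives exactly the stated kernel of $(b,B)\mapsto(\tau\pr^\infty_2B+\Sq^2b,\Sq^3\Sq^1b)$.

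For $q=3$ the $E^1$-term is $h^{n,n+3}\oplus h^{n+2,n+3}\oplus H^{n+3,n+3}$. Here the first summand vanishes only if I index carefully, so I will recheck the indices. The outgoing $\dd_1(3)$ lands in groups of degree above the weight, so it is zero. The incoming $\dd_1(2)$ hits the second summand through $\Sq^3\Sq^1$ and $\Sq^2\pr^\infty_2$, and by the third row hits nothing integral. The quotient $h^{n,n+2}/\pr^\infty_2H^{n,n+2}$ should arise after matching the summand indices.

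I expect the main obstacle to be the quotient at $q=3$. There I must show that the image of $\dd_1(2)$ in the mod-$2$ summand is exactly $\pr^\infty_2 H$. I will use the Bloch--Kato identification $h^{s,w}\cong \tau^{w-s}h^{s,s}$ for $s\le w$ to rewrite $\Sq^2$ and $\Sq^3\Sq^1$ on such classes: $\Sq^2(\tau^2x)=\tau\rho x$-type formulas, and $\Sq^1(\tau x)=\rho x$. The identification $h^{n,n+2}/\pr H^{n,n+2}\cong {}_2H^{n+1,n+2}$ then follows from the Bockstein long exact sequence. Finally, the comparison statement follows from Proposition~\ref{prop:slices-msl-kq}. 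The torsion summands map identically and the integral summand $H^{n+3,n+3}$ at $q=3$ maps to zero, since $\cyc_6(\Z[\beta])=0$. This gives isomorphisms for $q\ne 3$ and the projection onto the first summand for $q=3$, once the $\kq$ column is known to have the same description.
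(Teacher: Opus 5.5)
\textbf{There is a genuine gap in the rows $q\geq 3$.} The overall strategy is sound: read off $E^2$ from the explicit $\dd_1$ of Theorem~\ref{thm:d1_msl}, borrow the rows that coincide with those of $\kq_\Lambda$, and compare via Proposition~\ref{prop:slices-msl-kq}. Your final comparison argument ($\cyc_6(\Z[\beta])=0$, torsion summands mapping identically) is also correct. But your degree bookkeeping for $q\geq 3$ is wrong exactly where the content lies.

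For $q\geq 4$ it is false that every $E^1$-term in this column vanishes. The groups $\pi_{3-(n)}\Sigma^{(q)}\EM\Lambda/2=h^{n+q-3,n+q}$ and $\pi_{3-(n)}\Sigma^{2+(q)}\EM\Lambda/2=h^{n+q-1,n+q}$ are nonzero in general; only summands $\Sigma^{a+(q)}$ with $a\geq 4$ die for degree reasons. Their disappearance on $E^2$ is a $\dd_1$-computation. The $\tau$-entry of the incoming $\dd_1(q-1)$ surjects onto $h^{n+q-3,n+q}$, and the $\tau$-entry of the outgoing $\dd_1(q)$ is injective on $h^{n+q-1,n+q}$. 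So the outgoing kernel projects injectively, and the incoming image surjectively, onto $h^{n+q-3,n+q}$.

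At $q=3$ the outgoing $\dd_1(3)$ is likewise not zero. Only its integral target $H^{n+6,n+4}$ vanishes; its mod-$2$ targets are $h^{n+2,n+4}\oplus h^{n+4,n+4}$. Its first row $(x,y)\mapsto\Sq^2x+\tau y$ involves the isomorphism $\tau\colon h^{n+2,n+3}\to h^{n+2,n+4}$. Hence the kernel on $h^{n,n+3}\oplus h^{n+2,n+3}$ is the graph $\{(x,\tau^{-1}\Sq^2x)\}$, a copy of $h^{n,n+3}$. The second row is then automatic by $\Sq^2(\tau z)=\tau(\Sq^2+\rho\Sq^1)z$ and $\Sq^2\Sq^2=\tau\Sq^3\Sq^1$.

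So $h^{n,n+3}$ is the summand that carries the answer, not one that should vanish. The relevant quotient comes from the $\tau\pr^\infty_2$-entry of $\dd_1(2)$ into $h^{n,n+3}$; its $\Sq^2$-entry vanishes on $h^{n-2,n+2}$. This gives $h^{n,n+3}/\tau\pr^\infty_2H^{n,n+2}\cong h^{n,n+2}/\pr^\infty_2H^{n,n+2}$. Executed as written (outgoing differential zero, quotient by the image landing in $h^{n+2,n+3}$), your plan produces a group that is too large whenever $h^{n+2,n+3}\neq 0$.

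\textbf{How the paper avoids this.} It never recomputes the $\kq$-part. In this column the only $E^1$-term of $\MSL_\Lambda$ absent for $\kq_\Lambda$ is $H^{n+3,n+3}$ at $q=3$. That summand is not hit, since the third row of $\dd_1(2)$ is zero. It supports no $\dd_1$, since its mod-$2$ entries in $\dd_1(3)$ are zero and its integral target vanishes. The extra entry $\partial^2_\infty\Sq^2\Sq^1$ of $\dd_1(3)$ has zero target on $h^{n+2,n+3}$. Hence $E^2(\MSL_\Lambda)=E^2(\kq_\Lambda)\oplus H^{n+3,n+3}$ at $q=3$ and agrees with $E^2(\kq_\Lambda)$ otherwise. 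The $\kq$ column, including its vanishing for $q\geq 4$, is taken from \cite{RSO24}. If you keep the direct approach instead, you must use the $\tau$-entries as above.

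\textbf{Two smaller slips.} At $q=1$ the entry $\partial^2_\infty\Sq^2\Sq^1$ does not vanish on $h^{n-2,n+1}$ (over $\RR$ it sends $\tau^3$ to $\partial^2_\infty(\tau\rho^3)\neq 0$); it vanishes only on $\ker\Sq^2$, which is why the answer is $\ker\Sq^2$. At $q=2$ the integral target of $\dd_1(2)$ in this column is $H^{n+4,n+3}$, not $H^{n+2,n+3}$.
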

\begin{proof}
    For $q\neq 3$, the result follows immediately from the computation for $\kq$ \cite[\S 2]{RSO24}, Theorem \ref{thm:d1_msl} and Proposition \ref{prop:slices-msl-kq}. 
    In the remaining slice degree, the computation of the $\dd_1$-differential in Theorem~\ref{thm:d1_msl} implies that the integral motivic cohomology group $H^{n+3,n+3}$ is a direct summand of $E^2_{-n+3,3,-n}(\MSL_\Lambda)$. 
    Here we use the fact that the operation $\partial^2_\infty\mathsf{Sq}^2\mathrm{pr}^\infty_2$, which appears in the differential $\slice_3\to\Sigma\slice_4$, acts trivially on $H^{n+3,n+3}$. 
    Comparing with the slices of $\kq_\Lambda$ again, the remaining piece is seen to coincide with $E^2_{-n+3,3,-n}(\kq_\Lambda)$; the operation $\partial^2_\infty\mathsf{Sq}^2\mathsf{Sq}^1$ does not affect the summand $h^{n+2,n+3}$ since its target is zero.
\end{proof}
To determine the abutment in the 3rd column, we also need to identify some non-trivial entries in the 4th column of the $E^2$-page.
\begin{lemma}
    The group $E^2_{-n+4,0,-n}(\MSL_\Lambda)$ is isomorphic to $H^{n-4,n}$. 
    The morphism $\MSL\to\kq$ induces an isomorphism \[E^2_{-n+4,0,-n}(\MSL_\Lambda)\xrightarrow{\simeq} E^2_{-n+4,0,-n}(\kq_\Lambda).\]
\end{lemma}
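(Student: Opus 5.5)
The plan is to compute $E^2_{-n+4,0,-n}(\MSL_\Lambda)$ directly from the $E^1$-page. Only the first slice differential can matter: since $\MSL$ is effective, no differential enters slice degree $0$. I then compare with $\kq_\Lambda$ via Proposition~\ref{prop:slices-msl-kq}.

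First, $\slice_0(\MSL_\Lambda)\cong\EM\Lambda$ by Remark~\ref{remark:first_slices}, so
\[ E^1_{-n+4,0,-n}(\MSL_\Lambda)=\pi_{4+(-n)}\EM\Lambda\cong H^{n-4,n}, \]
where we keep the convention that $H$ means $\Lambda$-coefficients here. Since $\slice_q(\MSL_\Lambda)=0$ for $q<0$, there are no incoming differentials $\dd_r$. Hence $E^2_{-n+4,0,-n}$ is the kernel of
\[ \dd_1(0)=\Sq^2\pr^\infty_2\colon H^{n-4,n}\to E^1_{-n+3,1,-n}=\pi_{3+(-n)}\Sigma^{(1)}\EM\Lambda/2\cong h^{n-2,n+1}, \]
using Theorem~\ref{thm:d1_msl}.

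The key step is to show that $\Sq^2$ vanishes on $\pr^\infty_2 H^{n-4,n}\subseteq h^{n-4,n}$. I would argue as follows.

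1. By the Beilinson--Lichtenbaum conjecture (Voevodsky's proof of Bloch--Kato), every class in $h^{n-4,n}$ can be written as $\tau^4 x$ with $x\in h^{n-4,n-4}\cong\KMil_{n-4}(F)/2$.
2. Apply the motivic Cartan formula to $\Sq^2(\tau^4x)$. The terms are $\Sq^2(\tau^4)x$, $\Sq^1(\tau^4)\Sq^1(x)$, $\tau^4\Sq^2(x)$, and the extra motivic term $\tau\Sq^1(\tau^4)\Sq^1(x)$.
3. Each term vanishes:
   - $\Sq^1(\tau^4)=0$, since $\Sq^1\tau=\rho$ and $4$ is even.
   - $\Sq^2(\tau^4)=0$, since $\Sq^2(\tau^{m})=\binom{m}{2}\rho^2\tau^{m-1}$ and $\binom42$ is even.
   - $\Sq^2(x)$ lies in $h^{n-2,n-3}$, which is zero because its cohomological degree exceeds its weight.

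This Cartan-formula bookkeeping, with the correct motivic Steenrod relations for powers of $\tau$, is the only place needing care, and I expect it to be the main (mild) obstacle. If necessary, I would instead appeal to the analogous computation for $\kq$ in \cite[\S 2]{RSO24}.

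Therefore $\dd_1(0)$ is zero on this group, and $E^2_{-n+4,0,-n}(\MSL_\Lambda)\cong H^{n-4,n}$.

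For the comparison, Proposition~\ref{prop:slices-msl-kq} shows that $\slice_0(\MSL_\Lambda)\to\slice_0(\kq_\Lambda)$ is $\EM\Lambda\otimes\cyc_0(f)$. Here $f_0\colon\W_0=\Z\to\Z$ is the identity, so this map is an isomorphism. The map $\MSL\to\kq$ commutes with $\dd_1$, and $\dd_1^{\kq}(0)=\Sq^2\pr^\infty_2$ as well by \cite[Theorem 3.5]{very_eff_KQ}. Thus both $E^2$-terms are identified with the same kernel, namely all of $H^{n-4,n}$, and the induced map is the identity under these identifications. Consequently it is an isomorphism.
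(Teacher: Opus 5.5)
Your proposal is correct and follows the paper's proof: $E^2_{-n+4,0,-n}$ is the kernel of $\dd_1(0)=\Sq^2\pr^\infty_2\colon H^{n-4,n}\to h^{n-2,n+1}$, this map vanishes, and the comparison with $\kq_\Lambda$ follows because $\slice_0(\MSL_\Lambda)\to\slice_0(\kq_\Lambda)$ is an equivalence. The only difference is that the paper cites \cite[Corollary 6.2]{RO16} for the vanishing of $\Sq^2$ on $h^{n-4,n}$, whereas you check it directly using $h^{n-4,n}=\tau^4h^{n-4,n-4}$ and the Cartan formula $\Sq^2(uv)=\Sq^2(u)v+\tau\Sq^1(u)\Sq^1(v)+u\Sq^2(v)$; your extra untwisted term $\Sq^1(\tau^4)\Sq^1(x)$ does not actually occur (wrong bidegree), but it is zero anyway, so the argument stands.
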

\begin{proof}
    By Theorem \ref{thm:d1_msl}, the desired group is given by the kernel of the map \[\mathsf{Sq}^2\mathrm{pr}^\infty_2\colon H^{n-4,n}\to h^{n-2,n+1},\]
    which is trivial since $\Sq^2\colon h^{n-4,n}\to h^{n-2,n+1}$ is zero \cite[Corollary 6.2]{RO16}.
\end{proof}
\begin{lemma}\label{lemma:E2_4-1}
    The group $E^2_{-n+4,1,-n}(\MSL_\Lambda)$ is isomorphic to $h^{n-3,n+1}$. 
    The morphism $\MSL\to\kq$ induces an isomorphism \[E^2_{-n+4,1,-n}(\MSL_\Lambda)\xrightarrow{\simeq} E^2_{-n+4,1,-n}(\kq_\Lambda).\]
\end{lemma}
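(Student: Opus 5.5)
The group $E^2_{-n+4,1,-n}(\MSL_\Lambda)$ is the homology at $\pi_{4-n+(-n)}\slice_1(\MSL_\Lambda)$ of the $\dd_1$-complex. Since $\slice_1(\MSL_\Lambda)\cong\Sigma^{(1)}\EM\Lambda/2$, the middle term is $\pi_{4-n+(-n)}\Sigma^{(1)}\EM\Lambda/2=h^{n-3,n+1}$. The plan is to show that both $\dd_1$-differentials touching this group vanish. Then $E^2=h^{n-3,n+1}$, and the comparison with $\kq$ is automatic, because $\slice_q(\MSL_\Lambda)\to\slice_q(\kq_\Lambda)$ is an isomorphism for $q\leq 2$ by Proposition~\ref{prop:slices-msl-kq}, and the differentials $\dd_1(0)$ and $\dd_1(1)$ are compatible.

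\emph{Incoming differential.} The incoming differential is $\dd_1(0)=\mathsf{Sq}^2\mathrm{pr}^\infty_2\colon H^{n-5,n}\to h^{n-3,n+1}$. Its source $\pi_{5-n+(-n)}\EM\Lambda=H^{n-5,n}$ should vanish. Here I would use the standard vanishing $H^{p,q}(F)=0$ for $p>q$ over a field, applied with $p=n-5<n$... but that inequality goes the wrong way for this vanishing. Instead I would argue exactly as in the preceding lemma: $\mathsf{Sq}^2\colon h^{n-5,n}\to h^{n-3,n+1}$ is zero by \cite[Corollary 6.2]{RO16}, since $\mathsf{Sq}^2$ vanishes on $h^{p,q}$ when $p\le q-2$ (weight-versus-degree gap). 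Either way the image of $\dd_1(0)$ is trivial.

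\emph{Outgoing differential.} The outgoing differential is $\dd_1(1)=(\mathsf{Sq}^2,\ \partial^2_\infty\mathsf{Sq}^2\mathsf{Sq}^1)^{\mathrm{tr}}$, landing in $h^{n-1,n+2}\oplus H^{n+1,n+2}$ by Theorem~\ref{thm:d1_msl}. The first component $\mathsf{Sq}^2\colon h^{n-3,n+1}\to h^{n-1,n+2}$ vanishes by the same \cite[Corollary 6.2]{RO16}, because its source degree is at most weight minus $2$. For the second component, $\mathsf{Sq}^2\mathsf{Sq}^1$ lands in $h^{n,n+2}$, and $\mathsf{Sq}^1$ maps $h^{n-3,n+1}$ to $h^{n-2,n+1}$, where the same vanishing of $\mathsf{Sq}^2$ on $h^{n-2,n+1}$ (degree $\le$ weight $-2$) again applies. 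Alternatively, $h^{*,*}=\KMil/2[\tau]$ by Bloch--Kato, and $\mathsf{Sq}^2$ acts on $\tau^k$-multiples through $\binom{k}{2}\tau^{k-1}\rho$-type formulas. If a direct citation fails, I would check this explicitly using $\mathsf{Sq}^1(\tau)=\rho$, $\mathsf{Sq}^2(\tau^2)=\ldots$. Hence the whole cycle group is $h^{n-3,n+1}$.

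\emph{Main obstacle.} The main obstacle is confirming that $\mathsf{Sq}^2$ (and $\mathsf{Sq}^2\mathsf{Sq}^1$) vanishes on the relevant bidegrees in full generality. My first attempt is to invoke \cite[Corollary 6.2]{RO16} as the previous lemma does. If its range is too narrow, I would fall back on the explicit Cartan-formula computation on $\KMil_\star/2[\tau]$. In that computation, $\mathsf{Sq}^2(\tau^k a)$ for $a\in\KMiltwo$ is a multiple of $\tau^{k-1}\rho\,\mathsf{Sq}^1$-type terms that must be checked to cancel, or else shown to be killed after applying $\partial^2_\infty$. The same computation for $\kq$ is recorded in \cite[\S 2]{RSO24}, which gives the answer $h^{n-3,n+1}$ there, so agreement of the $\kq$ entries provides a consistency check.
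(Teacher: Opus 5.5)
Your overall strategy is the paper's. You identify the $E^1$-term as $h^{n-3,n+1}$, show that $\dd_1(0)=\Sq^2\pr^\infty_2$ out of $H^{n-5,n}$ and $\dd_1(1)=(\Sq^2,\partial^2_\infty\Sq^2\Sq^1)^{\mathrm{tr}}$ out of $h^{n-3,n+1}$ both vanish, and get the comparison with $\kq$ from $\slice_q(\MSL_\Lambda)\cong\slice_q(\kq_\Lambda)$ for $q\leq 2$.

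However, the vanishing criterion you attribute to \cite[Corollary 6.2]{RO16} is false: $\Sq^2$ does not vanish on $h^{p,q}$ just because $p\leq q-2$. Write $h^{p,q}=\tau^{q-p}\KMiltwo_p(F)$ for $0\le p\le q$. The motivic Cartan formula, together with $\Sq^1\tau=\rho$ and the vanishing of $\Sq^1,\Sq^2$ on $\KMiltwo_\star(F)$, gives
\[ \Sq^1(\tau^k a)=k\,\tau^{k-1}\rho\, a,\qquad \Sq^2(\tau^k a)=\tbinom{k}{2}\tau^{k-1}\rho^2 a . \]
So $\Sq^2$ is nonzero in general when $k\equiv 2,3 \bmod 4$. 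The paper itself uses $\Sq^2(\tau^2\rho)=\tau\rho^3\neq 0$ in the proof of Theorem~\ref{thm.firstslicediffMGL}. In particular, your argument for the second component of $\dd_1(1)$ fails as written. You claim $\Sq^2$ vanishes on $h^{n-2,n+1}=\tau^3\KMiltwo_{n-2}(F)$, but there $\Sq^2(\tau^3 b)=\tau^2\rho^2 b$, which is nonzero over $\RR$; for example, $\Sq^2(\tau^3)=\tau^2\rho^2\neq 0$ when $n=2$.

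The correct reason in these specific bidegrees is the $\tau$-exponent. On $h^{n-5,n}$ ($k=5$) and on $h^{n-3,n+1}$ ($k=4$) the coefficient $\binom{k}{2}$ is even, so $\Sq^2$ vanishes there. The component $\partial^2_\infty\Sq^2\Sq^1$ vanishes on $h^{n-3,n+1}$ because already $\Sq^1(\tau^4 a)=4\tau^3\rho a=0$, not because $\Sq^2$ vanishes on $h^{n-2,n+1}$. Your fallback via binomial coefficients is the right repair, with $\rho^2$ in place of $\rho$. With it, the argument becomes exactly the paper's one-line proof.
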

\begin{proof}
    Both $\dd_1$-differentials 
    \[\mathsf{Sq}^2\mathrm{pr}^\infty_2\colon H^{n-5,n}\to h^{n-3,n+1}\quad \text{and}\quad \begin{pmatrix}
        \mathsf{Sq}^2 \\ \partial^2_\infty\mathsf{Sq}^2\mathsf{Sq}^1
    \end{pmatrix}\colon h^{n-3,n+1}\to h^{n-1,n+2}\oplus H^{n+1,n+2}\]
    are trivial by \cite[Corollary 6.2]{RO16} again.
\end{proof}
\begin{lemma}\label{lemma:E2_4>4}
    For $q>4$ the group $E^2_{-n+4,q,-n}(\MSL_\Lambda)$ is isomorphic to $h^{n+q,n+q}$, and the morphism $\MSL\to \kq$ induces an isomorphism 
    \[E^2_{-n+4,q,-n}(\MSL_\Lambda)\xrightarrow{\simeq} E^2_{-n+4,q,-n}(\kq_\Lambda).\]
\end{lemma}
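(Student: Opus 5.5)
We need $E^2_{-n+4,q,-n}$ for $q>4$. Here $m=4$, and $E^1_{-n+m,q,-n}=\pi_{m-n+(-n)}\slice_q$ receives contributions from the summands $\Sigma^{q-p+(q)}\EM\Lambda\otimes\hmlg_{2q-2p}(\W)$ and $\Sigma^{q+(q)}\EM\Lambda\otimes\cyc_{2q}(\W)$. A summand $\Sigma^{a+(q)}\EM A$ contributes the group $H^{q+a-m,\,q+n}(A)$ in this position. By Beilinson--Soulé type vanishing and the vanishing of motivic cohomology of fields in cohomological degree exceeding the weight, this is nonzero only when $q+a-m\le q+n$. It is also nonzero only when the weight is nonnegative. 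For a field, mod $2$ cohomology $h^{s,w}$ with $s\le w$ is generally nonzero, so degree vanishing alone is not enough. Instead I will compare with $\kq$.

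First, I will write down the relevant part of $\slice_q(\MSL_\Lambda)$ in slice degrees $q-1,q,q+1$ using Theorem \ref{slices_MSL}. Its torsion summands are $\Sigma^{2j+(q)}\EM\Lambda/2$ for $0\le 2j<q$, up to multiplicity, and its integral summand is $\Sigma^{q+(q)}\EM\Lambda^{r}$. In the column $m=4$, a summand $\Sigma^{2j+(q)}\EM\Lambda/2$ contributes $h^{q+2j-4,q+n}$. The integral summand contributes $H^{2q-4,q+n}$, and I will check that it is annihilated or irrelevant for the differentials once $q>4$. The contributions with $j\ge 1$ and $2j\le q-1$ need an argument. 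The relevant groups $h^{q+2j-4,q+n}$ do not vanish automatically, so I will show they are killed in $E^2$ by $\dd_1$, mirroring the $\kq$ computation in \cite[\S 2]{RSO24}. For $\kq$, the corresponding $E^2$ is $h^{n+q,n+q}$, which comes from $j=2$ at weight $q+n$: $h^{q,q+n}$ shifted. (Indeed, $q+4-4=q$; I need to recheck the indexing against the pattern $h^{n+q,n+q}$ in Lemma \ref{lemma:E^2_for_m<3}, which comes from the $\Sigma^{(q)}$ summand at $m=0$.)

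The strategy is as follows. Proposition \ref{prop:slices-msl-kq} says that the map $\slice_q(\MSL_\Lambda)\to\slice_q(\kq_\Lambda)$ is the identity on the two bottom torsion summands $\Sigma^{(q)}\EM\Lambda/2\oplus\Sigma^{2+(q)}\EM\Lambda/2$. By Theorem \ref{thm:d1_msl}, for $q\ge4$ the $\dd_1$ restricted to these summands agrees with that of $\kq$. So the proof reduces to showing that the remaining summands contribute nothing to $E^2$ in this column. These remaining summands are $\Sigma^{2j+(q)}\EM\Lambda/2$ with $j\ge2$, together with the integral summand. For the remaining summands I will use that their contributions $h^{q+2j-4,q+n}$ lie in degrees where, after applying $\dd_1$ from $\Sigma^{2j-2+(q-1)}$, the relevant differentials are either isomorphisms or the groups vanish by \cite[Corollary 6.2]{RO16} and Lemma \ref{lemma_eta_action}.

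The step I expect to be the main obstacle is that $\dd_1$ on the higher torsion summands is not computed in Theorem \ref{thm:d1_msl} for $q\ge4$. To get around this, I will first try to compare with $\MSL[\eta^{-1}]$ or with the $\eta$-action of Lemma \ref{lemma_eta_action}. Multiplication by $\eta$ maps these summands identically from slice $q-1$, so their $\dd_1$ is inherited by $\eta$-linearity from the known lower-degree formulas. Given that, the $E^2$ comparison with $\kq$ follows summand by summand.
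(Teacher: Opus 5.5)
Your overall plan, comparing with $\kq_\Lambda$ through Proposition~\ref{prop:slices-msl-kq} and Theorem~\ref{thm:d1_msl}, points in the right direction. But the reduction you set up is wrong, and it rests on an indexing slip. A summand $\Sigma^{a+(q)}\EM A$ of $\slice_q$ contributes $\pi_{4-n,-n}\Sigma^{a+q,q}\EM A\iso H^{a+q+n-4,\,q+n}(A)$ to $E^1_{-n+4,q,-n}$, not $H^{q+a-4,\,q+n}(A)$. With the correct index, motivic cohomology of a field (integral and mod~$2$) vanishes in cohomological degree above the weight. This already disposes of every summand with $a>4$: all $\Sigma^{2j+(q)}\EM\Lambda/2$ with $j\geq 3$, and the integral summand $\Sigma^{q+(q)}\EM\Lambda\otimes\cyc_{2q}(\W)$, since $q>4$. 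Nothing has to be killed by $\dd_1$. One simply has $E^1_{-n+4,q,-n}(\MSL_\Lambda)\iso h^{n+q-4,n+q}\oplus h^{n+q-2,n+q}\oplus h^{n+q,n+q}$, and the target of the outgoing $\dd_1$ is $h^{n+q-2,n+q+1}\oplus h^{n+q,n+q+1}$.

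The genuine gap is the third summand $\Sigma^{4+(q)}\EM\Lambda/2$, your $j=2$. You list it among the summands that should contribute nothing to $E^2$, yet, as you note yourself, it is exactly where the answer $h^{n+q,n+q}$ sits. So the reduction cannot be carried out.

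What is missing is that, by Proposition~\ref{prop:slices-msl-kq}, the map $\slice_q(\MSL_\Lambda)\to\slice_q(\kq_\Lambda)$ carries this summand isomorphically onto the corresponding summand of $\slice_q(\kq_\Lambda)$. The map is induced by $\hmlg_8(f)\colon\hmlg_8(\W)\to\hmlg_8(\Z[\beta])$, a homomorphism between groups of order $2$. It is nonzero because $x_1^{*4}=a_1^4+16a_1^2a_2+64a_2^2$ and $\mathrm{FGL}(m)$ sends $a_1\mapsto\pm\beta$ and $a_2\mapsto 0$, so $f_8(x_1^{*4})=\beta^4$. Hence $E^1_{-n+4,q,-n}(\MSL_\Lambda)\to E^1_{-n+4,q,-n}(\kq_\Lambda)$ is an isomorphism. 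It is also an isomorphism on the target of the outgoing $\dd_1$, and surjective on the incoming source $E^1_{-n+5,q-1,-n}$ (including $q\in\{5,6\}$, where an integral summand of $\slice_{q-1}$ enters). Naturality of $\dd_1$ then identifies the $E^2$-terms.

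Your $\eta$-linearity substitute does not close the gap. Following Lemma~\ref{lemma_eta_action} backwards, $\Sigma^{4+(q)}\EM\Lambda/2$ originates from the integral summand $\Sigma^{4+(4)}\EM\Lambda\otimes\cyc_8(\W)$ of $\slice_4$ via $\pr_8$. Theorem~\ref{thm:d1_msl} does not give $\dd_1(4)$ on that summand.

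Finally, you assert the value $h^{n+q,n+q}$ for $\kq$ rather than derive it. It has to be computed from \cite[Theorem 3.5]{very_eff_KQ}, as the paper does:
\begin{enumerate}
\item The first column of the outgoing $\dd_1$ vanishes by \cite[Corollary 6.2]{RO16} and $\tau$ is injective, so the kernel is $h^{n+q-4,n+q}\oplus h^{n+q,n+q}$.
\item The $\Sq^2$-entries of the incoming $\dd_1$ vanish, so $E^2$ is the cokernel of $(\tau,\Sq^3\Sq^1)\colon h^{n+q-4,n+q-1}\to h^{n+q-4,n+q}\oplus h^{n+q,n+q}$. This cokernel is $h^{n+q,n+q}$ because $\tau$ is bijective in this degree.
\item For $q=5$, the extra integral source from $\slice_4$ contributes nothing, since $\Sq^2\colon h^{n+3,n+4}\to h^{n+5,n+5}$ is zero.
\end{enumerate}
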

\begin{proof}
    According to the respective computation of slices and Proposition \ref{prop:slices-msl-kq}, the map $\MSL\to \kq$ induces isomorphisms 
    \[ E^1_{-n+4,q,-n}(\MSL_\Lambda)\xrightarrow{\simeq} E^1_{-n+4,q,-n}(\kq_\Lambda)\]
    for $q>4$. 
    Hence, the differential $\dd_1\colon E^1_{-n+4,q,-n}\to E^1_{-n+3,q+1,-n}$ has the form 
    \[ \begin{pmatrix}
        \mathsf{Sq}^2 & \tau & 0 \\ \mathsf{Sq}^3\mathsf{Sq}^1 & \mathsf{Sq}^2+\rho\mathsf{Sq}^1 & 0 
    \end{pmatrix}\colon h^{n+q-4,n+q}\oplus h^{n+q-2,n+q}\oplus h^{n+q,n+q} \to h^{n+q-2,n+q+1}\oplus h^{n+q,n+q+1} \]
    by \cite[Theorem 3.5]{very_eff_KQ}. 
    The first column acts trivially in this degree by \cite[Corollary 6.2]{RO16}. 
    Since multiplication with $\tau$ is injective, the kernel of the above matrix is then given by $h^{n+q-4,n+q}\oplus h^{n+q,n+q}$. 
    Now for $q>5$, the differential $$\dd_1\colon E^1_{-n+5,q-1,-n}\to E^1_{-n+4,q,-n}$$ has the form
    \begin{align*}
        \begin{pmatrix}
        \mathsf{Sq}^2 & \tau & 0 \\ \mathsf{Sq}^3\mathsf{Sq}^1 & \mathsf{Sq}^2+\rho\mathsf{Sq}^1 & 0 \\ 0 & \mathsf{Sq}^3\mathsf{Sq}^1 & \mathsf{Sq}^2 
    \end{pmatrix}\colon h^{n+q-6,n+q-1}\oplus h^{n+q-4,n+q-1}&\oplus h^{n+q-2,n+q-1}\to \\ &h^{n+q-4,n+q}\oplus h^{n+q-2,n+q}\oplus h^{n+q,n+q}
    \end{align*}
    by the same argument. 
    The operations $\mathsf{Sq}^2$ in the first and third rows act trivial in the given degree. 
    It follows that the group $E^2_{-n+4,q,-n}$ is isomorphic to the cokernel of
    \[ (\tau\ \mathsf{Sq}^3\mathsf{Sq}^1)\colon h^{n+q-4,n+q-1}\to h^{n+q-4,n+q}\oplus h^{n+q,n+q}, \]
    which can be identified with $h^{n+q,n+q}$ via the composition 
    \[ h^{n+q,n+q}\hookrightarrow h^{n+q-4,n+q}\oplus h^{n+q,n+q}\twoheadrightarrow \coker(\tau\ \mathsf{Sq}^3\mathsf{Sq}^1). \]
    In the remaining case $q=5$, the differential $\dd_1\colon E^1_{-n+5,4,-n}\to E^1_{-n+4,5,-n}$ has the form
    \begin{align*}
        \begin{pmatrix}
        \mathsf{Sq}^2 & \tau & 0 \\ \mathsf{Sq}^3\mathsf{Sq}^1 & \mathsf{Sq}^2+\rho\mathsf{Sq}^1 & \star \\ 0 & \mathsf{Sq}^3\mathsf{Sq}^1 & 0 \end{pmatrix} \colon h^{n-1,n+4}\oplus h^{n+1,n+4}\oplus (H^{n+3,n+4})^2\to h^{n+1,n+5}\oplus h^{n+3,n+5}\oplus h^{n+5,n+5}.
    \end{align*}
    Here the component $\EM\Lambda^2\to\Sigma^{1+(1)}\EM\Lambda/2$ of $\dd_1$ might be non-zero, but it acts trivially on $(H^{n+3,n+4})^2$ since the Steenrod square $\mathsf{Sq}^2\colon h^{n+3,n+4}\to h^{n+5,n+5}$ is zero. 
    Hence, the desired group $E^2_{-n+4,5,-n}$ can be computed in the same way as before. 
    The result follows.
\end{proof}

\section{Higher slice differentials for motivic sphere and \texorpdfstring{$\SL$}{SL}-cobordism}\label{section:5}
In Section~\ref{section:4} we identified the part of the second page of the slice spectral sequence for $\MSL$ which is responsible for the homotopy groups $\pi_{n+m,n}\MSL$, where $n\in \Z$ and $m\in \{0,1,2,3\}$.
To turn the pages further, information on higher slice differentials is required.
We start with the differentials entering the first three non-trivial columns.
\begin{lemma}\label{lem:higherdiffMSL}
    The slice $\dd_r$-differentials $E^r_{-n+m,q,-n}(\MSL_\Lambda)\to E^{r}_{-n+m-1,q+r,-n}(\MSL_\Lambda)$ are trivial for $m\leq 3$ and $r\geq 2$.
\end{lemma}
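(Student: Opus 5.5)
The plan is to compare with $\kq_\Lambda$, where the analogous higher differentials are known, and to handle separately the few entries of $E^2(\MSL_\Lambda)$ that are not detected by $\kq_\Lambda$. The differential $\dd_r$ with $r\geq 2$ goes from column $m$ to column $m-1$. So for $m\leq 3$ its target lies in one of the columns $m-1\leq 2$. By Lemma \ref{lemma:E^2_for_m<3}, the map $\MSL\to\kq$ induces isomorphisms $E^2_{-n+m-1,q,-n}(\MSL_\Lambda)\cong E^2_{-n+m-1,q,-n}(\kq_\Lambda)$ in these columns.

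\textbf{Step 1: the case $r=2$.} By naturality of the slice spectral sequence,
\[ \dd_2^{\MSL}\circ\,\mathrm{comparison}=\mathrm{comparison}\circ\dd_2^{\kq}. \]
Since the comparison map is injective on the target, $\dd_2^{\MSL}(x)=0$ whenever the image of $x$ in $E^2(\kq_\Lambda)$ has vanishing $\dd_2^{\kq}$. The input from $\kq$ is that all higher differentials of $\kq$ in this range are trivial; this is recorded in \cite{RSO24}, and it also follows because the slice spectral sequence of $\kq$ degenerates at $E^2$ in low Milnor--Witt stems, with abutment the known $\pi_{n+m,n}\kq$. With this input, $\dd_2^{\MSL}(x)=0$ for every $x$ in columns $m\leq 3$.

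\textbf{Step 2: higher $r$.} Once $\dd_2$ vanishes on these columns, we get $E^3=E^2$ there, and also in the target columns $m-1\leq 2$, since no differential leaves column $m-1$ nontrivially: the same argument applies one column lower. Hence the comparison is still an isomorphism onto the target at page $E^r$. Induction on $r$ then repeats the argument verbatim.

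\textbf{Step 3: the main obstacle.} The difficulty is the source column $m=3$ in slice degree $q=3$. There the summand $H^{n+3,n+3}$ of $E^2_{-n+3,3,-n}(\MSL_\Lambda)$ maps to zero in $\kq$ by Proposition \ref{prop:E^2_for_m=3}, so naturality gives nothing. For this summand I would argue directly.
\begin{itemize}
\item First try a weight/degree argument. The targets are $E^r_{-n+2,3+r,-n}$, which vanish for $3+r\geq 4$ by Lemma \ref{lemma:E^2_for_m<3}, since column $2$ is zero in slice degrees $\geq 4$. So every such differential lands in a zero group, and this case is disposed of outright.
\item Similarly, for any source with $q+r\geq 4$ landing in column $m-1=2$, the target is zero. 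For targets in columns $m-1\in\{0,1\}$ one uses the nonvanishing entries $h^{n+q,n+q}$ together with the comparison isomorphism.
\end{itemize}
Thus the only substantive check is that $\kq$ itself has no higher differentials into columns $0,1,2$. I expect to cite this from \cite{RSO24}, or to deduce it by showing that the $\kq$ abutment already matches $E^2$, using the known homotopy groups of $\kq$ via the Wood cofiber sequence.
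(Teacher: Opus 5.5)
Your proof is correct and is essentially the paper's argument. You use naturality along $\MSL_\Lambda\to\kq_\Lambda$, bijectivity on the target columns $\leq 2$ by Lemma~\ref{lemma:E^2_for_m<3}, the vanishing of the higher $\kq$-differentials from \cite[\S 2]{RSO24}, and the observation that differentials out of $E^r_{-n+3,3,-n}$ land in zero groups.

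Two small corrections, neither affecting validity. First, naturality does handle the summand $H^{n+3,n+3}$: if $x\mapsto 0$ in $\kq$, then $\dd_r(x)$ maps to $\dd_r(0)=0$, and injectivity on the target forces $\dd_r(x)=0$. Second, the pages stabilize only on columns $\leq 2$, not on column $3$, which can still be hit from column $4$ (cf.\ Lemma~\ref{lem:d3pi4s0MSL}); your induction only needs the target columns, so this is harmless.
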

\begin{proof}
    This follows from Lemma \ref{lemma:E^2_for_m<3} and triviality of the slice $\dd_r$-differentials for $\kq$, see \cite[\S 2]{RSO24}. 
    Note that any $\dd_r$-differential with source $E^r_{-n+3,3,-n}(\MSL_\Lambda)$ is automatically trivial since its target is zero.
\end{proof}
\begin{lemma}\label{lemma:stab_m<3}
    For $m\leq 2$, the abutment $E^2_{-n+m,q,-n}(\MSL_\Lambda)=E^\infty_{-n+m,q,-n}(\MSL_\Lambda)$ occurs at the second page.
\end{lemma}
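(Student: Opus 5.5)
To prove that $E^2_{-n+m,q,-n}(\MSL_\Lambda)=E^\infty_{-n+m,q,-n}(\MSL_\Lambda)$ for $m\leq 2$, we show that every $\dd_r$ with $r\geq 2$ entering or leaving these entries vanishes.

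\emph{Outgoing differentials.} A differential leaving $E^r_{-n+m,q,-n}$ with $m\leq 2$ lands in column $m-1\leq 1$. By Lemma \ref{lem:higherdiffMSL} all such $\dd_r$ with $r\geq 2$ are trivial.

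\emph{Incoming differentials.} A differential entering column $m$ comes from column $m+1\leq 3$. By Lemma \ref{lem:higherdiffMSL} these are also trivial, since its source lies in a column $\leq 3$. For $m\leq 1$ this is immediate. For $m=2$ the sources are $E^r_{-n+3,q',-n}$, and Lemma \ref{lem:higherdiffMSL} covers them as well. Its proof compares with $\kq$ via Proposition \ref{prop:E^2_for_m=3}: the map of $E^2$-pages is an isomorphism except at $q=3$. The extra summand $H^{n+3,n+3}$ sits at $q'=3$, and a $\dd_r$ from it would land at $q'+r\geq 5$ in column $2$. Lemma \ref{lemma:E^2_for_m<3} shows these entries vanish.

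\emph{Conclusion.} Since all differentials in and out vanish from page $2$ on, $E^2=E^r$ for every finite $r$. Each entry has only finitely many possibly nonzero incoming differentials:
\begin{itemize}
\item for $m\leq 1$, column $m+1\leq 2$ vanishes in slice degrees $\geq 4$;
\item for $m=2$, column $3$ is nonzero only for $q\leq 3$ by Proposition \ref{prop:E^2_for_m=3}.
\end{itemize}
Each entry also has only finitely many outgoing differentials, because column $m-1\leq 1$ vanishes for $q\geq 3$ by Lemma \ref{lemma:E^2_for_m<3}. Hence $E^\infty$ equals the stable value $E^2$.

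\emph{Main obstacle.} The only delicate point is the $m=2$ incoming differentials from the non-$\kq$ summand $H^{n+3,n+3}$ in $E^2_{-n+3,3,-n}$. Lemma \ref{lem:higherdiffMSL} already handles them, and the vanishing of column $2$ in slice degrees $\geq 4$ (Lemma \ref{lemma:E^2_for_m<3}) gives a direct check.
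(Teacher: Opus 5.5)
Your proof is correct and is essentially the paper's argument. The paper simply observes that Lemma~\ref{lem:higherdiffMSL} (sources in columns $m\leq 3$, $r\geq 2$) kills every differential entering or leaving columns $m\leq 2$. Your finiteness paragraph is superfluous: once all $\dd_r$ with $r\geq 2$ vanish, $Z^r$ and $B^r$ are constant, so $E^\infty=E^2$ directly. That paragraph's claim about outgoing differentials is also false for $m=1$, since column $0$ contains $h^{n+q,n+q}$ for every $q\geq 4$.
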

\begin{proof}
    This follows immediately from Lemma~\ref{lem:higherdiffMSL}.
\end{proof}
In order to compute the third Milnor--Witt stem of $\MSL_\Lambda$, we need to control the $3$rd column of the slice spectral sequence. 
Potentially non-trivial slice $\dd_r$-differentials that may affect it are
\begin{align*}
    \dd_2\colon E^2_{-n+4,0,-n}(\MSL_\Lambda)\to E^2_{-n+3,2,-n}(\MSL_\Lambda), \\
    \dd_2\colon E^2_{-n+4,1,-n}(\MSL_\Lambda)\to E^2_{-n+3,3,-n}(\MSL_\Lambda), \\
    \dd_3\colon E^3_{-n+4,0,-n}(\MSL_\Lambda)\to E^3_{-n+3,3,-n}(\MSL_\Lambda).
\end{align*}
To analyze these, recall that the $k$-th Milnor--Witt stem \[\pi_k\EE=\bigoplus_{n\in \Z}\pi_{n+k,n}\EE=\bigoplus_{n\in \Z}\pi_{k+(n)}\EE\] of a motivic spectrum $\EE\in \SHF$ is naturally a graded module over the zeroth Milnor--Witt stem $\pi_0\sph_F$ of the sphere spectrum over $F$.
Morel's Theorem identifies $\pi_0\sph_F$ with the Milnor--Witt $K$-theory of $F$ \cite{Morel_book}.
The latter is denoted $\KMW_\star(F)$.
Let $\KMil_\star(F)$ denote the Milnor $K$-theory of $F$ as a graded module over $\KMW_\star(F)$, and let $\KMiltwo_\star(F)$ be its reduction modulo $2$. 
%If $A$ is a graded $\KMW_\star(F)$-module and $n$ is an integer, then $A(n)$ is the graded $\KMW_\star(F)$-module with $A_m(n):=A_{n+m}$.

\begin{lemma}\label{lem:d2pi4s0sphere}
The differential $\dd_2\colon E^2_{-n+4,0,-n}(\sph_\Lambda)\to E^2_{-n+3,2,-n}(\sph_\Lambda)$ is nonzero in general.
More precisely, it is uniquely determined by fitting into a commutative diagram
\begin{center}\begin{tikzcd}
    H^{n-4,n}\ar[r,"\pr^\infty_{24}"] \ar[d,"\dd_2(\sph_\Lambda)"] &  h^{n-4,n}_{24} \ar[d,"\dd_2(\sph_\Lambda/12\hyper)"] \\  \ker(\Sq^2\colon h^{n-1,n+2}\to h^{n+1,n+3})\directsum h^{n,n+2}_{12}\ar[r] &      E^2_{-n+3,3,-n}(\sph_\Lambda/12\hyper)
\end{tikzcd}\end{center}
where the lower horizontal map is a split injection and the homomorphism $\dd_2(\sph_\Lambda/12\hyper)$ is determined by sending a generator $g\in h^{0,4}_{24}$ to $\widetilde{\tau^2\rho^4}\in h^{4,6}_{12}$.
This element is uniquely defined by the property $\pr^{12}_2(\widetilde{\tau^2\rho^4})=\tau^2\rho^4\in h^{4,6}$ over prime fields.
\end{lemma}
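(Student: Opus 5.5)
Our approach is to compare the slice spectral sequence of $\sph_\Lambda$ with that of the cofiber $\sph_\Lambda/12\hyper$ along the canonical map $\sph_\Lambda\to\sph_\Lambda/12\hyper$. The relevant slices of the sphere are known from \cite{RSO19}. In low degrees we have $\slice_0\sph\cong\EM\Z$ and $\slice_1\sph_\Lambda\cong\Sigma^{(1)}\EM\Lambda/2$. The slice $\slice_2\sph_\Lambda$ contains the summands $\Sigma^{(2)}\EM\Lambda/2$ and $\Sigma^{1+(2)}\EM\Lambda/12$, the latter coming from $\Ext^{1,4}$ in the Adams--Novikov $E_2$-page ($\nu$ has order $12$ after the relevant identifications). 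This identifies the lower left corner $E^2_{-n+3,2,-n}(\sph_\Lambda)$ with $\ker(\Sq^2)\oplus h^{n,n+2}_{12}$, using the first slice differentials of \cite[\S4]{RSO19}.

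Since $12\hyper$ acts on slices of $\sph$ through multiplication by $24$ on the zeroth slice and by $0$ on the torsion summands in slice degrees $1,2$, the slices of $\sph_\Lambda/12\hyper$ in this range are computable by Lemma~\ref{lemma_EM_ses} and Lemma~\ref{lemma_EM_map}. One gets $\slice_0(\sph_\Lambda/12\hyper)\cong\EM\Lambda/24$, with additional torsion summands in slice degrees $1,2,3$ (shifted copies of the mod $2$ and mod $12$ parts). The induced map $E^2_{-n+4,0,-n}(\sph_\Lambda)=H^{n-4,n}\to h^{n-4,n}_{24}$ is $\pr^\infty_{24}$. The lower map is then a split injection: the relevant summands of the slices of $\sph_\Lambda$ map identically onto summands of the slices of the cofiber, and no $\dd_1$ hits them additionally, by degree reasons \cite[Corollary 6.2]{RO16}. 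Naturality of the spectral sequence then gives the commutative square, and split injectivity shows that $\dd_2(\sph_\Lambda)$ is uniquely determined by $\dd_2(\sph_\Lambda/12\hyper)\circ\pr^\infty_{24}$.

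The main step is computing $\dd_2(\sph_\Lambda/12\hyper)$ on the generator $g\in h^{0,4}_{24}$. Since differentials are $\KMil_\star$-linear (\cite[Lemma 4.14]{RSO19}), and $h^{n-4,n}_{24}$ is generated over motivic cohomology of the base by $\tau$-multiples, it suffices to determine the image of $g$. We do this over prime fields and then extend by base change, as in the proof of Theorem~\ref{thm.firstslicediffMGL}.

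To identify the image, I would use homotopy groups: $g$ corresponds to the Moore-spectrum lift of $\nu$ in topology, i.e.\ the element detected by $12\hyper$-torsion in $\pi_{3}$. Over $\RR$, one compares with the known $\pi_{3+(n)}\sph_\RR$ (or with real Betti realization, where $\nu$ realizes to $\eta_{\mathrm{top}}$-related classes and $\rho$-multiples are detected). The element $g$ cannot survive, because the corresponding homotopy class in $\pi_{4-(\cdot)}$ does not exist. The only available targets are in the $h^{4,6}_{12}$-summand, and $\rho$-periodicity forces the image to reduce to $\tau^2\rho^4$ modulo $2$. Uniqueness of the lift $\widetilde{\tau^2\rho^4}$ over prime fields follows because $h^{4,6}_{12}$ splits as the $2$-primary and $3$-primary parts: the $3$-primary part vanishes for degree reasons, and over prime fields $h^{4,6}_4\to h^{4,6}$ is injective on the relevant classes.

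I expect this last detection argument, showing that $\dd_2$ is nonzero with exactly that value, to be the main obstacle. It requires an external input: either real realization, or the known $\eta$-inverted or $\rho$-complete homotopy of $\sph_\RR$.
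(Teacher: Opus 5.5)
Your setup agrees with the paper's. You compare along $\sph_\Lambda\to\sph_\Lambda/12\hyper$ and use injectivity of the lower map. You reduce by Milnor--Witt linearity to the image of $g\in h^{0,4}_{24}$ over the prime field. Over $\mathbb{F}_p$ the target vanishes, and over $\QQ$ the only candidates are $0$ and $\widetilde{\tau^2\rho^4}$.

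\textbf{Main gap.} The decisive step, excluding the value $0$, is missing. Your claim that ``$g$ cannot survive because the corresponding homotopy class does not exist'' rests on no computation of $\pi_{4-(4)}(\sph/12\hyper)$. Even granting non-survival, it would not give $\dd_2(g)\neq 0$: a priori $g$ could be a $\dd_2$-cycle supporting some $\dd_r$ with $r\geq 3$. Also, $g$ is not a lift of $\nu$: $\partial^{24}_\infty g$ lands in $H^{1,4}(\QQ)\cong\Z/240$, the image-of-$J$ group attached to $\sigma$.

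The paper argues from the target instead. Suppose $\dd_2(g)=0$. Then over $\RR$ with $n=5$, the class $\widetilde{\tau^2\rho^5}\in h^{5,7}_{12}(\RR)\cong h^{5,7}_{12}(\QQ)$ in slice degree $2$ is a nonzero permanent cycle that is not a boundary. It therefore detects a nonzero $2$-primary torsion element of $\pi_{3-(5)}\sph^\wedge_\eta(\RR)$ that does not come from slice filtration above $2$. Lemma~\ref{lem:unit-d3pi4s5} is needed here, to know that the higher-filtration contributions are finite $2$-groups. This contradicts the Dugger--Isaksen computation (Theorem~\ref{thm:pi3-(5)reals}): the $2$-primary torsion there is $\Z/8\{\rho^9\sigma\}$, which lives in slice filtration $4$. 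Your phrase ``compare with the known homotopy of $\sph_\RR$'' points in this direction. What you need is this specific mechanism: a surviving \emph{target} class would sit in the wrong filtration.

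\textbf{Second gap.} You never prove the first sentence of the statement, that $\dd_2(\sph_\Lambda)$ itself is nonzero. Nontriviality of $\dd_2(\sph_\Lambda/12\hyper)$ on $g$ does not suffice, because $g$ is not in the image of $\pr^\infty_{24}$: over $\QQ$, $\partial^{24}_\infty(g)$ is a nonzero element of $H^{1,4}(\QQ)$. The paper passes to $n=5$. There $\partial^{24}_\infty(\rho g)=\rho\,\partial^{24}_\infty(g)\in H^{2,5}(\Z)=0$, since $K_8(\Z)=0$. So $\rho g$ lifts to $H^{1,5}(\QQ)$, and this lift is sent to $\widetilde{\tau^2\rho^5}\neq 0$.

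\textbf{Smaller point.} For injectivity of the lower map, it is not true that no $\dd_1$ reaches the relevant summands for degree reasons. The $\dd_1$ on the summand $h^{n-2,n+1}$ of $\pi_4\slice_1(\sph/12\hyper)$ may be nonzero. What is needed, and true, is that its image meets the image of $\pi_3\slice_2\sph$ trivially.
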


\begin{proof}
    For simpler notation, the coefficient ring $\Lambda$ will not be mentioned.
    The target of this differential is the direct sum of $\kernel (\mathsf{Sq}^2\colon h^{n-1,n+2}\to h^{n+1,n+3})$ and $h^{n,n+2}_{12}\cong h^{n,n+2}_4\oplus h^{n,n+4}_3$ by \cite[Lemma 3.10]{RSO24}.
  Consider the canonical map $\sph\to \sph/{12\hyper}$ to the Moore spectrum for the hyperbolic form of rank $24$. 
  The first differentials on $\slice_q(\sph/{12\hyper})$ for $q\leq 3$ are given in \cite[Theorem 6.6]{roendigs.moore}.
  The first differential $\dd_1\colon \pi_4\slice_0(\sph/12\hyper)\to \pi_3\slice_1(\sph/12\hyper)$ is again zero, since $\Sq^2\pr^{24}_2$ and $\partial^{24}_2$ are zero on elements in $h^{n-4,n}_{24}$.
  The first differential $\dd_1\colon \pi_4\slice_1(\sph/12\hyper)\to \pi_3\slice_2(\sph/12\hyper)$ is zero on the summand $h^{n-3,n+1}$ since $\Sq^1$ and $\Sq^2$ are zero on such elements, but not necessarily on the summand $h^{n-2,n+1}$.
  Nevertheless, the image of this first differential intersects the image of
  $\pi_3\slice_2\sph\to\pi_3\slice_2(\sph/12\hyper)$ in the zero subgroup.
  It follows that the lower horizontal map in the diagram
  \begin{center}
    \begin{tikzcd}
      \pi_{4}\slice_0\sph\ar[r] \ar[d,"\dd_2"] &      \pi_{4}\slice_0(\sph/12\hyper) \ar[d,"\dd_2"]\\
      E^2_{\star+3,2,\star}(\sph) \ar[r] & E^2_{\star+3,2,\star}(\sph/12\hyper)
    \end{tikzcd}
  \end{center} 
  induced by $\sph\to \sph/12\hyper$ is injective.
  The $\KMW_\star(F)$-module $\pi_4\slice_0(\sph/12\hyper)$ is generated in $\pi_{4-(4)}\slice_0(\sph/12\hyper)$ by an element $g\in h^{0,4}_{24}$, which already lives over the prime field $F_0$ of $F$.
  Hence, its image under the second slice differential resides in the group
  \begin{align*} E^2_{-1,2,-4}(\sph/12\hyper) & = \ker(\Sq^2\colon h^{3,6}(F_0)\to h^{5,7}(F_0))
  \directsum h^{4,6}_{12}(F_0)\directsum \\ 
  & \directsum \{(b,B)\in h^{4,6}(F_0)\directsum h^{5,6}_{12}(F_0)\colon \Sq^2b=
  \tau\partial^{12}_2(B)\}/(\Sq^2,\inc^2_{12}\Sq^2\Sq^1)h^{2,5}(F_0).
  \end{align*}
  For prime fields of positive characteristic, this group is zero, whence the image of the second differential $\dd_2\colon \pi_4\slice_0\sph\to \pi_3\slice_2\sph$ must be zero.
  For the field of rational numbers, this group reduces to $h^{4,6}_{12}(\QQ)$, which coincides with the image of the lower horizontal injection in the diagram above for $\QQ$.
  This group is isomorphic via $\pr^{12}_2$ to $h^{4,6}(\QQ)\iso \Z/2\{\tau^2\rho^4\}$, because $\partial^2_6\colon h^{3,6}(\QQ)\to h^{4,6}_6(\QQ)$ is an isomorphism and $\partial^2_6\colon h^{4,6}(\QQ)\to h^{5,6}_6(\QQ)$ is the zero homomorphism.
  In particular, there are only two possibilities for the differential $\dd_2(\sph/12\hyper)$, sending any chosen generator either to the preimage $\widetilde{\tau^2\rho^4}$ of $\tau^2\rho^4$ or to $0$.
 
  To produce a contradiction, assume the chosen generator maps to $0$.
  Then the second slice differential in this degree is zero, both for $\sph/12\hyper$ and for $\sph$, over all fields. 
  In particular, this holds for $\RR$. 
  The field inclusion $\QQ\hookrightarrow \RR$ induces an isomorphism $h^{n,n+2}_{12}(\QQ)\iso h^{n,n+2}_{12}(\RR)$ for all $n>2$.
  Relevant will be the case $n=5$.
  The element $\widetilde{\tau^2\rho^5}\in h^{5,7}(\RR)$ is thus a nonzero permanent cycle, which cannot be hit by any differential, being in slice degree $2$.
  It thus represents a nonzero element $\xi\in \pi_{3-(5)}\sph^\wedge_\eta(\RR)$.
  Thanks to Lemma~\ref{lem:unit-d3pi4s5}, there are only finitely many nonzero groups from higher slices contributing to $\pi_{3-(5)}\sph^\wedge_\eta$, and these are all finite $2$-torsion groups for $\RR$.
  Hence there exists $r\in \NN$ such that the element $\xi$ is a nonzero $2^r$-torsion element in $\pi_{3-(5)}\sph^\wedge_\eta(\RR)$ which cannot come from slice filtration level $>2$.
  Theorem~\ref{thm:pi3-(5)reals} below implies that the only $2$-primary torsion in $\pi_{3-(5)}\sph_\RR$ is generated by $\sigma\in \pi_{3+(4)}\sph$.
  In particular, it comes from slice filtration level $4$.
  This is a contradiction.

  It follows that the chosen generator has to map to $\widetilde{\tau^2\rho^4}\in h^{4,6}_{12}$ over $\QQ$, and thus over any field. 
  (It is the zero element over fields of positive characteristic.)
  To see that this determines the second slice differential for $\sph$ in this degree, it remains to note that since its target is $24$-torsion (even $12$-torsion), it factors over
  $H^{n-4,n}/24H^{n-4,n}$, which embeds in the upper right corner $h^{n-4,n}_{24}$ in the diagram above.

  To prove the statement that this second slice differential can indeed be nonzero, it suffices to provide an element in $\pi_{4-(n)}\slice_0(\sph)$ for some field which maps nontrivially under $\dd_2$.
  Let $F=\QQ$ and $n=5$.
  The group $\pi_{4-(5)}\slice_0(\sph_\QQ)\iso H^{1,5}(\QQ)\iso H^{1,5}(\Z)\iso \Z\oplus \Z/2\Z$ is nontrivial, and the image of $H^{1,5}(\QQ)/24\iso \Z/24\Z\oplus \Z/2\Z\hookrightarrow h^{1,5}_{24}(\QQ)$ contains $\rho g$ for any generator $g\in h^{0,4}_{24}(\QQ)\iso h^{0,4}_{24}(\Z)\iso\Z/24\Z$.
  The reason is that $\partial^{24}_\infty(g)\in H^{1,4}(\QQ)\iso H^{1,4}(\Z)\iso \Z/240\Z$ lives already over $\Z$, just as $\rho$, whereas $\partial^{24}_\infty(\rho g)=\rho \partial^{24}_{\infty}(g)\in H^{2,5}(\Z)=0$, since $K_8(\Z)=0$ \cite{k8z}.
  In particular, there exists an element in $H^{1,5}(\QQ)$ which is mapped to the unique nonzero element $\widetilde{\tau^2\rho^5}\in h^{5,7}_{12}(\QQ)$.
\end{proof}

\begin{corollary}\label{cor:kq-msl-d2pi4s0}
  The second slice differential $\dd_2\colon E^2_{-n+4,0,-n}(\kq) \to E^2_{-n+3,2,-n}(\kq)$ is zero.
  The same holds for $\MSL_\Lambda$ in place of $\kq$.
\end{corollary}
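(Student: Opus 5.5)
Use naturality of the slice spectral sequence along the unit maps $\sph_\Lambda\to\MSL_\Lambda\to\kq_\Lambda$, combined with Lemma~\ref{lem:d2pi4s0sphere}. The point is that the source $E^2_{-n+4,0,-n}$ is $H^{n-4,n}$ in all three cases, and the unit maps identify these sources. This holds because $\slice_0$ of each spectrum is $\EM\Lambda$ (resp.\ $\EM\Z$) and the unit induces the identity on $\slice_0$. The $\dd_1$-differentials leaving and entering this spot vanish: for $\MSL$ and $\kq$ by the lemma on $E^2_{-n+4,0,-n}(\MSL_\Lambda)$ above, and for $\sph$ because $\Sq^2$ vanishes on $h^{n-4,n}$ by \cite[Corollary 6.2]{RO16}. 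Hence $E^2_{-n+4,0,-n}(\sph_\Lambda)\to E^2_{-n+4,0,-n}(\kq_\Lambda)$ is surjective, in fact an isomorphism. By naturality, $\dd_2(\kq)$ composed with this map equals the image of $\dd_2(\sph)$ under $E^2_{-n+3,2,-n}(\sph)\to E^2_{-n+3,2,-n}(\kq)$. It therefore suffices to show that the image of $\dd_2(\sph)$ dies in $E^2_{-n+3,2,-n}(\kq)$.

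By Lemma~\ref{lem:d2pi4s0sphere}, $\dd_2(\sph)$ lands in the summand $h^{n,n+2}_{12}$ of $E^2_{-n+3,2,-n}(\sph)$ (via the element $\widetilde{\tau^2\rho^4}$ times $\KMil$-classes), not in the $\ker(\Sq^2)\subseteq h^{n-1,n+2}$ summand. I would argue that the summand $h^{n,n+2}_{12}$ comes from $\pi_3$ of the slice summand $\Sigma^{2+(2)}\EM\Z/12$ of $\slice_2(\sph)$, which corresponds to the $\nu$-family (ANSS $\alpha_{2/2}$). The map $\slice_2(\sph)\to\slice_2(\kq)\cong\Sigma^{(2)}\EM\Z/2\oplus\Sigma^{2+(2)}\EM\Z$ on this summand lies in $[\EM\Z/12,\EM\Z]$, which vanishes by Lemma~\ref{lemma_hom_(s,0)} since $\Hom(\Z/12,\Z)=0$. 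The relevant Ext term would land in a shift that does not contribute to $\pi_3$ in the weight under consideration; this is the step to check carefully, using the explicit description of $E^2_{-n+3,2,-n}(\kq)=\kernel((b,B)\mapsto\ldots)\subseteq h^{n-1,n+2}\oplus H^{n+1,n+2}$ from Proposition~\ref{prop:E^2_for_m=3}. That description contains no $h^{n,n+2}$ component at all, so the $h^{n,n+2}_{12}$-summand must map to zero. Thus $\dd_2(\kq)=0$.

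For $\MSL_\Lambda$, run the same naturality argument with $\sph\to\MSL_\Lambda$. Alternatively, use that $E^2_{-n+3,2,-n}(\MSL_\Lambda)\to E^2_{-n+3,2,-n}(\kq_\Lambda)$ is an isomorphism (Proposition~\ref{prop:E^2_for_m=3}) and that the source map $E^2_{-n+4,0,-n}(\MSL_\Lambda)\to E^2_{-n+4,0,-n}(\kq_\Lambda)$ is an isomorphism. Naturality along $\MSL_\Lambda\to\kq_\Lambda$ then gives $\dd_2(\MSL_\Lambda)=0$ directly from the $\kq$ case. The main obstacle is the middle step: verifying that the map $\slice_2(\sph)\to\slice_2(\kq)$ kills the $\EM\Z/12$ summand on $\pi_3$. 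If the slice-level argument is awkward, I would instead use $\slice_2(\sph)\to\slice_2(\kq)$ through the known identification of the unit map in \cite{RSO24}.
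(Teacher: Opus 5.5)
\textbf{There is a genuine gap in your middle step.} The torsion summand of $\slice_2(\sph)$ is $\Sigma^{1+(2)}\EM\Z/12$, not $\Sigma^{2+(2)}\EM\Z/12$; its $\pi_{3-(n)}$ is $h^{n,n+2}_{12}$.

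So the component of the unit map $\slice_2(\sph)\to\slice_2(\kq)$ on this summand lies in $[\EM\Z/12,\Sigma^{1}\EM\Z]\cong\Ext(\Z/12,\Z)\cong\Z/12$, not in $\Hom(\Z/12,\Z)=0$. For a suitable choice of equivalences it is in fact the nonzero map $\partial^{12}_\infty$. On $\pi_{3-(n)}$ it therefore induces $\partial^{12}_\infty\colon h^{n,n+2}_{12}\to H^{n+1,n+2}$. This lands exactly in the $H^{n+1,n+2}$-coordinate of $E^2_{-n+3,2,-n}(\kq)\subseteq h^{n-1,n+2}\oplus H^{n+1,n+2}$.

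Hence ``the target has no $h^{n,n+2}$ component'' does not imply that the summand maps to zero. The induced map has kernel only $\pr^\infty_{12}H^{n,n+2}$ and is nonzero in general. For example, over $\QQ$ with $n=0$ one has $h^{0,2}_{12}(\QQ)\cong\Z/12$ and $H^{0,2}(\QQ)=0$, so the map is injective, and these classes survive to $E^2(\kq)$. As written, your argument would prove the false statement that the whole $h^{n,n+2}_{12}$-summand of $E^2(\sph)$ dies in $E^2(\kq)$.

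\textbf{The missing idea.} Use the precise form of the image of $\dd_2(\sph)$ from Lemma~\ref{lem:d2pi4s0sphere}, which you mention but never use. That image lies in the Milnor--Witt submodule generated by the single element $\widetilde{\tau^2\rho^4}\in h^{4,6}_{12}$, and this element is already defined over the prime field $F_0$. The map $\partial^{12}_\infty$ is compatible with the module structure and with base change. So the image of $\dd_2(\kq)$ is generated by the base change of $\partial^{12}_\infty(\widetilde{\tau^2\rho^4})\in H^{5,6}(F_0)=0$, and therefore vanishes. This is the paper's argument.

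The rest of your proposal is fine:
\begin{itemize}
\item identifying the sources via the unit map, and surjectivity of $E^2_{-n+4,0,-n}(\sph_\Lambda)\to E^2_{-n+4,0,-n}(\kq_\Lambda)$;
\item deducing the $\MSL_\Lambda$ case from the $\kq$ case, via the isomorphism $E^2_{-n+3,2,-n}(\MSL_\Lambda)\cong E^2_{-n+3,2,-n}(\kq_\Lambda)$, or equivalently via $\slice_q(\MSL_\Lambda)\simeq\slice_q(\kq_\Lambda)$ for $q\le 2$.
\end{itemize}
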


\begin{proof}
    The unit $\sph\to \kq$ induces an equivalence on $\slice_0$, and the map
  \[ \slice_2\sph\iso \Sigma^{(2)}\EM\Z/2 \oplus \Sigma^{1+(2)}\EM\Z/12 
    \xrightarrow{\begin{pmatrix} 1 & 0 \\ 0 & \partial^{12}_\infty \end{pmatrix}}
    \Sigma^{(2)}\EM\Z/2 \oplus \Sigma^{2+(2)}\EM\Z\iso \slice_2\kq\]
  on $\slice_2$ after a suitable choice of equivalence.
  The image of $\dd_2\colon E^2_{-n+4,0,-n}(\sph_\Lambda) \to E^2_{-n+3,2,-n}(\sph_\Lambda)$
  is contained in the Milnor--Witt submodule of $h^{n,n+2}_{12}$ generated by
  $\widetilde{\tau^2\rho^4}\in h^{4,6}$ by Lemma~\ref{lem:d2pi4s0sphere}.
  This element, which lives over the prime field $F_0\subset F$, is sent to $\partial^{12}_{\infty}(\widetilde{\tau^2\rho^4})\in H^{5,6}(F_0)=0$.
  Hence the image of $\dd_2\colon E_{-n+4,0,-n}(\kq) \to E^2_{-n+3,2,-n}(\kq)$ must be zero for any field.
  The same then holds for $\MSL_\Lambda$, because the special linear orientation $\MSL_\Lambda\to \kq_\Lambda$ induces an equivalence on $\slice_q$ for $q\leq 2$.
\end{proof}

\begin{lemma}\label{lem:d2pi4s1sphere}
    The differential $\dd_2\colon E^2_{-n+4,1,-n}(\sph_\Lambda)\to E^2_{-n+3,3,-n}(\sph_\Lambda)$ is trivial.
    The same holds for $\kq$ and $\MSL_\Lambda$ in place of $\sph_\Lambda$.
\end{lemma}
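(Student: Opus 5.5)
Since the differential is linear over Milnor $K$-theory and its source is generated by classes defined over the prime field, I will reduce to a computation over prime fields, handle the sphere $\sph_\Lambda$ there, and then transport the vanishing along the unit maps to $\kq$ and $\MSL_\Lambda$.

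\emph{Source for the sphere.} We have $\slice_1\sph_\Lambda\cong\Sigma^{(1)}\EM\Lambda/2$, so $E^1_{-n+4,1,-n}(\sph_\Lambda)=h^{n-3,n+1}$. The incoming and outgoing $\dd_1$-differentials are built from $\Sq^2\pr^\infty_2$ and $\Sq^2$, $\Sq^3\Sq^1$, and these act trivially in this degree by \cite[Corollary 6.2]{RO16}, exactly as in Lemma~\ref{lemma:E2_4-1}. Hence $E^2_{-n+4,1,-n}(\sph_\Lambda)\cong h^{n-3,n+1}$.

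\emph{Reduction to $n=3$ and prime fields.} The differential $\dd_2$ is a morphism of $\KMil_\star(F)$-modules by \cite[Lemma 4.14]{RSO19}. As a $\KMil_\star(F)$-module, $h^{\star-3,\star+1}$ is generated by $\tau^4\in h^{0,4}$, which lives over the prime field $F_0$ (Bloch--Kato). It therefore suffices to show $\dd_2(\tau^4)=0$ in $E^2_{0,3,-3}(\sph)$ over $F_0$, and then base change to $F$.

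\emph{The computation over prime fields (main obstacle).} The target is a subquotient of $\pi_{3+(-3)}\slice_3\sph$, built from mod $2$ groups $h^{3,6}$, $h^{5,6}$ and the $\Z/24$-type integral summand coming from $\slice_3\sph$ in \cite{RSO24}. For $F_0=\mathbb{F}_p$ these groups vanish because the cohomological degree exceeds the weight range where $h^{*,*}(\mathbb{F}_p)$ is nonzero. For $F_0=\QQ$, I expect the mod $2$ contributions to be killed modulo $\rho$-divisible classes that are already hit by $\dd_1$ (using $\tau\cdot$ and $\Sq^2$ from \cite[Theorem 2.6]{RSO24}), and the remaining integral piece $H^{5,6}(\QQ)=0$. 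If a nonzero candidate survives over $\QQ$, I would argue as in Lemma~\ref{lem:d2pi4s0sphere}. I would pass to $\RR$, where $\tau^4$ must be a permanent cycle detecting $\eta_{\mathrm{top}}$-related classes: it is the image of a real-realization class, or equivalently of $\nu^2$-type elements coming from $\pi_{4}$ of the topological sphere via the constant functor. This forces $\dd_2(\tau^4)=0$. Concretely, the element $\tau^4\in h^{0,4}$ in slice $1$ corresponds to the topological class in the $E_2$-term of the ANSS detecting $\eta$ multiplied by a power of $\tau$, and $\eta$ survives. This is the step where I anticipate the most work.

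\emph{Transfer to $\kq$ and $\MSL_\Lambda$.} The unit $\sph\to\kq$ induces an isomorphism on $\slice_1$, hence on $E^2_{-n+4,1,-n}$, since both are $h^{n-3,n+1}$ by Lemma~\ref{lemma:E2_4-1}. Naturality of the slice spectral sequence then gives $\dd_2^{\kq}=0$ on this source. For $\MSL_\Lambda$, the map $E^2_{-n+4,1,-n}(\MSL_\Lambda)\to E^2_{-n+4,1,-n}(\kq_\Lambda)$ is an isomorphism by Lemma~\ref{lemma:E2_4-1}, and the unit $\sph\to\MSL_\Lambda$ also hits this source surjectively. So $\dd_2^{\MSL_\Lambda}$ is the image of $\dd_2^{\sph}=0$, and it vanishes as well.
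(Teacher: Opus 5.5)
Your overall architecture is the paper's. The source $E^2_{-n+4,1,-n}\cong h^{n-3,n+1}$ is generated by the single class $g=\tau^4\in h^{0,4}$, which is defined over the prime field $F_0$, so it suffices to show $\dd_2(g)=0$ over $F_0$. Positive characteristic is immediate: the relevant mod-$2$ groups vanish because $\KMiltwo_{\geq 2}(\mathbb{F}_p)=0$, not because of a weight bound. The transfer to $\kq$ and $\MSL_\Lambda$ by naturality along the unit, which is an isomorphism on this $E^2$-term, is exactly how the paper handles $\MSL_\Lambda$. The paper also treats $\kq$ directly, while remarking that the sphere suffices.

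The gap is the case $F_0=\QQ$. It is the only non-formal part of the lemma, and there you only record an expectation. Your description of the target is also wrong. The slice $\slice_3\sph\simeq\Sigma^{(3)}\EM\Z/2\oplus\Sigma^{2+(3)}\EM\Z/2$ has no integral or $\Z/24$-type summand, and no group like $H^{5,6}$ enters. The $\Z/12$-summand $\Sigma^{1+(2)}\EM\Z/12$ sits in $\slice_2\sph$. Its role is to \emph{kill} the target through the incoming $\dd_1$, whose component from that summand to the summand $\Sigma^{(3)}\EM\Z/2$ of $\slice_3\sph$ is $\tau\partial^{12}_2$. This is how the paper identifies the target as $h^{n,n+2}/\partial^{12}_2h^{n-1,n+2}_{12}$ (for $\kq$: $h^{n,n+2}/\pr^\infty_2H^{n,n+2}$). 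At $n=3$ everything then reduces to surjectivity of $\partial^{12}_2\colon h^{2,5}_{12}(\QQ)\to h^{3,5}(\QQ)\cong\Z/2\{\tau^2\rho^3\}$. That holds because $\partial^{12}_2\inc^2_{12}=\Sq^1$ and $\Sq^1(\tau^3\rho^2)=\tau^2\rho^3$ (for $\kq$: $\pr^\infty_2\partial^2_\infty(\tau^3\rho^2)=\tau^2\rho^3$). This concrete computation is the missing step.

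Your fallback does not replace it, for three reasons:
\begin{itemize}
\item The argument of Lemma~\ref{lem:d2pi4s0sphere} is a contradiction argument that establishes \emph{non}-vanishing of a differential, so it is not a template here.
\item $\pi_4$ of the topological sphere is zero, and $\nu^2$ lives in $\pi_6$.
\item $\tau$ is not a homotopy class over $\QQ$ or $\RR$. So the survival of $\eta$ says nothing about whether $\tau^4$ times the slice-$1$ generator is a $\dd_2$-cycle.
\end{itemize}
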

\begin{proof}
    Again $\Lambda$ will be ignored in the notation.
    Applying \cite[Theorem 3.5]{very_eff_KQ} and \cite[Lemma~3.11]{RSO24}, this differential for $\kq$ has the form \[h^{*-3,*+1}\to h^{*,*+2}/(\mathrm{pr^\infty_2}H^{*,*+2}). \] 
    In the case of $\sph_\Lambda$, one can prove similarly that only the target is potentially different, namely of the form $h^{*,*+2}/\partial^{12}_{2}h^{*-1,*+2}_{12}$; see \cite[Lemma 3.11]{RSO24} for the corresponding statement one row below.
    Hence it would suffice to give the argument for $\sph_\Lambda$, but instead both cases are treated.
    The source in both cases is generated (as a module over Milnor--Witt $K$-theory) by the unique non-trivial element $g\in h^{0,4}\cong \Z/2$, which exists over the prime subfield $F_0\subset F$. 
    Its image under the differential for $\kq$ is contained in the group $h^{3,5}/(\mathrm{pr}^\infty_2H^{3,5})$. 
    We claim that this group is trivial over $F_0$. 
    If $F$ has odd characteristic, already $h^{3,5}(F_0)$ is trivial, since $\KMil_3(F_0)=0$. 
    In the case $F_0=\QQ$, the morphism $\mathrm{pr}^\infty_2\colon H^{3,5}\to h^{3,5}$ is surjective, because $\partial^2_\infty(\tau^3\rho^2)\in H^{3,5}$ hits the generator $\tau^2\rho^3\in h^{3,5}\cong \KMiltwo_3(\QQ)\cong\Z/2$.
    In the case of $\sph_\Lambda$, the argument is the same in positive characteristic. 
    For the rational numbers, the reason is that
    $\partial^{12}_{2}\colon h^{2,5}_{12}(\QQ)\to h^{3,5}(\QQ)\iso \Z/2\Z\{\tau^2\rho^3\}$
    is surjective.
    In fact, already $h^{2,5}(\QQ)\xrightarrow{\inc^{2}_{12}} h^{2,5}_{12}(\QQ)\xrightarrow{\partial^{12}_{2}} h^{3,5}(\QQ)$ is surjective.
    The case of $\MSL_\Lambda$ follows from $\sph_\Lambda$, because the unit $\sph\to\MSL$ induces an isomorphism 
    $E^2_{-n+4,1,-n}(\sph_\Lambda)\cong E^2_{-n+4,1,-n}(\MSL_\Lambda).$
\end{proof}

It remains to discuss a few third differentials.

\begin{lemma}\label{lem:unit-d3pi4s5}
  The differential $\dd_3\colon E^3_{-n+4,5,-n}(\sph_\Lambda)\to E^3_{-n+3,8,-n}(\sph_\Lambda)$ sends the unique nonzero element in $\pi_{4+(5)}\slice_5\sph_\Lambda\iso h^{0,0}\{\alpha_5\}$ to $\rho^3\in h^{3,3}\{\alpha_4\alpha_1^4\}$.
  More generally, the differential 
  \[ \dd_3\colon E^3_{-n+4,5+m,-n}(\sph_\Lambda)\to E^3_{-n+3,8+m,-n}(\sph_\Lambda)\] 
  sends the unique nonzero element in $\pi_{4+(5+m)}\slice_{5+m}\sph_\Lambda\iso h^{0,0}\{\alpha_5\alpha_1^{m}\}$ to $\rho^3\in h^{3,3}\{\alpha_4\alpha_1^{4+m}\}$ for every $m\in \NN$.
\end{lemma}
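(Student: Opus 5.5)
We will work in the slice spectral sequence for the sphere, whose slices are $\slice_q\sph_\Lambda\cong\bigvee \Sigma^{q-p+(q)}\EM\Lambda\otimes \Ext^{p,2q}_{\MU_*\MU}(\MU_*,\MU_*)$. The class $\alpha_5$ lives in Adams--Novikov filtration $1$ and internal degree $10$, so $\pi_{4+(5)}\slice_5\sph_\Lambda\cong h^{0,0}\{\alpha_5\}$. The target $\alpha_4\alpha_1^4$ lives in filtration $5$ and internal degree $16$, giving the summand $\Sigma^{3+(8)}\EM\Z/2$ of $\slice_8\sph$, with $\pi_{3+(5)}$ of it equal to $h^{3,3}$. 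The first task is to check that these classes survive to $E^3$. Both are in low cohomological degree, so the first differentials, given by \cite[Theorem 5.5]{RSO19}-type formulas involving $\Sq^2$, $\tau$ and $\rho$ on these summands, can be checked directly. We also need to verify that no $\dd_2$ kills or hits $\rho^3\alpha_4\alpha_1^4$; the $\dd_2$ maps out of the relevant $E^2$-terms land in groups that vanish for weight reasons.

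The main step, and the expected obstacle, is forcing the $\dd_3$ to be nonzero. The plan is to compare with real Betti realization (or $\rho$-periodization) over $\RR$, where $\sph[\rho^{-1}]$ realizes to the classical sphere $\mathbb{S}$. There the $\rho$-inverted slice spectral sequence is known to be a form of the Adams--Novikov/Bockstein spectral sequence, in which $\alpha_{k}$ maps to $h_0$-towers and differentials of length $3$ correspond to $\dd_3(\alpha_{4k+1})=\alpha_{4k}\alpha_1^{4}$-type relations. A cleaner route is to argue by contradiction via homotopy groups over $\RR$. If $\rho^3\alpha_4\alpha_1^4$ survived, it would give a nonzero $\eta^4$-multiple in $\pi_{3-(5)}\sph_\RR$. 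However, the real points of $\pi_{3+(4)}\sph$ contain $\sigma$ with $\eta^4$-divisibility behavior that is incompatible with this, as in the $\rho$-local computations of Dugger--Isaksen / Bachmann, where $\pi_*\sph[\rho^{-1}]\cong\pi_*\mathbb{S}$ and $\eta\mapsto 2$. Dually, if $\alpha_5$ survived, it would produce a class in $\pi_{4+(5)}$ mapping to a nonzero element of $\pi_4\mathbb{S}=0$ after $\rho$-inversion. This is the form we will use: after inverting $\rho$, $\alpha_5\rho^k$ must support a differential, and the only available target compatible with weights is $\rho^3\alpha_4\alpha_1^4$ via $\dd_3$. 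We must check that $\alpha_5\rho^k$ is not killed by an incoming differential, and that no $\dd_2$ is available, which follows from the preceding $E^2$ analysis. Then we descend from $\rho$-inverted to non-inverted, using $\KMil_\star(F)$-linearity of differentials \cite[Lemma 4.14]{RSO19}, to conclude that $\dd_3(\alpha_5)=\rho^3\alpha_4\alpha_1^4$ over $\RR$, hence over $\QQ$. The generator lives over the prime field, and the target $h^{3,3}(\QQ)\to h^{3,3}(\RR)$ is injective on $\rho^3$, so the differential is determined over $\QQ$ and then over every field by base change. In positive characteristic, $\rho^3$ is either zero or detected after base change from $\Z[1/2]$ via the argument of Theorem \ref{thm.firstslicediffMGL}.

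Finally, the statement for general $m$ follows from the multiplicative structure: $\alpha_1$ corresponds to $\eta$, and multiplying by $\eta^m$ gives the classes $\alpha_5\alpha_1^m$ and $\alpha_4\alpha_1^{4+m}$. By the Leibniz rule \cite[Proposition 2.24]{RSO19} and $\dd_r(\eta)=0$, we get $\dd_3(\alpha_5\alpha_1^m)=\rho^3\alpha_4\alpha_1^{4+m}$, and this target is nonzero since $\alpha_1$-multiplication is injective on these $E^3$-classes. That injectivity is read off the topological ANSS $E_2$-page.
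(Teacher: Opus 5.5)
Your route differs from the paper's, and as written the decisive step is not established. The paper's proof is two lines. The stated $\dd_3$-pattern in the $\eta$-periodic slice spectral sequence is exactly Ormsby--R\"ondigs' Conjecture~4.10, which became a theorem through Bachmann--Hopkins' computation of $\pi_*\sph[\eta^{-1}]$. Naturality along the unit $\sph_\Lambda\to\sph_\Lambda[\eta^{-1}]$ then transports the differential to $\sph_\Lambda$, and the family in $m$ comes for free because $\alpha_1$ is invertible there. So the nonvanishing of this particular $\dd_3$ is imported from a global computation of homotopy groups. Your proposal tries to replace that input with $\pi_4\mathbb{S}=0$ via real realization, and that replacement does not work as stated.

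First, the ``$\rho$-inverted slice spectral sequence'' cannot be the slice spectral sequence of $\sph[\rho^{-1}]$. Since $\rho\colon\sph\to\Sigma^{(1)}\sph$, we have $\sph[\rho^{-1}]\simeq\Sigma^{(q)}\sph[\rho^{-1}]$, which is $q$-effective for every $q$, so all its slices vanish. You must therefore mean the $\rho$-localization of the pages $E^r(\sph)$. But the slice spectral sequence converges only conditionally, to $\pi_*\sph^\wedge_\eta$, and $\rho$-localization does not commute with the inverse limits involved. You would need a separate convergence theorem identifying the localized $E^\infty$ with an associated graded of $\pi_*$ of the realization; since $\eta$ realizes to $\pm 2$, that is at best $\pi_*\mathbb{S}^\wedge_2$. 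Without it, ``a class mapping to a nonzero element of $\pi_4\mathbb{S}=0$'' is not a contradiction.

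Second, even granting convergence, you only learn that $\rho^k\alpha_5$ cannot survive to a nonzero class at $E^\infty$. You would still have to exclude incoming differentials and every other outgoing one, and you do not. For instance, $\pi_{3+(5)}\slice_9\sph$ contains $h^{4,4}\{\alpha_4\alpha_1^5\}$, so $\dd_4(\alpha_5)=\rho^4\alpha_4\alpha_1^5$ is equally compatible with weights. Deciding between such options requires explicit $E^3$ and $E^4$ pages in this range, which is essentially what the Ormsby--R\"ondigs conjecture left open.

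Your alternative via $\sigma$ is too vague to check. It is also risky here, because in this paper the description of $\pi_{3-(5)}\sph_\RR$ (Theorem~\ref{thm:pi3-(5)reals}) itself uses the lemma being proved.

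The $\eta$-linearity step for general $m$ is fine as an identity in $E^3$. However, your claim that $\rho^3\alpha_4\alpha_1^{4+m}$ is nonzero on $E^3$ cannot be read off the Adams--Novikov $E_2$-page, since it depends on $\dd_1$ and $\dd_2$. In the paper this is automatic from $\eta$-periodicity.
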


\begin{proof}
  This follows for $\sph[\eta^{-1}]$ from \cite{BH_eta}, which proved \cite[Conjecture 4.10]{ormsby-roendigs}.
  Using naturality with respect to the unit $\sph_\Lambda\to\sph_\Lambda[\eta^{-1}]$, this holds already for $\sph_\Lambda$.
\end{proof}

\begin{lemma}\label{lem:d3pi4s0kq}
The third slice differential
\[ \dd_3\colon E^3_{-n+4,0,-n}(\kq) \to E^3_{-n+3,3,-n}(\kq)\]
is zero.
\end{lemma}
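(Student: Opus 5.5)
We want $\dd_3\colon E^3_{-n+4,0,-n}(\kq)\to E^3_{-n+3,3,-n}(\kq)$ to vanish. We will use the same strategy as for Lemma~\ref{lem:d2pi4s1sphere}: reduce to a generator defined over the prime field and show that the target group vanishes there.

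First, identify the source. By the lemma computing $E^2_{-n+4,0,-n}(\MSL_\Lambda)$, the group $E^2_{-n+4,0,-n}(\kq)$ is $H^{n-4,n}$. It survives to $E^3$ by Corollary~\ref{cor:kq-msl-d2pi4s0}, so $E^3_{-n+4,0,-n}(\kq)\cong H^{n-4,n}$.

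Next, identify the target. By Proposition~\ref{prop:E^2_for_m=3}, $E^2_{-n+3,3,-n}(\kq)$ is the first summand ${}_2H^{n+1,n+2}\cong h^{n,n+2}/\pr^\infty_2H^{n,n+2}$. The incoming $\dd_2$ from $E^2_{-n+4,1,-n}$ is zero by Lemma~\ref{lem:d2pi4s1sphere}, so $E^3_{-n+3,3,-n}(\kq)$ is this group. The third slice differential is a morphism of $\KMil_\star(F)$-modules by \cite[Lemma 4.14]{RSO19}. It is also compatible with base change along $F_0\subset F$, since slices of $\kq$ pull back.

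The central step is a reduction to the unit. The unit $\sph\to\kq$ is an equivalence on $\slice_0$, so the whole differential factors through $\sph$. Here I would not rely on the source being generated over $F_0$, because $H^{n-4,n}(F)$ is typically not generated as a $\KMil$-module by classes from $F_0$. Instead I would argue as in Lemma~\ref{lem:d2pi4s0sphere}.

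The target is $2$-torsion. Hence $\dd_3$ factors through $H^{n-4,n}/2$, which embeds into $h^{n-4,n}$. I would compare with $\kq/2$, or with $\sph/2$ mapped to $\kq/2$, where the source $\pi_4\slice_0$ contains $h^{n-4,n}$. That $\KMW$-module is generated by $\tau^4\in h^{0,4}$, a class defined over $F_0$.

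It then suffices to show that the image of this generator under the relevant $\dd_3$ vanishes over $F_0$. The target there is the analogue of $h^{3,5}/\pr^\infty_2H^{3,5}$ in slice degree $3$, weight shifted by $4$. As computed in the proof of Lemma~\ref{lem:d2pi4s1sphere}, this group is zero over prime fields:

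1. in odd characteristic, $\KMil_3(F_0)=0$;
2. over $\QQ$, the map $\pr^\infty_2$ is surjective, since $\partial^2_\infty(\tau^3\rho^2)$ hits $\tau^2\rho^3$.

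The main obstacle is making the mod-$2$ comparison honest. One must check that $E^3_{-n+3,3,-n}(\kq)\to E^3_{-n+3,3,-n}(\kq/2)$ is injective on the relevant summand, in the same way the lower horizontal map was shown to be injective in Lemma~\ref{lem:d2pi4s0sphere}. One must also control the intervening $\dd_1$ and $\dd_2$ differentials for $\kq/2$, using \cite[Theorem 3.5]{very_eff_KQ} and the vanishing of $\Sq^2$ on $h^{n-4,n}$ from \cite[Corollary 6.2]{RO16}.

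If that becomes unwieldy, there is a fallback. The slice spectral sequence for $\kq$ is known to degenerate in this range by \cite[\S 2]{RSO24}, so one can instead cite that computation directly. This is consistent with how Lemma~\ref{lem:higherdiffMSL} was deduced.
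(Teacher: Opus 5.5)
Your plan has a genuine gap, located exactly at the step you call the main obstacle, and that step is the whole content of the lemma. Factoring $\dd_3$ through $H^{n-4,n}/2\subset h^{n-4,n}$ is harmless. But to evaluate on $\tau^4\in h^{0,4}$ you need a canonical extension of the differential to all of $h^{\star-4,\star}$, and the slice spectral sequence of $\kq/2$ does not give you one with a vanishing target.

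The group $h^{3,5}/\pr^\infty_2H^{3,5}$ you quote is the target for the slice-$1$ generator in Lemma~\ref{lem:d2pi4s1sphere}. For $\tau^4$ (that is, $n=4$) the $\kq$-target is $h^{4,6}/\pr^\infty_2H^{4,6}\cong{}_2H^{5,6}$, which does vanish over $F_0$. However, $\slice_3(\kq)/2$ has extra summands. Besides $h^{4,7}$ and $h^{6,7}$, the group $E^1_{-1,3,-4}(\kq/2)$ also contains $h^{5,7}$ and $h^{7,7}$, both nonzero over $\QQ$ (they contain $\tau^2\rho^5$ and $\rho^7$). Moreover, $\tau^4$ must first survive $\dd_2^{\kq/2}$, whose $E^1$-target $h^{3,6}\oplus h^{4,6}\oplus h^{5,6}$ is nonzero over $\QQ$.

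So no ``target vanishes over the prime field'' argument is available. You would have to compute $\dd_2$ and $\dd_3$ of $\kq/2$ on $\tau^4$ over $\QQ$, and also prove that $E^3(\kq)\to E^3(\kq/2)$ is injective on the target. The proof of Lemma~\ref{lem:d3pi4s0MSL} shows this is not a formality: for $\MGL/(2,a_1,a_2)$ the analogous $\dd_3$ sends the generator of $h^{0,4}$ to $\rho^7\neq 0$.

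Your remark that the differential factors through $\sph$ because $\slice_0\sph\simeq\slice_0\kq$ is also false. By Lemma~\ref{lem:d2pi4s0sphere}, $\dd_2$ is nonzero on $E^2_{-n+4,0,-n}(\sph)$ in general. Hence $E^3_{-n+4,0,-n}(\sph)$ is a proper subgroup of $E^3_{-n+4,0,-n}(\kq)=H^{n-4,n}$.

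The fallback does not help either. What the paper takes from \cite[\S 2]{RSO24} concerns differentials leaving columns $m\leq 3$, which is exactly how Lemma~\ref{lem:higherdiffMSL} is stated. The present $\dd_3$ leaves column $4$, which is why the paper lists it as potentially nonzero and proves it separately.

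The missing idea is to describe the differential by honest cohomology operations that factor through $\pr^\infty_2$ or $\pr^\infty_3$. Then the generation of $h^{\star-4,\star}$ and $h^{\star-4,\star}_3$ in degree $(0,4)$ over $F_0$ can actually be used. The paper gets this from the very effective slice filtration, where $\vs_3\kq\simeq *$, in three steps:
\begin{itemize}
\item The second very effective differential vanishes on $h^{\star-3,\star+1}\subset\pi_4\vs_0\kq$. It therefore induces a map on the quotient $H^{n-4,n}$, and this map equals $\gamma\circ\dd_3$, where $\gamma$ is injective and induced by $\partial^2_\infty$.
\item That differential is induced by some $\ell\colon\mathsf{Q}\to\Sigma\vs_2\kq$, and the group of all such $\ell$ is computed explicitly.
\item Modulo elements that do not affect the differential on $H^{n-4,n}$, this group is generated by $\partial^2_\infty\Sq^4\pr^\infty_2+\partial^3_\infty\mathsf{P}^1\pr^\infty_3$ on $\slice_0\kq$. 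This operation kills $H^{n-4,n}$ because $H^{5,6}(F_0)=0$.
\end{itemize}
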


\begin{proof}
The target of this third slice differential is the $2$-torsion group $h^{n,n+3}/\tau\pr^\infty_2H^{n,n+2}$, whereas the source is $H^{n-4,n}$, by Corollary~\ref{cor:kq-msl-d2pi4s0} and Lemma~\ref{lem:d2pi4s1sphere}.
While in principle the argument may proceed entirely within the slice filtration, the proof seems slightly easier using the very effective slice filtration on $\kq$.
Consider \cite{bachmann.hermitian} as a convenient reference for the very effective slice filtration over fields, dubbed ``generalized'' slice filtration.
It has the advantage that the $n$-th filtration $\vf_n\kq$ is $n$-connective, which in particular implies strong convergence for the associated spectral sequence.
The main result \cite[Theorem 16]{bachmann.hermitian}, generalized to include fields of characteristic 2 in \cite[Theorem 4.1]{KRO_hermitian_k_theory_char2}, provides the very effective slices $\vs_n\kq$ of $\kq$.
Since $\vs_3\kq\simeq \ast$, only $\pi_3\vs_n\kq$ for $n\in \{0,1,2\}$ contribute to $\pi_3\kq$.
The latter reference also identifies the first differentials $\vs_0\kq\to \Sigma \vs_1\kq \simeq \Sigma^{2+(1)}\EM\Z/2$ and $\vs_1\kq \to \Sigma\vs_2\kq\simeq \Sigma^{3+(2)}\EM\Z$ as the respective unique nonzero map; see \cite[Corollary 8.9]{KRO_hermitian_k_theory_char2}.
The resulting first page of the very effective slice spectral sequence for $\kq$ thus contains the contribution $\pi_{3-(n)}\vs_2\kq\cong H^{n+1,n+2}/\partial^2_\infty\Sq^2h^{n-2,n+1}$ to $\pi_3\kq$.
This group is hit by the second differential $d_2\colon \pi_{4-(n)}\vs_0\kq \to H^{n+1,n+2}/\partial^2_\infty\Sq^2h^{n-2,n+1}$ whose source fits into a short exact sequence
\[ 0 \to h^{n-3,n+1}\to \pi_{4-(n)}\vs_0\kq \to H^{n-4,n}\to 0 \]
by \cite[Prop.~7.6]{KRO_hermitian_k_theory_char2}; here \cite[Lemma 8.11]{KRO_hermitian_k_theory_char2} says that the first differential is zero on $\pi_4\vs_0\kq$.
The restriction of the second differential $d_2$ to $h^{\star-3,\star+1}$ is zero, because this Milnor-Witt module is generated by a unique element in $h^{0,4}$ over the prime field $F_0$.
Its image under the second differential $d_2$ thus lies in the group $H^{4,5}(F_0)/\partial^2_\infty \Sq^2 h^{1,4}(F_0)$ which is zero, since already $H^{4,5}(F_0)$ is.
Hence the second differential $d_2$ induces uniquely a homomorphism $H^{n-4,n}\to H^{n+1,n+2}/\partial^2_\infty\Sq^2h^{n-2,n+1}$.
The relation between the effective and the very effective slice filtration explained in \cite{bachmann.hermitian} factors it as
\[ H^{n-4,n}\xrightarrow{\dd_3} h^{n,n+3}/\tau\pr^\infty_2H^{n,n+2}\cong h^{n,n+2}/\pr^\infty_2H^{n,n+2}\xrightarrow{\gamma} H^{n+1,n+2}/\partial^2_\infty \Sq^2h^{n-2,n+1}\]
where the first homomorphism is the third slice differential, and $\gamma$ is induced by the connecting homomorphism $\partial^2_\infty \colon h^{n,n+2}\to H^{n+1,n+2}$.
Since $\pr^\infty_2H^{n,n+2}$ is precisely the kernel of the latter homomorphism, and moreover $\Sq^2h^{n-2,n+1}\subset \pr^2_\infty\partial^2_\infty h^{n-1,n+2}\subset \pr^2_\infty H^{n,n+2}$, the homomorphism $\gamma$ is injective.
Hence the third slice differential in question is zero if and only if the second very effective slice differential $d_2$ containing it as a composition factor is zero.

To limit the possible homomorphism that can occur as the second very effective slice differential in this degree, let $\mathsf{Q}$ be the fiber of the first differential $\vs_0\kq\to \Sigma\vs_1\kq$.
Choose a lift $\mathsf{Q} \to \Sigma \vf_2\kq$, which composes with the canonical map to $\ell \colon \mathsf{Q}\to \Sigma\vs_2\kq$.
There results a diagram
\begin{equation}\label{eq:d3-d2} \begin{tikzcd}
      \pi_4\mathsf{Q} \ar[r] \ar[d,"\ell"] & \pi_4\vs_0\kq \ar[d,"d_2"] \\
      \pi_3 \vs_2\kq \ar[r] & \pi_3\vs_2\kq/\partial^2_\infty\Sq^2 \pi_4\vs_1\kq
    \end{tikzcd} 
  \end{equation}
which commutes by the definition of the second slice differential.
Moreover, the upper horizontal arrow in~(\ref{eq:d3-d2}) depicts an epimorphism by \cite[Lemma 8.11]{KRO_hermitian_k_theory_char2}.
It remains to prove that for any possible map $\ell\colon \mathsf{Q}\to \Sigma\vs_2\kq$, the induced homomorphism $d_2$ is zero.
The group of maps $[\mathsf{Q},\Sigma\vs_2\kq]$ fits into an exact sequence
\begin{equation}\label{eq:second-veff-kq} [\Sigma\vs_1\kq,\Sigma\vs_2\kq] \to [\vs_0\kq, \Sigma\vs_2\kq] \to
    [\mathsf{Q},\Sigma\vs_2\kq] \to [\vs_1\kq,\Sigma\vs_2\kq] \to [\Sigma^{-1}\vs_0\kq,\Sigma\vs_2\kq]  
\end{equation}
in which the outer homomorphisms are induced by precomposition with the first very effective slice differential $\vs_0\kq \to \Sigma\vs_1\kq$.
In particular, the last homomorphism in~(\ref{eq:second-veff-kq}) is zero, because the only nontrivial element in its source is the first very effective slice differential $\vs_1\kq\to \Sigma\vs_2\kq$.
With the slice filtration on $\vs_0\kq$, one may determine $[\vs_0\kq,\Sigma\vs_2\kq]$ as an extension of $[\effcover_1\vs_0\kq,\Sigma\vs_2\kq]$ (which is isomorphic to $[\slice_1\vs_0\kq,\Sigma\vs_2\kq]\cong h^{0,0}\oplus h^{1,1}$, as the weight $1$ motivic Steenrod algebra implies \cite{Voevodsky_Steenrod, HKO_Steenrod}) by $[\slice_0\vs_0\kq,\Sigma\vs_2\kq]$ (which is isomorphic to $\Z/6$, as \cite[Lemma 8.5]{KRO_hermitian_k_theory_char2} implies).
The source of the first homomorphism in~(\ref{eq:second-veff-kq}) is isomorphic to $h^{1,1}$, and the first homomorphism hits the copy of $h^{1,1}$ in its target by direct inspection.
As a consequence, the abelian group $[\mathsf{Q},\Sigma\vs_2\kq]$ is an extension of $\Z/2$ by a group which is an extension of $\Z/6$ and $\Z/2$.
To prove that every element $\ell \in [\mathsf{Q},\Sigma\vs_2\kq]$ induces the zero homomorphism $d_2$ in the diagram~(\ref{eq:d3-d2}), consider first a generator
\[ \mathsf{Q}\to\vs_0\kq\to\slice_0\vs_0\kq=\slice_0\kq \xrightarrow{\partial^2_\infty\Sq^4\pr_2^\infty+\partial^3_\infty\mathsf{P}^1\pr^\infty_3} \Sigma\vs_2\kq \]
of $\Z/6$.
Having already seen that any second differential $d_2$ is zero on $h^{\star-3,\star+1} \subset \pi_4\vs_0\kq$, it suffices to prove that for every $x\in H^{n-4,n}$, the elements $\partial^2_\infty\Sq^4\pr^\infty_2(x)$ and $\partial^3_\infty\mathsf{P}^1\pr^\infty_3(x)$ are zero.
This holds, because $h^{\star-4,\star}$ and $h^{\star-4,\star}_3$ are generated by an element in degree $(0,4)$ as Milnor-Witt modules, respectively, which lives over the prime field $F_0$.
The target group $H^{5,6}(F_0)$ for both these elements is the zero group.
Any multiple of the generator above then also induces the zero homomorphism $d_2$.
Changing such a multiple by any element in one of the copies of $\Z/2$ occurring in the extensions described above does not change $d_2$ either.
For the first copy, it follows from the vanishing on $h^{\star-3,\star+1} \subset \pi_4\vs_0\kq$, while the other copy is given by maps from $\vs_1\kq$.
However, $\vs_1\kq$ maps trivially to $\vs_0\kq$ anyhow via $\mathsf{Q}$.
This concludes the proof.
\end{proof}

\begin{corollary}\label{cor:d3pi4s0msl}
  The first component of the third differential
  \[\dd_3\colon E^{3}_{-n+4,0,-n}(\MSL_\Lambda)=\pi_{4-(n)}\slice_0\MSL_\Lambda\to
  E^{3}_{-n+3,3,-n}(\MSL_\Lambda)\]
  is zero.
\end{corollary}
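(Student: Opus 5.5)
The plan is to compare with $\kq$ through the special linear orientation $\MSL_\Lambda\to\kq_\Lambda$, using naturality of the slice spectral sequence together with Lemma~\ref{lem:d3pi4s0kq}.

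First I would check that the source is unchanged: $\slice_0(\MSL_\Lambda)\to\slice_0(\kq_\Lambda)$ is an equivalence by Proposition~\ref{prop:slices-msl-kq}, since $\cyc_0(\W)=\Z=\cyc_0(\Z[\beta])$ and $f_0$ is the identity. By the lemmas in Section~\ref{section:4}, the first differential on $\pi_{4-(n)}\slice_0$ vanishes for both spectra. By Corollary~\ref{cor:kq-msl-d2pi4s0}, $\dd_2$ vanishes on this group for both as well. Hence $E^3_{-n+4,0,-n}(\MSL_\Lambda)=H^{n-4,n}=E^3_{-n+4,0,-n}(\kq_\Lambda)$, and the induced map between them is the identity.

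Next, the target. By Proposition~\ref{prop:E^2_for_m=3}, $E^2_{-n+3,3,-n}(\MSL_\Lambda)\cong {}_2H^{n+1,n+2}\oplus H^{n+3,n+3}$, and the map to $E^2_{-n+3,3,-n}(\kq_\Lambda)$ is the projection onto the first summand. I would then pass from $E^2$ to $E^3$. The incoming $\dd_2$ from $E^2_{-n+4,1,-n}$ is zero for both spectra by Lemma~\ref{lem:d2pi4s1sphere}. The outgoing $\dd_2$ has zero target by Lemma~\ref{lem:higherdiffMSL}. So $E^3=E^2$ in this spot for both, and the comparison map is still the projection. By ``first component'' of $\dd_3$ I understand the component landing in ${}_2H^{n+1,n+2}\cong h^{n,n+2}/\pr^\infty_2H^{n,n+2}$, i.e. the summand matching the $\kq$ target.

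Naturality then gives
\[ \mathrm{pr}_1\circ\dd_3^{\MSL_\Lambda}=\dd_3^{\kq_\Lambda}\circ\id=0 \]
on $H^{n-4,n}$, by Lemma~\ref{lem:d3pi4s0kq}.

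The main obstacle is bookkeeping: making sure that the identification of $E^3_{-n+3,3,-n}(\MSL_\Lambda)$ as a direct sum is compatible with the comparison map at $E^3$, and not merely at $E^2$. Any ambiguity from the choice of splitting in Remark~\ref{rem:choice_s3} only affects how the integral summand $H^{n+3,n+3}$ is embedded. The first component is defined as the composite with the projection to the $\kq$ page, so the argument is insensitive to that choice. The second component, landing in $H^{n+3,n+3}$, is not controlled by this comparison; it has to be treated separately and is not part of the present statement.
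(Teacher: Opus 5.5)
Your proposal is correct and follows the paper's proof. Naturality of the slice spectral sequence along $\MSL_\Lambda\to\kq_\Lambda$ is the identity on $E^3_{-n+4,0,-n}$ and the projection onto the first summand on $E^3_{-n+3,3,-n}$, so the claim reduces to the vanishing of $\dd_3$ for $\kq$ in Lemma~\ref{lem:d3pi4s0kq}; your check that $E^3=E^2$ at both spots makes explicit what the paper leaves implicit. One small correction to your side remark: the change of identification in Remark~\ref{rem:choice_s3} fixes the embedding of the integral summand $H^{n+3,n+3}$ and instead changes the complementary mod-$2$ summands, hence the projection onto $H^{n+3,n+3}$. As you say, this does not affect the first component, because that component is defined by composing with the map to $\kq$.
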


\begin{proof}
  This follows from Lemma~\ref{lem:d3pi4s0kq}, because the map $\MSL\to \kq$ induces the projection onto the first component 
  \[ h^{n,n+2}/(\mathrm{pr^\infty_2}H^{n,n+2})\oplus H^{n+3,n+3}\iso E^{3}_{-n+3,3,-n}(\MSL_\Lambda)\twoheadrightarrow E^{3}_{-n+3,3,-n}(\kq_\Lambda)\iso h^{n,n+2}/(\mathrm{pr^\infty_2}H^{n,n+2}) \]
  on the third pages of the respective slice spectral sequences.
\end{proof}

In the computation of the second component of the $\dd_3$-differential, we will reduce to a map of motivic Eilenberg--MacLane spectra. The following lemma will be useful.
\begin{lemma}\label{lem:MZtoMZ(3)[7]}
    Let $F$ be a field of characteristic not $2$.
    The group of maps of the form $\EM\Lambda\to\Sigma^{4+(3)}\EM\Lambda$ is isomorphic to 
    \[ \mathbb{F}_2\{\partial^2_\infty\Sq^6\pr^\infty_2\}\oplus\mathbb{F}_2\{\partial^2_\infty\Sq^4\Sq^2\pr^\infty_2\}. \]
\end{lemma}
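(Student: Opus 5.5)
We compute $[\EM\Lambda,\Sigma^{4+(3)}\EM\Lambda]$ by the same template as Lemma~\ref{lemma:d1-integralsummands}. We reduce integral-to-integral maps to mod-$2$ and mod-$\ell$ Steenrod operations, using the Bockstein long exact sequences.

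\emph{Step 1: the group is torsion.} Since $\Lambda\otimes\QQ$-coefficients give rational motivic cohomology, we have $[\EM\QQ,\Sigma^{4+(3)}\EM\QQ]=H^{7,3}_{\QQ}$-type operations. The rational motivic Steenrod algebra is concentrated in bidegree $(0,0)$, so this group vanishes. Hence every map $\EM\Lambda\to\Sigma^{4+(3)}\EM\Lambda$ is torsion. Using the cofiber sequence $\EM\Lambda\xrightarrow{\ell}\EM\Lambda\to\EM\Lambda/\ell$ on the target, any such map factors as $\partial^{\ell^k}_\infty\circ\theta$ for some $\theta\colon\EM\Lambda\to\Sigma^{3+(3)}\EM\Lambda/\ell^k$. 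Precomposing with the source sequence, $\theta$ factors through $\pr^\infty_{\ell^k}$ modulo maps from $\EM\Lambda$ itself.

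\emph{Step 2: mod-$\ell$ operations of bidegree $(6,3)$.} Use the description of the motivic Steenrod algebra \cite{Voevodsky_Steenrod, HKO_Steenrod} (with the base change remarks of \cite{Spi_HZ}). The mod-$2$ operations of bidegree $(6,3)$ on $\EM\Lambda/2$ form the span of the admissible monomials $\Sq^6$ and $\Sq^4\Sq^2$, together with $h^{*,*}$-multiples of lower-weight operations. The latter include terms like $\tau\Sq^5\Sq^1$, $\rho\Sq^4\Sq^1$, $\tau\rho$-multiples and similar. For odd $\ell\ge 5$ nothing occurs, since $\mathsf{P}^1$ has bidegree $(2\ell-2,\ell-1)$. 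For $\ell=3$, the candidates are $\mathsf{P}^1$ (bidegree $(4,2)$) composed with Bocksteins, and none of them hits $(6,3)$ compatibly with weight. Higher torsion $\ell^k$ for $k\ge 2$ contributes nothing new, since the operation is killed by the relevant Bocksteins. We then restrict to the operations that survive precomposition with $\pr^\infty_2$ and postcomposition with $\partial^2_\infty$. This means discarding every operation beginning with $\Sq^1$ on the right (since $\Sq^1\pr^\infty_2=0$ and $\pr^\infty_2\partial^2_\infty$ relates to $\Sq^1$) and every operation ending with $\Sq^1$ on the left (since $\partial^2_\infty\Sq^1=0$ after adjusting). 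Multiples by $\tau$ or $\rho$ of operations that do satisfy these conditions live in the wrong weight here, because $\tau$ has weight $1$ and degree $0$: $\tau\cdot(\text{op of bidegree }(6,2))$ would need a $(6,2)$ operation, and none exists that avoids $\Sq^1$ at either end. What remains is $\Sq^6$ and $\Sq^4\Sq^2$. Hence the group is generated by $\partial^2_\infty\Sq^6\pr^\infty_2$ and $\partial^2_\infty\Sq^4\Sq^2\pr^\infty_2$, and it is $2$-torsion.

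\emph{Step 3: independence.} This is the main obstacle. The plan is to mirror the exact-sequence argument in Lemma~\ref{lemma:d1-integralsummands}. Apply $[-,\Sigma^{4+(3)}\EM\Lambda]$ to $\EM\Lambda\xrightarrow{2}\EM\Lambda\to\EM\Lambda/2$ and $[\EM\Lambda/2,-]$ to the target sequence. This identifies the group with the subquotient of $\mathcal{A}^{6,3}$ consisting of operations $\theta$ with $\Sq^1\theta=0=\theta\Sq^1$, modulo $\Sq^1\mathcal{A}^{5,3}\Sq^1$-type terms. Then check, via the Milnor basis, that $\Sq^6$ and $\Sq^4\Sq^2$ are linearly independent modulo these relations. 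The alternative is to evaluate them on explicit classes over $\RR$, for instance on $\partial^2_\infty(\tau^k)$ and its products with $\rho$. This produces distinct nonzero values, analogously to the $\{-1,-1,-1,-1\}$ computation in Theorem~\ref{thm.firstslicediffMGL}. Finally, the result is extended to all fields of characteristic not $2$ by base change from prime fields.
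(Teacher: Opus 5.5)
\textbf{Step 1 has a genuine gap.} Your route also differs from the paper's, and that difference matters exactly at this step.

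The paper never computes maps out of $\EM\Lambda$ by hand. It identifies $[\EM\Lambda,\Sigma^{4+(3)}\EM\Lambda]$ with $[\effcover_1\sph_\Lambda,\Sigma^{3+(3)}\EM\Lambda]$ and descends the slice tower of the sphere. The only inputs are:
\begin{itemize}
\item $\sph$ is compact with $[\sph,\Sigma^{s+(3)}\EM\Lambda]=0$ for $s>0$;
\item $\Sigma^{s+(3)}\EM\Lambda$ is right orthogonal to $\effcover_4\sph_\Lambda$;
\item $\slice_1\sph_\Lambda,\slice_2\sph_\Lambda,\slice_3\sph_\Lambda$ are sums of suspensions of $\EM\Lambda/2$ and $\EM\Lambda/12$.
\end{itemize}
So only weight $\leq 2$ Steenrod operations and the known first slice differentials enter, and the group comes out as an extension of $\mathbb{F}_2$ by $\mathbb{F}_2$ before the generators are identified.

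You instead use Bockstein sequences. These only compute ${}_\ell G$ and $G/\ell G$ for $G=[\EM\Lambda,\Sigma^{4+(3)}\EM\Lambda]$, so you need $G$ to be torsion, and your argument for that fails. The vanishing of $[\EM\Lambda,\Sigma^{4+(3)}\EM\QQ]$ only says that every element lifts to $[\EM\Lambda,\Sigma^{3+(3)}\EM(\QQ/\Lambda)]$. Since $\EM\Lambda$ is not compact, you cannot commute this Hom with $\EM(\QQ/\Lambda)=\colim_n\EM\Lambda/n$ to conclude that each element is killed by some $\ell^k$. Classically, $[H\QQ,\Sigma H\Z]\cong\Ext(\QQ,\Z)\neq 0$ although $[H\QQ,\Sigma H\QQ]=0$. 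Nothing you wrote excludes a torsion-free or divisible part of $G$. You need a genuine finiteness input, which the slice tower of $\sph$ supplies for free.

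\textbf{Steps 2--3 also need repair, though here the fix is within reach of your method.}

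The Bockstein sequence gives $[\EM\Lambda,\Sigma^{p,q}\EM\Lambda/2]\cong(\mathcal{A}/\mathcal{A}\Sq^1)^{p,q}$, which is free over $h^{*,*}(F)$ on admissible monomials not ending in $\Sq^1$. In bidegree $(6,3)$ this is $h^{1,1}(F)\{\Sq^5\}\oplus\mathbb{F}_2\{\Sq^6\}\oplus\mathbb{F}_2\{\Sq^4\Sq^2\}$; the coefficients range over all of $h^{*,*}(F)$, not just $\tau$ and $\rho$. Then ${}_2G$ is the quotient of this by the reductions $\pr^\infty_2\circ[\EM\Lambda,\Sigma^{3+(3)}\EM\Lambda]$.

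Since $\pr^\infty_2\circ\partial^2_\infty\theta\pr^\infty_2=\Sq^1\theta\pr^\infty_2$, a class $\theta$ with $\Sq^1\theta\neq 0$ gives a nonzero element not divisible by $2$. This is the opposite of the cycle condition $\Sq^1\theta=0$ in your Step~3. That condition would exclude your own generators, because $\Sq^1\Sq^6=\Sq^7$ and $\Sq^1\Sq^4\Sq^2=\Sq^5\Sq^2$.

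With the correct criterion:
\begin{itemize}
\item $a\Sq^5=\Sq^1(a\Sq^4)$ for $a\in h^{1,1}(F)$, since units lift integrally. So this class is a reduction and dies.
\item $\Sq^7$ and $\Sq^5\Sq^2$ are distinct basis elements of $\mathcal{A}/\mathcal{A}\Sq^1$.
\item Hence ${}_2G\cong\mathbb{F}_2^2$ over every $F$ at once, and ${}_2G\cap 2G=0$. This is what rules out elements of order $4$; your sentence about higher torsion is only an assertion.
\end{itemize}

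Your alternative of evaluating over $\RR$ and then base changing from prime fields cannot establish nonvanishing over a general $F$. $\mathbb{F}_p$ does not embed in $\RR$. Over $\mathbb{C}$, all these operations vanish on $h^{*,*}(\mathbb{C})=\mathbb{F}_2[\tau]$, yet they are nonzero maps.

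Finally, for $\ell=3$ your weight argument is wrong: $h^{1,1}_3$-multiples of the mod-$3$ Bockstein of $\mathsf{P}^1$ do lie in bidegree $(6,3)$. They vanish in $G$ for the same Bockstein reason as the $h^{1,1}(F)\{\Sq^5\}$ summand.
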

\begin{proof}
The cofiber sequence $\effcover_4\sph_\Lambda\to \effcover_3\sph_\Lambda \to \slice_3\sph_\Lambda$ induces an isomorphism $[\effcover_3\sph_\Lambda,\Sigma^{s+(3)}\EM\Lambda]\cong [\slice_3\sph_\Lambda,\Sigma^{s+(3)}\EM\Lambda]$ for all integers $s$, because $\Sigma^{s+(3)}\EM\Lambda$ is right orthogonal to $4$-effective motivic spectra.
Since $\slice_3\sph_\Lambda\simeq \Sigma^{(3)}\EM\Lambda/2\oplus \Sigma^{2+(3)}\EM\Lambda/2$ by \cite[Corollary 2.13]{RSO19}, there results an isomorphism 
\[ [\effcover_3\sph_\Lambda,\Sigma^{s+(3)}\EM\Lambda]\cong \begin{cases} \mathbb{F}_2 & s\in \{1,3\} \\ 0 & \mathrm{else}\end{cases}\]
where for $s\in \{1,3\}$, the generator corresponds to the connecting map $\partial^2_\infty$.
The cofiber sequence $\effcover_3\sph_\Lambda\to \effcover_2\sph_\Lambda \to \slice_2\sph_\Lambda$ induces a not so long exact sequence
\[ 0 \to [\slice_2\sph_\Lambda,\Sigma^{3+(3)}\EM\Lambda] \to [\effcover_2\sph_\Lambda,\Sigma^{3+(3)}\EM\Lambda] \to \mathbb{F}_2 \to [\slice_2\sph_\Lambda,\Sigma^{4+(3)}\EM\Lambda] \] 
in which the last displayed homomorphism is induced by precomposition with the first slice differential $\Sigma^{-1}\slice_2\sph_\Lambda\simeq \Sigma^{-1+(2)}\EM\Lambda/2\oplus \Sigma^{(2)}\EM\Lambda/12\xrightarrow{\Sq^3\Sq^1+\Sq^2\partial^{12}_{2}} \Sigma^{2+(3)}\EM\Lambda/2$; see \cite[Lemma 4.1]{RSO19}.
Since $\partial^2_{\infty}\circ (\Sq^3\Sq^1+\Sq^2\partial^{12}_{2})= \partial^2_\infty\Sq^2\partial^{12}_2\neq 0$, this last homomorphism is injective, giving an isomorphism $[\slice_2\sph_\Lambda,\Sigma^{3+(3)}\EM\Lambda]\cong [\effcover_2\sph_\Lambda,\Sigma^{3+(3)}\EM\Lambda]$.
The form of $\slice_2\sph_\Lambda$ already displayed above provides an isomorphism
$[\slice_2\sph_\Lambda,\Sigma^{3+(3)}\EM\Lambda]\cong \mathbb{F}_2\{\partial^2_\infty\Sq^2\Sq^1\}\oplus \mathbb{F}_2\{\partial^2_\infty\Sq^2\pr^{12}_{2}\}$ by knowledge on the weight~$1$ part of the Steenrod algebra \cite{Voevodsky_Steenrod, HKO_Steenrod}.
Similarly, precomposition with the first slice differential $\Sigma^{-1}\slice_2\sph_\Lambda\simeq \Sigma^{-1+(2)}\EM\Lambda/2\oplus \Sigma^{(2)}\EM\Lambda/12\xrightarrow{\Sq^2+\tau\partial^{12}_{2}} \Sigma^{(3)}\EM\Lambda/2$ from \cite[Lemma 4.1]{RSO19} produces an exact sequence
\[  \mathbb{F}_2\to [\slice_2\sph_\Lambda,\Sigma^{2+(3)}\EM\Lambda] \to [\effcover_2\sph_\Lambda,\Sigma^{2+(3)}\EM\Lambda]\to 0\]
in which the first homomorphism sends $\partial^2_\infty$ to $\partial^2_\infty\Sq^2+\rho\partial^{12}_\infty\neq 0$.
The cofiber sequence $\effcover_2\sph_\Lambda\to \effcover_1\sph_\Lambda \to \slice_1\sph_\Lambda$ induces an exact sequence
\[  [\slice_2\sph_\Lambda,\Sigma^{2+(3)}\EM\Lambda]/\mathbb{F}_2\to [\slice_1\sph_\Lambda,\Sigma^{3+(3)}\EM\Lambda] \to [\effcover_1\sph_\Lambda,\Sigma^{3+(3)}\EM\Lambda] \to \mathbb{F}_2\oplus \mathbb{F}_2 \to [\slice_1\sph_\Lambda,\Sigma^{4+(3)}\EM\Lambda] \] 
in which the first and the last displayed homomorphism are induced by precomposition with the first slice differential $\Sigma^{-1}\slice_1\sph_\Lambda\simeq \Sigma^{-1+(1)}\EM\Lambda/2\xrightarrow{(\Sq^2,\inc^{2}_{12}\Sq^2\Sq^1)} \Sigma^{(2)}\EM\Lambda\oplus \Sigma^{1+(2)}\EM\Lambda/12$; see again \cite[Lemma 4.1]{RSO19}.
The composition $\partial^2_\infty\Sq^2\pr^{12}_2\inc^2_{12}\Sq^2\Sq^1=0$, while the composition $\partial^2_\infty\Sq^2\Sq^1\Sq^2=\partial^2_\infty(\Sq^4\Sq^1+\Sq^1\Sq^4)=\partial^2_\infty\Sq^4\Sq^1\neq 0$ by a motivic Adem relation \cite[Theorem 10.2]{Voevodsky_Steenrod}, \cite[Theorem 5.1]{HKO_Steenrod}.
The previous exact sequence thus reduces to a shorter exact sequence
\[  [\slice_2\sph_\Lambda,\Sigma^{2+(3)}\EM\Lambda]/\mathbb{F}_2 \to [\slice_1\sph_\Lambda,\Sigma^{3+(3)}\EM\Lambda] \to [\effcover_1\sph_\Lambda,\Sigma^{3+(3)}\EM\Lambda] \to \mathbb{F}_2\{\partial^2_\infty\Sq^2\pr^{12}_{2}\} \to 0 \] 
in which $[\slice_1\sph_\Lambda,\Sigma^{3+(3)}\EM\Lambda]\cong h^{1,1}(F)\{\partial^2_\infty\Sq^2\Sq^1\}\oplus \mathbb{F}_2\{\partial^2_\infty\Sq^4\}$, as the weight~$2$ part of the Steenrod algebra implies \cite{Voevodsky_Steenrod, HKO_Steenrod}.
The first summand is in the image of the leftmost homomorphism, since for every square class $\phi\in h^{1,1}(F)$ with representing unit $\Phi\in H^{1,1}(F)$ one has $\partial^2_\infty\phi\Sq^2\Sq^1=\Phi\partial^{12}_\infty\inc^{12}_{2}\Sq^2\Sq^1$. 
Thus the previous exact sequence reduces to a short exact sequence
\[  0\to \mathbb{F}_2\{\partial^2_\infty\Sq^4\}\to [\effcover_1\sph_\Lambda,\Sigma^{3+(3)}\EM\Lambda] \to \mathbb{F}_2\{\partial^2_\infty\Sq^2\pr^{12}_{2}\} \to 0 \]
which soon will turn out to be split.
The cofiber sequence $\effcover_1\sph\to \sph \to \slice_0\sph = \EM\Z$ induces an isomorphism 
$[\effcover_1\sph_\Lambda,\Sigma^{3+(3)}\EM\Lambda]\cong [\EM\Lambda,\Sigma^{4+(3)}\EM\Lambda]$, because $[\sph,\Sigma^{s+(3)}\EM\Z]=0$ for $s>0$ over any field.
Again one resulting nonzero element is induced by precomposition with a first slice differential, now $\Sq^2\pr^\infty_2\colon \EM\Lambda\simeq \slice_0\sph_\Lambda\to \Sigma\slice_1\sph_\Lambda\simeq\Sigma^{1+(1)}\EM\Lambda/2$, which provides $\partial^2_\infty\Sq^4\Sq^2\pr^\infty_2\in [\EM\Lambda,\Sigma^{4+(3)}\EM\Lambda]$. 
Also $\partial^2_\infty\Sq^6\pr^\infty_2\in [\EM\Lambda,\Sigma^{4+(3)}\EM\Lambda]$, and it is nonzero, since for $F=\QQ$, the composition 
\[ H^{1,5}(\QQ)\to h^{1,5}(\QQ)\xrightarrow{\Sq^6} h^{7,8}(\QQ)\to H^{8,8}(\QQ)\] 
is nonzero by Lemma~\ref{lem:Sq6_tau4} and the argument appearing in the proof of Lemma~\ref{lem:d2pi4s0sphere}.
It also differs from $\partial^2_\infty\Sq^4\Sq^2\pr^\infty_2$, because the composition 
\[ H^{1,5}(\QQ)\to h^{1,5}(\QQ)\xrightarrow{\Sq^2} h^{3,6}(\QQ) \xrightarrow{\Sq^4} h^{7,8}(\QQ)\to H^{8,8}(\QQ)\]
is zero.
Note that the sum of these two nonzero elements is $\partial^2_\infty\Sq^2\Sq^4\pr^\infty_2$ by an Adem relation \cite{Voevodsky_Steenrod, HKO_Steenrod}, which implies also that the last short exact sequence is split.
\end{proof}

\begin{lemma}\label{lem:Sq6_tau4}
    We have $\Sq^6(\tau^4c)=\rho^6\tau c$ for every $c\in \KMil_\star(F)$.
\end{lemma}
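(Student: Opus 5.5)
The plan is to compute directly with the motivic Cartan formula for mod-$2$ Steenrod squares \cite{Voevodsky_Steenrod, HKO_Steenrod}, reducing everything to the action of the squares on powers of $\tau$. For the even squares I would use
\[ \Sq^{2i}(xy)=\sum_{a}\Sq^{2a}(x)\Sq^{2i-2a}(y)+\tau\sum_{a}\Sq^{2a+1}(x)\Sq^{2i-2a-1}(y), \]
together with the analogous formula for odd squares, which carries a correction term involving $\rho$. I also need the instability properties: for $x\in h^{p,q}$ one has $\Sq^{2i}(x)=0$ when $i>p-q$, and $\Sq^{2i}(x)=x^2$ when $i=p-q\geq 0$. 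In addition, $\Sq^{2i+1}=\Sq^1\Sq^{2i}$ up to the usual Adem relation. Throughout I write $\Sq^{k}(\tau^2)$ for the squares evaluated on $\tau^2\in h^{0,2}$.

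\emph{Step 1: the factor $c$ is inert.} For $c\in\KMiltwo_n(F)\cong h^{n,n}$, every $\Sq^j c$ with $j>0$ lands in $h^{n+j,n+\lceil j/2\rceil}$. This group vanishes for a field, since its cohomological degree exceeds its weight. Hence the Cartan formula gives $\Sq^6(\tau^4c)=\Sq^6(\tau^4)\,c$: all $\tau$- and $\rho$-correction terms also involve some $\Sq^{j}c$ with $j>0$ and therefore vanish. The claim thus reduces to $\Sq^6(\tau^4)=\tau\rho^6$.

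\emph{Step 2: squares of $\tau$ and $\tau^2$.} For $\tau\in h^{0,1}$, one has $\Sq^1\tau=\rho$ (the Bockstein of $\tau$), and $\Sq^{2i}\tau=0$ for $i\geq1$ by instability. Applying the Cartan formula to $\tau\cdot\tau$ gives:
\begin{itemize}
\item $\Sq^1(\tau^2)=2\tau\rho=0$;
\item $\Sq^2(\tau^2)=\tau\,\Sq^1\tau\,\Sq^1\tau=\tau\rho^2$;
\item $\Sq^3(\tau^2)=\Sq^1(\tau\rho^2)=\rho^3$;
\item $\Sq^{j}(\tau^2)=0$ for $j\geq4$, by instability (for even $j$) and the relation $\Sq^{2i+1}=\Sq^1\Sq^{2i}$ (for odd $j$).
\end{itemize}

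\emph{Step 3: apply the even Cartan formula to $\tau^4=\tau^2\cdot\tau^2$.} In the even part $\sum_a\Sq^{2a}(\tau^2)\Sq^{6-2a}(\tau^2)$, every term contains some $\Sq^{\geq4}(\tau^2)=0$, so the even part vanishes. The $\tau$-part is
\[ \tau\bigl(\Sq^1(\tau^2)\Sq^5(\tau^2)+\Sq^3(\tau^2)\Sq^3(\tau^2)+\Sq^5(\tau^2)\Sq^1(\tau^2)\bigr)=\tau\rho^6 , \]
since the outer two terms contain $\Sq^1(\tau^2)=0$. This proves the lemma.

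The main obstacle is bookkeeping rather than ideas. One must get the precise form of the motivic Cartan formula right, in particular the $\tau$- and $\rho$-correction terms in the odd-square case, and confirm the instability vanishing $\Sq^4(\tau^2)=\Sq^6(\tau^2)=0$ in the relevant bidegrees. I would double-check the result by a consistency test over $\RR$, where $h^{*,*}=\mathbb{F}_2[\tau,\rho]$ and the answer can be compared with the known total square $\Sq(\tau)=\tau+\rho$ in the appropriate sense.
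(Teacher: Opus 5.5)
Your proposal is correct and takes essentially the paper's route: the Cartan formula for $(\tau^2,\tau^2)$, where $\Sq^3(\tau^2)=\rho^3$ is the only surviving input, then the Cartan formula for $(\tau^4,c)$ using that positive squares vanish on $c$; you also derive the values of $\Sq^j(\tau^2)$ from the pair $(\tau,\tau)$, which the paper takes for granted. Two harmless slips should be fixed, since the cases you actually use satisfy the correct statements:
\begin{itemize}
\item $\Sq^j$ has bidegree $(j,\lfloor j/2\rfloor)$, not $(j,\lceil j/2\rceil)$. With the ceiling, $\Sq^1c$ would land in $h^{n+1,n+1}$ and your degree argument would not apply.
\item Instability $\Sq^{2i}(x)=0$ for $x\in h^{p,q}$ needs $i\geq q$ as well as $i>p-q$. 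As you stated it, it would force $\Sq^0\tau=0$ and $\Sq^2(\tau^2)=0$.
\end{itemize}
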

\begin{proof}
This follows from the motivic Cartan formula \cite[Proposition 9.7]{Voevodsky_Steenrod}, applied first to the pair $(\tau^2,\tau^2)$ to deduce $\Sq^6(\tau^4)=\rho^6\tau$ from $\Sq^3(\tau^2)=\rho^3$, and then to the pair $(\tau^4,c)$, using that all Steenrod squares except $\Sq^0$ vanish on $c\in \KMil_\star(F)$.
\end{proof}

\begin{lemma}\label{lem:d3pi4s0MSL}
    The second component of the differential $\dd_3\colon E^3_{-n+4,0,-n}(\MSL_\Lambda)\to E^3_{-n+3,3,-n}(\MSL_\Lambda)$ is nonzero in general. It is given by the operation
    \[ \partial^2_\infty\Sq^6\pr^\infty_2\colon H^{n-4,n}\to H^{n+3,n+3}. \]
\end{lemma}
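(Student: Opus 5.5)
The plan is to reduce the second component of $\dd_3$ to a map of motivic Eilenberg--MacLane spectra, and then pin that map down using Lemma~\ref{lem:MZtoMZ(3)[7]} together with a computation over $\QQ$.

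\emph{Reduction to a map $\EM\Lambda\to\Sigma^{4+(3)}\EM\Lambda$.} By Corollary~\ref{cor:kq-msl-d2pi4s0} and Lemma~\ref{lem:d2pi4s1sphere}, all $\dd_1$- and $\dd_2$-differentials leaving and entering the relevant spots vanish on the summands in question. Hence the second component
\[ H^{n-4,n}=\pi_{4-(n)}\slice_0\MSL_\Lambda\to H^{n+3,n+3} \]
is induced by a map of spectra. To produce it, I would take the composite $\effcover_0\MSL_\Lambda\to\slice_0\MSL_\Lambda=\EM\Lambda$ and its fiber $\effcover_1\MSL_\Lambda$. Vanishing of the lower differentials on the integral summand lets me lift through $\effcover_3$ and compose with the projection $\slice_3\MSL_\Lambda\to\Sigma^{3+(3)}\EM\Lambda$ onto the integral summand $\cyc_6(\W)$ (with the identification of Remark~\ref{rem:choice_s3}). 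Carried out on the level of spectra, this yields an element of $[\EM\Lambda,\Sigma^{4+(3)}\EM\Lambda]$, well defined modulo indeterminacy from lower slices. The indeterminacy consists of maps factoring through $\slice_1$ or $\slice_2$ composed with $\dd_1$'s, and it should act trivially on $H^{n-4,n}$ by the $E^2$ computations of Section~\ref{section:4}.

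\emph{Identifying the operation.} By Lemma~\ref{lem:MZtoMZ(3)[7]}, the resulting operation is $a\,\partial^2_\infty\Sq^6\pr^\infty_2+b\,\partial^2_\infty\Sq^4\Sq^2\pr^\infty_2$ with $a,b\in\mathbb{F}_2$. The second operation acts trivially on $H^{n-4,n}$, since $\Sq^2$ vanishes on $h^{n-4,n}$ by \cite[Corollary 6.2]{RO16}. So on the relevant groups only the coefficient $a$ matters, and it remains to show $a=1$. By Lemma~\ref{lem:Sq6_tau4} and the argument in Lemma~\ref{lem:d2pi4s0sphere}, $\partial^2_\infty\Sq^6\pr^\infty_2$ is nonzero on $H^{1,5}(\QQ)$. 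It therefore suffices to show that $\dd_3$ is nonzero over $\QQ$ (or over $\RR$) in degree $n=5$.

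\emph{The nonvanishing (main obstacle).} This is the hard step. I would argue by contradiction, in the style of Lemma~\ref{lem:d2pi4s0sphere}. Suppose $\dd_3$ vanished on the integral summand. Then the class $\partial^2_\infty(\tau\rho^6\cdot)\in H^{8,8}=\KMil_8$, which corresponds to $\rho^8$ over $\RR$, would survive as a permanent cycle in $\pi_{3-(5)}\MSL$ that maps to zero in $\kq$.

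To rule this out I would use real Betti realization, or the $\eta$-inverted computation via Theorem~\ref{slice_MSL_eta_inv}. The slice in question is $\Sigma^{3+(3)}\EM\Lambda\otimes\cyc_6(\W)$, generated by $x_3$. After inverting $\eta$, or after real realization, the class $\rho^8$ times $x_3$ should die, because $x_3$ has odd Conner--Floyd degree and $\hmlg_6(\W)=0$. Since $\rho$-multiplication is injective on $\KMil(\RR)$, the element would have to be killed by a differential, and the only candidate is this $\dd_3$. The alternative is a direct comparison with the unit map $\sph\to\MSL$ together with Lemma~\ref{lem:unit-d3pi4s5}, which transports the known $\rho^3$-differential for $\sph$.

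Once nontriviality over $\RR$ or $\QQ$ is established, base change from the prime field, as in Theorem~\ref{thm.firstslicediffMGL}, determines the operation over all fields.
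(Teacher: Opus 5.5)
\textbf{There is a genuine gap in your first step, and it is the step the paper's proof is built around.} The second component of $\dd_3$ for $\MSL_\Lambda$ is not induced by any map of spectra $\EM\Lambda\to\Sigma^{4+(3)}\EM\Lambda$.

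\emph{Why the reduction fails.} The slices $\slice_1\MSL_\Lambda\cong\Sigma^{(1)}\EM\Lambda/2$ and $\slice_2\MSL_\Lambda$ are nonzero. Moreover $\dd_1(0)=\Sq^2\pr^\infty_2\colon\EM\Lambda\to\Sigma^{1+(1)}\EM\Lambda/2$ is a nonzero map of spectra (Theorem~\ref{thm:d1_msl}). The vanishing you invoke holds only for the induced maps on the particular groups $H^{n-4,n}$. So the connecting map $\slice_0\MSL_\Lambda\to\Sigma\effcover_1\MSL_\Lambda$ does not lift to $\Sigma\effcover_3\MSL_\Lambda$. Element-wise lifts taken modulo indeterminacy do not assemble into an element of $[\EM\Lambda,\Sigma^{4+(3)}\EM\Lambda]$. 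Hence Lemma~\ref{lem:MZtoMZ(3)[7]} cannot be applied to $\MSL$ directly.

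\emph{The missing idea.} Compare with a spectrum whose first two positive slices vanish, namely $\MGL/(a_1,a_2)$. There the connecting map genuinely factors through $\Sigma\effcover_1=\Sigma\effcover_3$, so Lemma~\ref{lem:MZtoMZ(3)[7]} applies. However, $\slice_3\MSL\to\slice_3\MGL/(a_1,a_2)$ is multiplication by $-3$ on the integral summand, because $x_3=-3a_3+2a_1a_2$. Together with Corollary~\ref{cor:d3pi4s0msl}, you therefore only learn $-3\,\dd_3^{\MSL}=\dd_3^{\MGL/(a_1,a_2)}$. This leaves a possible error term with values in ${}_3H^{\star+3,\star+3}$, which your outline never sees. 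The paper removes it by comparing with $\MSL/3$. Its $\dd_3$ vanishes because the generator of $h^{0,4}_3$ is defined over $F_0(\zeta_3)$, which has $3$-cohomological dimension at most one; if $\zeta_3\notin F$, one passes through the degree two extension $F(\zeta_3)$.

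\textbf{The nonvanishing step, as written, does not hold up, though one variant can be repaired.}

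\emph{Two of your suggestions do not work.} Lemma~\ref{lem:unit-d3pi4s5} is irrelevant: it concerns the $\eta$-periodic classes $\alpha_5\alpha_1^m$ in slices $\geq 5$. The $\eta$-inverted version gives no contradiction either. A class $\xi$ detected by $\rho^8x_3$ in filtration $3$ satisfies $\eta\xi=0$ automatically, since $\eta\xi$ lies in filtration $\geq 4$, which vanishes in this column. So the vanishing $\hmlg_6(\W)=0$ is beside the point.

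\emph{The real realization variant can be made rigorous.} Suppose $\dd_3^{\MSL}$ vanished over $\RR$. By convergence in this finite column and Corollary~\ref{cor:pi_3=with_eta_comp}, the class $\rho^8x_3$ would detect some $\xi\in\pi_{3-(5)}\MSL$. Since $\rho$ has slice filtration $0$, every $\rho^k\xi$ would be detected by $\rho^{8+k}x_3\neq 0$. Then $\xi$ survives to $\pi_{3-(5)}\MSL[\rho^{-1}]$, which is $\pi_3\mathbf{MSO}=0$ by \cite{Bachmann_real_etale} and $r_\RR\MSL\simeq\mathbf{MSO}$. This is a contradiction. The operative input is $\pi_3\mathbf{MSO}=0$, which you never state.

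This would be a genuinely different route from the paper's. The paper instead imports the differential $g\mapsto\rho^7$ for $\MGL/(2,a_1,a_2)$ over $\RR$ from \cite[Theorem~5.4]{MGL_over_R}, and transports it via $\rho g\in\pr^\infty_2H^{1,5}(\QQ)$.

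\emph{It still depends on the corrected reduction.} This argument only shows $\dd_3^{\MSL}\neq 0$ over $\RR$. To conclude you need the $\MGL/(a_1,a_2)$ reduction, which says the operation is $\alpha\,\partial^2_\infty\Sq^6\pr^\infty_2$ up to $3$-torsion. Since $\KMil_\ast(\RR)$ has no $3$-torsion, $-3\,\dd_3^{\MSL}\neq 0$ over $\RR$, which forces $\alpha=1$.
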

\begin{proof}
Again $\Lambda$ will be ignored in the notation. By a slight abuse of terminology, we refer to the second component simply as the $\dd_3$-differential and denote it by $\dd_3^{\MSL}$ (recall that the first component is trivial by Corollary \ref{cor:d3pi4s0msl}).
Consider the spectrum $\MGL/(a_1,a_2)$, where $a_1$ and $a_2$ are the first two polynomial generators of the Lazard ring. The slices of this spectrum are readily computed. Lemma \ref{lemma:xi_decomposition_ai} implies that the map $\slice_3\MSL\to\slice_3\MGL/(a_1,a_2)$ within the integral summands is the multiplication by $-3$. Combining this with Corollary \ref{cor:d3pi4s0msl}, we obtain the commutative diagram
\begin{center}
    \begin{tikzcd}
        H^{n-4,n} \ar[r, "\simeq"] \ar[d, "\dd_3^\MSL"'] & H^{n-4,n} \ar[d, "\dd_3^{\MGL/(a_1,a_2)}"] \\ H^{n+3,n+3} \ar[r, "-3"] & H^{n+3,n+3},
    \end{tikzcd}
\end{center}
which implies that the desired $\dd_3$-differential is equal to $-3\dd_3^{\MGL/(a_1,a_2)}$. We claim that $\dd_3^{\MGL/(a_1,a_2)}=\partial^2_\infty\Sq^6\pr^\infty_2$.
First, let us prove that the claim implies the result. The exact sequence
\[ 0\to {}_3H^{\star+3,\star+3}\to H^{\star+3,\star+3}\xrightarrow{\cdot 3} H^{\star+3,\star+3} \]
of graded $\KMil_\star(F)$-modules induces the exact sequence
\[ 0\to \Hom_{\KMil_\star(F)}(H^{\star-4,\star},{}_3H^{\star+3,\star+3})\to\Hom_{\KMil_\star(F)}(H^{\star-4,\star},H^{\star+3,\star+3})\to \Hom_{\KMil_\star(F)}(H^{\star-4,\star},H^{\star+3,\star+3}). \]
It follows from the claim that the last displayed homomorphism sends $\dd_3^\MSL$ to $\partial^2_\infty\Sq^6\pr^\infty_2$.
Since $3\partial^2_\infty\Sq^6\pr^\infty_2=\partial^2_\infty\Sq^6\pr^\infty_2$, the differential $\dd_3^\MSL$ has the form $\partial^2_\infty\Sq^6\pr^\infty_2+\phi$ for some map of $\KMil_\star(F)$-modules $\phi$ that factors through ${}_3H^{\star+3,\star+3}\cong{}_3\KMil_{\star+3}(F)$. Assume that $\phi$ is nonzero. Then the composite of the desired differential with $\pr^\infty_3$ is non-trivial (the $\KMil_\star(F)$-module ${}_3\KMil_{\star+3}(F)$ injects into $\KMil_{\star+3}(F)/3$). This non-trivial map is the composite via the bottom left in the commutative diagram 
\begin{center}
    \begin{tikzcd}
        H^{\star-4,\star} \ar[rr, "\pr^\infty_3"] \ar[d, "\dd_3^{\MSL}"'] & & h^{\star-4,\star}_3 \ar[d, "\dd_3^{\MSL/3}"] \\ H^{\star+3,\star+3} \ar[rr, "\pr^\infty_3"] & & h^{\star+3,\star+3}_3, 
    \end{tikzcd}
\end{center}
which is induced by the morphism $\MSL\to \MSL/3$. However, the differential $\dd_3^{\MSL/3}$ is trivial. Indeed, if $F$ contains a primitive 3rd root of unity, this differential is determined by the image of any generator $g\in h^{0,4}_3$, which exists over the subfield $F_0(\zeta_3)\subset F$ of $3$-cohomological dimension at most one. If $F$ does not contain a primitive 3rd root of unity, the degree two field extension $F\subset F(\zeta_3)$ induces injections on the groups occurring in the differential.
This yields a contradiction, and thus we have $\phi=0$.
 
Now we prove the claim. The $\dd_3$-differential for $\MGL/(a_1,a_2)$ comes from a map \[\mathbf{d}_3^{\MGL/(a_1,a_2)}\colon\EM\Lambda\to \Sigma^{7,3}\EM\Lambda\] since $\slice_1\MGL/(a_1,a_2)$ and $\slice_2\MGL/(a_1,a_2)$ vanish. 
For the convenience of the reader, we distinguish the map $\mathbf{d}_3^{\MGL/(a_1,a_2)}$ (the differential as a map of motivic spectra) from $\dd_3^{\MGL/(a_1,a_2)}$ (the induced map in Milnor--Witt degree $4$, i.e., $\dd_3^{\MGL/(a_1,a_2)}=\pi_{4-(\star)}(\mathbf{d}_3^{\MGL/(a_1,a_2)})$). 
By Lemma \ref{lem:MZtoMZ(3)[7]}, there exist $\alpha,\beta\in\mathbb{F}_2$ such that
\[ \mathbf{d}_3^{\MGL/(a_1,a_2)}=\alpha\partial^2_\infty\Sq^6\pr^\infty_2+\beta\partial^2_\infty\Sq^4\Sq^2\pr^\infty_2. \]
Whatever $\beta$ is, the equality $\dd_3^{\MGL/(a_1,a_2)}=\alpha\partial^2_\infty\Sq^6\pr_2^\infty$ holds because $\partial^2_\infty\Sq^4\Sq^2\pr^\infty_2$ acts trivially on $H^{\star-4,\star}$.
In fact, already $\Sq^2\colon h^{\star-4,\star}\to h^{\star-2,\star+1}$ is trivial. 
Hence, there are only two options for $\dd_3^{\MGL/(a_1,a_2)}$: it is either trivial or given by $\partial^2_\infty\Sq^6\pr^\infty_2$. 
If $F$ has odd characteristic, these options coincide by Lemma~\ref{lem:Sq6_tau4}, since in this case $\rho^2=0$. 
To prove the claim in characteristic zero, it remains to show that the differential is nonzero over $\QQ$. 
For that consider the map $\MGL/(a_1,a_2)\to \MGL/(2,a_1,a_2)$. 
It induces the following commutative diagram
\begin{center}
    \begin{tikzcd}
        H^{\star-4,\star} \ar[rr, "\pr^\infty_2"] \ar[d, "\dd_3^{\MGL/(a_1,a_2)}"'] & & h^{\star-4,\star} \ar[d, "\dd_3^{\MGL/(2,a_1,a_2)}"] \\ H^{\star+3,\star+3} \ar[rr, "\pr^\infty_2"] & & h^{\star+3,\star+3}.
    \end{tikzcd}
\end{center}
The right vertical map sends the unique non-trivial element $g\in h^{0,4}$ to $\rho^7$ over $\RR$ (and, hence, over $\QQ$) by \cite[Theorem~5.4]{MGL_over_R}. 
If the left differential was zero, then the composition via the top right would be zero, but this cannot happen.
For example, the element $\rho g\in h^{1,5}(\QQ)$ belongs to the image of the top homomorphism (see the end of the proof of Lemma \ref{lem:d2pi4s0sphere}) and maps to $\rho^8\neq 0\in h^{8,8}(\QQ)$ under the right differential. 
It follows that the differential $\dd_3^{\MGL/(a_1,a_2)}$ is given by $\partial^2_\infty\Sq^6\pr^\infty_2$ over $\QQ$. 
This completes the proof.
\end{proof}

\begin{lemma}\label{lem:stabilization_m=3}
    The equality $E^4_{-n+3,q,-n}(\MSL_\Lambda)=E^\infty_{-n+3,q,-n}(\MSL_\Lambda)$ holds for all integers $n$ and $q$.
    The same holds for $\kq_\Lambda$ instead of $\MSL_\Lambda$, and the morphism $\MSL_\Lambda\to \kq_\Lambda$ induces isomorphisms
    \[ E^\infty_{-n+3,q,-n}(\MSL_\Lambda)\xrightarrow{\simeq}E^\infty_{-n+3,q,-n}(\kq_\Lambda)\ \text{for}\ q\neq 3, \]
    and a split epimorphism $E^\infty_{-n+3,3,-n}(\MSL_\Lambda)\twoheadrightarrow E^\infty_{-n+3,3,-n}(\kq_\Lambda)$ with kernel 
    \[ H^{n+3,n+3}/(\partial^2_\infty\Sq^6\pr^\infty_2H^{n-4,n}). \]
\end{lemma}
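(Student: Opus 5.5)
The plan is to examine every differential entering or leaving the third column $E^r_{-n+3,\ast,-n}$ for $r\geq 2$, and to show that from page $4$ on all of them vanish.

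\textbf{Outgoing differentials.} These are trivial for all $r\geq 2$ by Lemma~\ref{lem:higherdiffMSL}, both for $\MSL_\Lambda$ and for $\kq_\Lambda$ (for $\kq$ via \cite[\S 2]{RSO24}).

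\textbf{Incoming differentials.} They originate in the fourth column $E^r_{-n+4,q',-n}$ and land in slice degree $q'+r$. Since the nonzero part of the third column sits in slice degrees $q\leq 3$ (Proposition~\ref{prop:E^2_for_m=3} together with Lemma~\ref{lemma:E^2_for_m<3}), only sources with $q'+r\leq 3$ matter. This leaves exactly the three differentials listed before Lemma~\ref{lem:d2pi4s0sphere}:
\begin{itemize}
\item $\dd_2$ from slice $0$ to slice $2$, which vanishes by Corollary~\ref{cor:kq-msl-d2pi4s0};
\item $\dd_2$ from slice $1$ to slice $3$, which vanishes by Lemma~\ref{lem:d2pi4s1sphere};
\item $\dd_3$ from slice $0$ to slice $3$.
\end{itemize}
Since the third column vanishes in slice degrees $q\geq 4$ on $E^2$, no $\dd_r$ with $r\geq 4$ can hit it. Hence $E^4=E^\infty$ in the third column. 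The same argument applies to $\kq_\Lambda$, using Lemma~\ref{lem:d3pi4s0kq} in place of the computation of $\dd_3$ below.

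\textbf{Comparison map.} For $q\neq 3$ nothing changes between $E^2$ and $E^\infty$ in either spectral sequence. The isomorphism therefore follows from Proposition~\ref{prop:E^2_for_m=3}, with naturality of the slice spectral sequence under $\MSL\to\kq$.

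For $q=3$:
\begin{itemize}
\item By Corollary~\ref{cor:d3pi4s0msl}, the $\dd_3$ into $E^3_{-n+3,3,-n}(\MSL_\Lambda)\cong {}_2H^{n+1,n+2}\oplus H^{n+3,n+3}$ has trivial first component.
\item By Lemma~\ref{lem:d3pi4s0MSL}, its second component is $\partial^2_\infty\Sq^6\pr^\infty_2\colon H^{n-4,n}\to H^{n+3,n+3}$.
\end{itemize}
Thus $E^\infty_{-n+3,3,-n}(\MSL_\Lambda)\cong {}_2H^{n+1,n+2}\oplus H^{n+3,n+3}/(\partial^2_\infty\Sq^6\pr^\infty_2H^{n-4,n})$. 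On the $\kq$ side, $E^\infty=E^2$ is the first summand. The map is the projection by Proposition~\ref{prop:E^2_for_m=3}, since the image of the $\dd_3$ lies entirely in the second summand. So it is a split epimorphism with the stated kernel.

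The main point requiring care is the degree bookkeeping that excludes all other incoming differentials. In particular, one must check that $E^2_{-n+3,q,-n}(\MSL_\Lambda)=0$ for $q\geq 4$, which is implicit in Proposition~\ref{prop:E^2_for_m=3} ("non-trivial entries"). All the substantive differential computations are already supplied by the preceding lemmas.
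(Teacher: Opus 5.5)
Your proposal is correct and follows essentially the same route as the paper. Stabilization at $E^4$ comes from the vanishing of the relevant columns in slice degrees $\geq 4$ (Lemma~\ref{lemma:E^2_for_m<3} and Proposition~\ref{prop:E^2_for_m=3}), and the comparison with $\kq_\Lambda$ feeds in Corollary~\ref{cor:kq-msl-d2pi4s0}, Lemma~\ref{lem:d2pi4s1sphere}, Lemma~\ref{lem:d3pi4s0kq}, Corollary~\ref{cor:d3pi4s0msl} and Lemma~\ref{lem:d3pi4s0MSL}, exactly as you do; you just make explicit the degree bookkeeping that the paper's two-line proof leaves to the reader.
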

\begin{proof}
    The stabilization follows from the vanishing stated in Lemma \ref{lemma:E^2_for_m<3} and Proposition \ref{prop:E^2_for_m=3}. For the comparison with $\kq_\Lambda$, combine Proposition \ref{prop:E^2_for_m=3} with the computations of higher slice differentials obtained in Corollary \ref{cor:kq-msl-d2pi4s0}, Lemma \ref{lem:d2pi4s1sphere}, Lemma \ref{lem:d3pi4s0kq}, Corollary \ref{cor:d3pi4s0msl}, and Lemma~\ref{lem:d3pi4s0MSL}.
\end{proof}

\begin{theorem}[Dugger--Isaksen]\label{thm:pi3-(5)reals}
  Let $\sigma\in \pi_{3+(4)}\sph$ be the element given by the third motivic Hopf map.
  There is an isomorphism
  \[ \pi_{3-(5)}\sph^{\wedge}_{\eta}(\RR)\iso \Z/8\Z\{\rho^9\sigma\}\oplus H^{2,5}(\RR)[\nicefrac{1}{2}],\]
  where the second factor is a uniquely divisible group containing no $2$-torsion.
\end{theorem}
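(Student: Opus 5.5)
This result is attributed to Dugger--Isaksen, so the plan is to deduce it from their computation of the $\RR$-motivic stable stems rather than to rebuild it from the slice spectral sequence. First, the $2$-completed part. Dugger and Isaksen computed $\pi_{*,*}(\sph^\wedge_{2,\eta})$ over $\RR$ in a range containing stem $3$ and all weights using the $\rho$-Bockstein and motivic Adams spectral sequences. I would read off the coweight $3-(5)$ entry, namely bidegree $(s,w)=(-2,-5)$. In the $\rho$-localized ($\RR$-motivic $\to$ classical $C_2$-fixed point) picture, the stem-3 classes in sufficiently negative weight are $\rho^k\sigma$. The $\rho$-Bockstein differentials kill $\rho^k\sigma$ only for large $k$ and leave a $\Z/8$ generated by $\rho^9\sigma$, with extensions by $\mathsf{h}$ matching the $\Z/8$ in the classical $\pi_3$ of the $C_2$-equivariant sphere. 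So the $2$-complete group should be $\Z/8\{\rho^9\sigma\}$. As a consistency check, $\sigma\in\pi_{3+(4)}$ and $\rho\in\pi_{0-(1)}$, so $\rho^9\sigma\in\pi_{3+(4-9)}=\pi_{3-(5)}$, which is the correct bidegree.

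Second, the odd-primary and rational part. After inverting $2$, $\SH(\RR)[\nicefrac12]$ splits into plus and minus parts. The minus part is $\rho$-periodic and equals $\mathrm{SH}[\nicefrac12]$, with $\pi_3^s[\nicefrac12]=\Z/3$ concentrated in a fixed coweight. The plus part, in the relevant coweight, is governed by the $\eta$-completed slice spectral sequence with $E^1$-entry $H^{2,5}(\RR)[\nicefrac12]$ coming from $\pi_{3-(5)}\slice_0$. I would check that $H^{2,5}(\RR)$ is uniquely divisible away from $2$. This follows from Beilinson--Lichtenbaum: the torsion in $H^{2,5}$ is the $2$-torsion given by Galois cohomology of $\RR$, and $H^{2,5}$ with $\QQ$-coefficients is a $\QQ$-vector space, namely $K_8(\RR)_\QQ$-related data. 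I would then argue that no odd torsion survives in this coweight, and that all higher-slice contributions in this coweight are $2$-primary. Lemma~\ref{lem:unit-d3pi4s5} helps organize these contributions.

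Third, I would combine the two parts through the arithmetic fracture square for $\eta$-completion, which gives the stated isomorphism.

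I expect the main obstacle to be the $2$-primary extension problem, namely establishing $\Z/8$ rather than a sum of smaller groups. The cleanest route is to quote Dugger--Isaksen's determination of hidden $\mathsf{h}$- and $2$-extensions over $\RR$ directly, with real Betti realization as a cross-check: the image of $\rho^9\sigma$ should be detected in the $C_2$-geometric fixed points, where $\pi_3^s=\Z/24$ has $2$-part $\Z/8$.
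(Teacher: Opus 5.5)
Your first step is the same as the paper's: $\pi_{3-(5)}\sph^\wedge_{\eta,2}(\RR)\cong\Z/8\{\rho^9\sigma\}$ is simply quoted from Dugger--Isaksen. So the $\Z/8$ extension problem you single out as the main obstacle is not where the work lies. (Your $\rho$-Bockstein story is also off: $\rho^k\sigma$ is never killed, since after $\rho$-localization it is detected by $\nu$, as your own real-realization check indicates.)

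\textbf{The gap is in the gluing.} It has to be done with the arithmetic square for the prime $2$, applied to $\sph^\wedge_\eta$. The $\eta$-arithmetic square relating $\sph$, $\sph[\eta^{-1}]$, $\sph^\wedge_\eta$ and $\sph^\wedge_\eta[\eta^{-1}]$ cannot combine $2$-complete with $2$-inverted information. The $2$-primary square gives an exact sequence
\[ \pi_{4-(5)}\sph^\wedge_{\eta,2}[\nicefrac{1}{2}]\to\pi_{3-(5)}\sph^\wedge_\eta\to\pi_{3-(5)}\sph^\wedge_{\eta,2}\oplus\pi_{3-(5)}\sph^\wedge_\eta[\nicefrac{1}{2}]\to\pi_{3-(5)}\sph^\wedge_{\eta,2}[\nicefrac{1}{2}], \]
and the asserted direct sum follows only if both outer groups vanish. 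The right one vanishes because $\Z/8$ is finite. The left one says that $\pi_{4-(5)}\sph^\wedge_{\eta,2}(\RR)$ is $2$-power torsion. That is a statement about the fourth Milnor--Witt stem, which lies outside Dugger--Isaksen's range (stems $0$ to $3$); the paper imports it from Belmont--Isaksen. Your plan contains no such input, so it only yields a surjection onto $\Z/8\oplus\pi_{3-(5)}\sph^\wedge_\eta[\nicefrac{1}{2}]$, whose kernel is the image of the boundary map.

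\textbf{The $2$-inverted part also needs repair.} Splitting $\SH(\RR)[\nicefrac{1}{2}]$ into plus and minus parts is a statement about $\sph[\nicefrac{1}{2}]$. Its minus part contributes $\pi_3^s[\nicefrac{1}{2}]=\Z/3$ to $\pi_{3+(w)}$ in every weight $w$, including $w=-5$, because $\eta$ acts invertibly there. That $\Z/3$ is not in the answer, and your plan neither removes it nor explains its absence. The object in the statement is $\sph^\wedge_\eta$, and $\sph^\wedge_\eta[\nicefrac{1}{2}]$ is not the plus part $\sph[\nicefrac{1}{2}]^+$ either. For example, in $\pi_{0+(0)}$ over $\RR$, the infinite tower $h^{q,q}(\RR)\cong\Z/2\{\rho^q\}$, $q\geq 1$, in the slice spectral sequence assembles into a copy of $\Z_2$ (the completed fundamental ideal), and this survives inverting $2$. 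This is exactly why ``all higher-slice contributions are $2$-primary'' is not enough.

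The paper instead computes $\pi_{3-(5)}\sph^\wedge_\eta[\nicefrac{1}{2}]$ directly from the slice spectral sequence, which converges to $\sph^\wedge_\eta$. In positive slice degrees the relevant column consists of finite $2$-groups, and after the $\dd_3$-differentials of Lemma~\ref{lem:unit-d3pi4s5} only finitely many of them are nonzero. Hence after inverting $2$ only the zeroth slice contributes, giving $H^{2,5}(\RR)[\nicefrac{1}{2}]$. The finiteness is the essential point you leave vague.

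Finally, unique divisibility comes from $H^{1,5}(\RR;\Z/p)=H^{2,5}(\RR;\Z/p)=0$ for odd primes $p$, since odd-primary motivic cohomology of $\RR$ vanishes in positive degrees. This gives both the absence of odd torsion and divisibility by odd primes. Your remark that $H^{2,5}$ with $\QQ$-coefficients is a $\QQ$-vector space gives neither.
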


\begin{proof}
  The main result of \cite{dugger-isaksen.real} computes $\pi_m\sph^{\wedge}_{\eta,2}(\RR)$ for $m\in \{0,1,2,3\}$ using the Adams spectral sequence.
  In particular, $\pi_{3-(5)}\sph^{\wedge}_{\eta,2}(\RR)\iso \Z/8\Z \{\rho^9\sigma\}$.
  The arithmetic square for the prime $2$ gives an exact sequence
  \[ 0=\pi_4\sph^{\wedge}_{\eta,2}[\nicefrac{1}{2}]\to \pi_{3}\sph^{\wedge}_{\eta}\to
    \pi_{3}\sph^{\wedge}_{\eta,2}\oplus \pi_3\sph^{\wedge}_{\eta}[\nicefrac{1}{2}] \to
    \pi_3\sph^{\wedge}_{\eta,2}[\nicefrac{1}{2}]=0 \]
  of Milnor--Witt modules in which the outer terms are zero for $\RR$.
  The reference for this vanishing in degree $3$ is \cite{dugger-isaksen.real}, the reference in degree $4$ is the less explicit \cite{belmont-isaksen}.
  It remains to identify $\pi_{3-(5)}\sph^{\wedge}_{\eta}[\nicefrac{1}{2}]$ for $\RR$.
  The second page of the slice spectral sequence for $\pi_{3-(5)}$ over $\RR$ contains only finite $2$-torsion groups in positive slice degrees, and the group $H^{2,5}(\RR)$ in slice degree $0$.
  After passage to the fourth page, the resulting column is finite by Lemma~\ref{lem:unit-d3pi4s5}.
  Hence $\pi_{3-(5)}\sph^{\wedge}_{\eta}(\RR)$ is an extension of $H^{2,5}$ and a finite $2$-primary torsion group.
  Inverting $2$ thus produces an isomorphism $\pi_{3-(5)}\sph^{\wedge}_{\eta}(\RR)[\nicefrac{1}{2}] \iso H^{2,5}(\RR)[\nicefrac{1}{2}]$ induced by mapping to the zero slice.
  Since motivic cohomology with odd prime coefficients for $\RR$ is zero in positive degree and any weight, this group is uniquely divisible.
\end{proof}

\section{Computations of Milnor--Witt stems for special linear algebraic cobordism}\label{section:6}
In this section, we determine the first four non-trivial Milnor--Witt stems of $\MSL_\Lambda$ via the map $\MSL_\Lambda\to\kq_\Lambda$. 
The result for $\pi_0$ is well-known, and the result for $\pi_1$ was proven in work of the first author \cite{NanMSL}.
In order to employ the computations from Section~\ref{section:5}, convergence properties of the slice spectral sequence for $\MSL$ have to be discussed. 
In order to do so, recall a few definitions from \cite[\S 3.1]{RSO19}, stated for an arbitrary motivic spectrum $\EE\in\SH(F)$, but eventually applied to the cases $\EE\in\{\MSL,\kq\}$.
\begin{definition}
    The \textit{slice completion of a motivic spectrum $\EE$} is the motivic spectrum $\mathsf{sc}(\EE)\in\SH(F)$, defined as the (homotopy) limit
        \[ \lim_{q\to\infty}\mathrm{cofib}(\effcover_{q}(\EE)\to\EE). \]
    By construction, the slice spectral sequence converges conditionally to the homotopy groups of $\mathsf{sc}(\EE)$. 
\end{definition}
\begin{definition}
    The \textit{$\eta$-completion of a motivic spectrum $\EE$} is the motivic spectrum $\EE^\wedge_\eta\in\SH(F)$, defined as the (homotopy) limit of the inverse system
        \[ \dots\to \EE/\eta^{3}\to \EE/\eta^2\to\EE/\eta. \]
\end{definition}
\begin{remark}\label{remark:eta-arithm_square}
    The spectrum $\EE^\wedge_\eta$ constitutes one corner of the (homotopy) pullback square 
    \begin{center}
    \begin{tikzcd}
        \EE \ar[r] \ar[d] & \EE[\eta^{-1}] \ar[d] \\ \EE^\wedge_\eta \ar[r] & \EE^{\wedge}_\eta[\eta^{-1}],
    \end{tikzcd}
    \end{center}
    which is called the \textit{$\eta$-arithmetic square of $\EE$} (see \cite[Lemma 3.9]{RSO19}). 
    It induces the long exact sequence
    \begin{equation}\label{equation:les_eta_arithm}
        \dots\to \pi_{n+1}\EE^\wedge_\eta[\eta^{-1}]\to\pi_n\EE\to\pi_n\EE^\wedge_\eta\oplus\pi_n\EE[\eta^{-1}]\to \pi_n\EE^\wedge_\eta[\eta^{-1}]\to \cdots. 
    \end{equation}
\end{remark}
\begin{lemma}\label{lemma:convergence}
    There is a canonical equivalence $\mathsf{sc}(\MSL)\cong\MSL^\wedge_\eta$. 
    In other words, the slice spectral sequence for $\MSL$ converges conditionally to the homotopy groups of $\MSL^\wedge_\eta$.
\end{lemma}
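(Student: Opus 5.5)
The plan is to follow the strategy used for the sphere spectrum in \cite[\S 3]{RSO19}: reduce the statement to a property of $\MSL/\eta$, which is $\MWL$ by Theorem~\ref{MWL_cofib}(1), and then use the slices of $\MWL$. First observe that $\mathsf{sc}$ is an exact functor, being a limit of cofibers of exact functors. Next, if $\EE$ is slice complete, then so is $\Sigma^{(1)}\EE$, and so is any cofiber of a map between slice complete spectra. Since $\Sigma^{(1)}$ shifts the slice filtration, $\effcover_q\Sigma^{(1)}=\Sigma^{(1)}\effcover_{q-1}$.

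The key input is that $\MWL$ is slice complete, i.e. $\lim_q\effcover_q(\MWL)\simeq 0$. I would prove this using the cofiber sequence $\MWL\to\MGL\xrightarrow{\Delta}\Sigma^{2+(2)}\MGL$ of Theorem~\ref{MWL_cofib}(2). Since $\MGL$ is slice complete (its $q$-th effective cover is $q$-connective in the homotopy $t$-structure, using the Hopkins--Morel--Hoyois description of $\effcover_q\MGL$ as a colimit of $\Sigma^{q+(q)}$-shifts of $\MGL$-modules; cf.\ \cite{SpiMGL1}), the exactness of $\effcover_q$ and of $\lim$ gives $\lim_q\effcover_q\MWL=0$. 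More concretely, one can check connectivity: each $\effcover_q\MWL$ is $q$-connective in the homotopy $t$-structure, because $\effcover_q$ of a connective spectrum whose slices $\Sigma^{q+(q)}\EM\Lambda\otimes\W_{2q}$ are $q$-connective is again $q$-connective. The limit of an inverse system whose connectivity tends to infinity vanishes by the Milnor sequence. This connectivity claim is the main obstacle. I would try to deduce it for $\MWL$ from that of $\MGL$ via the fiber sequence, where the degree shift $\Sigma^{2+(2)}$ costs only a bounded amount of connectivity.

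Given this, the tower $\MSL/\eta^n$ is handled by induction: $\MSL/\eta^n$ is an iterated extension of $\Sigma^{(k)}\MWL$ for $0\le k<n$, so each $\MSL/\eta^n$ is slice complete. Then $\MSL^\wedge_\eta=\lim_n\MSL/\eta^n$ is slice complete, because limits commute with limits and $\mathsf{sc}$ is built from limits. It remains to show $\mathsf{sc}(\MSL)\simeq\mathsf{sc}(\MSL^\wedge_\eta)$, i.e. that $\mathsf{sc}$ kills $\lim_n\Sigma^{(n)}\MSL$ along $\eta$, the fiber of $\MSL\to\MSL^\wedge_\eta$ (up to shift). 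Since $\Sigma^{(n)}\MSL=\effcover_n\Sigma^{(n)}\MSL$, the map $\Sigma^{(n)}\MSL\to\MSL$ factors through $\effcover_n\MSL$. Hence, for fixed $q$, the pro-objects $\{\mathrm{cofib}(\effcover_q\Sigma^{(n)}\MSL\to\Sigma^{(n)}\MSL)\}_n$ vanish as soon as $n\ge q$, because $\Sigma^{(n)}\MSL$ is then $q$-effective. Consequently $\mathsf{sc}(\lim_n\Sigma^{(n)}\MSL)$ is computed as a double limit that is pro-zero, and therefore vanishes. The canonical map $\mathsf{sc}(\MSL)\to\mathsf{sc}(\MSL^\wedge_\eta)\simeq\MSL^\wedge_\eta$ is then an equivalence, and conditional convergence follows from the definition of $\mathsf{sc}$.
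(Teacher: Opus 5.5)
Your treatment of $\MWL$ is sound and is the input the paper uses: slice completeness of $\MWL$ follows from cofiber sequence (2) of Theorem~\ref{MWL_cofib}, exactness of $\effcover_q$, and slice completeness of $\MGL$. The connectivity digression is not needed. The paper then passes from $\MSL/\eta$ to $\MSL$ by citing \cite[Lemma 3.13]{RSO19}.

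\textbf{The gap.} Your own derivation of that passage does not go through. Two steps need $\effcover_q(\lim_n X_n)\simeq\lim_n\effcover_q X_n$:
\begin{itemize}
\item the claim that $\MSL^\wedge_\eta=\lim_n\MSL/\eta^n$ is slice complete (``limits commute with limits and $\mathsf{sc}$ is built from limits'');
\item the claim that $\mathsf{sc}(\lim_n\Sigma^{(n)}\MSL)$ is a pro-zero double limit.
\end{itemize}
This interchange is not available. We have $\effcover_q=i_q\circ r_q$. The right adjoint $r_q$ preserves limits, but $i_q$ is a left adjoint. The subcategory $\Sigma^{q+(q)}\SH^{\mathrm{eff}}(F)$ is closed under colimits; this is why slices commute with colimits, \cite[Corollary 4.5]{SpiMGL1}. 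It is not known to be closed under countable products or sequential limits. So $\mathsf{sc}$ is not built from limits alone. In particular you cannot conclude that $\lim_n\Sigma^{(n)}\MSL$ is $q$-effective. Nor can you identify $\mathrm{cofib}(\effcover_q\MSL^\wedge_\eta\to\MSL^\wedge_\eta)$ with $\lim_n\mathrm{cofib}(\effcover_q(\MSL/\eta^n)\to\MSL/\eta^n)$.

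\textbf{The missing idea.} The cited lemma's argument never commutes $\effcover_q$ with a limit; it uses effectivity of $\MSL$ instead.
\begin{itemize}
\item Let $L_q:=\mathrm{cofib}(\effcover_q\MSL\to\MSL)$. It is effective, and it is right orthogonal to $q$-effective spectra.
\item $\Sigma^{(q)}L_q$ is $q$-effective, so $\eta^q\colon\Sigma^{(q)}L_q\to L_q$ is zero.
\item Hence each $L_q$ is $\eta$-complete, and so is $\mathsf{sc}(\MSL)=\lim_q L_q$.
\item Therefore $\MSL\to\mathsf{sc}(\MSL)$ factors through a map $\MSL^\wedge_\eta\to\mathsf{sc}(\MSL)$ between $\eta$-complete spectra.
\item Such a map is an equivalence as soon as it is one modulo $\eta$.
\item Using $\MSL^\wedge_\eta/\eta\simeq\MSL/\eta$, exactness of $\mathsf{sc}$, and $\mathsf{sc}\circ\Sigma^{(1)}\simeq\Sigma^{(1)}\circ\mathsf{sc}$ (which you already noted), the reduction modulo $\eta$ is the map $\MWL\to\mathsf{sc}(\MWL)$.
\item That map is an equivalence by slice completeness of $\MWL$.
\end{itemize}
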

\begin{proof}
    By \cite[Lemma 3.13]{RSO19} it is enough to prove that $\MSL/\eta\cong\MWL$ is $\eta$-complete. 
    This follows from the cofiber sequence (2) of Theorem \ref{MWL_cofib} since $\MGL$ is $\eta$-complete.
\end{proof}
\begin{remark}
    A similar convergence property holds for $\kq$. 
    In other words, the slice spectral sequence for $\kq$ converges conditionally to the homotopy groups of $\kq^\wedge_\eta$, see \cite[Theorem 4.1]{very_eff_KQ}.
\end{remark}
\begin{lemma}\label{lemma:vanishing_etacompl_etainv}
    The vanishing $\pi_n(\MSL_\Lambda)^\wedge_\eta[\eta^{-1}]=0$ holds for $1\leq n\leq 3$. The same is true for $\kq_\Lambda$.
\end{lemma}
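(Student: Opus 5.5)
We aim to show that $\pi_n(\MSL_\Lambda)^\wedge_\eta[\eta^{-1}]$ vanishes for $1\leq n\leq 3$, and likewise for $\kq_\Lambda$. The plan is to compute these groups through the slice spectral sequence of the $\eta$-inverted completed spectrum, using that slices commute with $\eta$-inversion. By Lemma~\ref{lemma:convergence}, $(\MSL_\Lambda)^\wedge_\eta$ is the slice completion of $\MSL_\Lambda$, so it has the same slices. Since $\eta$-inversion is a sequential colimit along $\Sigma^{(-1)}\eta$, and slices commute with colimits \cite[Corollary 4.5]{SpiMGL1}, we expect
\[
\slice_q\bigl((\MSL_\Lambda)^\wedge_\eta[\eta^{-1}]\bigr)\cong\slice_q(\MSL_\Lambda[\eta^{-1}]).
\]
Theorem~\ref{slice_MSL_eta_inv} then identifies this with $\bigoplus_{p\geq 0}\Sigma^{p+(q)}\EM\Lambda\otimes\hmlg_{2p}(\W)$. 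The analogous statement for $\kq$ yields the slices of $\mathbf{kw}$.

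First, we use Theorem~\ref{thm_conner_floyd}. It gives $\hmlg_{2p}(\W)\neq 0$ only for $p\equiv 0,1 \pmod 4$. In those cases the summand is $\Sigma^{p+(q)}\EM\Lambda/2^{\oplus}$, with $\hmlg_0=\Z/2$ (coming from $\W_0=\Z$ modulo $2$, or rather $\Z$ cycles modulo $\delta(x_1)=-2$), $\hmlg_2=\Z/2$, $\hmlg_4=0$, $\hmlg_6=0$, $\hmlg_8=(\Z/2)^{2}$, and so on. Hence in the stems $\pi_n$ with $n=1,2,3$, the $E^1$-page receives contributions only from $p=0$ and $p=1$:
\[
\pi_{n+(w)}\Sigma^{(q)}\EM\Lambda/2=h^{q-n,q-w}
\quad\text{and}\quad
\pi_{n+(w)}\Sigma^{1+(q)}\EM\Lambda/2=h^{q+1-n,q-w}.
\]
The $p\geq 4$ summands land in negative stems relative to these and can be ignored. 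This mirrors the known picture for $\mathbf{kw}$, whose homotopy is $\W(F)[\beta^{\pm}]$ concentrated in stems $\equiv 0 \pmod 4$.

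Second, we determine the $\dd_1$-differentials between these summands. On the $\eta$-periodic spectrum the differential $\Sigma^{(q)}\EM/2\oplus\Sigma^{2+(q)}\EM/2\to\Sigma^{1+(q+1)}\EM/2\oplus\Sigma^{3+(q+1)}\EM/2$ is given, as in Theorem~\ref{thm:d1_msl}, by the matrix with entries $\Sq^2$, $\tau$, $\Sq^3\Sq^1$ and $\Sq^2+\rho\Sq^1$. Here we use that $\hmlg_{2p}(f)$ is an isomorphism for $p\le 1$, so that this part agrees with $\mathbf{kw}$ via the comparison preceding the statement. We then appeal to the known computation for $\mathbf{kw}$ (\cite{very_eff_KQ}, \cite{RSO19}, \cite{RSO24}), namely that $\pi_n\mathbf{kw}^\wedge_\eta$ vanishes for $n=1,2,3$. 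Concretely, multiplication by $\tau$ is injective and $\Sq^2+\rho\Sq^1$ acts so that the complex $h^{*,*}\xrightarrow{\tau}h^{*,*+1}$ is exact in the relevant degrees, since $h^{s,w}\cong\KMiltwo_s$ for $w\ge s$. This kills the $E^2$-page in stems $1,2,3$.

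We expect the main obstacle to be the identification of the $\eta$-inverted $E^2$-page, and in particular checking that the $\tau$-entry, together with the $\rho$-twisted $\Sq^2$, produces exactness rather than merely bounded torsion. To handle this we would first reduce to $\mathbf{kw}$ via the comparison map, which is an isomorphism on the relevant slice summands because $\hmlg_0(f)$ and $\hmlg_2(f)$ are isomorphisms. We would then cite the vanishing for $\kq^\wedge_\eta[\eta^{-1}]\simeq\mathbf{kw}^\wedge_\eta$ in these stems. Convergence is conditional, and strong convergence follows because the $E^2$-terms vanish in the relevant range, so the entire column is zero. This gives the claim for $\kq_\Lambda$, and the claim for $\MSL_\Lambda$ follows by the isomorphism of $E^2$-pages in these stems.
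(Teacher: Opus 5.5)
There is a genuine gap, and it is in the convergence step. You run the slice spectral sequence of the $\eta$-periodic spectrum $X=(\MSL_\Lambda)^\wedge_\eta[\eta^{-1}]$ and conclude $\pi_nX=0$ because its $E^2$-page vanishes.

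That spectral sequence converges conditionally to $\pi_*\mathsf{sc}(X)$, not to $\pi_*X$. Lemma~\ref{lemma:convergence} only identifies $\mathsf{sc}(\MSL)$ with $\MSL^\wedge_\eta$; it says nothing about $\mathsf{sc}(X)$. The spectrum $X$ is not effective, its slice tower is unbounded in both directions, and nothing in your argument controls the fiber $\lim_q\effcover_qX$ of $X\to\mathsf{sc}(X)$.

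For $\eta$-periodic spectra this is a real issue. For $\mathbf{KW}=\mathbf{KQ}[\eta^{-1}]$, the slice filtration on $\pi_{0+(0)}\mathbf{KW}=\Witt(F)$ is the $\mathbf{I}(F)$-adic filtration, so the slice spectral sequence only recovers $\Witt(F)^\wedge_{\mathbf{I}(F)}$. For your $X$, stem $4$ carries Witt-type groups of exactly this kind (compare Lemma~\ref{lem:coker_W/I^}). So a discrepancy between $X$ and $\mathsf{sc}(X)$ could surface precisely in $\pi_3$. Your remark about strong convergence therefore only yields $\pi_n\mathsf{sc}(X)=0$.

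Several further steps are also problematic:
\begin{enumerate}
\item Identifying the slices of $X$ via ``the slice completion has the same slices'' needs an argument, since slices do not commute with limits.
\item The $\kq$ half is not proved. $\mathbf{kw}=\kq[\eta^{-1}]$ is $\eta$-periodic, so $\mathbf{kw}^\wedge_\eta=0$ and $\kq^\wedge_\eta[\eta^{-1}]\not\simeq\mathbf{kw}^\wedge_\eta$.
\item The bookkeeping is off. By Theorem~\ref{thm_conner_floyd}, $\hmlg_2(\W)=0$ and $\hmlg_4(\W)\cong\Z/2$. So the periodic slices are $\Sigma^{(q)}\EM\Lambda/2\oplus\Sigma^{2+(q)}\EM\Lambda/2\oplus\Sigma^{4+(q)}\EM\Lambda/2\oplus\cdots$. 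A summand $\Sigma^{2k+(q)}$ contributes to stems $\geq 2k$, not to negative stems; in particular $\Sigma^{4+(q)}$ is a source of $\dd_1$ into stem $3$.
\end{enumerate}

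The remedy, which is the paper's argument, is to avoid the spectral sequence of the periodic spectrum altogether. Homotopy commutes with sequential colimits, so $\pi_{n+(w)}X\cong\colim_k\pi_{n+(w+k)}(\MSL_\Lambda)^\wedge_\eta$, with the colimit taken along multiplication by $\eta$.
\begin{enumerate}
\item The slice spectral sequence of $\MSL_\Lambda$ itself converges conditionally to $(\MSL_\Lambda)^\wedge_\eta$ by Lemma~\ref{lemma:convergence}.
\item By Lemma~\ref{lemma:E^2_for_m<3} and Proposition~\ref{prop:E^2_for_m=3}, its $E^2$-page in stems $1\leq n\leq 3$ is concentrated in slice degrees $0\leq q\leq 3$. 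Hence convergence is strong there, and the induced filtration on $\pi_{n+(w)}(\MSL_\Lambda)^\wedge_\eta$ has length at most $4$.
\item Since $\eta$ maps $\Sigma^{(1)}\effcover_q$ into $\effcover_{q+1}$, it raises this filtration by one. So $\eta^4$ annihilates $\pi_n(\MSL_\Lambda)^\wedge_\eta$, and the colimit is zero.
\end{enumerate}
The same argument, using the slice spectral sequence of $\kq_\Lambda$ (which converges to $(\kq_\Lambda)^\wedge_\eta$), handles $\kq_\Lambda$. Your vanishing of the periodic $E^2$-page in these stems is essentially the colimit of these $E^2$-pages, so the input is right. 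The conclusion just has to be drawn on the non-periodic side, where convergence is known.
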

\begin{proof}
    This follows directly from Lemma~\ref{lemma:E^2_for_m<3},  Proposition~\ref{prop:E^2_for_m=3}, and Lemma~\ref{lemma:convergence}, since the groups on the $E^2$-page that contribute to $\pi_n$ are trivial in slice degrees $\geq 4$.
\end{proof}
\begin{lemma}\label{lemma:pi_1,2_etacompl}
    The natural map $\MSL_\Lambda\to(\MSL_\Lambda)^\wedge_\eta$ induces isomorphisms
    \[ \pi_{n}\MSL_\Lambda\xrightarrow{\simeq}\pi_n(\MSL_\Lambda)^\wedge_\eta\ \text{for}\ n=1,2. \]
    The same holds for $\kq_\Lambda$.
\end{lemma}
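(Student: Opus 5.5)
The plan is to use the long exact sequence \eqref{equation:les_eta_arithm} of the $\eta$-arithmetic square from Remark~\ref{remark:eta-arithm_square}, with $\EE=\MSL_\Lambda$ (and then $\EE=\kq_\Lambda$). For $n=1,2$ the relevant portion reads
\[ \pi_{n+1}(\MSL_\Lambda)^\wedge_\eta[\eta^{-1}]\to\pi_n\MSL_\Lambda\to\pi_n(\MSL_\Lambda)^\wedge_\eta\oplus\pi_n\MSL_\Lambda[\eta^{-1}]\to\pi_n(\MSL_\Lambda)^\wedge_\eta[\eta^{-1}]. \]
By Lemma~\ref{lemma:vanishing_etacompl_etainv}, both outer terms vanish for $n\in\{1,2\}$, since then $n$ and $n+1$ lie in $\{1,2,3\}$. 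Hence $\pi_n\MSL_\Lambda\cong\pi_n(\MSL_\Lambda)^\wedge_\eta\oplus\pi_n\MSL_\Lambda[\eta^{-1}]$. It therefore remains to show that $\pi_n\MSL_\Lambda[\eta^{-1}]=0$ for $n=1,2$. Then the component $\pi_n\MSL_\Lambda\to\pi_n(\MSL_\Lambda)^\wedge_\eta$, which is the map induced by the natural map, is an isomorphism.

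For the vanishing of $\pi_n\MSL_\Lambda[\eta^{-1}]$ I would use Theorem~\ref{slice_MSL_eta_inv}. All slices are shifts $\Sigma^{p+(q)}\EM\Lambda\otimes\hmlg_{2p}(\W)$, and by Theorem~\ref{thm_conner_floyd} the groups $\hmlg_{2p}(\W)$ are nonzero only for $p$ even, i.e. $p\equiv 0\pmod 4$ in real index. So the $E^1$-page contributing to $\pi_n$ consists of groups $h^{q+p-n,q+\star}$-type terms with $p\in\{0,4,8,\dots\}$ (up to the mod $2$ coefficients). Alternatively, and more robustly, I would compare with topology: $\eta$-inverted $\MSL$ is known (Bachmann--Hopkins \cite{BH_eta}) to have $\pi_*\MSL[\eta^{-1}]\cong \W(F)[y_4,y_8,\dots]$ concentrated in Milnor--Witt stems $\equiv 0 \bmod 4$, and $\Lambda$-localization preserves this. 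In particular $\pi_1$ and $\pi_2$ vanish.

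For $\kq_\Lambda$ the argument is identical. The sequence \eqref{equation:les_eta_arithm} with Lemma~\ref{lemma:vanishing_etacompl_etainv} for $\kq_\Lambda$ reduces the claim to $\pi_n\kq_\Lambda[\eta^{-1}]=\pi_n\mathbf{kw}_\Lambda=0$ for $n=1,2$. This holds because $\pi_*\mathbf{kw}\cong \W(F)[\beta^{\pm}]$-connective, i.e. $\W(F)[\beta]$ with $\beta$ in stem $4$.

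The main obstacle is the vanishing of the $\eta$-periodic stems. If one wants to avoid citing \cite{BH_eta}, one must run the slice spectral sequence for $\MSL_\Lambda[\eta^{-1}]$ from Theorem~\ref{slice_MSL_eta_inv}, where the $E^1$ terms in stems $1,2$ are not a priori zero (they are mod $2$ cohomology groups with nontrivial $\Sq^2$-type differentials as for $\mathbf{kw}$). One would also need convergence. I would therefore cite the Bachmann--Hopkins computation, or equivalently the Witt-theory comparison, as the first option.
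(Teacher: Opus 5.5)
Your argument is correct and is essentially the paper's proof: the long exact sequence \eqref{equation:les_eta_arithm} together with Lemma~\ref{lemma:vanishing_etacompl_etainv} gives $\pi_n\MSL_\Lambda\cong\pi_n(\MSL_\Lambda)^\wedge_\eta\oplus\pi_n\MSL_\Lambda[\eta^{-1}]$ for $n\in\{1,2\}$, and the second summand vanishes by the Bachmann--Hopkins computation (resp.\ by the known homotopy of $\mathbf{kw}_\Lambda=\kq_\Lambda[\eta^{-1}]$ for $\kq_\Lambda$). The slice-based alternative you discard has a slip: by Theorem~\ref{thm_conner_floyd}, $\hmlg_{2p}(\W)$ is nonzero for every even $p$, not only for $p\equiv 0 \bmod 4$. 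This does not affect your proof, because you rely on Bachmann--Hopkins instead.
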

\begin{proof}
    Combining the vanishing stated in Lemma \ref{lemma:vanishing_etacompl_etainv} with the long exact sequence \eqref{equation:les_eta_arithm} gives an isomorphism
    \[ \pi_n\MSL_\Lambda\cong \pi_n(\MSL_\Lambda)^\wedge_\eta\oplus\pi_n\MSL_\Lambda[\eta^{-1}]\]
    for $n\in \{1,2\}$, and similarly for $\kq_\Lambda$.
    The $n$-th homotopy group of $\MSL_\Lambda[\eta^{-1}]$ vanishes by the Bachmann--Hopkins result \cite[Corollary 1.3(3)]{BH_eta}. 
    The corresponding vanishing for $\kq_\Lambda[\eta^{-1}]=\mathbf{kw}_\Lambda$ is well-known, see e.g., \cite[6.3.2]{BH_eta}.
\end{proof}
\begin{theorem}\label{thm:pi_2MSL}
    Let $F$ be a field of characteristic $\neq 2$ and $\Lambda=\Z[\nicefrac{1}{e}]$, where $e$ is the exponential characteristic of $F$. 
    The morphism $\MSL\to\kq$ induces isomorphisms
    \[ \pi_n\MSL_\Lambda\xrightarrow{\simeq}\pi_n\kq_\Lambda\ \text{for}\ n\leq 2. \]
\end{theorem}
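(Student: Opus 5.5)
The plan is to split into the cases $n\leq 0$ and $n\in\{1,2\}$, and to treat the latter through the $\eta$-completion.

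For $n\in\{1,2\}$, Lemma~\ref{lemma:pi_1,2_etacompl} identifies $\pi_n\MSL_\Lambda$ with $\pi_n(\MSL_\Lambda)^\wedge_\eta$, and likewise for $\kq_\Lambda$. These identifications are compatible with the map $\MSL\to\kq$ by naturality of the $\eta$-arithmetic square. It therefore suffices to show that $(\MSL_\Lambda)^\wedge_\eta\to(\kq_\Lambda)^\wedge_\eta$ induces an isomorphism on $\pi_n$.

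By Lemma~\ref{lemma:convergence} and the remark following it, both slice spectral sequences converge conditionally to the homotopy of the $\eta$-completions. The map $\MSL\to\kq$ induces a morphism of spectral sequences. By Lemma~\ref{lemma:E^2_for_m<3} it is an isomorphism on the $E^2$-terms in the columns $m\leq 2$. By Lemma~\ref{lemma:stab_m<3} these columns already equal their $E^\infty$-terms, and the corresponding statement for $\kq$ follows from the vanishing of its higher differentials \cite[\S 2]{RSO24}. One also has to control the $(m+1)$-st column, since differentials leaving it hit column $m$. For $m=2$ this is column $3$, which is handled by Lemma~\ref{lem:stabilization_m=3}. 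In any case, the $E^\infty$-terms in columns $m\leq 2$ are identified.

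To pass from $E^\infty$ to the abutment, I need strong convergence, and I expect this to be the main obstacle. In each fixed column $m\leq 3$ and weight, the $E^2$-page is nonzero only in finitely many slice degrees up to $q\geq 4$, with only $h^{n+q,n+q}$ surviving there. So I would argue that the $\lim^1$-terms vanish because the relevant towers stabilize: for $m\leq 2$ all $E^r$-terms are constant from $r=2$ on, and only finitely many differentials can enter. Boardman's theorem then gives strong convergence. The maps of filtration quotients are isomorphisms, so the five lemma applied to the finite filtrations yields the isomorphism on $\pi_n$ of the $\eta$-completions. Alternatively, I would reduce the high filtrations to the $\eta$-periodic part, which the vanishing in Lemma~\ref{lemma:vanishing_etacompl_etainv} controls.

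For $n\leq 0$, the arithmetic square no longer kills the $\eta$-inverted terms. Here I would use the long exact sequence \eqref{equation:les_eta_arithm} for both spectra and compare. For $\pi_n[\eta^{-1}]$ with $n\leq 0$, the Bachmann--Hopkins computation gives $\pi_*\MSL[\eta^{-1}]\cong W(F)[y_4,\dots]$ in nonnegative degrees. In degrees $\leq 0$ only $W(F)$ survives, and it agrees with $\pi_*\mathbf{kw}$. The completed terms agree by the spectral-sequence comparison above. The $\eta$-inverted completions agree by Theorem~\ref{slice_MSL_eta_inv} together with the comparison with $\mathbf{kw}$ in low $p$, since $\hmlg_0(\W)=\Z/2=\hmlg_0(\Z[\beta])$ and $\hmlg_2=0$. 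The five lemma then finishes the argument.

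For $n<0$ everything vanishes by connectivity of these very effective spectra. For $n=0$, the statement is the known identification with $\GWitt$, which follows from $\slice_0$ and the five lemma as above.
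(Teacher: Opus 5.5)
For $n\in\{1,2\}$ your argument is the paper's. It combines exactly Lemmas \ref{lemma:E^2_for_m<3}, \ref{lemma:stab_m<3}, \ref{lemma:convergence} and \ref{lemma:pi_1,2_etacompl}. Your Boardman remark makes explicit the strong convergence that the paper leaves implicit: columns $1,2$ are nonzero only for $q\leq 3$, and all $E^r$ there are constant from $r=2$ on, so $RE^\infty=0$. One small correction: the vanishing of differentials leaving column $3$ is Lemma \ref{lem:higherdiffMSL}, which is already built into Lemma \ref{lemma:stab_m<3}; it is not Lemma \ref{lem:stabilization_m=3}. The case $n<0$ is connectivity, as you say.

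For $n=0$ the routes differ. The paper simply notes that both unit maps $\sph\to\MSL$ and $\sph\to\kq$ are isomorphisms on $\pi_0$ \cite[Example 16.35]{BH-Norms}, \cite[Theorem 4.2]{very_eff_KQ}. Hence the unital ring map $\MSL\to\kq$ is one as well.

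Your $\eta$-arithmetic-square argument also works, but three steps need repair:
\begin{enumerate}
\item Column $0$ is not finite. It contains $h^{n+q,n+q}$ for every $q\geq 4$; for instance $\rho^{n+q}\neq 0$ over $\RR$. So you need the complete Hausdorff filtration supplied by strong convergence plus a limit argument, not the five lemma on a finite filtration.
\item Theorem \ref{slice_MSL_eta_inv} computes slices of $\MSL_\Lambda[\eta^{-1}]$, not of $(\MSL_\Lambda)^\wedge_\eta[\eta^{-1}]$. Nothing guarantees convergence of the slice spectral sequence for the latter. Instead, obtain $\pi_0(-)^\wedge_\eta[\eta^{-1}]$ as the colimit along $\eta$ of the weightwise column-$0$ isomorphisms for the completions.
\item The five lemma should start from $\pi_1(-)^\wedge_\eta[\eta^{-1}]=0$, which is Lemma \ref{lemma:vanishing_etacompl_etainv}. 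It also uses $\pi_0\MSL_\Lambda[\eta^{-1}]\cong\Witt(F)\otimes\Lambda\cong\pi_0\mathbf{kw}_\Lambda$ via a unital map.
\end{enumerate}

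Your aside that $\pi_0$ ``follows from $\slice_0$'' is not right by itself. The zeroth slice does not determine $\pi_0$: compare $\slice_0\sph\simeq\EM\Z$ with $\pi_0\sph\cong\KMW_{-\star}(F)$.

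Your route buys uniformity with the case $n\in\{1,2\}$, staying inside the slice machinery plus Bachmann--Hopkins. The paper's buys brevity by outsourcing $\pi_0$ to known computations.
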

\begin{proof}
    The statement for $\pi_0$ follows from comparison with the motivic sphere spectrum $\sph$, see \cite[Example 16.35]{BH-Norms} and \cite[Theorem 4.2]{very_eff_KQ}. 
    For $n\in \{1,2\}$, this is a consequence of Lemmas \ref{lemma:E^2_for_m<3}, \ref{lemma:stab_m<3}, \ref{lemma:convergence}, and \ref{lemma:pi_1,2_etacompl}.
\end{proof}
Now we turn to the computation of $\pi_3\MSL_\Lambda$. 
Our first goal is to prove that the boundary homomorphism 
\[\pi_4(\MSL_\Lambda)^{\wedge}_{\eta}[\eta^{-1}]\to \pi_3\MSL_{\Lambda}\]
in the long exact sequence \eqref{equation:les_eta_arithm} for $\MSL_\Lambda$ is zero. 
The argument is similar to the one given in \cite[\S 5]{RSO24} for the motivic sphere spectrum $\sph_\Lambda$ and $n=2$. 
\begin{lemma}\label{lemma:surj1}
    Multiplication with the Hopf element $\eta$ induces a surjection 
    \[\pi_{4+(n)}(\MSL_\Lambda)^\wedge_\eta\to \pi_{4+(n+1)}(\MSL_\Lambda)^\wedge_\eta\]
    for $n\geq 4$, which moreover is an isomorphism for $n>4$.
\end{lemma}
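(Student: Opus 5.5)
We compute both sides via the slice spectral sequence, which by Lemma~\ref{lemma:convergence} converges conditionally to $\pi_*(\MSL_\Lambda)^\wedge_\eta$, and compare the fourth columns in weights $n$ and $n+1$. Multiplication by $\eta$ is a map of spectra $\Sigma^{(1)}\MSL_\Lambda\to\MSL_\Lambda$. It therefore induces a map of slice spectral sequences, which on $E^1$ is given by $\slice_q(\eta)\colon\Sigma^{(1)}\slice_{q-1}\to\slice_q$. This map is described explicitly in Lemma~\ref{lemma_eta_action}. In the Milnor--Witt degree $4+(n)$, i.e. in the column $E_{4+n,q,n}$ with $n\geq 4$, the integral contributions $\pi_{4+(n)}\Sigma^{q+(q)}\EM\Lambda\otimes\cyc_{2q}(\W)=H^{q-4,q-n}\otimes\cyc_{2q}(\W)$ vanish whenever $q-4>q-n$, i.e. for $n>4$. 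For $n=4$ they reduce to $H^{q-4,q-4}$, a negative-degree group, which is zero unless $q=4$.

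The first step is to list the nonzero $E^1$-terms in this range. Only summands $\Sigma^{q-p+(q)}\EM\Lambda/2$ with $p\geq 1$ survive. They contribute $h^{q-p-4,q-n}$, which is nonzero only if $0\leq q-p-4\leq q-n$, i.e. $p\geq n-4$ and $q\geq p+4$. For large $n$, weight considerations show that $h^{a,b}$ with $a\leq b$ and $b-a=n-4-p\ldots$ behaves like a Milnor $K$-group times $\tau^{b-a}$.

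The second step is to observe that, by Lemma~\ref{lemma_eta_action}(2), $\slice(\eta)$ is the identity on these torsion summands $N_q$, shifting $(q-1)\mapsto q$. It also maps the cycle summand onto the homology summand via $\mathrm{pr}$. Hence on $E^1$, $\eta$ is an isomorphism between the torsion parts of the two columns, and it is surjective once the integral summand vanishes. The only integral contribution is at $n=4$, namely $\cyc_8(\W)\otimes H^{0,0}$ in slice $4$, which maps onto $\hmlg_8(\W)\otimes h^{0,0}$ in the next weight. This explains why we only get surjectivity at $n=4$. The maps on $E^1$ commute with all differentials, so they induce maps on $E^r$ pages.

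The third step passes to $E^\infty$ and to the abutment. This is the main obstacle. The torsion part of the slice spectral sequence for $\MSL$ in these degrees agrees with that of $\MSL[\eta^{-1}]$ shifted (Theorem~\ref{slice_MSL_eta_inv}), and the identification of $E^1$ makes the differentials commute with the $\eta$-isomorphism. I would argue that for $n>4$ the map of spectral sequences is an isomorphism on $E^1$ in the relevant source and target columns (degrees $3,4,5$). By the five lemma on pages, and using that $\eta$ commutes with all $\dd_r$, it is then an isomorphism on $E^\infty$. For $n=4$, surjectivity on $E^1$ for the column $4$ together with isomorphisms on the neighbouring columns $3,5$ gives surjectivity on $E^\infty$; here I would check the column-$3$ comparison via the $E^2$-computations of Proposition~\ref{prop:E^2_for_m=3}. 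Finally, to deduce the statement on $\pi_4(\MSL_\Lambda)^\wedge_\eta$ we need strong convergence in this column. I would get it from the vanishing of $E^1$ entries in high slice degree. For fixed $(4,n)$ only finitely many $p$ are relevant up to the $\tau$-periodic tail, and I would use Lemma~\ref{lem:unit-d3pi4s5}-style $\rho^3$-differentials (as in \cite[\S 5]{RSO24}), imported via the unit $\sph\to\MSL$, to show the columns are eventually zero on $E^4$. A filtration-wise comparison of the finite filtrations then yields surjectivity, respectively bijectivity.
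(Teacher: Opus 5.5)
Your overall strategy is the paper's. You compare the fourth columns via the map of slice spectral sequences induced by $\eta$, read off the comparison on early pages from Lemma~\ref{lemma_eta_action}, push it through the pages with the five lemma, and pass to the abutment. The step that fails is the last one: the fourth column of $\MSL_\Lambda$ is not eventually zero. For $n\geq 5$ its $E^2$-entries are $h^{0,0},h^{1,1},h^{2,2},\dots$ in slice degrees $n,n+1,n+2,\dots$ (Lemma~\ref{lemma:E2_4>4}); over $\RR$ all of them are nonzero. They are permanent cycles, because the third column vanishes above slice degree $3$ on $E^2$ (Proposition~\ref{prop:E^2_for_m=3}), and they form the associated graded of the $\mathbf{I}(F)$-adic filtration of $\Witt(F)$, as used in Lemma~\ref{lem:coker_W/I^}. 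The $\rho^3$-differentials of Lemma~\ref{lem:unit-d3pi4s5} cannot remove them. Those differentials run from the fourth into the third column of $\sph_\Lambda$, and under the unit their targets map to the third column of $\MSL_\Lambda$ in slice degree $8+m$, which is zero, so they induce nothing for $\MSL_\Lambda$. Hence no reduction to finite filtrations is available, and the conclusion must be drawn from the infinite slice filtration on $\pi_{4+(n)}(\MSL_\Lambda)^\wedge_\eta$. This filtration is complete by the conditional convergence of Lemma~\ref{lemma:convergence} (Milnor sequence). Once it is also Hausdorff with graded pieces the $E^\infty$-column (strong convergence in this stem), successive approximation shows that a map raising filtration by one which is surjective, resp.\ bijective, on graded pieces is surjective, resp.\ bijective. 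The paper concludes at this point simply by invoking Lemma~\ref{lemma:convergence}.

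The page comparison also contains errors, though they are repairable. First, $\pi_{4+(n)}\Sigma^{q-p+(q)}\EM A\cong H^{2q-p-4-n,\,q-n}(F;A)$, not $H^{q-p-4,q-n}$. So a summand contributes only if $q\geq n$ and $q-p\leq 4$, which gives $h^{w-4,w}\oplus h^{w-2,w}\oplus h^{w,w}$ with $w=q-n$, plus $\Lambda\otimes\cyc_8(\W)$ for $n=q=4$. Your condition $q\geq p+4$ is wrong, and so is the claim that $H^{q-4,q-4}$ vanishes for $q\neq 4$, although you land on the correct integral summand. Second, $\eta$ is not an isomorphism on the fifth column for $n\in\{4,5\}$. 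For example, $E^1_{10,5,5}\cong\Lambda\otimes\cyc_{10}(\W)\cong\Lambda^2$ is killed by $\eta$ because $\hmlg_{10}(\W)=0$. This is harmless, because what propagates from page to page is a weaker package: injective on the third column, surjective on the fourth, and for $n>4$ bijective on the fourth and surjective on the fifth. Surjectivity on the cycles of the fifth column uses injectivity on the fourth, and injectivity on the next page of the third column uses surjectivity on the fourth. The paper streamlines this by starting at $E^2$, where Proposition~\ref{prop:E^2_for_m=3} makes the fourth column consist of permanent cycles. Then only the incoming differentials from the fifth column, and surjectivity of $\eta$ there, have to be controlled.
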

\begin{proof}
    Consider the entries on the second page of the slice spectral sequence that contribute to the group $\pi_{4+(n)}\MSL_\Lambda$ for $n\geq 4$. 
    They are trivial in slice degrees $0\leq q\leq n-1$. 
    Moreover, since $E^2_{n+3,q,n}(\MSL_\Lambda)$ vanishes for $q>3$ by Proposition \ref{prop:E^2_for_m=3}, the remaining groups consist of permanent cycles.
    
    We claim that multiplication with the motivic Hopf element induces a surjection between the 4th columns of the $E^\infty$-page
    \[ E^\infty_{n+4,\star,n}(\MSL_\Lambda)\xrightarrow{\eta} E^\infty_{n+5,\star+1,n+1}(\MSL_\Lambda) \]
    for $n\geq 4$, which is even an isomorphism for $n>4$.
    To prove the statement about the $E^\infty$-page, it suffices to prove it for the $E^r$-page for any $r\geq 2$. 
    Consider the following commutative diagram:
    \begin{center}\begin{tikzcd} 
    E^r_{n+5,\star-r,n} \ar[r,"\dd_r"] \ar[d,"\eta"'] & E^r_{n+4,\star,n} \ar[r] \ar[d,"\eta"] & E^{r+1}_{n+4,\star,n} \ar[d,"\eta"] \ar[r] & 0 \\ E^r_{n+6,\star-r+1,n+1} \ar[r,"\dd_r"] & E^r_{n+5,\star+1,n+1} \ar[r] & E^{r+1}_{n+5,\star+1,n+1} \ar[r] & 0
    \end{tikzcd}\end{center}
    A straightforward computation with Lemma \ref{lemma_eta_action} shows that for $r=2$ the vertical homomorphism in the middle is surjective for $n\geq 4$, while the vertical homomorphism on the left hand side is surjective for $n>4$. 
    Moreover, for $n>4$ the vertical homomorphism in the middle is an isomorphism by Lemma \ref{lemma:E2_4>4}. 
    By induction on $r$ and the Five Lemma, the vertical homomorphism in the middle is surjective for $n\geq 4$ and an isomorphism for $n>4$, while the vertical homomorphism on the left hand side is surjective for $n>4$. 
    The claim for the $E^\infty$-page follows.
    Applying Lemma \ref{lemma:convergence} supplies the result.
\end{proof}

\begin{lemma}\label{lemma:surj2}
    The homomorphism $\pi_{4+(n)}(\MSL_\Lambda)^\wedge_\eta\to \pi_{4+(n)}(\MSL_\Lambda)^\wedge_\eta[\eta^{-1}]$ is surjective for $n\geq 4$ and an isomorphism for $n>4$.
\end{lemma}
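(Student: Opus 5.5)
The plan is to write the $\eta$-inverted spectrum as a colimit and feed in Lemma~\ref{lemma:surj1}. By definition,
\[ (\MSL_\Lambda)^\wedge_\eta[\eta^{-1}]=\colim\bigl((\MSL_\Lambda)^\wedge_\eta\xrightarrow{\eta}\Sigma^{(-1)}(\MSL_\Lambda)^\wedge_\eta\xrightarrow{\eta}\cdots\bigr). \]
Homotopy groups commute with this sequential (homotopy) colimit, because the spheres $S^{s+(w)}$ are compact in $\SHF$. Hence
\[ \pi_{4+(n)}(\MSL_\Lambda)^\wedge_\eta[\eta^{-1}]\cong\colim_k\pi_{4+(n+k)}(\MSL_\Lambda)^\wedge_\eta, \]
with transition maps given by multiplication with $\eta$. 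Under this identification, the homomorphism in the statement is the canonical map from the $k=0$ term of the direct system into its colimit. This bookkeeping of the weight shift should be double-checked against the indexing convention $\pi_{s+(w)}$. Concretely, $\eta\in\pi_{0+(1)}\sph$ sends $\pi_{4+(n)}$ to $\pi_{4+(n+1)}$, which matches Lemma~\ref{lemma:surj1}.

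Next I would apply Lemma~\ref{lemma:surj1} to every term $m=n+k\ge n\geq 4$. It says each transition map $\pi_{4+(m)}\to\pi_{4+(m+1)}$ is surjective for $m\geq 4$ and bijective for $m>4$. Two elementary facts about sequential colimits of abelian groups then finish the argument.

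\emph{Surjectivity.} If all transition maps are surjective, then every element of the colimit is represented at some stage $k$. That stage receives a surjection from the $0$th stage, since a composite of surjections is surjective. So the map from the $0$th term to the colimit is surjective.

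\emph{Injectivity for $n>4$.} If all transition maps from stage $0$ onward are isomorphisms, the colimit is attained at stage $0$. So the canonical map is an isomorphism. An element in its kernel would have to die at some finite stage, which is impossible when the transition maps are injective.

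I expect no serious obstacle here. The substantive input, namely the $\eta$-periodicity of the fourth column on the $E^\infty$-page and its transfer to homotopy of the $\eta$-completion, is already contained in Lemma~\ref{lemma:surj1}.

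The only points needing care are the following. First, one must confirm that the $\eta$-inversion of the $\eta$-completion is formed by the same telescope as in the definition of $\MSL_\Lambda[\eta^{-1}]$, so that compactness of spheres applies. Second, one must check that the map $\pi_{4+(n)}(\MSL_\Lambda)^\wedge_\eta\to\pi_{4+(n)}(\MSL_\Lambda)^\wedge_\eta[\eta^{-1}]$ in the arithmetic square of Remark~\ref{remark:eta-arithm_square} is indeed the structure map into the colimit. This is the case for the telescope model of $\EE[\eta^{-1}]$, since the square is obtained by applying $(-)[\eta^{-1}]$ to the completion map.
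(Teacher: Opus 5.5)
Your proposal is correct and is the same argument as the paper. The paper's proof is just ``follows directly from Lemma~\ref{lemma:surj1}''. You have written out exactly that deduction: the telescope description of $\eta$-inversion, compactness of the spheres, and the elementary facts about sequential colimits with surjective or bijective transition maps.
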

\begin{proof}
    This follows directly from Lemma \ref{lemma:surj1}.
\end{proof}
\begin{lemma}\label{lem:coker_W/I^}
    The cokernel of the morphism $\pi_{4+(n)}(\MSL_\Lambda)^\wedge_\eta\to \pi_{4+(n)}(\MSL_\Lambda)^\wedge_\eta[\eta^{-1}]$ is a quotient of $\bigl(\Witt(F)/\mathbf{I}(F)^{-(n-4)}\bigr)\otimes \Lambda$ for all $n\in \Z$.\footnote{Here $\mathbf{I}(F)^k$ denotes the $k$-th power of the fundamental ideal $\mathbf{I}(F)\subset \Witt(F)$, which by definition coincides with $\Witt(F)$ for nonpositive $k$.}
\end{lemma}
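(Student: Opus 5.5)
We plan to identify $\pi_{4+(n)}(\MSL_\Lambda)^\wedge_\eta[\eta^{-1}]$ as a colimit along multiplication by $\eta$ and compare it with the image of the source. By definition,
\[ \pi_{4+(n)}(\MSL_\Lambda)^\wedge_\eta[\eta^{-1}]\cong\colim_k \pi_{4+(n+k)}(\MSL_\Lambda)^\wedge_\eta , \]
where the transition maps are multiplication by $\eta$. By Lemma~\ref{lemma:surj1}, these transition maps are surjective from index $4$ on and are isomorphisms beyond it. Hence the colimit equals $\pi_{4+(m)}(\MSL_\Lambda)^\wedge_\eta$ for any $m>4$. The image of $\pi_{4+(n)}$ in the colimit is the image of $\eta^{m-n}\colon \pi_{4+(n)}\to\pi_{4+(m)}$ for any large $m$.

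For $n\geq 4$ the cokernel vanishes by Lemma~\ref{lemma:surj2}. This is consistent with the claim, since $\mathbf{I}^{-(n-4)}=\Witt$ there and the quotient $\Witt/\Witt$ is zero. For $n<4$ the map factors through $\pi_{4+(4)}$. Multiplication by $\eta^{4-n}$ lands in the image, so the cokernel is a quotient of
\[ \pi_{4+(4)}(\MSL_\Lambda)^\wedge_\eta\big/\eta^{4-n}\pi_{4+(n)}(\MSL_\Lambda)^\wedge_\eta . \]

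Next we use the $\KMW$-module structure. The group $\pi_{4+(4)}(\MSL_\Lambda)^\wedge_\eta$ should be cyclic over $\KMW_0=\GWitt(F)$, generated by a class $g$ that lives over the prime field. Indeed, the only contributing $E^\infty$-term in slice degree $\geq$ the weight is, by the tables of Section~\ref{section:4}, the mod $2$ group $h^{0,0}$ (or an $H^{0,0}$ term), and surjectivity of $\eta$ propagates generators. Since $\eta[-1]$-type relations make $\eta$-multiples factor through $\Witt$, the target becomes a cyclic $\Witt(F)\otimes\Lambda$-module generated by $g$.

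The remaining step is to show that $\eta^{4-n}\pi_{4+(n)}$ contains $\mathbf{I}^{4-n}\cdot g$. For $a_1,\dots,a_k\in F^\times$ with $k=4-n$, the element $[a_1]\cdots[a_k]\,g\in\pi_{4+(4-k)}$ satisfies
\[ \eta^k[a_1]\cdots[a_k]\,g=\langle\!\langle a_1,\dots,a_k\rangle\!\rangle g \]
up to sign, because $\eta[a]=\langle a\rangle-1$. Pfister forms generate $\mathbf{I}^k$, so the image of $\eta^k$ contains $\mathbf{I}^k g$. The cokernel is then a quotient of $(\Witt/\mathbf{I}^{4-n})\otimes\Lambda$, which is the claim.

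The main obstacle is pinning down that $\pi_{4+(4)}(\MSL_\Lambda)^\wedge_\eta$ (or already its $\eta$-colimit) is generated as a $\GWitt$-module by a single class. We would try to establish this through the $E^\infty$-page in slice degrees $\geq 4$, using Lemma~\ref{lemma:E2_4>4} and Lemma~\ref{lemma_eta_action}, and handle extensions by the filtration argument of Lemma~\ref{lemma:surj1}. If that fails, an alternative is to argue that the colimit is a $\KMW[\eta^{-1}]=\Witt[\eta^{\pm1}]$-module quotient of the image of $\pi_{4+(4)}$. One then only needs that this image is generated by elements whose $\eta$-divisibility is controlled by Pfister forms, as in the $n=2$ argument of \cite[\S 5]{RSO24}.
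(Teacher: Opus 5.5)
\textbf{Genuine gap: the cyclicity claim is false.}

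Your reduction is sound. By Lemma~\ref{lemma:surj1} the colimit is $P:=\pi_{4+(5)}(\MSL_\Lambda)^\wedge_\eta$, and the cokernel is $P/\eta^{5-n}\pi_{4+(n)}(\MSL_\Lambda)^\wedge_\eta$. The identity $\eta^k[a_1]\cdots[a_k]=\prod_i(\langle a_i\rangle-1)$ correctly shows that this image contains $\mathbf{I}(F)^{4-n}P$. The problem is the cyclicity statement your conclusion rests on.

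In weight $4$ the lowest contributing slice is $\slice_4$. Its $E^\infty$-term is a finite-index subgroup of $\pi_{4+(4)}\slice_4(\MSL_\Lambda)\cong\Lambda\otimes\cyc_8(\W)\cong\Lambda^2$, spanned by $x_1^{*4}$ and $2x_4-x_1*x_3$. Since $\langle a\rangle-1=\eta[a]$ raises the slice filtration, $\GWitt(F)$ acts on this bottom graded piece only through the rank. Hence $\pi_{4+(4)}(\MSL_\Lambda)^\wedge_\eta$ is not cyclic over $\GWitt(F)\otimes\Lambda$. This matches $\pi_{4+(4)}\MSL\cong\GWitt(F)\{\tfrac{1}{2\hyper}y_2^2\}\oplus\Z\{y_4\}$ from Proposition~\ref{prop:eta_arithmetic_bound_zero}.

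Your reading of the $E^\infty$-page is also off. The weight-$5$ column carries $h^{j,j}$ in slice degree $5+j$ for every $j\geq 0$ (Lemma~\ref{lemma:E2_4>4}), not only $h^{0,0}$. So even the weaker claim that $P$ is a cyclic $\Witt(F)\otimes\Lambda$-module fails in general. The slice filtration on $P$ is complete with these graded pieces, so one only gets a surjection from $\Witt(F)^\wedge_{\mathbf{I}(F)}$, which is exactly how the paper phrases it. For $F=\RR$ the group $P$ is uncountable, hence not a quotient of $\Witt(\RR)\cong\Z$.

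\textbf{What is missing: the paper's finite-filtration argument.}

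First, on the $E^\infty$-page, multiplication by $\eta^{5-n}$ hits all of $h^{4-n,4-n},h^{5-n,5-n},\dots$ in the weight-$5$ column. This is because, by Lemma~\ref{lemma_eta_action}, $\eta$ induces $\mathrm{pr}_8\colon\cyc_8(\W)\to\hmlg_8(\W)$ and the identity on the summands $N_q$. By convergence (completeness of the slice filtration on the source), the image of $\eta^{5-n}$ then contains the whole filtration stage $F^{9-n}P$ of slice degree $\geq 9-n$.

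Second, $P/F^{9-n}P$ has a finite filtration with graded pieces $h^{0,0},\dots,h^{3-n,3-n}$. A single lift $g$ of the generator of $h^{0,0}$, for example the image of $\eta\cdot\tfrac{1}{2\hyper}y_2^2$, generates it over $\Witt(F)$, since $\mathbf{I}(F)^jg$ is detected by the symbols in $h^{j,j}$. This is where Milnor's conjecture enters. Because $\mathbf{I}(F)^{4-n}g\subseteq F^{9-n}P$, one obtains the required surjection from $\bigl(\Witt(F)/\mathbf{I}(F)^{4-n}\bigr)\otimes\Lambda$.

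Your Pfister identity can replace that last inclusion, but not the first step. On its own it only bounds the cokernel by $P/\mathbf{I}(F)^{4-n}P$. Since $\mathbf{I}(F)^{4-n}P$ is a priori only dense in $F^{9-n}P$, proving that this quotient is cyclic again requires the filtration input above.
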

\begin{proof}
    For simpler notation, the coefficient ring $\Lambda$ will be omitted in the proof.
    For $n\geq 5$, the column of the $E^2$-page of the $n$-th slice spectral sequence computing $\pi_{4+(n)}\MSL^\wedge_\eta$ does not depend on $n$. 
    It consists of the groups $h^{0,0},h^{1,1},h^{2,2},\dots$ (see Lemma \ref{lemma:E2_4>4}), which form the associated graded of the $\mathbf{I}(F)$-adic filtration on the Witt ring $\Witt(F)$ by Milnor's conjecture on quadratic forms \cite{OVV_Milnor_qf} (see also \cite{RO16}). 
    It follows that $\pi_{4+(n)}\MSL^\wedge_\eta$ is a quotient of the $\mathbf{I}(F)$-adic completion $\Witt(F)^\wedge_{\mathbf{I}(F)}$, and the same holds for $\pi_{4+(n)}\MSL^\wedge_\eta[\eta^{-1}]$. 
    Thus, the desired cokernel is also a quotient of $\Witt(F)^\wedge_{\mathbf{I}(F)}$, which is trivial for $n\geq 4$ by Lemma \ref{lemma:surj2}. 
    For $n<4$, multiplication with $\eta^{5-n}$ hits the groups $h^{-n+4,-n+4},h^{-n+5,-n+5},\dots$, but not $h^{0,0},h^{1,1},\dots,h^{-n+3,-n+3}$. Therefore, the cokernel of the morphism
    \[ \pi_{4+(n)}\MSL^\wedge_\eta\xrightarrow{\eta^{5-n}}\pi_{4+(5)}\MSL^\wedge_\eta\cong \pi_{4+(5)}\MSL^\wedge_\eta[\eta^{-1}]\cong \pi_{4+(n)}\MSL^\wedge_\eta[\eta^{-1}]\]
    is a quotient of $\Witt(F)/\mathbf{I}(F)^{-(n-4)}$, as required. 
\end{proof}

\begin{proposition}\label{prop:eta_arithmetic_bound_zero}
    The canonical map $\pi_{4}(\MSL_\Lambda)^\wedge_\eta\oplus \pi_{4}{\MSL}_\Lambda[\eta^{-1}] \to \pi_{4}(\MSL_\Lambda)^\wedge_\eta[\eta^{-1}]$ is surjective.
\end{proposition}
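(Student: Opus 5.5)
The plan is to argue weight by weight, that is, for each $n\in\Z$ separately in $\pi_{4+(n)}$. Lemma~\ref{lem:coker_W/I^} then reduces everything to one claim: the cokernel of $\pi_{4+(n)}(\MSL_\Lambda)^\wedge_\eta\to\pi_{4+(n)}(\MSL_\Lambda)^\wedge_\eta[\eta^{-1}]$ is hit by $\pi_{4+(n)}\MSL_\Lambda[\eta^{-1}]$. For $n\geq 4$ there is nothing to prove, since the map from the $\eta$-completion alone is already surjective by Lemma~\ref{lemma:surj2}. So assume $n<4$. By Lemma~\ref{lem:coker_W/I^}, the cokernel is a quotient of $(\Witt(F)/\mathbf{I}(F)^{4-n})\otimes\Lambda$. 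Its generators come from the classes in $h^{0,0},\dots,h^{3-n,3-n}$ in the $\eta$-periodic column, which are not reached by $\eta$-multiples from the completion.

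The key step is to exhibit these classes as images of elements of $\pi_4\MSL_\Lambda[\eta^{-1}]$. By Bachmann--Hopkins \cite{BH_eta}, $\pi_{*}\MSL[\eta^{-1}]$ is a polynomial ring over $\Witt(F)$ (after inverting $e$) on generators in degrees $4k$. In particular, $\pi_{4+(\star)}\MSL_\Lambda[\eta^{-1}]$ is a free $\Witt(F)\otimes\Lambda$-module containing the class $y_4$, which lifts the topological generator of $\pi_8\mathbf{MSp}$ / $\MSU$ mod torsion in the corresponding degree.

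I would then compare the images via the map of slice spectral sequences for $\MSL_\Lambda[\eta^{-1}]\to(\MSL_\Lambda)^\wedge_\eta[\eta^{-1}]$. Theorem~\ref{slice_MSL_eta_inv} gives the slices of the $\eta$-periodic spectrum as sums $\Sigma^{p+(q)}\EM\Lambda\otimes\hmlg_{2p}(\W)$. The summand with $p=4$ (recall $\hmlg_8(\W)\cong\Z/2$) contributes precisely the column $h^{q,q}$ in stem $4$. One checks that $y_4$ is detected there by the generator of $h^{0,0}$. Then $\Witt(F)$-linearity means $\Witt(F)\cdot y_4$ maps onto the associated graded pieces $h^{k,k}$ for all $k$, so its image surjects onto the quotient of $\Witt(F)$ appearing in Lemma~\ref{lem:coker_W/I^}. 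Concretely, the element $\eta^{4-n}$-divided image of $y_4$ in weight $n$ maps to the class generating the cokernel, and the $\Witt(F)$-module structure generates the rest.

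The main obstacle is the identification of the image of $\pi_{4+(n)}\MSL_\Lambda[\eta^{-1}]$ in $\pi_{4+(n)}(\MSL_\Lambda)^\wedge_\eta[\eta^{-1}]$ as everything modulo $\mathbf{I}^{4-n}$. There are two issues: convergence and extensions in the $\eta$-inverted completed tower, and showing that the detecting class of $y_4$ is the generator of $h^{0,0}$ rather than zero. For the latter, I would first try real Betti realization. Over $\RR$, $y_4$ realizes to a nonzero class in $\pi_4$ of $\mathbf{MSO}$-type theory, namely the class of $\mathbb{CP}^2$ with signature $1$. That class reduces nontrivially mod $2$, which forces the $h^{0,0}$-detection over the prime field. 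The detection then base-changes to $F$.
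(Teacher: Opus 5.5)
\paragraph{A gap in the key step.} Your reduction is the same as the paper's. For $n\ge 4$, Lemma~\ref{lemma:surj2} suffices. For $n<4$, Lemma~\ref{lem:coker_W/I^} reduces everything to one fact: the $\Witt(F)$-generator of $\pi_{4+(\star)}\MSL_\Lambda[\eta^{-1}]$ must map to a class in $\pi_{4+(\star)}(\MSL_\Lambda)^\wedge_\eta[\eta^{-1}]$ detected by $1\in h^{0,0}$. That detection is the whole content of the proposition, and you do not prove it; you state ``one checks'' and then give a heuristic. Suppose the image lay in positive filtration. Then so would all its $\Witt(F)$-multiples, and they would miss the $h^{0,0}$-graded piece of the cokernel, which is not reached from $\pi_{4+(n)}(\MSL_\Lambda)^\wedge_\eta$ for $n<4$. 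Surjectivity would then fail.

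\paragraph{Why real realization does not close it.} Real realization sends $\eta$ to $\pm 2$, so $\MSL[\eta^{-1}]$ realizes to $\mathbf{MSO}[\nicefrac12]$. As a result:
\begin{itemize}
\item the realized generator changes by powers of $2$ as the weight varies, so ``signature $1$'' is not weight-independent;
\item ``reduces nontrivially mod $2$'' has no meaning in $\pi_4\mathbf{MSO}[\nicefrac12]$;
\item nothing in your sketch links real realization to the bottom slice $\pi_{4+(n)}\slice_n\cong h^{0,0}$;
\item base change from $\RR$ or $\QQ$ cannot reach fields of positive characteristic.
\end{itemize}
Your notation also points at the wrong class. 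If ``$y_4$'' is meant to lift the topological generator $y_4=2x_4-x_1*x_3$ of $\MSU_8$, note that $\etatop y_4=0$ in $\MSU_9\cong\Z/2$. So $(y_4,0)\in\pi_{4+(4)}\MSL$ has zero $\eta$-multiple in $\Witt(F)$, and its image in $\cyc_8(\W)$ dies in $\hmlg_8(\W)$.

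\paragraph{The missing ingredient.} The paper supplies it from the cartesian square of \cite[Theorem D]{ZolMSL}. This gives
\[
\pi_{4+(4)}\MSL\cong\GWitt(F)\{\tfrac{1}{2\hyper}y_2^2\}\oplus\Z\{y_4\},\qquad \tfrac{1}{2\hyper}y_2^2=(\tfrac14y_2^2,1).
\]
By construction, $\eta\cdot\frac{1}{2\hyper}y_2^2$ is the generator of $\pi_{4+(5)}\MSL\cong\pi_{4+(5)}\MSL[\eta^{-1}]\cong\Witt(F)$.

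On the other side, $\frac{1}{2\hyper}y_2^2$ lifts to $\effcover_4\MSL$. Its image in $\pi_{4+(4)}\slice_4\MSL\cong\cyc_8(\W)$ is $x_1^{*4}=\frac14y_2^2$, which projects to the nonzero element of $\hmlg_8(\W)\cong\Z/2$. By Lemma~\ref{lemma_eta_action}, its $\eta$-multiple is therefore detected by $1\in h^{0,0}$ in $\slice_5$.

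So the needed detection comes from complex realization (namely $\etatop$ on $\MSU_8$) together with the computed $\eta$-action on slices, not from real realization. Working with $\MSL$ itself in weight $5$, rather than with the slice spectral sequence of $\MSL[\eta^{-1}]$, also avoids the convergence issues you raised.
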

\begin{proof}
    The proof ignores $\Lambda$ in the notation. 
    Lemma \ref{lem:coker_W/I^} implies that the cokernel of 
    \[ \pi_{4+(n)}{\MSL}^\wedge_\eta\to \pi_{4+(n)}{\MSL}^\wedge_\eta[\eta^{-1}] \]
    is generated (as a $\KMW_\star(F)$-module) by a single element. 
    To construct a generator, use \cite[Theorem D]{ZolMSL}, which gives a cartesian square
    \begin{center}\begin{tikzcd} 
    \pi_{4+(4)}\MSL \ar[r] \ar[d] & \Z\{\textstyle \frac{1}{4} y_2^2\}\oplus \Z\{y_4\} \ar[d] \\ \mathbf{W}(F) \ar[r,"\overline{\mathrm{rk}}"] & \Z/2\Z
    \end{tikzcd}\end{center} 
    where $\frac{1}{4}y_2^2=x_1^{*4}$ and $y_4=2x_4-x_1*x_3$ are generators of $\MSU_8$, the vertical map on the left hand side is given by 
    \[\pi_{4+(4)}\MSL \xrightarrow{\eta}\pi_{4+(5)}\MSL \cong\pi_{4+(5)}\MSL[\textstyle \eta^{-1}]\cong\Witt(F)\] 
    and the vertical homomorphism on the right hand side is the following composition:
    \[\MSU_8 \xrightarrow{\etatop}\MSU_9\cong \Z/2\Z.\] By \cite[\S 7]{CLP} the latter sends $y_4$ to zero and $\frac{1}{4}y_2^2$ to the non-trivial element of $\Z/2\Z$. 
    There results an isomorphism 
    \[\pi_{4+(4)}\MSL\cong \GWitt(F)\{\textstyle \frac{1}{2\hyper}y_2^2\}\oplus \Z\{y_4\},\]
    where $\frac{1}{2\hyper}y_2^2=(\frac{1}{4}y_2^2,1)$ and $y_4=(y_4,0)$ with respect to the cartesian square above. 
    The notation $\frac{1}{2\hyper}y_2^2$ indicates that this element satisfies the relation $2\hyper\cdot (\frac{1}{2\hyper}y_2^2)=y_2^2,$ where $y_2$ is the generator of $\pi_{2+(2)}\MSL\cong \Z$ that corresponds under the map $\MSL\to\mathbf{KQ}$ to the standard skew-symmetric form in $\pi_{2+(2)}\mathbf{KQ}\cong \mathbf{K}^{\Sp}_0(F)$.
    From this perspective the generator $\frac{1}{2\mathrm{h}}y_2^2$ maps to the Karoubi--Bott periodicity element in $\pi_{4+(4)}\mathbf{KQ}$. 
    We claim that $\frac{1}{2\hyper}y_2^2$ is the desired generator. 
    Indeed, for weight reasons it lifts (uniquely) to $\pi_{4+(4)}\effcover_4\MSL$, and this lift maps to a generator of $\pi_{4+(4)}\slice_4(\MSL)\cong\Z^2$ whose $\eta$-multiple is non‑trivial. 
    It follows from Lemma \ref{lemma:surj2} and Lemma \ref{lem:coker_W/I^} that it must generate the desired cokernel. 
    It remains to note that the generator of $\pi_{4+(4)}\MSL[\eta^{-1}]\cong \Witt(F)$ maps to the image of $\frac{1}{2\hyper}y_2^2$ by construction.
\end{proof}
\begin{corollary}\label{cor:pi_3=with_eta_comp}
    The canonical map $\pi_3\MSL_\Lambda\to \pi_3(\MSL_\Lambda)^\wedge_\eta$ is an isomorphism.
\end{corollary}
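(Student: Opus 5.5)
We will read off the claim from the long exact sequence \eqref{equation:les_eta_arithm} of the $\eta$-arithmetic square for $\EE=\MSL_\Lambda$, in degrees $n=3,4$:
\[ \pi_4\MSL_\Lambda^\wedge_\eta\oplus\pi_4\MSL_\Lambda[\eta^{-1}]\to \pi_4(\MSL_\Lambda)^\wedge_\eta[\eta^{-1}]\xrightarrow{\partial}\pi_3\MSL_\Lambda\to\pi_3(\MSL_\Lambda)^\wedge_\eta\oplus\pi_3\MSL_\Lambda[\eta^{-1}]\to\pi_3(\MSL_\Lambda)^\wedge_\eta[\eta^{-1}]. \]
The aim is to show that $\partial$ vanishes, that the right-hand term vanishes, and that $\pi_3\MSL_\Lambda[\eta^{-1}]=0$. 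Together these give the isomorphism $\pi_3\MSL_\Lambda\cong\pi_3(\MSL_\Lambda)^\wedge_\eta$.

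\textbf{Step 1.} By Proposition~\ref{prop:eta_arithmetic_bound_zero}, the first displayed map is surjective. Exactness then forces the boundary homomorphism $\partial$ to be zero, so $\pi_3\MSL_\Lambda$ injects into $\pi_3(\MSL_\Lambda)^\wedge_\eta\oplus\pi_3\MSL_\Lambda[\eta^{-1}]$.

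\textbf{Step 2.} By Lemma~\ref{lemma:vanishing_etacompl_etainv} we have $\pi_3(\MSL_\Lambda)^\wedge_\eta[\eta^{-1}]=0$. Hence the injection from Step 1 is also surjective, and
\[ \pi_3\MSL_\Lambda\cong\pi_3(\MSL_\Lambda)^\wedge_\eta\oplus\pi_3\MSL_\Lambda[\eta^{-1}]. \]

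\textbf{Step 3.} As in the proof of Lemma~\ref{lemma:pi_1,2_etacompl}, we use the Bachmann--Hopkins computation \cite[Corollary 1.3(3)]{BH_eta}. It shows that $\pi_*\MSL[\eta^{-1}]$ is concentrated in Milnor--Witt stems divisible by $4$, being $\Witt(F)[y_4]$ after inverting the exponential characteristic where needed. In particular $\pi_3\MSL_\Lambda[\eta^{-1}]=0$. The canonical map in the statement is the first component of the isomorphism in Step 2, so it is an isomorphism.

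\textbf{Main obstacle.} The only nontrivial input is the surjectivity in Step 1, and that is already provided by Proposition~\ref{prop:eta_arithmetic_bound_zero}. What remains is to check that the Bachmann--Hopkins vanishing applies to $\MSL_\Lambda$ over an arbitrary field of characteristic $\neq 2$ after $\Lambda$-localization. I expect this to be the only point needing care. It is handled exactly as in Lemma~\ref{lemma:pi_1,2_etacompl}, where the same reference is used for $\pi_1$ and $\pi_2$.
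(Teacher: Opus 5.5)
Your proposal is correct and matches the paper's proof. The paper also reads the result off the $\eta$-arithmetic long exact sequence \eqref{equation:les_eta_arithm}: Proposition~\ref{prop:eta_arithmetic_bound_zero} kills the boundary map, Lemma~\ref{lemma:vanishing_etacompl_etainv} kills $\pi_3(\MSL_\Lambda)^\wedge_\eta[\eta^{-1}]$, and \cite[Corollary 1.3]{BH_eta} gives $\pi_3\MSL_\Lambda[\eta^{-1}]=0$. One small correction: Bachmann--Hopkins give a polynomial ring over $\Witt(F)$ on infinitely many generators in degrees $4i$, not just $\Witt(F)[y_4]$, but this does not matter since only the vanishing in stem $3$ is used.
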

\begin{proof}
    The long exact sequence \eqref{equation:les_eta_arithm}, Lemma \ref{lemma:vanishing_etacompl_etainv} and Proposition \ref{prop:eta_arithmetic_bound_zero} imply that the map 
    \[ \pi_3\MSL_\Lambda\to \pi_3(\MSL_\Lambda)^\wedge_\eta\oplus \pi_{3}{\MSL}_\Lambda[\eta^{-1}] \]
    is an isomorphism. 
    The result follows from the vanishing $\pi_{3}{\MSL}_\Lambda[\eta^{-1}]=0$ \cite[Corollary 1.3]{BH_eta}.
\end{proof}
\begin{remark}\label{rem:pi_3=with_eta_comp_kq}
    The same argument shows that the map $\pi_3\kq_\Lambda\to\pi_3(\kq_\Lambda)^\wedge_\eta$ is an isomorphism.
\end{remark}
\begin{theorem}\label{thm:pi_3MSL}
    Let $F$ be a field of characteristic $\neq 2$ and $\Lambda=\Z[\nicefrac{1}{e}]$, where $e$ is the exponential characteristic of $F$. 
    The morphism $\MSL\to \kq$ induces a short exact sequence
    \[ 0\to\bigl(\KMil_{3-\star}(F)/(\partial^2_\infty\Sq^6\pr^\infty_2H^{-\star-4,-\star})\bigr)\otimes\Lambda\to \pi_{3+(\star)}\MSL_\Lambda\to \pi_{3+(\star)}\kq_\Lambda\to 0. \]
\end{theorem}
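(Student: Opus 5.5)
By Corollary~\ref{cor:pi_3=with_eta_comp} and Remark~\ref{rem:pi_3=with_eta_comp_kq}, the canonical maps $\pi_3\MSL_\Lambda\to\pi_3(\MSL_\Lambda)^\wedge_\eta$ and $\pi_3\kq_\Lambda\to\pi_3(\kq_\Lambda)^\wedge_\eta$ are isomorphisms, compatibly with $\MSL\to\kq$. It therefore suffices to establish the short exact sequence for the $\eta$-completions. By Lemma~\ref{lemma:convergence} and the analogous statement for $\kq$, these $\eta$-completions are the conditional abutments of the respective slice spectral sequences. I will compare the two slice spectral sequences column by column in Milnor--Witt degree $3$.

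The first step is convergence. The $E^2$-column computing $\pi_{3+(-n)}$ is nonzero only in slice degrees $q\le 3$ (Lemma~\ref{lemma:E^2_for_m<3}, Proposition~\ref{prop:E^2_for_m=3}). Moreover, $E^4=E^\infty$ by Lemma~\ref{lem:stabilization_m=3}. Hence the filtration on $\pi_3$ is finite, conditional convergence upgrades to strong convergence, and $\pi_{3+(-n)}(\MSL_\Lambda)^\wedge_\eta$ carries a finite filtration of length $4$ with graded pieces $E^\infty_{-n+3,q,-n}$ for $q=0,\dots,3$. The same holds for $\kq_\Lambda$, and the map $\MSL\to\kq$ is a map of filtered groups.

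The second step is a comparison of the associated gradeds. By Lemma~\ref{lem:stabilization_m=3}, the induced map on $E^\infty$ is an isomorphism for $q\ne 3$. For $q=3$ it is a split epimorphism with kernel $H^{n+3,n+3}/(\partial^2_\infty\Sq^6\pr^\infty_2H^{n-4,n})$, where $H^{n+3,n+3}\cong\KMil_{n+3}(F)$ (tensored with $\Lambda$). Since $q=3$ is the deepest filtration level, $E^\infty_{-n+3,3,-n}$ is exactly the bottom filtration subgroup $F^3\pi_3$. A five-lemma induction over the finite filtration then gives surjectivity of $\pi_3\MSL\to\pi_3\kq$. It also identifies the kernel with the kernel on $F^3$: an element of the kernel maps to zero in every graded piece with $q<3$, where the map is an isomorphism, so it lies in $F^3$. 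The kernel is therefore the stated quotient of Milnor $K$-theory. Reindexing by $\star=-n$ gives $\KMil_{3-\star}(F)/(\partial^2_\infty\Sq^6\pr^\infty_2H^{-\star-4,-\star})$.

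The main subtlety is bookkeeping rather than a genuinely new argument. One must check that the kernel of the $E^\infty$ map at $q=3$ injects into the abutment, i.e.\ is not killed by later differentials or extension issues. This holds because $E^4=E^\infty$ and, by Lemma~\ref{lem:higherdiffMSL}, no differentials leave the third column. One must also check the compatibility of the identifications in Lemma~\ref{lem:d3pi4s0MSL} with the $\KMW_\star(F)$-module structure, so that the sequence is one of Milnor--Witt modules. For $\Lambda$ and positive characteristic, everything above was carried out $\Lambda$-locally, so no further work is needed.
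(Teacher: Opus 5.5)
Your proposal is correct and is essentially the paper's proof. The paper simply cites Lemma~\ref{lem:stabilization_m=3}, Lemma~\ref{lemma:convergence}, Corollary~\ref{cor:pi_3=with_eta_comp} and Remark~\ref{rem:pi_3=with_eta_comp_kq}, and you have written out the filtration bookkeeping it leaves implicit: a finite column, the five-lemma on the graded pieces, and the kernel lying in $F^3$. The one point you state quickly is strong convergence, since Hausdorffness of the filtration on $\pi_3$ involves a $\lim^1$ over the $\pi_4$-column; this $\lim^1$ also vanishes, because the $\dd_r$ ($r\geq 2$) out of column $4$ land in column $3$, which is zero from $E^2$ on in slice degrees $\geq 4$, so the cycles in each entry of column $4$ stabilize.
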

\begin{proof}
    This follows directly from Lemma \ref{lem:stabilization_m=3}, Lemma \ref{lemma:convergence}, Corollary \ref{cor:pi_3=with_eta_comp}, and Remark \ref{rem:pi_3=with_eta_comp_kq}.
\end{proof}
\begin{remark}
    By Morel's theorem \cite[Theorem 2.11]{Morel_hm}, the stated results are valid on the level of homotopy sheaves as well. 
    In particular, Theorem \ref{thm:pi_2MSL} and Theorem \ref{thm:pi_3MSL} imply that the fiber of the map $\MSL_\Lambda\to\kq_\Lambda$ is exactly $3$-connective with respect to the homotopy $t$-structure.
\end{remark}

\section{A rational decomposition of special linear algebraic cobordism}
Since $2$ is invertible in $\MSL\otimes \QQ$, it splits naturally as
$(\MSL\otimes \QQ)^+\oplus (\MSL\otimes \QQ)^-$.
The first summand can be identified with $\eta$-completion, while the second corresponds to $\eta$-inversion.
In this section, we use slices to prove that $(\MSL\otimes\QQ)^+$ is a polynomial algebra over the rational motivic cohomology spectrum $\EM\QQ$. 
The summand $(\MSL\otimes\QQ)^-$ may be treated using the work of Bachmann \cite{Bachmann_real_etale}. 
First, we construct the corresponding polynomials generators using complex and real Betti realization. 
The values of these realization functors on $\MSL$ are given by the special unitary and special orthogonal bordism spectra, respectively; see for example \cite[Theorem A]{real_realization}.

\begin{lemma}\label{lem:geom_diag_overZ}
    The complex and real Betti realizations of $\MSL\otimes\QQ\in\SH(\Z)$ induce an isomorphism
    \[ \pi_{\star+(\star)}\MSL\otimes\QQ\xrightarrow{\simeq} (\pi_{2\star}\MSU\otimes\QQ)\times (\pi_{\star}\mathbf{MSO}\otimes\QQ) \]
    of graded rings.
\end{lemma}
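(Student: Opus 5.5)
The plan is to split $\MSL\otimes\QQ$ over $\Z$ into its plus and minus parts and to treat each part by a different realization. Since $2$ is invertible, there is a decomposition $\MSL\otimes\QQ\simeq(\MSL\otimes\QQ)^+\times(\MSL\otimes\QQ)^-$ as ring spectra. On the geometric diagonal $\pi_{\star+(\star)}$ this gives a product of graded rings. The goal is to show that complex Betti realization detects the plus part isomorphically and real Betti realization detects the minus part isomorphically.

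For the plus part, $\eta$ acts trivially rationally (since $\eta\hyper=0$ and $\hyper=2$ on the plus part). Hence $(\MSL\otimes\QQ)^+\simeq(\MSL\otimes\QQ)/\eta\otimes$(plus) is a summand of $\MWL\otimes\QQ$. Rationally, Theorem~\ref{MWL_cofib}(2) together with the surjectivity of $\Delta$ identifies $\pi_{\star+(\star)}\MWL\otimes\QQ$ with $\W_{2\star}\otimes\QQ$. I would rather argue directly, as follows.
\begin{enumerate}
\item Rationally the cofiber sequence of Theorem~\ref{MWL_cofib}(1) on the diagonal reduces, via the plus/minus splitting, to the statement that $\pi_{\star+(\star)}(\MSL\otimes\QQ)^+$ is the kernel of $\delta$ on $\W_{2\star}\otimes\QQ$. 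This kernel is $\cyc_{2\star}(\W)\otimes\QQ$, because $\hmlg_*(\W)$ is $2$-torsion by Theorem~\ref{thm_conner_floyd}.
\item The group $\cyc_{2\star}(\W)\otimes\QQ\cong\pi_{2\star}\MSU\otimes\QQ$ is classical (Conner--Floyd: $\MSU_*\otimes\QQ$ equals the $\delta$-cycles).
\item Complex realization is compatible with these identifications, since on $\MGL$ it induces the identity $\MU_{2\star}\to\MU_{2\star}$ on the Lazard ring. This uses that $\pi_{\star+(\star)}\MGL$ over $\Z$ is $\MU_{2\star}$ (Hopkins--Morel--Hoyois/Spitzweck over Dedekind domains), or alternatively base change to $\QQ$ and the rigidity of the geometric diagonal.
\end{enumerate}

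For the minus part, $(\MSL\otimes\QQ)^-=\MSL[\eta^{-1}]\otimes\QQ$. By Bachmann's theorem \cite{Bachmann_real_etale}, $\SH(\Z)[\eta^{-1}]\otimes\QQ$ (the minus part) is equivalent to $\SH(\Z)[\rho^{-1}]\otimes\QQ$, and the latter is identified with sheaves on the real spectrum $\mathrm{Sper}(\Z)$, which is a point. Under this equivalence the functor is real Betti realization. Thus $\pi_{\star+(\star)}$ of the minus part, i.e.\ $\pi_{0}$ of the $\eta$-periodized object in Milnor--Witt weight $\star$, becomes $\pi_\star$ of $r_\RR(\MSL)\otimes\QQ=\mathbf{MSO}\otimes\QQ$ by \cite[Theorem A]{real_realization}. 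The degree bookkeeping uses that $\eta$ realizes to $\pm2$ and that the motivic sphere $S^{\star+(\star)}$ realizes to $S^\star$.

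Finally, I would check that the two realization maps kill the opposite summands: complex realization kills the minus part because $\eta\mapsto\etatop$ is torsion, and real realization kills the plus part because $\hyper\mapsto0$ there. The sum map is then the product of two isomorphisms, and it is a ring map since both realizations are symmetric monoidal.

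The main obstacle I expect is step 3 over the base $\Z$ rather than over a field. The slice computations are available over fields, and I would reduce the integral case using that the rational plus part satisfies cdh/étale descent and is computed by rational motivic cohomology over $\Z$, whose diagonal part $H^{2n,n}(\Z;\QQ)$ vanishes for $n>0$ and equals $\QQ$ for $n=0$. This reduction of the diagonal to $\QQ\otimes\cyc_{2\star}(\W)$ is where care is needed.
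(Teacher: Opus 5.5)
Your proposal is correct in outline, but it takes a different route from the paper.

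\textbf{How the paper argues.} The paper never splits into $\pm$ parts in this lemma. It proves that base change along $\Z\to\QQ$ is an isomorphism on $\pi_{\star+(\star)}\MSL\otimes\QQ$, and then imports the field case from \cite[Theorem D]{ZolMSL}. The base change argument runs as follows.
\begin{enumerate}
\item It shows $\pi_{p+(q)}\MWL\otimes\QQ=0$ for $p<q$ over $\Z$, using slice convergence for $\MGL\otimes\QQ$ and vanishing of $H^{*,*}(\Z,\QQ)$.
\item From Theorem~\ref{MWL_cofib}(1) it deduces the $\eta$-stabilization $\pi_{n+(n+1)}\MSL\otimes\QQ\cong\pi_{n+(n+1)}(\MSL\otimes\QQ)^-$.
\item It compares the resulting exact sequences over $\Z$ and $\QQ$. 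The $\eta$-inverted term is handled by Bachmann, since $\mathrm{Sper}(\Z)=\mathrm{Sper}(\QQ)$. The $\MWL$ term is handled by a variant of \cite[Proposition 4.10]{ZolMSL} using $\Pic(\Z)=0$ and $H^{3,2}(\Z,\QQ)=0$.
\item It applies the five lemma twice.
\end{enumerate}

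\textbf{How your route differs.} You split first. Since $\eta=0$ on the plus part and $\MWL\otimes\QQ$ has no minus part, you get $\MWL\otimes\QQ\simeq(\MSL\otimes\QQ)^+\oplus\Sigma^{1+(1)}(\MSL\otimes\QQ)^+$. So the diagonal of the plus part is $\ker\delta$, with no $p<q$ vanishing and no five-lemma bookkeeping. You treat the minus part directly over $\Z$ by Bachmann plus real realization. This is exactly the argument the paper gives later in Lemma~\ref{lemma:msl_rational_minus}. Your route is more self-contained and re-derives the field-level identification rather than citing it; the paper's is shorter given Theorem D. Both rest on the same vanishing facts about $H^{*,*}(\Z,\QQ)$.

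\textbf{One step needs tightening.} Surjectivity of $\Delta$ on the diagonal does not identify $\pi_{n+(n)}\MWL\otimes\QQ$ with $\W_{2n}\otimes\QQ$. What you need is injectivity of $\pi_{n+(n)}\MWL\otimes\QQ\to\pi_{n+(n)}\MGL\otimes\QQ$. That is, the preceding term $\pi_{n-1+(n-2)}\MGL\otimes\QQ$ in the long exact sequence of Theorem~\ref{MWL_cofib}(2) must vanish, or at least be hit by $\Delta$.

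Writing $\MGL\otimes\QQ\simeq\EM\QQ[b_1,b_2,\dots]$ over $\Z$, this group is a sum of copies of $H^{2k-1,k}(\Z,\QQ)\cong K_1(\Z)^{(k)}_{\QQ}=0$. So besides the diagonal groups $H^{2k,k}(\Z,\QQ)$ you mention (where $\Pic(\Z)=0$ enters), you also need these odd-degree groups. In particular you need $H^{1,1}(\Z,\QQ)=\Z^\times\otimes\QQ=0$ and $H^{3,2}(\Z,\QQ)=0$, which is precisely the input the paper invokes.

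\textbf{A minor slip.} The plus part of $(\MSL\otimes\QQ)/\eta$ is $(\MSL\otimes\QQ)^+\oplus\Sigma^{1+(1)}(\MSL\otimes\QQ)^+$, not $(\MSL\otimes\QQ)^+$. Your step 1 already uses the correct version.
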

\begin{proof}
    By \cite[Theorem D]{ZolMSL}, the complex and real Betti realizations induce the desired isomorphism for $\MSL\otimes\QQ\in \SH(\QQ)$. Hence, it suffices to show that base change along the ring inclusion $\Z\to \QQ$ induces an isomorphism on $\pi_{\star+(\star)}\MSL\otimes\QQ$.

    First, we claim that $\pi_{p+(q)}\MWL\otimes\QQ=0$ for $p<q$ both over $\Z$ and $\QQ$. Over $\QQ$ this is proven in \cite[Proposition 4.8]{ZolMSL} even integrally. The proof works over $\Z$ almost in the same way, with the difference in the proof of the vanishing of $\pi_{p+(q)}\MGL\otimes\QQ$ for $p<q$. To show this, combine the convergence of the slice filtration for $\MGL\otimes\QQ\in\SH(\Z)$ (see e.g., the beginning of the proof of \cite[Theorem B.1]{Chowt}) together with the vanishing of the corresponding rational motivic cohomology groups $H^{*,*}(\Z,\QQ)$ stated in \cite[Lemma 5.3]{KRO_hermitian_k_theory_char2}.

    Applying the claim to the exact sequence of homotopy groups associated with the first cofiber sequence from Theorem \ref{MWL_cofib}, we obtain the $\eta$-stabilization
    \[ \pi_{n+(n+1)}\MSL\otimes\QQ\xrightarrow{\simeq} \pi_{n+(n+1)}\MSL[\eta^{-1}]\otimes\QQ=\pi_{n+(n+1)}(\MSL\otimes\QQ)^- \]
    for $n\in\Z$ (cf. \cite[Theorem 5.2]{ZolMSL}). 
    Now the same cofiber sequence induces the exact sequence 
    \[ \pi_{n+1+(n+1)}\MSL\otimes\QQ\to \pi_{n+1+(n+1)}\MWL\otimes\QQ\to\pi_{n+(n)}\MSL\otimes\QQ\to\pi_{n+(n+1)}(\MSL\otimes\QQ)^-\to 0, \]
    both over $\Z$ and $\QQ$. 
    We claim that base change along $\Z\to \QQ$ induces an isomorphism on these sequences. 
    The fourth group maps isomorphically by the result of Bachmann \cite{Bachmann_real_etale} since $\mathrm{Sper}(\Z)=\mathrm{Sper}(\QQ)$ (note that inverting $\rho$ coincides with inverting $\eta$ whenever $2$ is already invertible).
    The second group maps isomorphically by an argument similar to \cite[Proposition 4.10]{ZolMSL}, which uses \cite[Theorem 6.5, Proposition 7.7]{SpiMGL2}, $\Pic(\Z)=0$, and $H^{3,2}(\Z,\QQ)=0$. 
    Applying the Five lemma twice provides the result.
\end{proof}

Combining Lemma \ref{lem:geom_diag_overZ} with standard topological computations, we obtain that the geometric diagonal of $\MSL\otimes\QQ\in \SH(\Z)$ is given by
\[ \pi_{\star+(\star)}\MSL\otimes\QQ\cong \QQ[y_2,y_3,\dots]\times \QQ[u_4,u_8,\dots], \]
where the generator $y_i$ has bidegree $i+(i)$ and the generator $u_{4i}$ has bidegree $4i+(4i)$. 
Here the first polynomial ring is the geometric diagonal of $(\MSL\otimes\QQ)^+$, and the second one is the geometric diagonal of $(\MSL\otimes\QQ)^-$. 
We begin with the decomposition of the plus part.

Let $I=(i_1,i_2,\dots,i_k)$ be a partition of $n$ such that $i_j\neq 1$, and consider the monomial 
\[ y_I=y_{i_1}y_{i_2}\dots y_{i_k}\colon \Sigma^{n+(n)}\sph\to(\MSL\otimes\QQ)^+. \]
This map factors (uniquely) through $\Sigma^{n+(n)}(\sph\otimes\QQ)^+$. Using $(\sph\otimes\QQ)^+\cong \EM\QQ$ (see \cite[Theorem~11]{Cisinski-Deglise}), we obtain the following map in $\SH(\Z)\otimes\QQ$:
\[ y_I\colon\Sigma^{n+(n)}\EM\QQ\to (\MSL\otimes\QQ)^+. \]
\begin{notation} The direct sum of these morphisms along all partitions with no part equal to $1$ produces a map
\[ \varphi^+\colon\bigoplus_{n=0}^\infty\Sigma^{n+(n)}\EM\QQ^{p(n)-p(n-1)}\to (\MSL\otimes\QQ)^+. \]
Endow the source of $\varphi^+$ with an obvious homotopy commutative ring structure of a polynomial algebra over $\EM\QQ$ with generators $y_i$ for $i\geq 2$. 
Then $\varphi^+$ becomes a morphism of homotopy commutative ring spectra. 
Given a qcqs scheme $S\to\Spec(\Z)$, pulling back $\varphi^+$ yields a morphism of homotopy commutative ring spectra in $\SH(S)\otimes\QQ$:
\[ \varphi^+_S\colon\EM\QQ[y_2,y_3,\dots]\to (\MSL\otimes\QQ)^+. \]
\end{notation}
\begin{proposition}\label{prop:Q_plus_part}
    The map $\varphi^+_S\colon\EM\QQ[y_2,y_3,\dots]\to (\MSL\otimes\QQ)^+$ is an isomorphism.
\end{proposition}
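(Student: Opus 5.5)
The plan is to reduce to fields, and then compare slices on both sides. The reduction comes first. Both source and target of $\varphi^+_S$ are pulled back from $\SH(\Z)\otimes\QQ$. Rational motivic spectra of this kind (cellular, or at least built from Thom spectra) satisfy continuity and are detected on points. More precisely, a morphism in $\SH(S)\otimes\QQ$ between objects compatible with base change is an isomorphism if it is one after pullback to all residue fields $\Spec(k)\to S$. I would invoke this via the standard detection of equivalences on points for $\SH(-)\otimes\QQ$, using that $\EM\QQ$ and $(\MSL\otimes\QQ)^+$ (being pulled back from $\Z$, with $\MSL$ stable under base change) commute with pullback. Hence it suffices to treat $S=\Spec(F)$ for a field $F$, so that $\Lambda\otimes\QQ=\QQ$ and everything from Section 3 applies.

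Over a field, $(\MSL\otimes\QQ)^+$ is $\eta$-complete, and its slice spectral sequence converges. Indeed $\eta$ acts nilpotently on the plus part, since $\eta\hyper=0$ and $\hyper$ is invertible on the plus part. So $(\MSL\otimes\QQ)^+$ is a retract of the $\eta$-complete $\MSL\otimes\QQ$-part, and Lemma \ref{lemma:convergence} (rationalized) applies. Both sides are effective, and a map of effective spectra with slice-complete source and target is an equivalence if it is an equivalence on all slices. The source $\EM\QQ[y_2,\dots]$ is a sum of shifted $\EM\QQ$, hence slice complete; the target is slice complete by the convergence just mentioned. So it suffices to check $\slice_q(\varphi^+)$ is an equivalence for all $q$.

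Slices are then computed via Remark \ref{remark:first_slices}: rationally $\slice_q(\MSL\otimes\QQ)\cong\Sigma^{q+(q)}\EM\QQ\otimes\cyc_{2q}(\W)$, of rank $p(q)-p(q-1)$. These slices lie entirely in the plus part, because $\slice_q(\MSL[\eta^{-1}]\otimes\QQ)=0$ by Theorem \ref{slice_MSL_eta_inv}, as $\hmlg_*(\W)\otimes\QQ=0$. The source has $\slice_q=\Sigma^{q+(q)}\EM\QQ^{p(q)-p(q-1)}$. The map $\slice_q(\varphi^+)$ is a map $\Sigma^{q+(q)}\EM\QQ^{a}\to\Sigma^{q+(q)}\EM\QQ^{a}$ of $\EM\QQ$-modules. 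By Lemma \ref{lemma_hom_(s,0)} such a map is a $\QQ$-matrix, determined by its effect on $\pi_{q+(q)}$, i.e. on the geometric diagonal.

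The key step, and the main obstacle, is identifying that matrix. On the slice $E^1$-term, $\pi_{q+(q)}\slice_q$ equals $E^\infty$ on the geometric diagonal, and this equals $\pi_{q+(q)}(\MSL\otimes\QQ)^+$ since no differentials can enter or leave rationally: all rational slices are pure $\Sigma^{q+(q)}\EM\QQ$, and $H^{p,q}(F,\QQ)$ vanishes for $p>q$. I would argue this by the degree vanishing $H^{-1,0}=0$, etc. The monomials $y_I$ therefore map to the images in $\pi_{q+(q)}\slice_q$ of their classes in the geometric diagonal. By Lemma \ref{lem:geom_diag_overZ} and the stated identification with $\QQ[y_2,y_3,\dots]$, they form a $\QQ$-basis. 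Thus the matrix is invertible, each $\slice_q(\varphi^+)$ is an equivalence, and convergence concludes. The delicate points are the continuity reduction from general qcqs $S$ to fields, and the careful check that the geometric diagonal of the plus part injects into $\pi_{q+(q)}\slice_q$. The latter is the edge map of a degenerate spectral sequence, which I would verify using the vanishing of $H^{p,q}(F,\QQ)$ for $p>q$.
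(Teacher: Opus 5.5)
Your proposal is correct and follows essentially the same route as the paper. Both reduce to fields by detection on points (the paper cites \cite[Proposition B.3]{BH-Norms}, passing first through $\Spec(\Z)$), both check that $\slice_q(\varphi^+_F)$ is an equivalence via the geometric diagonal, and both conclude from convergence of the slice filtration of $(\MSL\otimes\QQ)^+$. The only difference is the last step. You use the general fact that a slice-wise equivalence between slice-complete spectra is an equivalence. The paper instead lifts the degree $q$ part $A_q$ of the source to a section of $\effcover_q(\MSL\otimes\QQ)^+\to\slice_q(\MSL\otimes\QQ)^+$, splits $(\MSL\otimes\QQ)^+\simeq A_0\oplus\dots\oplus A_q\oplus\effcover_{q+1}(\MSL\otimes\QQ)^+$ inductively, and lets $q\to\infty$; this also shows that the slice spectral sequence degenerates.
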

\begin{proof}
    By \cite[Proposition B.3]{BH-Norms}, it suffices to consider the case $S=\Spec(F)$ for a field $F$, since $\varphi^+$ is stable under base change (in order to remove the assumption that $S$ is locally of finite Krull dimension, one may first reduce by base change to $\Spec(\Z)$ and then to its residue fields). 
    For brevity, denote by $A$ our homotopy polynomial algebra $\EM\QQ[y_2,y_3,\dots]$ and by $A_q$ the degree $q$ part of $A$, i.e., the direct sum
    $$ \bigoplus_{|I|=q}\Sigma^{q+(q)}\EM\QQ\{y_I\}. $$
    Note that the slices of $(\MSL\otimes\QQ)^+$ are canonically equivalent to those of $\MSL\otimes\QQ$ by \cite[Lemma 3.6]{RSO19}.
    
    By construction, the map $\varphi_F^+\colon A\to (\MSL\otimes\QQ)^+$ induces an equivalence  
    \[ A_q=\slice_q(A)\cong \slice_q(\MSL\otimes\QQ)^+. \]
    Denote by $\varphi_{F,q}^+$ the composition of the split monomorphism $A_q\to A$ with $\varphi_F^+$. 
    This map factors through $\effcover_q(\MSL\otimes\QQ)^+$ by the universal property of the $q$-th effective cover, inducing the commutative triangle:
    \[\begin{tikzcd}
    A_q \arrow[r] \arrow[rd, "\varphi_{F,q}^+"'] & \effcover_q(\MSL\otimes\QQ)^+ \arrow[d] \\
    & (\MSL\otimes\QQ)^+.  
    \end{tikzcd}\]
    We claim that the top morphism is a section of the map $\effcover_q(\MSL\otimes\QQ)^+\to \slice_q(\MSL\otimes\QQ)^+\cong A_q$. 
    Since the composition 
    \[ A_q\to\effcover_q(\MSL\otimes\QQ)^+\to A_q \]
    is an endomorphism of $\Sigma^{q+(q)}\EM V$ for a $\QQ$-vector space $V$, it suffices to verify the claim after applying the $q$-th slice functor.
    However, then all above morphisms become identities (up to the identification of all terms with $A_q$ via canonical isomorphisms and $\slice_q(\MSL\otimes\QQ)^+\cong A_q$). 
    This proves the claim.

    Applying the claim by induction, for every $q$ there exists a morphism $\theta_{q+1}$ such that the map 
    $$ (\varphi_{F,0}^+\ \dots\ \varphi_{F,q}^+\ \theta_{q+1})\colon A_0\oplus \dots \oplus A_q\oplus \effcover_{q+1}(\MSL\otimes\QQ)^+\to (\MSL\otimes\QQ)^+ $$
    is an equivalence of rational motivic spectra over $F$. 
    Taking the colimit over $q$ and using the vanishing
    $$\colim_{q\to \infty} \effcover_{q+1}(\MSL\otimes\QQ)^+=0,$$ 
    which follows from the convergence of the slice filtration for $(\MSL\otimes\QQ)^+$, the result follows.
\end{proof}
\begin{remark}
    The proof of Proposition \ref{prop:Q_plus_part} implies that, over a field $F$, the slice spectral sequence for $\MSL\otimes\QQ$ degenerates. 
    Alternatively, this can be seen as follows. 
    Consider the canonical morphism $\MSL\otimes\QQ\to \MGL\otimes\QQ$. 
    Applying the $q$-th slice functor, the resulting map 
    \begin{align}\label{eq:s_q(MSL+toMGL)}
    \slice_q(\MSL\otimes\QQ)\to \slice_q(\MGL\otimes\QQ)
    \end{align}
    can be identified with
    $$ \Sigma^{q+(q)}\EM\QQ\otimes \cyc_{2q}(\W)\to \Sigma^{q+(q)}\EM\QQ\otimes \MU_{2q}, $$
    which is induced by the inclusion $\cyc_{2q}(\W)\hookrightarrow \MU_{2q}$. 
    After tensoring with $\QQ$, this inclusion becomes a split monomorphism and, fixing a retraction, yields a retraction to the map \eqref{eq:s_q(MSL+toMGL)}. 
    It follows that the slice spectral sequence for $\MSL\otimes\QQ$ degenerates at the $E^1$-page, since it does for $\MGL\otimes\QQ$; see \cite[Remark 2.8]{RSO19}.
\end{remark}
\begin{notation}
    Similarly to the plus part, for an arbitrary qcqs scheme $S$, there exists a morphism 
    \[ \varphi^-_S\colon\W\QQ[u_4,u_8,\dots]\to (\MSL\otimes\QQ)^-\]
    of homotopy commutative ring spectra in $\SH(S)\otimes\QQ$, where $\W\QQ$ is the rational Witt motivic cohomology spectrum, see \cite[Definition 5.18]{DFJK} and \cite[(10)]{KRO_hermitian_k_theory_char2}. 
    The existence uses the identification $(\sph\otimes\QQ)^-\cong \W\QQ$ \cite[Corollary 6.2]{DFJK} (see also \cite[Theorem 5]{ALP} for the case of a field).
\end{notation}
\begin{lemma}\label{lemma:msl_rational_minus}
    The map $\varphi^-_S\colon\W\QQ[u_4,u_8,\dots]\to (\MSL\otimes\QQ)^-$ is an isomorphism.
\end{lemma}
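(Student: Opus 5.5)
We will reduce to real étale / real Betti realization via Bachmann's theorem \cite{Bachmann_real_etale}. Since $2$ is invertible and $\eta$ is inverted on the minus part, $(\sph\otimes\QQ)^-\cong\sph[\eta^{-1}]\otimes\QQ\cong\sph[\rho^{-1}]\otimes\QQ$. Bachmann identifies $\SH(S)[\rho^{-1}]$ with $\SH(\mathrm{Sper}(S)_{\mathrm{r\acute et}})$, the stable category of sheaves of spectra on the real spectrum. Under this equivalence both $\W\QQ$ and $(\MSL\otimes\QQ)^-$ become sheaves of rational spectra. Their stalks at a point of $\mathrm{Sper}(S)$, i.e.\ a real closed field $R$, are computed by real Betti realization (after reducing to $\RR$ or to real closed fields). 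By \cite[Theorem A]{real_realization}, real realization of $\MSL$ is $\mathbf{MSO}$, and real realization of $\W\QQ$ is $H\QQ$. So the problem becomes a statement about sheaves of $H\QQ$-modules on a topological space, which can be checked stalkwise.

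First, we check that $\varphi^-_S$ is compatible with base change and with the equivalence above; this holds because $\varphi^-$ is defined over $\Spec(\Z)$ and pulled back, just as $\varphi^+$ was. Next, it suffices to prove that $\varphi^-_S$ is an equivalence after applying $\rho$-inverted stalks, i.e.\ for $S=\Spec(R)$ with $R$ real closed. For $\SH(R)[\rho^{-1}]\simeq\SH$, real realization is an equivalence. The map becomes
\[H\QQ[u_4,u_8,\dots]\to \mathbf{MSO}\otimes\QQ,\]
where the $u_{4i}$ are sent to their real Betti realizations. By Lemma \ref{lem:geom_diag_overZ}, these generate $\pi_*\mathbf{MSO}\otimes\QQ\cong\QQ[u_4,u_8,\dots]$ polynomially. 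A map of $H\QQ$-modules inducing an isomorphism on homotopy is an equivalence, so the stalkwise claim follows from the classical computation $\pi_*\mathbf{MSO}\otimes\QQ=\QQ[\mathbb{CP}^2,\mathbb{CP}^4,\dots]$.

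The main obstacle is the reduction from arbitrary qcqs $S$ to points of the real spectrum. This requires that conservativity of the family of stalk functors holds on $\SH(\mathrm{Sper}(S)_{\mathrm{r\acute et}})\otimes\QQ$ (hypercompleteness), and that the identification of $\MSL[\rho^{-1}]$ and $\W\QQ$ with constant sheaves is compatible with realization. First, we would use Bachmann's result that $\mathrm{Sper}(S)$ has finite-dimensional hypercomplete behavior for rational coefficients, or we would reduce via \cite[Proposition B.3]{BH-Norms}-type continuity, first to $\Spec(\Z)$ and then to fields. For fields, the real-étale site is the profinite space of orderings, where stalks detect equivalences. The stalks are real closures, and there real realization is conservative on $\rho$-inverted rational spectra.
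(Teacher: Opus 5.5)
Your argument is correct in substance and uses the same ingredients as the paper: Bachmann's identification of $\rho$-inverted (equivalently $\eta$-inverted, since $2$ is invertible) spectra with real-\'etale sheaves, the real realizations $\MSL\mapsto\mathbf{MSO}$ and $\W\QQ\mapsto H\QQ$, and the classical computation of $\mathbf{MSO}\otimes\QQ$. The difference is that you check the equivalence stalkwise at points of $\mathrm{Sper}(S)$, and that choice is the source of your self-identified ``main obstacle''.

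The paper avoids this obstacle entirely. As you observe yourself, $\varphi^-_S$ is the pullback of $\varphi^-_\Z$ along $S\to\Spec(\Z)$, and pullback preserves isomorphisms, so it suffices to treat $S=\Spec(\Z)$. There $\mathrm{Sper}(\Z)=\mathrm{Sper}(\RR)=*$, so Bachmann's theorem gives $\SH(\Z)[\rho^{-1}]\simeq\SH(\RR)[\rho^{-1}]\simeq\SH$ with real realization as the equivalence. The claim then reduces at once to the classical equivalence $H\QQ[u_4,u_8,\dots]\to\mathbf{MSO}\otimes\QQ$, with generators compatible by construction.

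This shortcut saves three things your route must handle separately:
\begin{itemize}
\item No hypercompleteness or joint conservativity of stalk functors is needed. Your first fallback, an unspecified ``hypercomplete behavior'' result, is not a precise citation anyway.
\item Bachmann's theorem is only invoked over the noetherian finite-dimensional base $\Spec(\Z)$. Its stated hypotheses need not hold for an arbitrary qcqs $S$.
\item You never have to identify the $\rho$-inverted stalks at real closed fields $R\neq\RR$, where real Betti realization is not directly available.
\end{itemize}
Your second fallback, reducing to $\Spec(\Z)$ (by base change, not really by a continuity statement like \cite[Proposition B.3]{BH-Norms}), is therefore already the whole proof. The further reduction to fields is unnecessary. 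A stalkwise argument would only pay off for a map not known to be pulled back from a base whose real spectrum is a single point.
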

\begin{proof}
    It suffices to consider the case $S=\Spec(\Z)$.
    The real realization of $\varphi^-_\Z$ is the classical decomposition of the rational oriented cobordism spectrum $\mathbf{MSO}\otimes\QQ$, since the real realization of $\Witt\QQ$ is $H\QQ$ (see \cite[Proposition H]{real_realization}) and the generators are compatible with the topological ones by construction. 
    The result follows from \cite{Bachmann_real_etale} since $\mathrm{Sper}(\Z)=\mathrm{Sper}(\RR)=*$.
\end{proof}
\begin{theorem}\label{thm:rational_decomp}
    Let $S$ be a qcqs scheme. 
    Then $\varphi_S=(\varphi^+_S,\varphi^-_S)$ is an isomorphism of homotopy commutative ring spectra in $\SH(S)$:
    \[ \varphi_S\colon\EM\QQ[y_2,y_3,\dots]\times \W\QQ[u_4,u_8,\dots]\xrightarrow{\simeq} \MSL\otimes\QQ. \]
\end{theorem}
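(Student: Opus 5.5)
The statement follows from Proposition \ref{prop:Q_plus_part} and Lemma \ref{lemma:msl_rational_minus}, glued along the canonical splitting of $\MSL\otimes\QQ$. Since $2$ is invertible in $\SH(S)\otimes\QQ$, the idempotents $\tfrac12(1\pm\epsilon)$, where $\epsilon$ is the usual involution of the rational sphere, split every rational spectrum functorially. This gives $\MSL\otimes\QQ\simeq(\MSL\otimes\QQ)^+\times(\MSL\otimes\QQ)^-$ as a product of homotopy commutative ring spectra: the two idempotents are orthogonal and central in $\pi_{0}(\sph\otimes\QQ)$, and the ring structure on $\MSL\otimes\QQ$ is $\sph\otimes\QQ$-linear. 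Hence $\varphi_S=(\varphi^+_S,\varphi^-_S)$ is, by construction, the product of the two component maps composed with this splitting.

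The key steps are as follows.
\begin{enumerate}
\item Verify that the source $\EM\QQ[y_2,y_3,\dots]\times\W\QQ[u_4,u_8,\dots]$ is compatible with the splitting. The first factor is $+$-local, being an $\EM\QQ\simeq(\sph\otimes\QQ)^+$-module, so $\epsilon=-1$ acts on it. The second factor is $-$-local, being a $\W\QQ\simeq(\sph\otimes\QQ)^-$-module. Consequently $\varphi^+_S$ takes values in the plus summand and $\varphi^-_S$ in the minus summand, and there are no cross terms: any map from a $+$-local to a $-$-local rational spectrum vanishes.
\item Conclude that $\varphi_S$ is the product of $\varphi^+_S$ and $\varphi^-_S$. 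It is therefore an equivalence, since each factor is an isomorphism by Proposition \ref{prop:Q_plus_part} and Lemma \ref{lemma:msl_rational_minus}.
\item Check multiplicativity: a product of morphisms of homotopy commutative ring spectra is again such a morphism into the product ring. The splitting equivalence is a ring map because the projections onto the $\pm$ summands are ring maps (localizations at the idempotents).
\end{enumerate}

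The only point needing care is the second half of step 3. One has to make sure that the ring structure on $\MSL\otimes\QQ$ agrees with the product ring structure on $(\MSL\otimes\QQ)^+\times(\MSL\otimes\QQ)^-$, so that $\varphi_S$ is genuinely a ring map and not merely a map of spectra. I would handle this by recalling that $(-)^\pm$ is the smashing localization $-\otimes(\sph\otimes\QQ)^\pm$, and that $\sph\otimes\QQ\simeq(\sph\otimes\QQ)^+\times(\sph\otimes\QQ)^-$ as ring spectra. Tensoring the ring $\MSL$ with this product decomposition then gives the desired product decomposition of rings. Everything else is formal given the two cited results.
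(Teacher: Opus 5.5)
Your proposal is correct and follows the paper's proof, which simply combines Proposition \ref{prop:Q_plus_part} and Lemma \ref{lemma:msl_rational_minus} along the canonical splitting $\MSL\otimes\QQ\simeq(\MSL\otimes\QQ)^+\times(\MSL\otimes\QQ)^-$. Your additional check that this splitting respects the ring structures is the detail the paper leaves implicit. That check uses the smashing localizations $-\otimes(\sph\otimes\QQ)^\pm$ and the orthogonality of the idempotents.
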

\begin{proof}
    Combine Proposition \ref{prop:Q_plus_part} and Lemma \ref{lemma:msl_rational_minus}. 
\end{proof}
We do not know how to lift the above equivalence to a morphism of $\mathbb{E}_\infty$-ring spectra. 
However, an immediate consequence is a decomposition of the cohomology theory represented by $\MSL\otimes\QQ$.
\begin{corollary}
    Let $S$ be a regular noetherian scheme. 
    Then there is an isomorphism
    $$ \MSL^{*,*}(S,\QQ)\cong \Bigl(H^{*,*}(S,\QQ)[y_2,y_3,\dots]\Bigr)\times \Bigl(\bigl(H^{*}_{\mathrm{Zar}}(S_\QQ,\W)\otimes\QQ\bigr)[u_4,u_8,\dots]\Bigr)$$
    of bigraded rings, where $S_\QQ=S\times_{\Spec(\Z)}\Spec(\QQ)$ is the characteristic zero fiber, the generator $y_i$ has bidegree $(-2i,-i)$, and the generator $u_{4i}$ has bidegree $(-8j,-4j)$.
\end{corollary}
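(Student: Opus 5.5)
The statement is a direct consequence of Theorem~\ref{thm:rational_decomp} once we know what the two factors represent on regular noetherian schemes. The plan is to evaluate both sides of the equivalence $\varphi_S$ on $S$ itself, i.e.\ apply $[\Sigma^\infty_+ S,\Sigma^{p,q}(-)]$ in $\SH(S)$. Since $\varphi_S$ is an isomorphism of homotopy commutative ring spectra, this gives an isomorphism of bigraded rings from $\MSL^{*,*}(S,\QQ)$ to the product of the bigraded cohomology rings of $\EM\QQ[y_2,y_3,\dots]$ and $\W\QQ[u_4,u_8,\dots]$. Here $\MSL^{*,*}(S,\QQ)$ means cohomology represented by $\MSL\otimes\QQ$; this agrees with $\MSL^{*,*}(S)\otimes\QQ$ because $\Sigma^\infty_+S$ is compact.

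The first step is to check that cohomology of a homotopy polynomial algebra is the polynomial ring over the cohomology of the base. The underlying spectrum of $\EM\QQ[y_2,\dots]$ is $\bigoplus_I\Sigma^{|I|+(|I|)}\EM\QQ$. In each fixed bidegree only finitely many monomials contribute, since the total degree $|I|$ is constrained by the bidegree. Hence the direct sum can be exchanged with the cohomology of the compact object $\Sigma^\infty_+S$, giving $H^{*,*}(S,\QQ)[y_2,\dots]$ with $y_i$ in bidegree $(-2i,-i)$. The multiplicative structure matches because it is defined by the homotopy ring structure. The same argument applies to $\W\QQ[u_4,\dots]$, with $u_{4j}$ in bidegree $(-8j,-4j)$.

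The second step, which is the main obstacle, is to identify $\W\QQ^{*,*}(S)$ with $H^*_{\mathrm{Zar}}(S_\QQ,\W)\otimes\QQ$ (suitably periodic in weight, since $\eta$ is invertible). I would argue as follows.
\begin{enumerate}
\item $\W\QQ$ is $\eta$-periodic and rational, and $2$ is invertible in it. By \cite{Bachmann_real_etale}, $\W\QQ$-cohomology therefore factors through the real étale site of $S$.
\item Residue fields of positive characteristic have empty real spectrum, so $\W\QQ$ restricted to the positive-characteristic fibers vanishes. Localization along $S_\QQ\to S$, or equivalently Bachmann's identification of $\SH(S)[\rho^{-1}]$ with sheaves on $\mathrm{Sper}(S)$, then shows that $\W\QQ^{*,*}(S)\cong\W\QQ^{*,*}(S_\QQ)$.
\item On the regular $\QQ$-scheme $S_\QQ$, use the description of $\W\QQ$ as the rationalized homotopy module of the Witt sheaf; cf.\ \cite[Definition 5.18]{DFJK}, \cite[(10)]{KRO_hermitian_k_theory_char2}. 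Its cohomology is computed by Zariski (equivalently Nisnevich) cohomology of $\W\otimes\QQ$, via the Gersten/Rost–Schmid resolution, which is available for regular schemes over a field.
\end{enumerate}

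Combining the two steps gives the displayed isomorphism of bigraded rings.
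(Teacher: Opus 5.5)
Your skeleton matches the paper's. You evaluate the ring isomorphism $\varphi_S$ of Theorem~\ref{thm:rational_decomp} on $\Sigma^\infty_+S$ and identify what the two factors represent. The paper does that identification entirely by citation. Over regular noetherian $S$, $\EM\QQ$ represents Beilinson's rational motivic cohomology, and $\W\QQ$ represents $\QQ$-linear Zariski Witt-sheaf cohomology of $S_\QQ$ by \cite[Corollary 5.14]{DFJK}.

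You instead re-derive the Witt identification yourself, from Bachmann's real-\'etale theorem, the vanishing over residue fields of positive characteristic, and a Gersten computation on $S_\QQ$. This explains why only the characteristic-zero fiber enters. Your first step also makes explicit the easy passage from homotopy polynomial algebras to polynomial rings in cohomology, which the paper leaves implicit.

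The soft spot is step (3). It carries the real content and is only gestured at. First, you need that over $\Spec\QQ$ the spectrum $\W\QQ\cong(\sph\otimes\QQ)^-$ is the Eilenberg--MacLane spectrum of the $\eta$-periodic homotopy module $\W\otimes\QQ$; this is the Ananyevskiy--Levine--Panin theorem \cite{ALP}. Second, $S_\QQ$ is merely regular, not smooth over $\QQ$, so you need:
\begin{enumerate}
\item Popescu's theorem, to write its affine opens as filtered limits of smooth $\QQ$-schemes;
\item continuity of $\SH(-)$ and of Nisnevich cohomology along such limits;
\item Zariski descent, to globalize;
\item the Gersten property of the Witt sheaf, to replace Nisnevich by Zariski cohomology.
\end{enumerate}
The phrase ``available for regular schemes over a field'' hides exactly this reduction, which is what \cite[Corollary 5.14]{DFJK} packages. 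Citing it directly is the economical fix.

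Two smaller points. In your first step, the finiteness claim is unnecessary and not correct as stated. A fixed bidegree $(p,q)$ does not bound $|I|$; the contributions $H^{p+2|I|,q+|I|}(S,\QQ)$ vanish for large $|I|$ only because of the vanishing range of motivic cohomology, e.g.\ when $S$ has finite Krull dimension. Compactness of $\Sigma^\infty_+S$ alone already lets you pull the direct sum out. Also, you should state, as the paper does, that $\EM\QQ$-cohomology of regular $S$ is Beilinson's rational motivic cohomology, since that is what gives $H^{*,*}(S,\QQ)$ its intended meaning.
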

\begin{proof}
    According to \cite[Corollary 5.14]{DFJK}, the spectrum $\W\QQ$ represents $\QQ$-linear Zariski Witt-sheaf cohomology groups of the characteristic zero fiber over a regular noetherian scheme. 
    In turn, over regular noetherian schemes, $\EM\QQ$ represents rational motivic cohomology as defined by Beilinson. 
    The result follows from Theorem \ref{thm:rational_decomp}.
\end{proof}
\begin{remark}
    Another interesting question that we are not discussing here is the homotopy type of the $p$-local special linear algebraic cobordism spectrum $\MSL_{(p)}=\MSL\otimes{\Z_{(p)}}$ for a prime $p$. 
    The corresponding answers in topology are as follows. 
    For an odd prime $p$, both $\MSU_{(p)}$ and $\mathbf{MSO}_{(p)}$ split as wedges of suspensions of the Brown--Peterson spectrum $\mathbf{BP}$, as mentioned in \cite{Pengelley}.
    However, at the prime $2$ the special unitary cobordism $\MSU_{(2)}$ splits as a wedge of suspensions of the Brown--Peterson spectrum $\mathbf{BP}$ and Pengelley's $\mathbf{BoP}$, which might be viewed as an ``orthogonal Brown--Peterson spectrum'' \cite{Pengelley}. In turn, the spectrum $\mathbf{MSO}_{(2)}$ splits as a wedge of suspensions of the Eilenberg--MacLane spectra $H\Z_{(2)}$ and $H\Z/2$.
\end{remark}

\appendix
\section{Theorem B in characteristic 2}\label{appendix_char2}
In this appendix, we prove the main result over fields of characteristic $2$. 
Since the argument is a bit special in this case (and much easier), we decided to present it separately for the convenience of the reader.
For simplicity, the standing assumption that $F$ is a field of characteristic $2$ and $\Lambda$ is the ring $\Z[\nicefrac{1}{2}]$ will not be repeated. 

\begin{lemma}\label{lemma:d1-integralsummands_char2}
    The group  $[\EM\Lambda,\Sigma^{2+(1)}\EM\Lambda]$ is trivial.
\end{lemma}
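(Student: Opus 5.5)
I will mimic the proof of Lemma~\ref{lemma:d1-integralsummands}, now in characteristic $2$ with $\Lambda=\Z[\nicefrac{1}{2}]$, where all $2$-primary phenomena disappear.

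First I reduce to a computation with slices of the sphere. Consider the cofiber sequences $\effcover_{q+1}\sph\to\effcover_q\sph\to\slice_q\sph$ for $q\in\{0,1\}$, after tensoring with $\Lambda$. The spectrum $\Sigma^{2+(1)}\EM\Lambda$ is right orthogonal to $2$-effective spectra, so $[\effcover_2\sph_\Lambda,\Sigma^{2+(1)}\EM\Lambda]$ and its relevant suspension vanish. In addition, $[\Sigma^q\sph,\Sigma^{2+(1)}\EM\Lambda]=H^{2-q,1}(F;\Lambda)=0$ for $q\in\{0,1\}$, since $H^{p,1}$ vanishes for $p\geq 2$. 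As in the proof of Lemma~\ref{lemma:d1-integralsummands}, the first slice differential then induces an injection, in fact an isomorphism,
\[ [\slice_0\sph_\Lambda,\Sigma^{2+(1)}\EM\Lambda]\hookrightarrow[\Sigma\slice_1\sph_\Lambda,\Sigma^{2+(1)}\EM\Lambda]. \]
It is enough to show that the right-hand side is zero.

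The key input is that $\slice_1\sph_\Lambda\simeq\Sigma^{(1)}\EM\Lambda/2$, which holds in characteristic $2$ as well; cf.\ \cite{KRO_hermitian_k_theory_char2} or \cite{RSO19} with the compatible pair $(F,\Lambda)$. Since $2$ is invertible in $\Lambda$, we have $\Lambda/2=0$, so $\slice_1\sph_\Lambda\simeq 0$. The target group is therefore zero, and hence $[\EM\Lambda,\Sigma^{2+(1)}\EM\Lambda]=0$, using $\slice_0\sph_\Lambda\simeq\EM\Lambda$ \cite[Theorem B.4]{BH-Norms}.

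Alternatively, and more directly, I would argue as follows. Rather than going through the slices of the sphere, I would note that $\EM\Lambda$ with $2$ inverted splits as $\EM\Lambda^+$. Self-maps of $\EM\Lambda$ in weight $1$ are then controlled by the $\Lambda$-local motivic Steenrod algebra, whose positive-degree part is entirely $\ell$-torsion for primes $\ell\neq 2$. That part lives in bidegrees $(2(\ell-1)i,(\ell-1)i)$ or one more, so nothing appears in bidegree $(2,1)$ unless $\ell-1\leq 1$, that is $\ell=2$, which has been inverted. The main obstacle is merely the availability of the identification of $\slice_1\sph$ (or of the Steenrod algebra) over imperfect fields of characteristic $2$ with $\Z[\nicefrac{1}{2}]$-coefficients. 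I would take it from \cite{RSO19} via the compatible-pair formalism, or reduce by base change to the perfect prime field $\mathbb{F}_2$, using that $\EM\Lambda$ is stable under base change \cite{Spi_HZ}.
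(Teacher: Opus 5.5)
Your first argument is exactly the paper's proof. The cofiber sequences for $\effcover_1\sph_\Lambda$ and $\effcover_2\sph_\Lambda$ identify $[\slice_0\sph_\Lambda,\Sigma^{2+(1)}\EM\Lambda]$ with $[\Sigma\slice_1\sph_\Lambda,\Sigma^{2+(1)}\EM\Lambda]$, and this group vanishes because $\slice_1\sph_\Lambda\simeq\Sigma^{(1)}\EM\Lambda/2=0$ once $2\in\Lambda^\times$.

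There is one indexing slip. Since $\Sigma^{2+(1)}=\Sigma^{3,1}$, the group $[\Sigma^q\sph,\Sigma^{2+(1)}\EM\Lambda]$ is $H^{3-q,1}(F;\Lambda)$, not $H^{2-q,1}$; your version would give the nonzero group $H^{1,1}$ for $q=1$. The correct groups $H^{3,1}$ and $H^{2,1}$ do vanish, so the argument is unaffected.
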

\begin{proof}
    The proof of Lemma \ref{lemma:d1-integralsummands} applies, using $\slice_1\sph_\Lambda\cong\Sigma^{(1)}\EM\Lambda/2=0$, since $\Lambda/2=0$.
\end{proof}
From the form of the slices (see Remark \ref{remark:first_slices}) and Lemma \ref{lemma:d1-integralsummands_char2}, we obtain that all first slice differentials are trivial. 
The potentially non-trivial higher slice differentials that may affect $\pi_n\MSL_\Lambda$ for $n\leq 3$ are:
\begin{align*}
    \dd_2\colon E^2_{-n+3,0,-n}\to E^2_{-n+2,2,-n},\\
    \dd_2\colon E^2_{-n+4,0,-n}\to E^2_{-n+3,2,-n},\\
    \dd_3\colon E^3_{-n+4,0,-n}\to E^3_{-n+3,3,-n}.
\end{align*}
Note that the differential $\dd_2\colon E^2_{-n+4,1,-n}\to E^2_{-n+3,3,-n}$, which appears in Lemma \ref{lem:d2pi4s1sphere} in characteristic $\neq 2$, is automatically trivial since $\slice_1\MSL_\Lambda=0$.
\begin{lemma}\label{lemma:differentials_char2}
    All of the higher slice differentials listed above are zero.
\end{lemma}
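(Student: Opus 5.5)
In characteristic $2$ we have $\Lambda=\Z[\nicefrac{1}{2}]$, so every mod-$2$ summand of the slices vanishes. By Remark~\ref{remark:first_slices} (and Theorem~\ref{slices_MSL}, where all homology groups $\hmlg_{2k}(\W)$ are $2$-torsion) the slices reduce to
\[
\slice_0\MSL_\Lambda\cong\EM\Lambda,\qquad \slice_1\MSL_\Lambda=0,\qquad \slice_2\MSL_\Lambda\cong\Sigma^{2+(2)}\EM\Lambda,\qquad \slice_3\MSL_\Lambda\cong\Sigma^{3+(3)}\EM\Lambda .
\]
I therefore plan to realize each of the three differentials as $\pi_{*}$ of a map of $\EM\Lambda$-modules between suspensions of $\EM\Lambda$, and then show that each such map vanishes on the relevant groups.

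First, the two $\dd_2$'s. Because $\slice_1$ vanishes, $\dd_2$ on $\slice_0$ is induced by a morphism $\EM\Lambda\to\Sigma\slice_2\MSL_\Lambda\cong\Sigma^{3+(2)}\EM\Lambda$. This comes from the cofiber sequence $\effcover_2\MSL\to\effcover_0\MSL\to\slice_0\MSL$: the boundary map factors through $\Sigma\effcover_2$, and $\effcover_2\to\slice_2$ is the projection. On the $E^2$-page the relevant groups are the targets $E^2_{-n+2,2,-n}=H^{n+2,n+2}$ and $E^2_{-n+3,2,-n}=H^{n+1,n+2}$, with sources $H^{n-3,n}$ and $H^{n-4,n}$. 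I would show that $[\EM\Lambda,\Sigma^{3+(2)}\EM\Lambda]=0$ in characteristic $2$ by mimicking Lemma~\ref{lemma:d1-integralsummands_char2} and Lemma~\ref{lem:MZtoMZ(3)[7]}. Filter the source via $\sph_\Lambda\to\EM\Lambda$ and use the slices of $\sph_\Lambda$ away from $2$. By \cite{RSO19}, $\slice_1\sph_\Lambda=0$ and $\slice_2\sph_\Lambda\cong\Sigma^{1+(2)}\EM\Lambda/12=\Sigma^{1+(2)}\EM\Lambda/3$. Then $[\EM\Lambda,\Sigma^{3+(2)}\EM\Lambda]\cong[\effcover_1\sph_\Lambda,\Sigma^{2+(2)}\EM\Lambda]$, and since $\slice_1\sph_\Lambda=0$ this equals $[\effcover_2\sph_\Lambda,\Sigma^{2+(2)}\EM\Lambda]$, which is $[\slice_2\sph_\Lambda,\Sigma^{2+(2)}\EM\Lambda]$ because the target is right orthogonal to $\effcover_3$. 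That last group is $[\Sigma^{1}\EM\Lambda/3,\Sigma^{2}\EM\Lambda]$ in weight zero, i.e.\ $\Ext$-type, which could give $\Z/3$ (the Bockstein $\partial^3_\infty$).

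If that group is nonzero, I would instead evaluate on homotopy. The source generators of $H^{\star-3,\star}$ and $H^{\star-4,\star}$ (as $\KMil$-modules) live over the prime field $\mathbb{F}_2$, where the targets $H^{*,*}(\mathbb{F}_2)$ in the relevant degrees vanish: $H^{5,5}(\mathbb{F}_2)=\KMil_5=0$, $H^{5,6}(\mathbb{F}_2)=0$, and so on. This uses that differentials are $\KMil_\star(F)$-linear and compatible with base change. Alternatively, any map of the form $\Sigma^{1}\EM\Lambda\to\Sigma^{2}\EM\Lambda$ through $\EM\Lambda/3$ composes to $\partial^3_\infty\circ(\text{operation})\circ\pr^\infty_3$, which kills classes in $H^{\star-3,\star}$ since $\mathsf{P}^0$-type operations of weight $0$ vanish there.

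Second, the $\dd_3$ from $\slice_0$ to $\slice_3$ comes from a map $\EM\Lambda\to\Sigma^{4+(3)}\EM\Lambda$. Lemma~\ref{lem:MZtoMZ(3)[7]} in odd characteristic gives only $2$-primary operations. Running the same filtration argument with $\Lambda/2=0$, only $3$-primary contributions such as $\partial^3_\infty\mathsf{P}^1\pr^\infty_3$ can survive. On $H^{n-4,n}$ these again vanish: the $\KMil$-module generator sits in $h^{0,4}_3$ over the prime field (adjoin $\zeta_3$ if necessary, a prime-to-$3$ extension, which is injective on the groups involved). The target $H^{3,3}(\mathbb{F}_2(\zeta_3))=\KMil_3=0$ there, exactly as in the $\MSL/3$ argument of Lemma~\ref{lem:d3pi4s0MSL}.

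The main obstacle is controlling the $3$-primary operation groups in characteristic $2$ without relying on the mod-$2$ Steenrod algebra. I would avoid computing them fully and rely on the prime-field/generator vanishing argument, which only needs $\KMil$-linearity and the triviality of low-degree Milnor $K$-groups of finite fields.
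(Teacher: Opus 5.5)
Your treatment of the two $\dd_2$'s has a genuine gap. You correctly reduce each one to a map $\EM\Lambda\to\Sigma^{3+(2)}\EM\Lambda$. You also find that the group of such maps is $[\EM\Lambda/3,\Sigma^{1}\EM\Lambda]\cong\Z/3$, so it need not vanish. Your fallback claim is that $H^{\star-3,\star}(F;\Lambda)$ and $H^{\star-4,\star}(F;\Lambda)$ are generated as $\KMil_\star(F)$-modules by classes defined over $\mathbb{F}_2$. This is false for integral coefficients. For instance, $H^{0,3}(\mathbb{F}_4;\Lambda)=0$, while $H^{1,4}(\mathbb{F}_4;\Lambda)\cong K_7(\mathbb{F}_4)\otimes\Lambda\cong\Z/255$, of which only $\Z/15$ comes from $\mathbb{F}_2$. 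Over a general $F$ these groups contain $K$-theoretic pieces not generated from the prime field at all, so $\KMil$-linearity plus base change controls only part of the source.

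Your ``alternative'' sentence does not repair this. By bidegree, the middle operation in $\partial^3_\infty\circ(\,\cdot\,)\circ\pr^\infty_3$ is a multiple of $\mathsf{P}^1$, which has weight $2$, so there is no weight-$0$ vanishing to appeal to. The missing step is to pass to mod-$3$ coefficients on the source:
\begin{itemize}
\item the group of maps is killed by $3$, so the map factors through $\pr^\infty_3\colon\EM\Lambda\to\EM\Lambda/3$;
\item on homotopy, $\dd_2$ therefore factors through $h_3^{\star-3,\star}$ resp.\ $h_3^{\star-4,\star}$;
\item after adjoining $\zeta_3$, these are generated by $\tau^3$ resp.\ $\tau^4$, defined over $\mathbb{F}_4$, where the targets $H^{5,5}$ resp.\ $H^{5,6}$ vanish.
\end{itemize}
Adjoining $\zeta_3$ is harmless: the extension has degree at most $2$, and $2\in\Lambda^\times$ makes restriction injective on the targets. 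It is the mod-$3$ groups, not the integral ones, that are generated over a field of cohomological dimension one. You use exactly this for $\dd_3$, but not for $\dd_2$.

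Your $\dd_3$ step also needs one more argument. Here $\slice_2\MSL_\Lambda\cong\Sigma^{2+(2)}\EM\Lambda\neq 0$, and the map $\slice_0\MSL_\Lambda\to\Sigma\slice_2\MSL_\Lambda$ lies in the $\Z/3$ above and need not vanish. So $\dd_3$ is a priori induced only by a map out of its fiber $\mathsf{Q}$. This differs from $\MGL/(a_1,a_2)$ in Lemma~\ref{lem:d3pi4s0MSL}, where $\slice_1$ and $\slice_2$ vanish. It is repairable: $[\Sigma^{2+(2)}\EM\Lambda,\Sigma^{4+(3)}\EM\Lambda]\cong[\EM\Lambda,\Sigma^{2+(1)}\EM\Lambda]=0$ by Lemma~\ref{lemma:d1-integralsummands_char2}, so every map $\mathsf{Q}\to\Sigma^{4+(3)}\EM\Lambda$ factors through $\mathsf{Q}\to\EM\Lambda$. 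Note also that the relevant target is $H^{7,7}$, not $H^{3,3}$.

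The paper sidesteps all of these operation groups. The unit $\sph_\Lambda\to\MSL_\Lambda$ is an isomorphism on $\slice_0$, and in characteristic $2$ one has $\slice_1\sph_\Lambda=\slice_3\sph_\Lambda=0$ and $\slice_2\sph_\Lambda\cong\Sigma^{1+(2)}\EM\Lambda/3$. Hence $\dd_3$ for $\sph_\Lambda$ has zero target. The $\dd_2$-targets for $\sph_\Lambda$ are mod-$3$ groups that embed into those of $\sph_\Lambda/3$, whose sources are $h_3^{\star-3,\star}$ and $h_3^{\star-4,\star}$. Naturality then transports the vanishing to $\MSL_\Lambda$.
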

\begin{proof}
    Since the unit map $\sph_\Lambda\to \MSL_\Lambda$ induces an equivalence on the zero slice, it suffices to show that the corresponding slice differentials are trivial for the motivic sphere spectrum. 
    The $\dd_3$-differential for $\sph_\Lambda$ is trivial since its target is zero. 
    Consider the canonical map $\sph_\Lambda\to\sph_\Lambda/3$ to the Moore spectrum mod $3$. It induces injective maps on the targets of the $\dd_2$-differentials. 
    Hence, it remains to prove the vanishing of the corresponding $\dd_2$-differentials for $\sph_\Lambda/3$. 
    They have the form
    \begin{align*} & \dd_2\colon h^{n-3,n}_3\to h^{n+1,n+2}_3\oplus h^{n+2,n+2}_3, \\
    & \dd_2\colon h^{n-4,n}_3\to h^{n,n+2}_3\oplus h^{n+1,n+2}_3.
    \end{align*}
    Assume first that $F$ contains a primitive $3$rd root of unity. 
    Then the Milnor(--Witt) modules $h_3^{\star-3,\star}$ and $h_3^{\star-4,\star}$ are generated in degrees $3$ and $4$, respectively, and these generators map trivially since they exist over the subfield $\mathbb{F}_4\subset F$ of $3$-cohomological dimension one. 
    If $F$ does not contain a primitive $3$rd root of unity, the degree two field extension $F\subset F(\zeta_3)$ induces injections on the groups occurring in the differentials. 
    This concludes the proof.
\end{proof}
\begin{corollary}\label{cor:stab_char2}
    If $F$ is a field of characteristic $2$, the equality 
    \[E^1_{-n+m,q,-n}(\MSL_\Lambda)=E^\infty_{-n+m,q,-n}(\MSL_\Lambda)\]
    holds for $m\leq 3$. 
    The morphism $\MSL_\Lambda\to \kq_\Lambda$ induces isomorphisms 
    \[ E^\infty_{-n+m,q,-n}(\MSL_\Lambda)\xrightarrow{\simeq} E^\infty_{-n+m,q,-n}(\kq_\Lambda)\ \text{for}\ m\leq 3\ \text{and}\ (m,q)\neq(3,3). \]
    For $(m,q)=(3,3)$, the map $E^\infty_{-n+3,3,-n}(\MSL_\Lambda)\to E^\infty_{-n+3,3,-n}(\kq_\Lambda)$ coincides with $H^{n+3,n+3}\to 0$.
\end{corollary}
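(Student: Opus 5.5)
The argument assembles the vanishing results just obtained. In characteristic $2$ with $\Lambda=\Z[\nicefrac{1}{2}]$, every mod-$2$ summand of $\slice_q(\MSL_\Lambda)$ and $\slice_q(\kq_\Lambda)$ vanishes, since $\EM\Lambda/2=0$. By Theorem~\ref{slices_MSL} and Theorem~\ref{thm_conner_floyd}, $\slice_q(\MSL_\Lambda)\cong\Sigma^{q+(q)}\EM\Lambda\otimes\cyc_{2q}(\W)$. Likewise $\slice_q(\kq_\Lambda)\cong\Sigma^{q+(q)}\EM\Lambda\otimes\cyc_{2q}(\Z[\beta])$, which is $\Sigma^{q+(q)}\EM\Lambda$ for $q$ even and zero for $q$ odd. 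Consequently
\[ E^1_{-n+m,q,-n}(\MSL_\Lambda)\cong H^{n+q-m,n+q}\otimes\cyc_{2q}(\W). \]

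First I establish the degeneration $E^1=E^\infty$ for $m\leq 3$. All $\dd_1$ vanish by Lemma~\ref{lemma:d1-integralsummands_char2}, as noted after it. For $r\geq 2$, a differential entering or leaving the column $m$ with $m\leq3$ has source in column $m$ or $m+1\leq 4$. I use vanishing of integral motivic cohomology $H^{s,w}(F)$ for $s>w$ to locate the nonzero entries: the column-$m$ entry in slice degree $q$ is nonzero only if $q\geq 0$ and $\cyc_{2q}(\W)\neq0$, i.e.\ $q\neq1$. Listing the possible sources and targets for $m\leq 4$ with $r\geq2$, the only candidates are the three differentials enumerated before Lemma~\ref{lemma:differentials_char2}, together with $\dd_2$ from slice degree $1$, which is absent since $\slice_1=0$. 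Other targets $E^r_{-n+m-1,q+r,-n}$ vanish because $q+r$ slice degree would need $m-1\geq q+r$ when the source slice degree is $q\geq2$: that forces $m\geq 5$. Lemma~\ref{lemma:differentials_char2} kills the listed differentials, so $E^1=E^\infty$ in columns $m\leq3$. The same bookkeeping applies to $\kq_\Lambda$; alternatively, its relevant differentials can be deduced by comparison, since the sources there come from $\slice_0$.

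Next comes the comparison with $\kq_\Lambda$. By Proposition~\ref{prop:slices-msl-kq}, the map on $\slice_q$ is $\Sigma^{q+(q)}\EM\Lambda\otimes\cyc_{2q}(f)$. For $q=0$ both groups are $\Z$ and $f_0$ is the identity. For $q=2$, $\cyc_4(\W)=\Z\{x_1^{*2}\}$ maps to $\cyc_4(\Z[\beta])=\Z\{\beta^2\}$; I must check that $f_4(x_1^{*2})=\pm\beta^2$ up to a unit of $\Lambda$, i.e.\ a power of $2$. Using $x_1^{*2}=a_1^2+8a_2$ (Lemma~\ref{lemma:xi_decomposition_ai_2}) and the multiplicative formal group law, where $a_1\mapsto\pm\beta$ and $a_2\mapsto0$ or some multiple of $\beta^2$, this gives a unit after inverting $2$. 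This computation is the main point to verify. For $q=1$ and $q=3$ the groups on the $\kq$ side vanish, and so do those on the $\MSL$ side ($\cyc_2(\W)=0$), except in the case $(m,q)=(3,3)$.

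For $q\geq4$ and $m\leq3$, the relevant groups $H^{n+q-m,n+q}$ on both sides vanish by the range restriction. The only exception is $(m,q)=(3,3)$, where $\cyc_6(\W)=\Z\{x_3\}$ gives $H^{n+3,n+3}$ on the $\MSL$ side and $\cyc_6(\Z[\beta])=0$ on the $\kq$ side. This yields the final claim: the map $E^\infty_{-n+3,3,-n}(\MSL_\Lambda)\to E^\infty_{-n+3,3,-n}(\kq_\Lambda)$ is $H^{n+3,n+3}\to 0$.
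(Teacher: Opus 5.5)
Your proposal follows the paper's proof: $\dd_1=0$ by Lemma~\ref{lemma:d1-integralsummands_char2}, the remaining candidate differentials vanish by Lemma~\ref{lemma:differentials_char2}, and the comparison with $\kq_\Lambda$ is read off from Proposition~\ref{prop:slices-msl-kq}. Your enumeration of the possible $\dd_r$ for $r\geq 2$ is right. The argument is correct once two details are fixed.

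\emph{The indexing of the $E^1$-term.} Since $\Sigma^{q+(q)}=\Sigma^{2q,q}$, you have $E^1_{-n+m,q,-n}(\MSL_\Lambda)\cong H^{n+2q-m,n+q}\otimes\cyc_{2q}(\W)$, not $H^{n+q-m,n+q}\otimes\cyc_{2q}(\W)$. With your formula, the claimed vanishing for $q\geq 4$, $m\leq 3$ does not follow, because there the cohomological degree never exceeds the weight. With the correct formula, the vanishing of $H^{s,w}(F)$ for $s>w$ gives exactly the range $q\leq m$. That range is what you actually use in the differential bookkeeping (the condition $m-1\geq q+r$) and at $(m,q)=(3,3)$, where you correctly obtain $H^{n+3,n+3}$.

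\emph{The $q=2$ comparison.} You do not need to hedge here. The multiplicative formal group law has $a_{11}=\pm\beta$ as its only nonzero coefficient, so $a_2=a_{12}\mapsto 0$. Hence $f_4(x_1^{*2})=\beta^2$, and $\cyc_4(f)$ is an integral isomorphism. The paper already uses this in the form $\slice_q(\MSL_\Lambda)\cong\slice_q(\kq_\Lambda)$ for $q\leq 2$. Your fallback, that $a_2\mapsto c\beta^2$ would still give a unit after inverting $2$, is not valid: the factor $1+8c$ need not be a power of $2$. You should therefore rely on the exact value $a_2\mapsto 0$.
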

\begin{proof}
    The first claim follows from Lemma \ref{lemma:d1-integralsummands_char2} and Lemma \ref{lemma:differentials_char2}. The comparison with $\kq_\Lambda$ follows from Proposition \ref{prop:slices-msl-kq}.
\end{proof}
\begin{lemma}\label{lemma:msl_msletacomp_char2}
    The natural map $\MSL\to\MSL^\wedge_\eta$ induces isomorphisms
    $$ \pi_n\MSL_\Lambda\xrightarrow{\simeq} \pi_{n}(\MSL_\Lambda)^\wedge_\eta $$
    for $n\in\Z$. The same holds for $\kq_\Lambda$.
\end{lemma}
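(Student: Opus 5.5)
The key point is that in characteristic $2$ we work with $\Lambda=\Z[\nicefrac{1}{2}]$, so $2$ is invertible. Since $\eta\hyper=0$ and $\hyper=2+\eta[-1]$, the $\eta$-inverted spectrum becomes a module over $\Lambda$ with $\eta$ invertible. There the relation $\eta(2+\eta[-1])=0$ gives $2=-\eta[-1]$ up to the unit $\eta$. In characteristic $2$ we have $-1=1$, so $[-1]=[1]=0$ in Milnor--Witt $K$-theory, hence $2=0$ after inverting $\eta$. As $2$ is also invertible, this forces $\EE[\eta^{-1}]=0$ for every $\EE\in\SH(F)\otimes\Lambda$.

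Concretely, I would argue as follows. First, for any $\EE\in\{\MSL_\Lambda,\kq_\Lambda\}$, and more generally for any $\Lambda$-local spectrum, the element $2\in\pi_{0+(0)}\sph_\Lambda[\eta^{-1}]$ is both a unit (by $\Lambda$-locality) and equal to $-\eta[-1]=0$. This uses $\eta\hyper=0$ together with $[-1]=0$ in $\KMW_1(F)$ when $\chark F=2$. Hence $\sph_\Lambda[\eta^{-1}]=0$, and therefore $\EE[\eta^{-1}]=\EE\otimes\sph_\Lambda[\eta^{-1}]=0$. Since $(-)[\eta^{-1}]$ is a colimit and $\EE^\wedge_\eta$ is again $\Lambda$-local ($\Lambda$-locality is preserved by limits of $\Lambda$-local objects), the same argument gives $\EE^\wedge_\eta[\eta^{-1}]=0$.

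Second, I would plug these vanishings into the $\eta$-arithmetic square of Remark~\ref{remark:eta-arithm_square}. With both $\EE[\eta^{-1}]$ and $\EE^\wedge_\eta[\eta^{-1}]$ zero, the pullback square shows that $\EE\to\EE^\wedge_\eta$ is an equivalence. Equivalently, in the long exact sequence \eqref{equation:les_eta_arithm} the outer terms vanish, so $\pi_n\EE\to\pi_n\EE^\wedge_\eta$ is an isomorphism for all $n$. This yields the claim simultaneously for $\MSL_\Lambda$ and $\kq_\Lambda$.

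The only step needing care is the identity $[-1]=0$ and the relation $2=-\eta[-1]$ modulo $\eta$-torsion. The first holds because $-1=1$ in $F$. For the second, I would multiply $\eta\hyper=0$ by $\eta^{-1}$ to get $\hyper=0$ in $\pi_{0}\sph[\eta^{-1}]$, and then use $\hyper=2+\eta[-1]=2$. Alternatively, I could cite that $\sph[\eta^{-1}]$ has $\pi_{0+(\star)}$ equal to the Witt ring $\Witt(F)$, which is $2$-torsion in characteristic $2$ since $\langle 1\rangle+\langle 1\rangle=\langle1\rangle+\langle -1\rangle=0$. Hence $\Witt(F)\otimes\Lambda=0$ and the unit of the ring spectrum $\sph_\Lambda[\eta^{-1}]$ is zero.
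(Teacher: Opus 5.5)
Your argument is correct, but you kill $\EE[\eta^{-1}]$ by a more elementary route than the paper does.

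The paper uses two cited inputs:
\begin{enumerate}
\item the splitting $\EE\simeq\EE^\wedge_\eta\times\EE[\eta^{-1}]$ of $\Z[\nicefrac{1}{2}]$-local spectra into $\eta$-complete and $\eta$-periodic parts;
\item Bachmann's real-\'etale theorem, which gives $\EE[\eta^{-1}]=0$ because $\mathrm{Sper}(F)=\emptyset$ in positive characteristic, after identifying $\eta$-inversion with $\rho$-inversion once $2$ is invertible.
\end{enumerate}

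You use only the relation $\eta\hyper=0$. In characteristic $2$ you have $[-1]=[1]=0$, i.e.\ $\rho=0$, so $\hyper=2$ and hence $2\eta=0$. Therefore $\eta$ acts by zero on every $\Lambda$-local spectrum. Both $\EE[\eta^{-1}]$ and $\EE^\wedge_\eta[\eta^{-1}]$ then vanish, and the $\eta$-arithmetic square of Remark~\ref{remark:eta-arithm_square} finishes the proof.

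Your version is self-contained and avoids the real-\'etale machinery. In effect it proves the special case of Bachmann's theorem needed here, since $\rho=0$ makes $\rho$-inversion trivial. It also gives the stronger conclusion that $\EE\to\EE^\wedge_\eta$ is an equivalence. The paper's version is a short citation that matches how Bachmann's theorem is used elsewhere in the paper.

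You could even skip the arithmetic square. The fiber of $\EE\to\EE^\wedge_\eta$ is the limit of the tower $\cdots\xrightarrow{\eta}\Sigma^{(2)}\EE\xrightarrow{\eta}\Sigma^{(1)}\EE\xrightarrow{\eta}\EE$. This limit vanishes once $\eta$ is null on $\EE$, because the induced towers of homotopy groups have zero transition maps, so both $\lim$ and $\lim^1$ are zero.
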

\begin{proof}
    More generally, any $\Lambda$-local motivic spectrum $\EE\in \SH(F)\otimes\Lambda$ splits as $\EE\cong \EE^\wedge_\eta\times \EE[\eta^{-1}]$ (recall that $\Lambda=\Z[\nicefrac{1}{2}]$ and see \cite[Remark 4]{ALP}), while the spectrum $\EE[\eta^{-1}]$ is trivial by the main result of \cite{Bachmann_real_etale} since $\mathrm{Sper}(F)=\emptyset$ for a field $F$ of positive characteristic.
    The reference applies, because inverting $\rho$ coincides with inverting $\eta$ whenever $2$ is invertible.
\end{proof}
\begin{proof}[Proof of Theorem B in characteristic $2$]
    This follows directly from Lemma \ref{lemma:msl_msletacomp_char2}, Lemma \ref{lemma:convergence}, and Corollary \ref{cor:stab_char2}.
\end{proof}

\bibliographystyle{amsalpha}
\bibliography{slices_msl}

\end{document}